\newcommand{\PP}{\mathcal {P}}
\newcommand{\bra}{\mathrm{bra}}
\newcommand{\sem}{\mathrm{sem}}
\newcommand{\topp}{{\bf top}}
\newcommand{\bott}{{\bf bot}}
\newcommand{\DG}{\mathrm{DG}}
\newcommand{\Supp}{\mathrm{Supp}}
\newcommand{\Cl}{\mathrm{Cl}}
\newcommand{\T}{\mathrm{T}}
\newcommand{\PL}{\mathrm{PL}}
\newcommand{\dd}{\mathcal{D}}
\newcommand{\la}{\langle}
\newcommand{\ra}{\rangle}
\renewcommand{\P}{\mathcal{P}}
\newcommand{\F}{\mathcal{F}}
\newtheorem{theorem}{Theorem}[section]
\newtheorem{lemma}[theorem]{Lemma}
\theoremstyle{definition}
\newcommand{\kk}{\mathcal{K}}
\newcommand{\be}{\begin{equation}}
\newcommand{\ee}{\end{equation}}
\newcommand {\N}{\mathbb{N}} 
\renewcommand {\min}{\mathrm{min}}
\newcommand {\lab}{\mathrm{lab}}
\newcommand {\iv}{^{-1}}
\numberwithin{equation}{section}
\newcounter{AbcT}
\newcommand{\La}{\mathcal{L}}
\newcommand{\nc}{\newcommand}
\nc{\meet}{\wedge}
\nc{\op}{\operatorname}\nc{\FP}{\op{FP}}\nc{\FS}{\op{FS}}\nc{\FPhat}{\widehat{\op{FP}}}
\newtheorem {Theorem}    {Theorem}[section]
\newtheorem {Problem}    [Theorem]{Problem}
\newtheorem {Lemma}      [Theorem]    {Lemma}
\newtheorem {Corollary}   [Theorem] {Corollary}
\newtheorem {Proposition}[Theorem]    {Proposition}
\newtheorem {Example}      [Theorem]    {Example}
\newtheorem {Conjecture}[Theorem]    {Conjecture}
\theoremstyle{remark}
\newtheorem {Remark}		 [Theorem]    {\bf{Remark}}
\newtheorem {Definition} [Theorem]    {\bf{Definition}}
\newcommand{\ab}{\mathrm{ab}}
\newcommand{\DPiec}{\mathrm{DPiec}}
\begin{document}

\title{The generation problem in Thompson group $F$}
\author{Gili Golan Polak\thanks{Most of the research was conducted in Vanderbilt University. It was partially supported by a Fulbright grant and a post-doctoral scholarship from Bar-Ilan University.}}

\maketitle

\abstract{
We show that the generation problem in Thompson's group $F$ is decidable, i.e., there is an algorithm which decides if a finite set of elements of $F$ generates the whole $F$. 
The algorithm makes use of the Stallings $2$-core of subgroups of $F$, which can be defined in an analogous way to the Stallings core of subgroups of a finitely generated free group. Further study of the Stallings $2$-core of subgroups of $F$ provides a solution to another algorithmic problem in $F$. Namely, given a finitely generated subgroup $H$ of $F$, it is decidable if $H$ acts transitively on the set of finite dyadic fractions $\mathcal D$. 
Other applications of the study include the construction of new maximal subgroups of $F$ of infinite index, among which, a maximal subgroup of infinite index which acts transitively on the set $\mathcal D$ and the construction of an elementary amenable subgroup of $F$ which is maximal in a normal subgroup of $F$. 
}

\renewcommand{\thefootnote}{\fnsymbol{footnote}} 
\footnotetext{\emph{2010 Mathematics Subject Classification.} Primary 20F10, 20F65 Secondary 20E28
}    
\footnotetext{\emph{Key Words:} Thompson's group $F$, decision problems, diagram groups, directed $2$-complexes, the Stallings $2$-core, closed subgroups, maximal subgroups, homeomorphisms of the interval. 
}     
\renewcommand{\thefootnote}{\arabic{footnote}}

\tableofcontents

\section{Introduction}

Recall that R. Thompson's group $F$ is the group of all piecewise-linear\footnote{Throughout the paper we wil use ``linear'' for ``affine'', as is common in the literature.} homeomorphisms of the interval $[0,1]$ with finitely many breakpoints, where all breakpoints are finite dyadic fractions (i.e., elements of the set $\mathbb Z[\frac{1}{2}]\cap(0,1)$) and all slopes are integer powers of $2$.
The group $F$ has a presentation with two generators and two defining relations \cite{CFP, Sa} (see below).

Decision problems in $F$ have been extensively studied. It is well known that the word problem in $F$ is decidable in linear time 
\cite{CFP,SU,GuSa97} and that the conjugacy problem is decidable in linear time \cite{GuSa97,BM,BHMM}. The simultaneous conjugacy problem \cite{KM}  and twisted conjugacy problem \cite{BMV} have also been proven to be decidable. The article \cite{BBH} gives an algorithm for deciding if a finitely generated subgroup $H$ of $F$ is solvable. On the other hand, it is proved in \cite{BMV} that there are orbit undecidable subgroups of $\mathrm{Aut}(F)$ and hence, there are extensions of Thompson’s group $F$ by finitely generated free groups with unsolvable conjugacy problem.

In this paper we consider the generation problem in Thompson's group $F$. Namely, the problem of deciding for a given finite subset $X$ of $F$ whether $\la X\ra=F$. Note that  solvability of the membership problem for subgroups in $F$ is a very interesting open problem (it is mentioned in \cite{GuSa99}) and that the generation problem is an important special case of the membership problem (we need to check if the generators of $F$ are in the subgroup). The generation problem is known to be undecidable for $F_2\times F_2$ (see, for example, \cite{LS}). Using Rips' construction \cite{R}, one can find a hyperbolic group $G$ which projects onto $F_2 \times F_2$ and has undecidable generation problem. Moreover, using Wise's version of Rips'  construction \cite{W}, one can ensure that $G$ is linear over $\mathbb{Z}$.


We prove that the generation problem in Thompson's group $F$ is decidable.
 The algorithm solving the generation problem makes use of the definition of $F$ as a diagram group \cite{GuSa97} and the construction of the Stallings $2$-core of subgroups of diagram groups. The construction is due to Guba and Sapir from 1999, but appeared in print first in \cite{GS1}. 

Every element of $F$ can be viewed as a diagram $\Delta$. A diagram $\Delta$ in $F$ is a directed labeled planar graph tessellated by cells, defined up to an isotopy of the plane. It has one top edge and one bottom edge and the whole diagram is situated between them. Every cell in 
$\Delta$ is either a positive or a negative cell. Positive cells have one top edge and two bottom edges. Negative cells are the reflection of positive cells about a horizontal line (See Figure \ref{f:xx}). In particular, a diagram $\Delta$ can be naturally viewed as a directed $2$-complex; i.e., a $2$-complex which consists of vertices, directed edges and $2$-cells bounded by two directed paths with the same endpoints.

The Stallings $2$-core $\La(H)$ of a subgroup $H\le F$ can be viewed as a $2$-dimensional analogue of the Stallings core of subgroups of free groups (see \cite{Sta}). The Stallings $2$-core $\La(H)$ is a directed $2$-complex associated with $H$ which has a distinguished input/output edge. We say that $\La(H)$ is a \emph{$2$-automaton}. The core $\La(H)$ \emph{accepts} a diagram $\Delta$ in $F$ if there is a morphism of directed $2$-complexes from $\Delta$ to $\La(H)$ which maps the top and bottom edges of $\Delta$ to the input/output edge of $\La(H)$.

By construction \cite{GS1}, $\La(H)$ accepts all diagrams in $H$, but unlike in the case of free groups, the core $\La(H)$ can accept diagrams not in $H$. We define the \emph{closure} of $H$ to be the subgroup of $F$ of all diagrams accepted by the core $\La(H)$. The closure operation satisfies the usual properties of closure. Namely, $H\le \Cl(H)$, $\Cl(\Cl(H))=\Cl(H)$ and if $H_1\le H_2$ then $\Cl(H_1)\le \Cl(H_2)$. We say that $H$ is \emph{closed} if $H=\Cl(H)$. The closure of $H$ is a diagram group over the directed $2$-complex $\La(H)$. Thus, if $H$ is finitely generated, the membership problem in $\Cl(H)$ is decidable \cite{GuSa97}. (As of now, it is unknown if the problem of deciding whether a finitely generated subgroup $H$ of $F$ is closed is decidable.)

Given a finitely generated subgroup $H$ of $F$, one can check if the generators of $F$ are accepted by $\La(H)$. 
If not, then $\Cl(H)$, and in particular $H$, is a proper subgroup of $F$. This however, only gives a partial solution to the generation problem in $F$. Indeed, there are finitely generated proper subgroups $H$ of $F$ such that $\Cl(H)=F$. 

To solve the generation problem in $F$, we study the core and the closure of subgroups $H$ of $F$. 
The first result in the study is the following characterization of the closure of subgroups of $F$. 

\begin{Theorem}\label{thm:GS_int}
Let $H\le F$. Then $\Cl(H)$ is the subgroup of $F$ consisting of piecewise-linear functions $f$, with finitely many pieces, such that each piece has dyadic endpoints and such that on each piece $f$ coincides with a restriction of some function from $H$.
\end{Theorem}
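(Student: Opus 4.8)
The plan is to translate membership in $\Cl(H)$ into the combinatorics of morphisms into the $2$-core and then to read such morphisms back as local linear agreements with elements of $H$. For a dyadic subinterval $I\subseteq[0,1]$, let $e(I)$ denote the edge of $\La(H)$ reached by starting at the input/output edge $e_0$ and following the cells of $\La(H)$ dictated by the sequence of left/right halvings that produces $I$ (when all those cells are present), and write $I\sim J$ when $e(I)$ and $e(J)$ are both defined and equal. Any reduced diagram for $f\in F$ factors as $\Delta=T_+\circ T_-^{-1}$, where the positive tree $T_+$ cuts the domain into dyadic intervals $K_1,\dots,K_m$, the positive tree $T_-$ cuts the range into $L_1,\dots,L_m$, and $f$ carries each $K_i$ linearly onto $L_i$. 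A morphism $\Delta\to\La(H)$ sending the top and bottom edges to $e_0$ necessarily sends the root-to-$i$th-leaf path of $T_+$ to the subdivision path of $K_i$ and the corresponding path of $T_-$ to that of $L_i$; since the $i$th leaf of $T_+$ is glued to that of $T_-^{-1}$, such a morphism exists if and only if $K_i\sim f(K_i)$ for every $i$. Thus $f\in\Cl(H)$ iff $f$ admits a dyadic domain subdivision $\{K_i\}$ with $K_i\sim f(K_i)$ for all $i$.

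The technical core is the identity: $I\sim J$ if and only if there exists $h\in H$ whose restriction to $I$ is the unique increasing linear map of $I$ onto $J$. One direction is immediate from the construction of \cite{GS1}: since $\La(H)$ accepts every diagram of every $h\in H$, choosing a diagram of $h$ in which $I$ occurs as a leaf of the top tree exhibits the subdivision paths of $I$ and of $h(I)=J$ terminating at a common edge, so $I\sim J$. For the converse I would unfold. Writing $\La(H)$ as the Stallings fold $\pi\colon B\to\La(H)$ of the bouquet $B$ of the diagrams of a generating set of $H$ (all sharing $e_0$), an equality $e(I)=e(J)$ lifts to a zig-zag of folds in $B$ joining the endpoint of the $I$-path to that of the $J$-path; reading off the generators traversed along this zig-zag, together with the linear identifications recorded by the cells of $B$, produces an element $h\in H$ with $h|_I$ linear onto $J$. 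Conceptually, the edge relation $\sim$ records honest \emph{single} elements of $H$, whereas the strictly larger set $\Cl(H)$ arises because an accepted diagram only imposes the local condition $K_i\sim f(K_i)$ at each leaf separately, and distinct leaves may invoke distinct elements of $H$.

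Granting this lemma, both inclusions follow. If $f\in\Cl(H)$, the reduction of the first paragraph yields a dyadic subdivision with $K_i\sim f(K_i)$, the lemma supplies $h_i\in H$ with $h_i|_{K_i}$ linear onto $f(K_i)$, and uniqueness of the increasing linear map between two intervals forces $h_i|_{K_i}=f|_{K_i}$; hence $f$ coincides on the piece $K_i$ with $h_i\in H$. Conversely, suppose $f$ coincides with some $h_i\in H$ on each piece $P_i$ of a finite partition. Refining $\{P_i\}$ to a dyadic subdivision $\{K_i\}$ on which $f$ is linear, with each $K_i$ contained in some $P_{j(i)}$, we have $f|_{K_i}=h_{j(i)}|_{K_i}$ linear onto $f(K_i)$; the easy direction of the lemma gives $K_i\sim f(K_i)$ and guarantees that the needed subdivision paths exist in $\La(H)$, and these assemble, via the reduction, into a morphism $\Delta_f\to\La(H)$. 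Therefore $f\in\Cl(H)$.

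The main obstacle is the nontrivial (forward) direction of the key lemma: that whenever two dyadic intervals reach a common edge of the \emph{folded} core they are already matched linearly by one element of $H$. This is the Stallings-type faithfulness of the $2$-core, and it requires the explicit unfolding above, interpreting a path through the folded generator diagrams as a word in the generators. I would also record the routine but necessary fact, implicit in the construction of \cite{GS1}, that $\La(H)$ is closed under subdivision along every edge reachable from $e_0$, so that the morphisms produced in the backward inclusion genuinely exist.
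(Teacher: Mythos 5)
Your overall architecture is the same as the paper's: translate acceptance by $\La(H)$ into the condition that corresponding positive and negative branches of a diagram terminate at the same edge of the core, and then convert that edge-coincidence into agreement with elements of $H$ on dyadic pieces. The gap is in your key lemma, which is false as stated. It is \emph{not} true that $e(I)=e(J)$ (i.e.\ $u^+=v^+$ on $\La(H)$ for the corresponding binary words) forces a single $h\in H$ carrying $I$ linearly onto $J$. The obstruction is foldings of type $2$: when two positive cells of the partially folded automaton share their \emph{bottom} paths, their top edges get identified, and two paths $u,v$ ending at the resulting edge are only guaranteed to satisfy that $u0,v0$ (resp.\ $u1,v1$) end at a common edge one level down --- possibly witnessed by \emph{different} elements of $H$ on the two halves. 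Your ``zig-zag of folds'' unfolding silently assumes every identification comes from a type-$1$ folding (shared top paths), where the argument does work; that is exactly the content of the paper's Lemma \ref{sem_lem} about the \emph{semi-core}. For the genuine core the correct statement is Lemma \ref{path_lem}: if $u^+=v^+$ then there is $k\ge 0$ such that for every word $w$ with $|w|\ge k$ some $h\in H$ has the pair of branches $uw\to vw$; the proof is an induction over the folding sequence in which $k$ counts the type-$2$ foldings. The paper's whole machinery of the branches-core $\La_{\bra}(H)$ (Lemma \ref{bra_cor}) exists precisely because edge-coincidence in $\La(H)$ records elements of $\Cl(H)$, not of $H$ (compare Lemma \ref{cor_ide}, which only produces $f\in\Cl(H)$ with the pair of branches $u\to v$).

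The good news is that the weaker, correct lemma still proves the theorem: in your forward direction you simply refine each $K_i$ into the subintervals $[K_iw]$, $w\in\{0,1\}^k$, and obtain agreement of $f$ with some $h_{i,w}\in H$ on each, which is all the piecewise description requires. So the gap is localized and repairable, but the repair \emph{is} the paper's Lemma \ref{path_lem}, whose careful folding induction your sketch does not supply. Your backward direction is essentially sound (it rests on the easy implication, the paper's Lemma \ref{trivial}), though you should note that the paper instead routes it through closure under components (Remark \ref{r:GS} and Lemma \ref{clo_dya}), which avoids having to check directly that arbitrarily fine refinements of branches still label paths on $\La(H)$.
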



Theorem \ref{thm:GS_int} proves a conjecture of Guba and Sapir about the closure of subgroups of diagram groups (see Conjecture \ref{GSC} below), in the special case of Thompson's group $F$. It follows from Theorem \ref{thm:GS_int} that the orbits of the action of $H$ on the interval $[0,1]$ coincide with the orbits of the action of $\Cl(H)$.

In this paper, we denote by $\mathcal D$ the set of finite dyadic fractions. That is, $\mathcal D=\mathbb Z[\frac{1}{2}]\cap(0,1)$. The proof of Theorem \ref{thm:GS_int} enables us to get the following characterization of subgroups $H\le F$ which act transitively on the set  $\mathcal D$. Cells in $\La(H)$, as in diagrams $\Delta$, are either positive or negative. Here as well, positive cells have one top edge and two bottom edges. The core $\La(H)$ has natural initial and terminal vertices. Every other vertex is an \emph{inner vertex} of $\La(H)$. 

\begin{Theorem}\label{thm:tra_int}
Let $H\le F$. Then $H$ acts transitively on the set of finite dyadic fractions $\mathcal D$ if and only if the following conditions are satisfied. 
\begin{enumerate}
\item[(1)] Every edge in $\La(H)$ is the top edge of some positive cell. 
\item[(2)] There is a unique inner vertex in $\La(H)$. 
\end{enumerate}
\end{Theorem}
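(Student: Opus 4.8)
The plan is to pass to the closed subgroup $\Cl(H)$ and then to build a dictionary between the inner vertices of $\La(H)$ and the orbits of $\Cl(H)$ on the interior dyadic fractions. First I would reduce: by Theorem~\ref{thm:GS_int} the groups $H$ and $\Cl(H)$ have the same orbits on $[0,1]$, and by construction $\La(H)=\La(\Cl(H))$, so I may assume $H$ is closed and argue with the dynamical description of $\Cl(H)$. Here $\mathcal{D}$ means the dyadic fractions of the open interval $(0,1)$, since every element of $F$ fixes $0$ and $1$.

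Next I would set up the dictionary. If $\Delta$ is accepted by $\La(H)$ via a morphism $\phi\colon\Delta\to\La(H)$, its top path is a dyadic subdivision $0=x_0<\cdots<x_m=1$ of the domain and its bottom path a dyadic subdivision $0=y_0<\cdots<y_m=1$ of the range, with $\Delta$ sending $[x_{i-1},x_i]$ affinely onto $[y_{i-1},y_i]$; I would send each breakpoint to its $\phi$-image, so that $0,1\mapsto\iota,\tau$ while interior breakpoints map to inner vertices. The key observation is one direction of the dictionary: if some $h\in\Cl(H)$ satisfies $h(d)=d'$ with $d,d'\in\mathcal{D}$, then representing $h$ by an accepted diagram in normal form in which $d$ occurs as a domain breakpoint (and $d'$ as the matching range breakpoint), the two breakpoints coincide at a single interface vertex of the diagram, whence $d$ and $d'$ share the same $\phi$-image. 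Thus the assignment ``breakpoint $\mapsto$ inner vertex'' is constant on $\Cl(H)$-orbits. I would also invoke the structural fact that the core has no superfluous cells: every vertex and edge of $\La(H)$ is traversed by the image of some accepted diagram.

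Granting the converse of the dictionary, both implications follow. For $(1)\wedge(2)\Rightarrow$ transitivity: given any $d\in\mathcal{D}$, repeated application of (1) yields a positive diagram issuing from the input/output edge whose bottom path has $d$ as a breakpoint, and its image at $d$ is an inner vertex, hence the unique one by (2); the same holds for the reference fraction $1/2$, so $d$ and $1/2$ are carried to the same vertex, and splicing the two positive diagrams along that common vertex produces a diagram accepted by $\La(H)$ — an element of $\Cl(H)$ — sending $d$ to $1/2$. For transitivity $\Rightarrow(1)\wedge(2)$: a single interior orbit together with ``breakpoint $\mapsto$ inner vertex is orbit-constant'' and the no-superfluous-cells fact forces all inner vertices to coincide, giving (2); and if some edge $e$ were the top of no positive cell, the interval carried by $e$ could never be subdivided along a reduced positive path, so its midpoint could not be reached as an interior breakpoint over $e$, trapping the fractions beneath $e$ in a proper sub-orbit and contradicting transitivity, giving (1).

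The main obstacle is the converse of the dictionary, namely that two interior fractions with the same $\phi$-image lie in one orbit: since $\La(H)$ accepts diagrams outside $H$, one must actually manufacture an element of $\Cl(H)$ realizing the orbit relation, and this is exactly the splicing construction above, which relies on condition (1) to subdivide down to the prescribed fractions and on (2) to make the interfaces match at a single vertex. Establishing this, together with the ``no superfluous cells'' property and the determinism of the folded core needed for the forward proof of (1), are the delicate points; the remaining bookkeeping — reducing diagrams, and matching the index and total number of breakpoints so that the spliced map sends $d$ to $d'$ — is routine once the correspondence is in place.
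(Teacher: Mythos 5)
Your reduction to $\Cl(H)$ and your ``dictionary'' (interior dyadic breakpoint $\mapsto$ inner vertex of $\La(H)$, constant on orbits) match the paper's architecture: the forward, orbit-constancy half is the paper's Lemma \ref{trivial} combined with the easy direction of Proposition \ref{inner_vertex}, and your dead-edge argument for condition (1) is in the spirit of Theorem \ref{thm:tra}(1). But the converse of the dictionary --- that two fractions sent to the same inner vertex lie in one $\Cl(H)$-orbit --- is the entire content of the theorem, and your proposed resolution by ``splicing'' does not work as stated. If $d=.w_1$ and $d'=.w_2$, the two positive tree-diagrams over $\La(H)$ have bottom $1$-paths $a_1e_1b_1$ and $a_2e_2b_2$ whose \emph{labels} (words in the edges of $\La(H)$) are in general different, even when $\iota(e_1)=\iota(e_2)$ is the unique inner vertex. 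Gluing them along that single vertex therefore does not produce a diagram over $\La(H)$: you would need connecting $(a_1,a_2)$- and $(e_1b_1,e_2b_2)$-diagrams over $\La(H)$, i.e.\ you need to know that $1$-paths in the core with the same endpoints are equal in the associated semigroup. That statement is false for general folded $2$-automata and is exactly what must be proved for cores; merely forming the spliced plane graph over the Dunce hat gives an element of $F$ sending $d$ to $d'$, but with no reason for it to be accepted by $\La(H)$.

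The paper supplies this missing step with three results, each proved by induction over the folding process used to build $\La(H)$ from the bouquet of generating diagrams: Lemma \ref{path_lem} (if $u^+=v^+$ then $uw\rightarrow vw$ is a pair of branches of an actual element of $H$ for all long enough $w$), Lemma \ref{cor_ide} (hence some $f\in\Cl(H)$ has the pair of branches $u\rightarrow v$, via the dyadic-piecewise description of the closure), and Proposition \ref{inner_vertex} (edges with the same initial vertex can be joined by descending to a common edge along left-bottom edges, i.e.\ $(u0^m)^+=(v0^n)^+$ for some $m,n$). Your proposal has no substitute for these inductions; the sentence asserting that the obstacle ``is exactly the splicing construction above'' is circular, since the splicing presupposes the connectability you are trying to establish. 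Until you prove that coincidence of endpoints in $\La(H)$ forces the existence of connecting diagrams over $\La(H)$ (or, equivalently, reprove Lemma \ref{path_lem} and Proposition \ref{inner_vertex}), the ``if'' direction of the theorem is not established.
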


An \emph{inner edge} is an edge of $\La(H)$ whose endpoints are inner vertices of $\La(H)$. We observe that if one replaces ``inner vertex'' in Theorem \ref{thm:tra_int} by ``inner edge'', one gets the criterion for $\Cl(H)$ to contain $[F,F]$ (Lemma \ref{fin_ind} below).
To solve the generation problem in $F$ we give a criterion for the group $H$ to contain the derived subgroup of $F$ (equivalently, for $H$ to be a normal subgroup of $F$ \cite{CFP}). It turns out that one only has to add a somewhat technical condition to the requirement that $\Cl(H)$ contains $[F,F]$. 

\begin{Theorem}\label{thm:der_int}
Let $H\le F$. Then $H$ contains the derived subgroup of $F$ if and only if the following conditions hold. 
\begin{enumerate}
\item[(1)] $[F,F]\subseteq \Cl(H)$
\item[(2)] There is a function $h\in H$ which fixes a finite dyadic fraction $\alpha\in \mathcal D$ such that the slope $h'(\alpha^-)=1$ and the slope $h'(\alpha^+)=2$. 
\end{enumerate}
\end{Theorem}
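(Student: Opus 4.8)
The plan is to prove the two directions separately; the forward direction is routine and the backward direction carries all the difficulty.

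For the forward direction, assume $[F,F]\subseteq H$. Then (1) is immediate, since $[F,F]\subseteq H\subseteq\Cl(H)$. For (2) it suffices to produce a single element of $[F,F]$ with the prescribed local behaviour. I would take $\alpha=\tfrac12$ and let $h$ be the identity on $[0,\tfrac12]$ and, on $[\tfrac12,1]$, an affinely rescaled copy of an element $g\in F$ with $g'(0^+)=2$ and $g'(1^-)=1$; such a $g$ exists, for instance the one that sends $[0,\tfrac14]$ to $[0,\tfrac12]$, then $[\tfrac14,\tfrac12]$ to $[\tfrac12,\tfrac58]$, then $[\tfrac12,\tfrac34]$ to $[\tfrac58,\tfrac34]$, and is the identity on $[\tfrac34,1]$ (all slopes are powers of $2$). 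Because $h$ is the identity near both endpoints, $h\in[F,F]\subseteq H$, and by construction $h(\alpha)=\alpha$, $h'(\alpha^-)=1$, $h'(\alpha^+)=2$.

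For the backward direction, assume (1) and (2). The first step is to upgrade (1) from a statement about $\Cl(H)$ to one about $H$. Since $[F,F]$ acts transitively on the interior finite dyadic fractions $\mathcal D\cap(0,1)$, condition (1) makes $\Cl(H)$ transitive there as well; and because the $H$-orbits on $[0,1]$ coincide with the $\Cl(H)$-orbits (a consequence of Theorem \ref{thm:GS_int}), $H$ itself acts transitively on $\mathcal D\cap(0,1)$. This transitivity is the engine that moves local data around inside $H$. The second step exploits the precise shape of the element $h$ from (2): since $h(\alpha)=\alpha$ and $h'(\alpha^-)=1$, the map $h$ is literally the identity on the linear piece immediately to the left of $\alpha$, while $h'(\alpha^+)=2$; thus at $\alpha$ the element $h$ carries exactly the germ of the identity on one side and of a doubling on the other, which is the local model of a standard generator of $F$. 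Using $h$ together with $H$-conjugates (available by transitivity) and commutators, I would localize this behaviour to a genuine element of $H$ supported on an arbitrarily small dyadic interval, and then assemble a full copy $F_{[c_0,d_0]}\le H$, where $F_{[a,b]}$ denotes the elements of $F$ whose support lies in $[a,b]$.

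Finally, once one copy of $F$ on a dyadic interval lies in $H$, I would spread it around: every element of $[F,F]$ is supported on some dyadic $[a,b]\subset(0,1)$ and so lies in $F_{[a,b]}$, and these copies together generate $[F,F]$; transitivity of $H$ then lets one conjugate the seed copy onto each $[a,b]$, giving $[F,F]\subseteq H$.

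The main obstacle is the localization/assembly step. Condition (1) alone only says that the required behaviour is realized \emph{piecewise} by $H$, i.e. lies in $\Cl(H)$, and extracting from this an honest element of $H$ supported on a proper subinterval is exactly the circularity that must be broken; this is precisely what the germ supplied by (2) provides and is why the $1\!\to\!2$ slope condition, rather than merely $[F,F]\subseteq\Cl(H)$, is needed. Making the localization precise, upgrading a single element to a full copy of $F$, and handling transitivity on interval configurations rather than just on points are the technical heart of the argument, most naturally carried out in the language of the core $\La(H)$ and the inner-edge criterion of Lemma \ref{fin_ind}.
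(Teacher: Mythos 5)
Your forward direction is fine, and your high-level plan for the converse (use transitivity of $H$ on $\mathcal D$, exploit the one-sided germ of $h$ at $\alpha$, build a copy of $F_{[a,b]}$ inside $H$, then conjugate it around to sweep up all of $[F,F]$) is the same architecture the paper uses. But the proposal defers exactly the step where all the content lives, and the step as you phrase it is not obviously achievable. You propose to ``localize'' $h$ to a genuine element of $H$ supported on an arbitrarily small dyadic interval using conjugates and commutators. Nothing in hypotheses (1)--(2) gives you control over the support of $h$ away from $\alpha$: $h$ may move every point of $(0,1)$ outside a small left neighborhood of $\alpha$, and conjugating by elements of $H$ (which you only know up to their action on single dyadic points) does not let you shrink supports. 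The paper never produces such a localized element directly. Instead it proves three quantitative lemmas: one showing that a given $g\in H$ with a prescribed pair of branches can be corrected by conjugates of powers of $h$ without disturbing its behaviour on a prescribed initial interval; one showing that $H$ contains elements compressing any $[a,b]$ into any prescribed dyadic interval $[w]$ (this requires first establishing, via the branches-core and Lemma \ref{ide_fin}, that $(0,.v)$ is an orbital of the pointwise stabilizer $H_{[v]}$ for a suitable $[v]$ --- a nontrivial fact you have not addressed); and the key interpolation lemma stating that for any $f\in[F,F]$ and any $[a,b]$ there is $g_1\in H\cap[F,F]$ agreeing with $f$ on $[a,b]$. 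Even then, $g_1$ is not supported in $[a,b]$; the paper gets a clean copy of $[F_{[u]},F_{[u]}]$ inside $H$ only by choosing two such elements $h_0,h_1$ whose supports intersect only inside $[u]$ and passing to the commutator subgroup of $\langle h_0,h_1\rangle$, which kills the uncontrolled behaviour outside $[u]$.

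In short: the circularity you correctly identify (condition (1) only gives piecewise realizability) is broken in the paper not by cutting $h$ down to a small support, but by an inductive branch-correction procedure in which conjugates of powers of $h$ supply exactly the slope adjustments needed at each breakpoint, combined with a commutator trick to discard the unwanted tails. Without supplying these arguments (or equivalents), the proposal is an outline of the theorem rather than a proof of it.
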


Given a finite subset $X$ of $F$, we let $H$ be the group generated by $X$. Then it is (easily) decidable if condition (1) holds for $H$. In Section \ref{sec:tuples}, we give an algorithm for deciding if $H$ satisfies condition (2), given that $H$ satisfies condition (1). As one can also decide if $H[F,F]=F$, Theorem \ref{thm:der_int} gives a solution for the generation problem in Thompson's group $F$. 

\begin{Corollary}\label{cor_int}
Let $H$ be a subgroup of $F$. Then $H=F$ if and only if the following conditions hold. 
\begin{enumerate}
\item[(1)] $[F,F]\subseteq \Cl(H)$
\item[(2)] $H[F,F]=F$.
\item[(3)] There is a function $h\in H$ which fixes a finite dyadic fraction $\alpha\in \mathcal D$ such that the slope $h'(\alpha^-)=1$ and the slope $h'(\alpha^+)=2$. 
\end{enumerate}
\end{Corollary}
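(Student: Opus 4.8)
The plan is to deduce Corollary~\ref{cor_int} directly from Theorem~\ref{thm:der_int}, observing that conditions (1) and (3) of the corollary are verbatim conditions (1) and (2) of that theorem. Theorem~\ref{thm:der_int} therefore already asserts that conditions (1) and (3) taken together are \emph{equivalent} to the inclusion $[F,F]\subseteq H$. The only extra ingredient needed is the elementary group-theoretic fact that once $[F,F]\subseteq H$, the product $H[F,F]$ collapses to $H$ itself. So the entire corollary is a short formal manipulation on top of Theorem~\ref{thm:der_int}.

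For the backward implication, I would assume (1), (2) and (3). By Theorem~\ref{thm:der_int}, conditions (1) and (3) give $[F,F]\subseteq H$. Consequently $H[F,F]=H$, and condition (2) then reads $H=H[F,F]=F$, which is the desired conclusion. For the forward implication, suppose $H=F$. Then $\Cl(H)=\Cl(F)=F\supseteq[F,F]$, giving (1); and $F[F,F]=F$ gives (2). The only point requiring an argument is (3): one must exhibit an element of $F$ fixing some interior finite dyadic fraction with left slope $1$ and right slope $2$. This is a routine explicit construction. For instance, take the homeomorphism $h$ that is the identity on $[0,1/2]$, sends $[1/2,5/8]$ to $[1/2,3/4]$ with slope $2$, sends $[5/8,3/4]$ to $[3/4,7/8]$ with slope $1$, and sends $[3/4,1]$ to $[7/8,1]$ with slope $1/2$. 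All breakpoints are dyadic and all slopes are powers of $2$, so $h\in F$, and $h$ fixes $\alpha=1/2$ with $h'(1/2^-)=1$ and $h'(1/2^+)=2$, establishing (3).

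The substance of the statement thus lies entirely in Theorem~\ref{thm:der_int}, and I do not expect a genuine obstacle at the level of the corollary. The only thing to be careful about is the witness for condition (3) in the forward direction: I should double-check that the element produced really lies in $F$ (dyadic breakpoints and power-of-two slopes) and genuinely has the prescribed one-sided slopes $1$ and $2$ at its fixed point, since that is the single place where the deduction is not purely formal.
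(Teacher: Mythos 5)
Your proposal is correct and follows essentially the same route as the paper: the paper's Theorem 6.6 deduces the corollary from Theorem 6.5 (= Theorem \ref{thm:der_int}) in exactly this way, with the backward direction being ``(1)+(3) give $[F,F]\subseteq H$, then (2) gives $H=F$'' and the forward direction dismissed as clear. Your explicit witness for condition (3) is fine --- it is in fact the generator $x_1$ of $F$ --- so the only difference is that you spell out a detail the paper leaves implicit.
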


Another application of Theorem \ref{thm:der_int} is the following. 

\begin{Theorem} 
There is a sequence of finitely generated subgroups $B<K<F$ such that $B$ is elementary amenable and maximal in $K$,  $K$ is normal in $F$ and $F/K$ is infinite cyclic.
\end{Theorem}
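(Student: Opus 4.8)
The plan is to realize $K$ as the kernel of a surjection $F\to\mathbb{Z}$ and to certify the maximality of $B$ in $K$ by applying Theorem \ref{thm:der_int} to each group $\langle B,k\rangle$ with $k\in K\setminus B$. First I would fix $K$. Recall the abelianization $F\to\mathbb{Z}^2$ is $(\chi_0,\chi_1)$ with $\chi_0(f)=\log_2 f'(0^+)$ and $\chi_1(f)=\log_2 f'(1^-)$ and kernel $[F,F]$. I would take $\chi=\chi_0-\chi_1$ and set $K=\ker\chi$; then $K$ is normal, $F/K\cong\mathbb{Z}$, it contains $[F,F]$, and $K/[F,F]\cong\{(a,a):a\in\mathbb{Z}\}\cong\mathbb{Z}$. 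The reason for using a combination of $\chi_0$ and $\chi_1$ rather than, say, $\chi_0$ alone is the required finite generation of $K$: by the Bieri--Geoghegan--Strebel description of the BNS-invariant $\Sigma^1(F)$, whose complement in the character sphere is exactly $\{[\chi_0],[\chi_1]\}$, the kernel $\ker\chi$ is finitely generated precisely when $\chi$ is a multiple of neither $\chi_0$ nor $\chi_1$, which holds for $\chi=\chi_0-\chi_1$.

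Next I would reduce maximality to a germ condition. I would seek a finitely generated elementary amenable subgroup $B\le K$ (which I would take to be solvable, so elementary amenability is automatic) satisfying (i) $[F,F]\subseteq\Cl(B)$ and (ii) the image of $B$ in $K/[F,F]\cong\mathbb{Z}$ is all of $\mathbb{Z}$. Given such a $B$, properness $B<K$ is immediate, since $K$ contains $[F,F]$, which is not elementary amenable, while $B$ is. For maximality, take $k\in K\setminus B$ and set $G=\langle B,k\rangle\le K$. Since $\Cl$ is monotone, $[F,F]\subseteq\Cl(B)\subseteq\Cl(G)$, so condition (1) of Theorem \ref{thm:der_int} holds for $G$. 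If I can also verify condition (2) — that $G$ contains an element $h$ fixing a finite dyadic fraction $\alpha\in(0,1)$ with $h'(\alpha^-)=1$ and $h'(\alpha^+)=2$ — then Theorem \ref{thm:der_int} gives $[F,F]\subseteq G$. Together with (ii) this forces $G=K$, because the subgroups of $K$ containing $[F,F]$ are classified by the subgroups of $K/[F,F]\cong\mathbb{Z}$, and $G$ already surjects onto that quotient. Thus maximality reduces to producing, for every $k\in K\setminus B$, such a germ element $h$ inside $\langle B,k\rangle$.

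I note that $B$ itself cannot satisfy condition (2): if it did, then by (i) and Theorem \ref{thm:der_int} we would get $[F,F]\subseteq B$, contradicting that $B$ is elementary amenable. So the germ $(1,2)$ at a dyadic point is \emph{absent} from $B$ and must be \emph{created} precisely upon adjoining a new element; this is the mechanism driving maximality. Concretely I would build $B$ as an explicit finitely generated solvable subgroup of $K$ whose action on $\mathcal{D}$ is rich enough to force $[F,F]\subseteq\Cl(B)$ — verified through the combinatorics of the core $\La(B)$ via the ``inner-edge'' variant of Theorem \ref{thm:tra_int} recorded in the Remark following it — and which surjects onto $K/[F,F]$, while all of its breakpoint data avoids the $(1,2)$ pattern.

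The main obstacle is verifying condition (2) for \emph{every} $\langle B,k\rangle$. One must pin down exactly which breakpoint and germ data the elements of $B$ realize, so that any $k\notin B$ necessarily contributes genuinely new data, and then manufacture, from $k$ and suitably chosen elements of $B$, a single honest element of $\langle B,k\rangle$ whose only relevant feature is a breakpoint at a dyadic point with left slope $1$ and right slope $2$. The tension is that elementary amenability severely constrains $B$, yet $B$ must be dynamically rich enough that adjoining any one new element of $K$ recovers all of $[F,F]$; balancing these two opposing demands — rather than the routine endpoint bookkeeping that sets up $K$ — is the crux of the argument.
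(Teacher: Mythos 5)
There is a genuine gap, and it sits at the foundation of your strategy rather than in the unfinished ``main obstacle'' you flag at the end. You require a finitely generated \emph{solvable} $B$ with $[F,F]\subseteq\Cl(B)$. This is impossible: by Theorem \ref{sol} (parts (1) and (2)), a solvable subgroup $B$ has solvable closure of the same derived length, and its action on $\mathcal D$ has infinitely many orbits, whereas $[F,F]\subseteq\Cl(B)$ would force $\Cl(B)$ to be non-solvable and (by Corollary \ref{orb_CH}) $B$ to act transitively on $\mathcal D$. Even if you relax ``solvable'' to ``elementary amenable,'' producing such a $B$ is not something you can take for granted; whether $[F,F]\subseteq\Cl(H)$ already forces $[F,F]\subseteq H$ is precisely Problem \ref{pro_der}, and no example of a proper (let alone elementary amenable) subgroup with this property is known. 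So condition (i) of your reduction cannot be met by any $B$ of the kind you propose, and the rest of the argument never gets off the ground. Your observation that, under (i), $B$ must \emph{fail} condition (2) of Theorem \ref{thm:der_int} is internally consistent, but it points the mechanism in exactly the wrong direction.

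The paper's construction (Theorem \ref{B}) inverts your two conditions. It takes $B$ to be an explicit copy of the Brin--Navas group, shows $B$ is \emph{closed} ($\Cl(B)=B$, so certainly $[F,F]\not\subseteq\Cl(B)$), and arranges that condition (2) of Theorem \ref{thm:der_int} holds already for an element of $B$ (the generator $x=x_0x_1x_2^{-1}x_0^{-1}$ fixes $.01$ with left slope $1$ and right slope $2$), so it is inherited by every $\la B,f\ra$ for free. What has to be \emph{created} by adjoining $f\notin B$ is condition (1): Lemma \ref{eve_ide} shows, by a folding argument on the core $\La(B)$ (any $f\notin B=\Cl(B)$ forces an identification of two inner edges, which cascades to a single inner edge), that $[F,F]\subseteq\Cl(\la B,f\ra)$ for every $f\notin B$. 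Then Theorem \ref{main1} gives $[F,F]\le\la B,f\ra$, hence $K=B[F,F]\le\la B,f\ra$, which is the maximality. If you want to salvage your write-up, you should swap the roles of the two hypotheses of Theorem \ref{thm:der_int}: make $B$ closed and give it the $(1,2)$ germ explicitly, and put the work into showing that \emph{closure} of $\la B,f\ra$ captures $[F,F]$ for every $f\notin B$ --- that is where the core machinery of the paper is actually needed. (Your choice of $K=\ker(\chi_0-\chi_1)$ and the BNS justification of its finite generation are fine, though the paper instead takes $K=B[F,F]$ and verifies $2$-generation directly in Remark \ref{rem_K}.)
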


In Section \ref{sec:tech} we give some techniques related to the core $\La(H)$ of a subgroup $H$ of $F$. 
In Section \ref{ss:alg} we consider the problem of finding a generating set of $\Cl(H)$, and show that an algorithm due to Guba and Sapir \cite{GuSa97} for finding a generating set of a diagram group can be simplified in that special case. In Section \ref{ss:core} we give conditions for a $2$-automaton $\La$ over the Dunce hat $\kk$ (see Section \ref{ss:dc}) to coincide with the core $\La(H)$ of some subgroup $H$ of $F$, up to identification of vertices. These techniques are applied in Section \ref{ss:max} to the construction of maximal subgroups of infinite index in $F$.

In \cite{Sav1, Sav} Dmytro Savchuk studied subgroups $H_U$ of the group $F$ which are the stabilizers of finite sets of real numbers $U\subset(0,1)$. He proved that if $U$ consists of one number, then $H_U$ is a maximal subgroup of $F$. He also showed that the Schreier graphs of the subgroups $H_U$ are amenable.  He asked \cite[Problem 1.5]{Sav} whether every maximal subgroup of infinite index in $F$ is of the form $H_{\{\alpha\}}$, that is, fixes a number from $(0,1)$. In \cite{GS1}, the author and Sapir applied the core of subgroups of $F$ to prove the existence of maximal subgroups of $F$ of infinite index which do not fix any number in $(0,1)$. An explicit example of a $3$-generated maximal subgroup that does not fix any number in $(0,1)$ was also constructed in \cite{GS1}, but all the examples from \cite{GS1} stabilize proper subsets of $\mathcal D$.  
Applying Corollary \ref{cor_int}, Theorem \ref{thm:tra_int} and the techniques from Section \ref{sec:tech} we prove the following.

\begin{Theorem}
Thompson's group $F$ has a $3$-generated maximal subgroup  of infinite index which acts transitively on the set of finite dyadic fractions $\mathcal D$. 
\end{Theorem}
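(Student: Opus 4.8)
The plan is to construct an explicit $3$-generated subgroup $H\le F$ by first specifying its Stallings $2$-core $\La(H)$ as a $2$-automaton over the Dunce hat $\kk$, and then to deduce maximality from the generation criterion of Corollary \ref{cor_int}. I would build $\La(H)$ so that it has exactly one inner vertex $o$ (besides the initial and terminal vertices), exactly one inner edge, namely a single loop at $o$, and so that every edge of $\La(H)$ is the top edge of some positive cell. Using the realization results of Section \ref{ss:core}, which give conditions for a $2$-automaton over $\kk$ to coincide with the core of an honest subgroup (up to identification of vertices), I would check that this $\La$ really is $\La(H)$ for a concrete subgroup $H$, and exhibit three generating diagrams whose images in the abelianization $F/[F,F]\cong\Z^2$ generate $\Z^2$, so that $H[F,F]=F$.

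With the core designed this way, transitivity and infinite index follow quickly. Since $\La(H)$ has a unique inner vertex and every edge is the top edge of a positive cell, Theorem \ref{thm:tra_int} shows that $H$ acts transitively on $\mathcal{D}$. Since $\La(H)$ has a unique inner edge as well, the inner-edge version of the criterion (Lemma \ref{fin_ind}) gives $[F,F]\subseteq\Cl(H)$, and together with $H[F,F]=F$ this forces $\Cl(H)=F$. To obtain infinite index I would arrange, in the choice of generators, that no $h\in H$ fixes a finite dyadic fraction $\alpha\in(0,1)$ with $h'(\alpha^-)=1$ and $h'(\alpha^+)=2$. Then condition (2) of Theorem \ref{thm:der_int} fails while condition (1) holds, so $[F,F]\not\subseteq H$; since every finite-index subgroup of $F$ has nontrivial normal core and hence contains $[F,F]$, this yields $[F:H]=\infty$, and in particular $H\ne F$.

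It remains to prove maximality, which is where the real work lies. Let $g\in F\setminus H$ and set $K=\langle H, g\rangle$. Because $\Cl(K)\supseteq\Cl(H)=F$ we have $[F,F]\subseteq\Cl(K)$, and because $K[F,F]\supseteq H[F,F]=F$ we have $K[F,F]=F$; thus conditions (1) and (2) of Corollary \ref{cor_int} hold automatically for $K$. Consequently $K=F$ if and only if $K$ satisfies condition (3), that is, contains an element fixing a dyadic $\alpha$ with slopes $1$ and $2$ on its two sides. Maximality therefore reduces to a single statement: for every $g\notin H$, the group $\langle H, g\rangle$ contains such a ``$(1,2)$-element''.

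This last step is the main obstacle. The strategy is to exploit transitivity of $H$ on $\mathcal{D}$ to move breakpoints into standard position: given $g\notin H$, I would analyze the germs $(\log_2 g'(\beta^-),\log_2 g'(\beta^+))$ of $g$ at its breakpoints and conjugate $g$ by suitable elements of $H$ to realize a prescribed germ at a dyadic fixed point. The construction of $H$ must guarantee that $H$ omits exactly the germ $(0,1)$ (equivalently, the $(1,2)$-elements) in a controlled way, so that the germ contributed by any $g\notin H$ combines with germs already available in $H$ — via products and commutators supported near a common fixed dyadic — to synthesize an element with germ $(0,1)$. Making this synthesis uniform over all $g\notin H$, while simultaneously keeping $\La(H)$ in the rigid form above, is the delicate part; it is here that the techniques of Section \ref{sec:tech} and the fixed-point/germ analysis underlying the algorithm of Section \ref{sec:tuples} would be brought to bear.
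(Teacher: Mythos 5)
Your approach diverges from the paper's at a structural level, and the divergence is fatal to the construction rather than merely to its write-up. You propose a core with a \emph{unique inner edge}, which by Lemma \ref{fin_ind} forces $[F,F]\subseteq\Cl(H)$ and hence (together with $H[F,F]=F$) $\Cl(H)=F$; you then want $H$ itself to be proper by arranging that no $h\in H$ fixes a dyadic $\alpha$ with $h'(\alpha^-)=1$ and $h'(\alpha^+)=2$. But a subgroup satisfying $[F,F]\subseteq\Cl(H)$ while containing no such element is precisely the object asked about in Problem \ref{pro_der}, and the paper states explicitly that no example of such a subgroup is known; likewise a proper $H$ with $H[F,F]=F$ and $\Cl(H)=F$ would answer Problem \ref{pro_HF} (and Problem \ref{pro_clo}) in the negative. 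So the very first step of your plan --- exhibiting the group $H$ --- already requires resolving an open problem, and your final step (showing that \emph{every} $g\notin H$ forces a $(1,2)$-germ into $\langle H,g\rangle$) is left as an admitted obstacle with no mechanism proposed that would make it work uniformly in $g$.

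The paper's proof (Lemma \ref{lem:max}) arranges the quantifiers the other way around. The group $M=\langle x_0,\,x_1x_2x_1^{-3},\,x_1x_2x_3x_2^{-3}x_1^{-1}\rangle$ is \emph{closed}, $\Cl(M)=M$, and its core has a unique inner \emph{vertex} (giving transitivity on $\mathcal D$ by Corollary \ref{cor_tra}) but three distinct inner \emph{edges} $h,k,m$; hence $[F,F]\not\subseteq\Cl(M)=M$, and properness and infinite index come for free. Maximality is then obtained by showing that adjoining any $f\notin M$ collapses the inner edges of the core to a single one (exactly as in Lemma \ref{eve_ide} and Example \ref{ex:2}), so $[F,F]\subseteq\Cl(\langle M,f\rangle)$; since $M$ already contains the element $y=x_1x_2x_1^{-3}(x_1x_2x_3x_2^{-3}x_1^{-1})^{-1}$, which fixes $.111$ with slope $2$ on the left and $1$ on the right, Lemma \ref{1221} followed by Theorem \ref{main1} yields $[F,F]\le\langle M,f\rangle$ and hence $\langle M,f\rangle=F$. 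If you want a workable construction, keep the unique inner vertex but deliberately retain several inner edges, and put the germ element inside $M$ from the start (where it is harmless because $[F,F]\not\subseteq\Cl(M)$) rather than trying to synthesize it from an arbitrary $g\notin M$.
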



In Section \ref{sec:sg}, we consider the closure of solvable subgroups $H$ of $F$ and prove the following theorem. 

\begin{Theorem}\label{sol_int}
Let $H$ be a solvable subgroup of $F$ of derived length $n$. Then the following assertions hold. 
\begin{enumerate}
\item[(1)] The action of $H$ on the set of finite dyadic fractions $\mathcal D$ has infinitely many orbits.
\item[(2)] $\Cl(H)$ is solvable of derived length $n$. 
\item[(3)] If $H$ is finitely generated then $\Cl(H)$ is finitely generated. 
\end{enumerate}
\end{Theorem}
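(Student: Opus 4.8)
The three assertions share a common engine, so the plan is to prove (2) first and then deduce (1) and (3) from it. Throughout I would use the dynamical description of the closure from Theorem~\ref{thm:GS_int}: a map $f\in F$ lies in $\Cl(H)$ precisely when, near every point, $f$ coincides with the restriction of some element of $H$. Consequently $H$ and $\Cl(H)$ have the same germs and, as recorded after Theorem~\ref{thm:GS_int}, the same orbits on $[0,1]$. The key local fact I would isolate at the outset is that the element of $H$ witnessing $f\in\Cl(H)$ can change only as we cross a point $p$ at which two elements $h_1,h_2\in H$ satisfy $h_1(p)=h_2(p)$, i.e.\ at a fixed point of the nontrivial element $h_2^{-1}h_1\in H$; this is forced by continuity of $f$. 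Thus all the ``extra'' structure of $\Cl(H)$ beyond $H$ is concentrated at fixed points of elements of $H$, and in particular the support components (orbitals) of $\Cl(H)$ coincide with those of $H$.

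For (2), the bound $\ge n$ is immediate from $H\le\Cl(H)$, so the content is $\le n$. I would measure derived length by the maximal height of a nested \emph{tower} of orbitals carrying a wreath-type action, using the fact (Bleak) that for solvable subgroups of $\mathrm{PL}_+([0,1])$ the derived length equals the supremal tower height. Given a tower of height $k$ in $\Cl(H)$, realised by elements $g_1,\dots,g_k$ nested at a pivot point, I would replace each $g_i$ by an $h_i\in H$ agreeing with $g_i$ near that pivot (possible by Theorem~\ref{thm:GS_int}); since towers are witnessed by the germs at the pivot together with the common orbital structure, which by the first paragraph is shared by $H$, the $h_i$ form a tower of the same height $k$ inside $H$. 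Hence the tower heights of $H$ and $\Cl(H)$ agree, and both equal $n$. The main obstacle is exactly the verification that the tower relations survive the passage from $g_i$ to its local copy $h_i$, i.e.\ that ``tower height'' is genuinely a germ-local invariant. An alternative, more hands-on route one could attempt is to prove directly that $[\Cl(H),\Cl(H)]\subseteq\Cl([H,H])$ and iterate to $\Cl(H)^{(k)}\subseteq\Cl(H^{(k)})$, the needed cancellation in $H^{\mathrm{ab}}$ being supplied by the fixed-point mechanism above; this requires care precisely because the local representatives of $f,g$ at different points may carry different classes in $H^{\mathrm{ab}}$.

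For (1), I would argue the contrapositive after reducing to the closure. If $H$ had only finitely many orbits on $\mathcal D$, then so would $\Cl(H)$, which by (2) is a \emph{closed} solvable group of derived length $n$. Finitely many orbits forces some orbit to be infinite and hence to accumulate, and closedness lets me localise bumps at arbitrarily small scales near an accumulation point; nesting these bumps produces towers of unbounded height in $\Cl(H)$, contradicting the bound $k\le n$ from (2). The main obstacle is the explicit construction of arbitrarily tall towers from the finite-orbit hypothesis. If one prefers to avoid towers, the same contradiction can be reached through the core, since Theorem~\ref{thm:tra_int} shows that a transitive closed subgroup has a very homogeneous core $\La(\Cl(H))$ (a single inner vertex, with every edge the top of a positive cell), a structure I would argue is incompatible with any finite bound on the derived length.

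Finally, for (3) I would show that, when $H$ is finitely generated, only finitely many ``piece switches'' are needed to pass from $H$ to $\Cl(H)$. Concretely, $\Cl(H)$ is generated by a finite generating set of $H$ together with the elements realising a single local switch across an $H$-fixed point; because $H$ is finitely generated the core $\La(H)$ is a finite $2$-complex, which bounds the combinatorial \emph{types} of such switches. A priori infinitely many switches could still be required, since by (1) the action has infinitely many orbits; solvability is what tames this, as the bounded tower height (derived length $n$) limits how deeply switches can be nested and thereby organises them into finitely many families generated by finitely many seeds. Hence $\Cl(H)$ is finitely generated. Here the technical heart, and the place where finite derived length is indispensable, is the reduction from the infinitely many orbits to finitely many generating switches: without the solvability bound the closure of a finitely generated subgroup can fail to be finitely generated, so the argument cannot rely on any Noetherian property (finitely generated solvable groups need not be Noetherian, and the derived factors of $\Cl(H)$ need not themselves be finitely generated), but only on the finiteness of $\La(H)$ together with the depth bound from (2).
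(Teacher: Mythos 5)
Your plan correctly identifies the relevant machinery (Bleak's tower characterization of derived length, the dynamical description of $\Cl(H)$ from Theorem~\ref{thm:GS_int}), but each part rests on a step that is asserted rather than proved, and the central one is the claim in your opening paragraph that the orbitals of \emph{elements} of $\Cl(H)$ coincide with those of elements of $H$, ``forced by continuity.'' The piecewise-$H$ description only gives you that near the left endpoint $a$ of an orbital $(a,b)$ of $f\in\Cl(H)$ there is some $h\in H$ agreeing with $f$; such an $h$ has an orbital $(a,b')$, but nothing forces $b'=b$, and producing an element of $H$ with orbital exactly $(a,b)$ is a substantive theorem of Bleak about split groups of \emph{solvable} groups (Theorem~\ref{Bl2}), not a formal consequence of the closure description. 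Indeed the claim is false without solvability: in Example~\ref{B_1} the group $B_1$ is elementary amenable yet $\Cl(B_1)$ contains a copy of $F$, so the closure acquires towers of unbounded height that $B_1$ does not have. Since your proof of (2) reduces to this coincidence of element-orbitals (replacing each $g_i$ by a germ-equal $h_i$ does not by itself keep the orbitals of the $h_i$ distinct or nested --- the obstacle you flag but do not resolve), part (2) is not established as written. The paper's route is to sandwich $H\le\Cl(H)\le S(H)$ and quote Bleak's result that $S(H)$ has the same element-orbitals, hence the same depth and derived length.

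For (1) your contrapositive through the closure (``finitely many orbits forces accumulation, closedness gives nested bumps of unbounded height'') omits precisely the construction that would make it a proof; the paper instead argues directly (Lemma~\ref{sol_inf}): take the innermost orbital $(a,b)$ of a maximal tower, show via Lemma~\ref{Bl1} and maximality that every element of $H$ moving a point of $(a,b)$ has orbital exactly $(a,b)$, deduce by a slope-at-$a^+$ argument that the restriction to $[a,b]$ of the stabilizer of $\{a,b\}$ is \emph{cyclic}, and observe that a cyclic group has infinitely many orbits on $(a,b)\cap\mathcal D$. For (3) the missing idea is Lemma~\ref{BBH}: the breakpoints of a finitely generated subgroup of $\PL_o(I)$ meet only finitely many of its orbits. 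The paper (Proposition~\ref{fin_gen_SG} and its dyadic-orbital adaptation) shows the split group is generated by one-orbital functions whose associated minimal orbitals lie in pairwise distinct $H$-orbits and each contain a breakpoint of an element of $H$, so an infinite generating set would contradict Lemma~\ref{BBH}. Your ``finitely many combinatorial types of switches, organised into finitely many families by the depth bound'' does not supply this counting mechanism, and I do not see how to extract one from the finiteness of $\La(H)$ alone.
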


The theorem follows from results about solvable subgroups of Thompson's group $F$ (and more generally, solvable subgroups of $\PL_o(I)$, the group of piecewise linear orientation preserving homeomorphisms of the unit interval $[0,1]$ with finitely many pieces) from \cite{Bl1,Bl2} and \cite{BBH}.

In Section \ref{ss:sol_alg}, we give a characterization of solvable subgroups $H$ of $F$ in terms of the core $\La(H)$. We define 
a directed graph $\mathcal P(H)$ related to the core $\La(H)$ (Definition \ref{def:PH}) and prove the following. 

\begin{Theorem}\label{Alg_sol_int}
Let $H$ be a finitely generated subgroup of $F$. Then $H$ is solvable if and only if there is no directed cycle in $\mathcal P(H)$.
\end{Theorem}

We give a similar characterization for solvable (not necessarily finitely generated) subgroups of $F$.

Theorem \ref{Alg_sol_int} makes use of the characterization of solvable subgroups of $F$ in terms of their towers (see Definition \ref{def:tower} below) due to Bleak \cite{Bl1}. When $H$ is finitely generated, Theorem \ref{Alg_sol_int} translates to a simple algorithm for deciding if $H$ is solvable. As mentioned above, there is an algorithm for determining the solvability of finitely generated subgroups of $F$ due to Bleak, Brough and Hermiller \cite{BBH}. In fact, the algorithm from \cite{BBH} applies to all  finitely generated computable subgroups of $\PL_o(I)$ (see \cite[Section 4]{BBH}). For finitely generated subgroups $H$ of $F$, the algorithm given by Theorem \ref{Alg_sol_int} is arguably easier than the one in \cite{BBH}. It has polynomial time complexity in the number of cells in the diagrams of the generators of $H$. It is proved in \cite{BBH} that the algorithm terminates on every input, but the proof does not yield any upper bound on the complexity. 

In Section \ref{open}, we list some open problems.

\vskip .4em

{\bfseries Acknowledgments.} The author would like to thank Mark Sapir for many helpful discussions and for comments on the text. The author would also like to thank the anonymous referee for his careful reading of the manuscript and for his helpful  comments and suggestions. 

\section{Preliminaries on $F$}\label{sec:pre}

\subsection{$F$ as a group of homeomorphisms}\label{ss:nf}

Recall that $F$ consists of all piecewise-linear increasing self-homeomorphisms of the unit interval with slopes of all linear pieces powers of $2$ and all break points of the derivative finite dyadic fractions. The group $F$ is generated by two functions $x_0$ and $x_1$ defined as follows \cite{CFP}.

	\[
   x_0(t) =
  \begin{cases}
   2t &  \hbox{ if }  0\le t\le \frac{1}{4} \\
   t+\frac14       & \hbox{ if } \frac14\le t\le \frac12 \\
   \frac{t}{2}+\frac12       & \hbox{ if } \frac12\le t\le 1
  \end{cases} 	\qquad	
   x_1(t) =
  \begin{cases}
   t &  \hbox{ if } 0\le t\le \frac12 \\
   2t-\frac12       & \hbox{ if } \frac12\le t\le \frac{5}{8} \\
   t+\frac18       & \hbox{ if } \frac{5}{8}\le t\le \frac34 \\
   \frac{t}{2}+\frac12       & \hbox{ if } \frac34\le t\le 1 	
  \end{cases}
\]

The composition in $F$ is from left to right.

Every element of $F$ is completely determined by how it acts on the set of finite dyadic fractions. Every number in $(0,1)$ can be described as $.s$ where $s$ is an infinite word in $\{0,1\}$. For each element $g\in F$ there exists a finite collection of pairs of (finite) words $(u_i,v_i)$ in the alphabet $\{0,1\}$ such that every infinite word in $\{0,1\}$ starts with exactly one of the $u_i$'s. The action of $F$ on a number $.s$ is the following: if $s$ starts with $u_i$, we replace $u_i$ by $v_i$. For example, $x_0$ and $x_1$  are the following functions:

\[
   x_0(t) =
  \begin{cases}
   .0\alpha &  \hbox{ if }  t=.00\alpha \\
    .10\alpha       & \hbox{ if } t=.01\alpha\\
   .11\alpha       & \hbox{ if } t=.1\alpha\
  \end{cases} 	\qquad	
   x_1(t) =
  \begin{cases}
   .0\alpha &  \hbox{ if } t=.0\alpha\\
   .10\alpha  &   \hbox{ if } t=.100\alpha\\
   .110\alpha            &  \hbox{ if } t=.101\alpha\\
   .111\alpha                      & \hbox{ if } t=.11\alpha\
  \end{cases}
\]

For the generators $x_0,x_1$ defined above, the group $F$ has the following finite presentation \cite{CFP}.
$$F=\la x_0,x_1\mid [x_0x_1^{-1},x_1^{x_0}]=1,[x_0x_1^{-1},x_1^{x_0^2}]=1\ra,$$ where $a^b$ denotes $b\iv ab$.

\subsection{$F$ as a diagram group} 

\subsubsection{Directed complexes and  diagram groups}\label{ss:dc}

The definition of $F$ we will use most often in this paper is that of $F$ as a diagram group. 
This section closely follows \cite{GSdc}.

\begin{Definition}
\label{dirc}
{\rm
For every directed graph $\Gamma$ let $\PP$ be the set of all (directed)
paths in $\Gamma$, including the empty paths. A {\em directed $2$-complex\/}
is a directed graph $\Gamma$ equipped with a set $\F$ (called the {\em set
of $2$-cells\/}), and three maps $\topp{\cdot}\colon\F\to\PP$,
$\bott{\cdot}\colon\F\to\PP$, and $^{-1}\colon\F\to\F$ called {\em top\/},
{\em bottom\/}, and {\em inverse\/} such that
\begin{itemize}
\item for every $f\in\F$, the paths $\topp(f)$ and $\bott(f)$ are non-empty
and have common initial vertices and common terminal vertices,
\item $^{-1}$ is an involution without fixed points, and
$\topp(f^{-1})=\bott(f)$, $\bott(f^{-1})=\topp(f)$ for every $f\in\F$.
\end{itemize}
}
\end{Definition}
We will usually assume that $\F$ is given with an orientation, that is, a subset
$\F^+\subseteq\F$ of {\em positive\/} $2$-cells, such that $\F$ is the
disjoint union of $\F^+$ and the set $\F^-=(\F^+)^{-1}$ (the latter is called
the set of {\em negative\/} $2$-cells).


If $\kk$ is a directed $2$-complex, then paths on $\kk$ are called
{\em $1$-paths\/} (we are going to have $2$-paths later). The initial and
terminal vertex of a $1$-path $p$ are denoted by $\iota(p)$ and
$\tau(p)$, respectively. For every $2$-cell $f\in\F$, the vertices
$\iota(\topp(f))=\iota(\bott(f))$ and $\tau(\topp(f))=\tau(\bott(f))$ are
denoted $\iota(f)$ and $\tau(f)$, respectively.

We shall denote each cell $f$ by $\topp(f)\to \bott(f)$. And we can denote a directed 2-complex $\kk$ similar to a semigroup presentation
$\la E\mid \topp(f)\to \bott(f), f\in F^{+}\ra$ where $E$ is the set of all edges of  $\kk$ (note that we ignore the vertices of $\kk$).

For example, the directed $2$-complex $\la x\mid x\to x^2\ra$ is the {\em Dunce hat\/}
obtained by identifying all edges in the triangle (Figure \ref{f:1})
according to their directions.  It has one vertex, one edge, and one
positive $2$-cell.

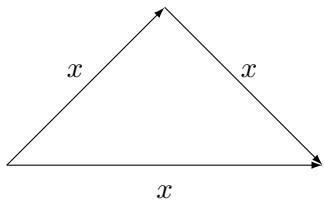
\begin{figure}[ht]
\begin{center}
\unitlength=0.7mm
\special{em:linewidth 0.4pt}
\linethickness{0.4pt}
\begin{picture}(61.00,36.00)
\put(1.00,6.00){\vector(1,1){30.00}}
\put(31.00,36.00){\vector(1,-1){30.00}}
\put(1.00,6.00){\vector(1,0){60.00}}
\put(31.00,1.00){\makebox(0,0)[cc]{$x$}}
\put(14.00,24.00){\makebox(0,0)[cc]{$x$}}
\put(47.00,24.00){\makebox(0,0)[cc]{$x$}}
\end{picture}

\caption{Dunce hat}
\label{f:1}
\end{center}
\end{figure}

With the directed $2$-complex $\kk$, one can associate a category $\Pi(\kk)$
whose objects are directed $1$-paths, and morphisms are {\em $2$-paths\/}, i.e., sequences of replacements of $\topp(f)$ by
$\bott(f)$ in $1$-paths, $f\in\F$. More precisely, an {\em atomic $2$-path\/}
(an {\em elementary homotopy\/}) is a triple $(p,f,q)$, where $p$, $q$ are
$1$-paths in $\kk$, and $f\in\F$ such that $\tau(p)=\iota(f)$,
$\tau(f)=\iota(q)$. If $\delta$ is the atomic $2$-path $(p,f,q)$, then
$p\topp(f)q$ is denoted by $\topp(\delta)$, and $p\bott(f)q$ is denoted by
$\bott(\delta)$; these are called the {\em top\/} and the {\em bottom\/}
$1$-paths of the atomic $2$-path. Every {\em nontrivial\/} $2$-path $\delta$
on $\kk$ is a sequence of atomic $2$-paths $\delta_1$, \dots, $\delta_n$, where
$\bott(\delta_i)=\topp(\delta_{i+1})$ for every $1\le i<n$. In this case $n$
is called the {\em length\/} of the $2$-path $\delta$. The {\em top\/} and
the {\em bottom\/} $1$-paths of $\delta$, denoted by $\topp(\delta)$ and
$\bott(\delta)$, are $\topp(\delta_1)$ and $\bott(\delta_n)$, respectively.
 Every $1$-path $p$ is considered as a trivial
$2$-path with $\topp(p)=\bott(p)=p$. We shall use any expression of the form $(u,\varepsilon,v)$ where $p= uv$ (and $u$ or $v$ are possiby empty $1$-paths) to denote it. 
These are the identity morphisms in the
category $\Pi(\kk)$. The composition of $2$-paths $\delta$ and $\delta'$ is
called {\em concatenation\/} and is denoted $\delta\circ\delta'$.

With every atomic $2$-path $\delta=(p,f,q)$, where $\topp(f)=u$, $\bott(f)=v$
we associate the labeled planar graph $\Delta$ in Figure \ref{f:2}. An arc
labeled by a word $w$ is subdivided into $|w|$ edges. All edges are
oriented from left to right. The label of each oriented edge of the
graph is a symbol from the alphabet $E$, the set of edges in $\kk$. As a
planar graph, it has only one bounded face; we label it by the corresponding
cell $f$ of $\kk$. This planar graph $\Delta$ is called the \emph{diagram of $\delta$}.
Such diagrams are called {\em atomic\/}. The leftmost and rightmost vertices
of $\Delta$ are denoted by $\iota(\Delta)$ and $\tau(\Delta)$, respectively. Directed paths in the diagram $\Delta$ are called $1$-paths. The diagram $\Delta$ has two distinguished $1$-paths from $\iota(\Delta)$ to
$\tau(\Delta)$ that correspond to the top and bottom $1$-paths of $\Delta$. Their
labels are $puq$ and $pvq$ respectively and they are denoted by $\topp(\Delta)$ and $\bott(\Delta)$.

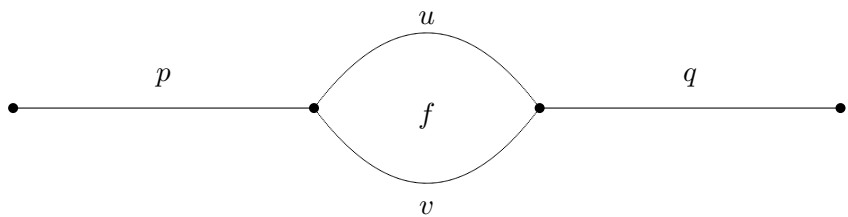
\begin{figure}[ht]
\begin{center}
\unitlength=1.00mm
\special{em:linewidth 0.4pt}
\linethickness{0.4pt}
\begin{picture}(110.66,26.33)
\put(0.00,13.33){\circle*{1.33}}
\put(40.00,13.33){\circle*{1.33}}
\put(70.00,13.33){\circle*{1.33}}
\put(110.00,13.33){\circle*{1.33}}
\bezier{200}(40.00,13.33)(55.00,33.33)(70.00,13.33)
\bezier{200}(40.00,13.33)(55.00,-6.67)(70.00,13.33)
\put(20.00,17.33){\makebox(0,0)[cc]{$p$}}
\put(55.00,25.33){\makebox(0,0)[cc]{$u$}}
\put(90.00,17.33){\makebox(0,0)[cc]{$q$}}
\put(55.00,12.33){\makebox(0,0)[cc]{$f$}}
\put(55.00,0.00){\makebox(0,0)[cc]{$v$}}
\put(0.33,13.33){\line(1,0){39.67}}
\put(70.00,13.33){\line(1,0){40.00}}
\end{picture}
\vspace{1ex}

\nopagebreak[4] 
\end{center}
\caption{An atomic diagram}
\label{f:2}
\end{figure}

The diagram corresponding to the trivial $2$-path $p$ is just an arc labeled
by $p$; it is called a {\em trivial diagram\/} and it is denoted by $\varepsilon(p)$.

Let $\delta=\delta_1\circ\delta_2\circ\cdots\circ\delta_n$ be a $2$-path
in $\kk$, where $\delta_1$, \dots, $\delta_n$ are atomic $2$-paths. Let
$\Delta_i$ be the atomic diagram corresponding to $\delta_i$. Then the
bottom $1$-path of $\Delta_i$ has the same label as the top $1$-path of $\Delta_{i+1}$
($1\le i<n$). Hence we can identify the bottom $1$-path of $\Delta_i$ with the
top $1$-path of $\Delta_{i+1}$ for all $1\le i<n$, and obtain a planar graph
$\Delta$, which is called the {\em diagram of the $2$-path\/} $\delta$.
If the top $1$-path of $\Delta$ is labeled $u$ and the bottom $1$-path is labeled $v$, then we say that $\Delta$ is a $(u,v)$-diagram. Similarly, a cell $\pi$ in a diagram $\Delta$ is a $(u,v)$-cell if it is labeled by a $2$-cell $f$ of $\kk$ with $\topp(f)=u$ and $\bott(f)=v$.
If $\Delta$ is a diagram of some $2$-path $\delta$ in $\kk$, we say that $\Delta$ is a diagram over $\kk$.

Two diagrams $\Delta_1,\Delta_2$ are considered {\em equal\/} if they are isotopic as planar
graphs. In that case, we write $\Delta_1\equiv \Delta_2$.  
The isotopy must take vertices to vertices, edges to edges, it must
also preserve labels of edges and cells. Two $2$-paths are
called {\em isotopic\/} if the corresponding diagrams are equal.

Concatenation of $2$-paths corresponds to concatenation of diagrams:
if the bottom $1$-path of $\Delta_1$ and the top $1$-path of $\Delta_2$ have the
same labels, we can identify them and obtain a new diagram
$\Delta_1\circ\Delta_2$.

Note that for any atomic $2$-path $\delta=(p,f,q)$ in $\kk$ one can naturally
define its {\em inverse\/} $2$-path $\delta^{-1}=(p,f^{-1},q)$. The inverses of
all $2$-paths and diagrams are defined naturally. The inverse diagram $\Delta^{-1}$
of $\Delta$ is obtained by taking the mirror image of $\Delta$ with respect to a
horizontal line, and replacing labels of cells by their inverses.

Let us identify in the category $\Pi(\kk)$ all isotopic $2$-paths and also identify
each $2$-path of the form
$\delta'\circ\delta\circ\delta^{-1}\circ\delta''$ with $\delta'\circ\delta''$ (in particular, since a $2$-path of the form
$\delta\circ\delta^{-1}$ is isotopic to $\delta_p\circ\delta\circ\delta^{-1}\circ\delta_p$ , where 
$\delta_p=(p,\varepsilon,\varepsilon)$ is the trivial $2$-path corresponding to the $1$-path $p=\topp(\delta)$, the $2$-path $\delta\circ\delta^{-1}$ is 
identified with the trivial $2$-path $\delta_p=(p,\varepsilon,\varepsilon)$).
The quotient category is obviously a groupoid (i.\,e. a category with invertible
morphisms). It is denoted by $\dd(\kk)$ and is called the {\em diagram groupoid\/}
of $\kk$. Two $2$-paths are called {\em homotopic\/} if they correspond to the same
morphism in $\dd(\kk)$. For each $1$-path $p$ of $\kk$, the local group of $\dd(\kk)$
at $p$ (i.e., the group of homotopy classes of $2$-paths connecting $p$ with
itself) is called the {\em diagram group of the directed $2$-complex $\kk$ with base\/}
$p$ and is denoted by $\DG(\kk,p)$.

The following theorem is proved in \cite{GuSa97} (see also \cite{GSdc}).

\begin{Theorem} If $\kk$ is the Dunce hat (see Figure \ref{f:1}) and $x$ is the edge of it,
then $\DG(\kk,x)$ is isomorphic to  R. Thompson's group $F$. The generators $x_0, x_1$ of $F$ are depicted in Figure \ref{f:xx} (all edges in the diagrams are labeled by $x$ and oriented from left to right).
\end{Theorem}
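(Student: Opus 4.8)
The plan is to set up the standard dictionary between $(x,x)$-diagrams over the Dunce hat and tree-pair representatives of elements of $F$, and then to verify that this dictionary is a group isomorphism carrying the specified diagrams to $x_0,x_1$. First I would unwind the structure of diagrams over $\kk=\la x\mid x\to x^2\ra$. Since $\kk$ has a single vertex and a single edge $x$, every positive $1$-path is a power $x^n$, the unique positive $2$-cell is the ``caret'' $x\to x^2$, and an element of $\DG(\kk,x)$ is represented by a planar diagram whose top and bottom paths are both the one edge $x$.

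Next I would pass to normal form. Recall that in $\dd(\kk)$ one identifies each $2$-path of the form $\delta'\delta\delta^{-1}\delta''$ with $\delta'\delta''$; on diagrams this is the cancellation of a \emph{dipole}, a positive cell immediately followed by the matching negative cell. Invoking the reduction theory for diagrams over directed $2$-complexes (see \cite{GuSa97, GSdc}), every $(x,x)$-diagram is homotopic to a unique reduced (dipole-free) diagram. The key structural claim is then that a reduced $(x,x)$-diagram splits as $\D_+\circ\D_-$, where $\D_+$ is built only from positive cells and $\D_-$ only from negative cells. Reading $\D_+$ from the top edge downward presents a finite rooted binary tree $T_1$, each positive cell being an internal caret, while $\D_-$ presents an upside-down binary tree $T_2$, the two sharing their common middle path $x^n$ (the leaves). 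Thus reduced diagrams correspond bijectively to pairs $(T_1,T_2)$ of binary trees with equally many leaves.

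Then I would connect tree pairs to homeomorphisms. A binary tree $T$ with $n$ leaves determines the dyadic subdivision of $[0,1]$ obtained by halving $[a,b]$ at each caret, and a pair $(T_1,T_2)$ defines the unique element of $F$ mapping the $i$-th subinterval of the $T_1$-subdivision affinely onto the $i$-th subinterval of the $T_2$-subdivision; every element of $F$ arises this way. Here I would use the standard fact that two tree pairs define the same element of $F$ exactly when they differ by simultaneous expansions (adjoining a matching caret at a common leaf), and observe that a simultaneous expansion is precisely the insertion of a dipole into $\D_+\circ\D_-$. Hence the assignment $[\D]\mapsto f_{\D}$ is well defined and bijective. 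For the homomorphism property I would check that concatenation of diagrams matches composition of homeomorphisms: to multiply, one brings the bottom (range) tree of the first diagram and the top (domain) tree of the second to a common tree by simultaneous expansions, namely exactly the dipoles cancelled upon concatenation and reduction, and composing affine maps across the common subdivision multiplies the slope data as required. The generators are then identified by matching their word-rewriting descriptions $.00\alpha\mapsto .0\alpha$, etc., given above with the corresponding small tree pairs, yielding the diagrams in Figure \ref{f:xx}.

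I expect the main obstacle to be the normal-form step: establishing that every diagram reduces uniquely to a dipole-free diagram, that such a diagram necessarily has all positive cells above all negative cells (so that it genuinely is a tree pair), and that homotopy of diagrams corresponds exactly to simultaneous tree reduction. Once this dictionary is in place, both the bijection and the homomorphism property follow from the standard combinatorics of tree-pair multiplication in $F$.
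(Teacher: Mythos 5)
Your outline is correct and follows essentially the same route as the proof the paper cites from Guba--Sapir \cite{GuSa97}: the reduction to dipole-free diagrams, the splitting $\Delta\equiv\Delta^+\circ\Delta^-$ into a tree-diagram and an inverted tree-diagram, and the identification of tree pairs modulo simultaneous expansion with elements of $F$ acting on dyadic subdivisions. This is precisely the dictionary the paper itself sets up in Sections 2.2.2 and 2.3 (normal forms, branches, and pairs of branches $u_i\rightarrow v_i$ acting by prefix replacement), so there is nothing further to add.
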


\begin{figure}[h!]
\begin{center}
\unitlength 1mm 
\linethickness{0.4pt}
\ifx\plotpoint\undefined\newsavebox{\plotpoint}\fi 
\begin{picture}(94.5,37)(0,0)
\put(39.5,24){\circle*{2}}
\put(29.5,24){\circle*{2}}
\put(19.5,24){\circle*{2}}
\put(9.5,24){\circle*{2}}
\put(93.5,24){\circle*{2}}
\put(83.5,24){\circle*{2}}
\put(73.5,24){\circle*{2}}
\put(63.5,24){\circle*{2}}
\put(53.5,24){\circle*{2}}
\put(39.5,24){\line(-1,0){10}}
\put(29.5,24){\line(-1,0){10}}
\put(19.5,24){\line(-1,0){10}}
\put(93.5,24){\line(-1,0){10}}
\put(83.5,24){\line(-1,0){10}}
\put(73.5,24){\line(-1,0){10}}
\put(63.5,24){\line(-1,0){10}}
\bezier{120}(29.5,24)(19.5,35)(9.5,24)
\bezier{120}(39.5,24)(30.5,13)(19.5,24)
\bezier{256}(39.5,24)(23.5,52)(9.5,24)
\bezier{256}(39.5,24)(28.5,-4)(9.5,24)
\bezier{132}(83.5,24)(75.5,37)(63.5,24)
\bezier{108}(93.5,24)(82.5,15)(73.5,24)
\bezier{208}(93.5,24)(76.5,45)(63.5,24)
\bezier{176}(93.5,24)(81.5,8)(63.5,24)
\bezier{296}(93.5,24)(79.5,55)(53.5,24)
\bezier{296}(93.5,24)(84.5,-6)(53.5,24)
\put(24.5,2){\makebox(0,0)[]{$x_0$}}
\put(73.5,2){\makebox(0,0)[]{$x_1$}}
\end{picture}
\end{center}
\caption{Generators of  R. Thompson's group $F$}
\label{f:xx}
\end{figure}
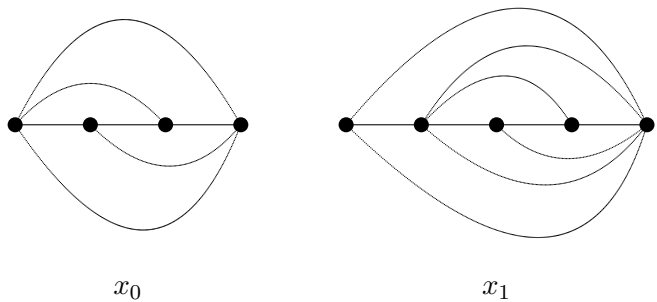

Diagrams $\Delta_1,\Delta_2$ over $\kk$ corresponding to homotopic $2$-paths are called
{\em equivalent\/} (denoted by $\Delta_1=\Delta_2$). The equivalence relation on the set of diagrams
(and the homotopy relation on the set of $2$-paths of $\kk$) can be
defined very easily as follows. We say that two cells $\pi_1$ and
$\pi_2$ in a diagram $\Delta$ over $\kk$ form a {\em dipole\/} if
$\bott(\pi_1)$ coincides with $\topp(\pi_2)$ and the labels of the
cells $\pi_1$ and $\pi_2$ are mutually inverse. Clearly, if $\pi_1$
and $\pi_2$ form a dipole, then one can remove the two cells from the
diagram and identify $\topp(\pi_1)$ with $\bott(\pi_2)$. The result
will be some diagram $\Delta'$. As in \cite{GuSa97}, it is easy to
prove that if $\delta$ is a $2$-path corresponding to $\Delta$, then
the diagram $\Delta'$ corresponds to a $2$-path $\delta'$, which is
homotopic to $\delta$. We call a diagram {\em reduced\/} if it does
not contain dipoles. A $2$-path $\delta$ in $\kk$ is called {\em reduced\/}
if the corresponding diagram is reduced.

Thus one can define morphisms in the diagram groupoid $\dd(\kk)$ as reduced
diagrams over $\kk$ with operation ``con\-cat\-enat\-ion + reduc\-tion''
(that is, the product of two reduced diagrams $\Delta$ and $\Delta'$ is the
result of removing all dipoles from $\Delta\circ\Delta'$ step by step; that process is confluent and terminating, so the result is uniquely determined \cite[Lemma 3.10]{GuSa97}). 
Thus, for each $1$-path $u$, the diagram group $\DG(\kk,u)$, is composed of all reduced $(u,u)$-diagrams over $\kk$. 
We will often consider a non-reduced diagram as an element of $\DG(\kk,u)$ identified with the reduced diagram equivalent to it.

One can naturally define a partial addition operation in the diagram groupoid $\dd(\kk)$. Let $\Delta_1$ and $\Delta_2$ be diagrams over $\kk$ with $\topp(\Delta_1)$ labeled $u$ and $\topp(\Delta_2)$ labeled $v$. If the $1$-paths $u$ and $v$ in $\kk$ satisfy $\tau(u)=\iota(v)$, then one can identify $\tau(\Delta_1)$ and $\iota(\Delta_2)$ to get a new diagram over $\kk$, $\Delta_1+\Delta_2$. Note that if $\kk$ has only one vertex, then the addition operation in $\dd(\kk)$ is everywhere defined.

\begin{Remark}[\cite{GSdc}]\label{one_vertex}
Let $\kk$ be a directed $2$-complex and let $\kk'$ be a directed $2$-complex resulting from $\kk$ by identification of vertices in $\kk$. Let $u$ be a $1$-path in $\kk$. In particular, $u$ is also a $1$-path in $\kk'$. Then the diagram groups $\DG(\kk,u)$ and $\DG(\kk',u)$ coincide. 
\end{Remark}

Remark \ref{one_vertex} is the reason we usually ignore vertices in the description of a directed $2$-complex $\kk$. We will follow this tradition in this paper. On occasion however, the vertices of $\kk$ will be important to us. On those occasions we will be careful to distinguish between different vertices of $\kk$. 




%

\subsubsection{A normal form of elements of $F$}

Let $x_0, x_1$ be the standard generators of $F$. Recall that $x_{i+1}, i\ge 1$,  denotes
$x_0^{-i}x_1x_0^i$.
In these generators, the group $F$ has the following presentation
$\la x_i, i\ge 0\mid x_i^{x_j}=x_{i+1} \hbox{ for every}$ $j<i\ra$ \cite{CFP}.

There exists a clear connection between representation of elements of $F$ by
diagrams and the normal form of elements in $F$. Recall~\cite{CFP} that every
element in $F$ is uniquely representable in the following form:
\begin{equation}\label{NormForm}
x_{i_1}^{s_1}\ldots x_{i_m}^{s_m}x_{j_n}^{-t_n}\ldots x_{j_1}^{-t_1},
\end{equation}
where $i_1\le\cdots\le i_m\ne j_n\ge\cdots\ge j_1$;
$s_1,\ldots,s_m,t_1,\ldots t_n\ge 1$, and if $x_i$ and $x_i\iv$ occur in
(\ref{NormForm}) for some $i\ge0$ then either $x_{i+1}$ or $x_{i+1}\iv$
also occurs in~(\ref{NormForm}).
This form is called the {\em normal form} of elements in $F$.

Let $\kk$ be the Dunce hat and let $x$ be the edge of $\kk$. Every cell in a diagram $\Delta$ over $\kk$ is either an $(x,x^2)$-cell or an $(x^2,x)$-cell. A dipole $\pi_1\circ\pi_2$ in $\Delta$ is a dipole of \emph{type $1$} if $\pi_1$ is an $(x,x^2)$-cell and $\pi_2$ is an $(x^2,x)$-cell. Otherwise, the dipole is of \emph{type 2}. 
 It was noticed in \cite{GS1} that if $\Delta$ is a diagram over $\kk$ with no dipoles of type $2$, then there is a (unique) \emph{horizontal $1$-path} in $\Delta$; i.e., a $1$-path from $\iota(\Delta)$ to $\tau(\Delta)$ which passes through all the vertices in the diagram. The horizontal $1$-path of $\Delta$ separates it into two parts,  
 {\em positive} and {\em negative}, denoted by $\Delta^+$ and $\Delta^-$
respectively. So $\Delta\equiv\Delta^+\circ\Delta^-$. It is easy to prove by
induction on the number of cells that all cells in $\Delta^+$ are
$(x,x^2)$-cells and all cells in $\Delta^-$ are $(x^2,x)$-cells.

Let us show how given a reduced diagram $\Delta$ in $\DG(\kk,x)$ one can
get the normal form of the element of $F$ represented by this diagram. This is the left-right dual of the procedure described in \cite[Example 2]{GS1} and after Theorem 5.6.41 in \cite{Sa}.

\begin{lemma}\label{lm1}
Let us number the cells of $\Delta^+$ by
numbers from $1$ to $k$ by taking every time the ``leftmost'' cell,
that is, the cell which is to the left of any other cell attached to the
bottom $1$-path of the diagram formed by the previous cells. The first cell is
attached to the top $1$-path of $\Delta^+$ (which is the top $1$-path of $\Delta$). The $i^{th}$ cell in
this sequence of cells corresponds to an atomic diagram,
which has the form $(x^{\ell_i},x\to x^2,x^{r_i})$, where $\ell_i$ ($r_i$)
is the length of the $1$-path from the initial (resp. terminal) vertex of the
diagram (resp. the cell) to the initial (resp. terminal) vertex of the cell
(resp. the diagram), such that the $1$-path is contained in the bottom $1$-path of the diagram formed by the first $i-1$ cells.
If $r_i=0$ then we label this cell by 1. If
$r_i\ne0$ then we label this cell by the element $x_{\ell_i}$ of $F$.
Multiplying the labels of all cells, we get the ``positive'' part of the
normal form. In order to find the ``negative'' part of the normal form, consider
$(\Delta^-)\iv$, number its cells as above and label them as above. The normal form of $\Delta$ is then the product of the normal form of $\Delta^+$ and the inverse of the normal form of $(\Delta^-)\iv$.
\end{lemma}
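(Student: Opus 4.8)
The plan is to work in the diagram groupoid $\dd(\kk)$ and to turn the groupoid factorization $\Delta\equiv\Delta^+\circ\Delta^-$ into a genuine product in $F=\DG(\kk,x)$ by inserting right combs. For $m\ge 1$ let $R_m$ be the reduced positive $(x,x^m)$-diagram whose underlying tree is the right comb with $m$ leaves, so that $R_1=\varepsilon(x)$ and $R_{m+1}$ is obtained from $R_m$ by splitting its rightmost bottom edge. Since $\Delta$ is reduced it has no dipoles, so the decomposition $\Delta\equiv\Delta^+\circ\Delta^-$ is available; if $n$ denotes the length of the horizontal $1$-path, then $\Delta^+$ is an $(x,x^n)$-diagram, $\Delta^-$ is an $(x^n,x)$-diagram, and $(\Delta^-)^{-1}$ is again a positive $(x,x^n)$-diagram. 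Everything follows once I understand the positive diagrams $\Delta^+$ and $(\Delta^-)^{-1}$.

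The key local computation is the following. Numbering the cells of $\Delta^+$ as in the statement realizes $\Delta^+$ as a concatenation $A_1\circ\cdots\circ A_k$ of atomic diagrams, where $A_i=(x^{\ell_i},x\to x^2,x^{r_i})$ is an $(x^{i},x^{i+1})$-diagram with $\ell_i+r_i+1=i$. I claim that
\[
g_i:=R_{i}\circ A_i\circ R_{i+1}^{-1}\in F
\]
is exactly the label of the $i$-th cell, that is $g_i=x_{\ell_i}$ when $r_i\ne 0$ and $g_i=1$ when $r_i=0$. This is a finite check: $g_i$ is the element whose top tree is $R_i$ with its leaf number $\ell_i$ subdivided and whose bottom tree is $R_{i+1}$. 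If $r_i=0$ the subdivided leaf is the last one, both trees equal $R_{i+1}$, and $g_i=1$. If $r_i\ge 1$, then passing from $r_i$ to $r_i+1$ adds one extra right caret to the last leaf of both the top and the bottom tree, which is a dipole and does not change the element; so $g_i$ is independent of $r_i$, and taking $r_i=1$ one recognizes the standard tree pair of $x_{\ell_i}$ (for $\ell_i=0,1$ this is read off from Figure \ref{f:xx}, and the general case follows by induction on $\ell_i$ from the relation $x_{\ell_i}=x_0^{-1}x_{\ell_i-1}x_0$, which on tree pairs simply prepends a right caret).

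Granting this, the factorization assembles by telescoping the inserted right combs: since $A_i$ has bottom $x^{i+1}$ equal to the top of $A_{i+1}$, inserting $R_{i+1}^{-1}\circ R_{i+1}$ between consecutive factors gives, in $F$,
\[
\Delta^+\circ R_n^{-1}=\big(R_1 A_1 R_2^{-1}\big)\big(R_2 A_2 R_3^{-1}\big)\cdots\big(R_k A_k R_{k+1}^{-1}\big)=g_1\cdots g_k=P,
\]
the product $P$ of the labels of $\Delta^+$. The same computation applied to $(\Delta^-)^{-1}$ gives $(\Delta^-)^{-1}\circ R_n^{-1}=N$, whence $R_n\circ\Delta^-=N^{-1}$. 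Inserting $R_n^{-1}\circ R_n$ (the identity at $x^n$) between $\Delta^+$ and $\Delta^-$ then yields
\[
\Delta\equiv\Delta^+\circ\Delta^-=\big(\Delta^+\circ R_n^{-1}\big)\circ\big(R_n\circ\Delta^-\big)=P\,N^{-1}
\]
in $F$, which is the asserted expression.

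It remains to see that $P\,N^{-1}$ is literally the normal form \eqref{NormForm}. In the leftmost-cell numbering the index $\ell_i$ is non-decreasing — once a position is passed, nothing is ever inserted to its left — so the subscripts of $P$ and of $N$ are non-decreasing; writing $P=x_{i_1}^{s_1}\cdots x_{i_m}^{s_m}$ and $N^{-1}=x_{j_n}^{-t_n}\cdots x_{j_1}^{-t_1}$ we obtain $i_1\le\cdots\le i_m$ and $j_n\ge\cdots\ge j_1$, as required. The remaining conditions of \eqref{NormForm} (namely $i_m\ne j_n$ and the ``consecutive'' requirement on indices occurring with both signs) are where reducedness of $\Delta$ must be used: a violation would force a deepest common caret of the trees of $\Delta^+$ and $(\Delta^-)^{-1}$, i.e. a dipole of type $1$ in $\Delta$, contradicting reducedness. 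I expect this translation of ``reduced diagram'' into ``reduced normal-form word'' to be the main obstacle, since it requires matching dipoles precisely with the failure patterns of \eqref{NormForm}; the cleanest way to discharge it is to invoke the known bijection between reduced diagrams over $\kk$ and normal forms \eqref{NormForm}, under which $\Delta$ and $P\,N^{-1}$ correspond.
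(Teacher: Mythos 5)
You should first note that the paper does not prove this lemma at all: it states the procedure and refers to the left--right dual construction in \cite{GS1} (Example 2) and \cite{Sa} (after Theorem 5.6.41). Your telescoping argument is therefore a genuinely different, self-contained route, and its core is correct. The leftmost-first numbering does yield a valid atomic decomposition $\Delta^+\equiv A_1\circ\cdots\circ A_k$ (each chosen cell is attached to the current bottom path by construction); the local identity $R_i\circ A_i\circ R_{i+1}^{-1}=x_{\ell_i}$ for $r_i\neq 0$ and $=1$ for $r_i=0$ checks out against Figure \ref{f:xx} and the relations $x_{i+1}=x_0^{-i}x_1x_0^{i}$, and your observation that increasing $r_i$ inserts a common last caret (a dipole) correctly shows independence of $r_i$; inserting $R_n^{-1}\circ R_n$ between $\Delta^+$ and $\Delta^-$ is legitimate in $\dd(\kk)$; and the monotonicity of the $\ell_i$ follows exactly as you say, since a cell attached to the left of a previously chosen cell would have been chosen earlier. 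This already proves the part of the lemma the paper actually uses, namely that $P\,N^{-1}$ represents the element of $F$ given by $\Delta$, with non-decreasing positive subscripts and non-increasing negative ones.

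The genuine gap is the final claim that $P\,N^{-1}$ \emph{is} the normal form, i.e.\ that it also satisfies $i_m\neq j_n$ and the condition that whenever $x_i$ and $x_i^{-1}$ both occur, $x_{i+1}^{\pm 1}$ also occurs. Your monotonicity argument does not address these, and the fallback you propose --- invoking ``the known bijection between reduced diagrams over $\kk$ and normal forms, under which $\Delta$ and $P\,N^{-1}$ correspond'' --- is circular, since that bijection is precisely the content of the lemma. To close the gap you must either (a) carry out the dipole argument you sketch in one sentence: show that a failure of either condition forces a positive cell of $\Delta^+$ and a negative cell of $\Delta^-$ whose bottom, respectively top, paths coincide on the horizontal $1$-path of $\Delta$, i.e.\ a dipole of type $1$, contradicting reducedness (a finite but genuinely fiddly case analysis on where $x_{i_m}$ and $x_{j_n}$ sit in the two trees); or (b) cite \cite{CFP} or \cite{Sa} for the fact that a seminormal word violating these conditions can be shortened, and combine this with uniqueness of the reduced diagram representing a given element. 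As written, your proposal establishes the representation claim rigorously but leaves the normal-form claim resting on an appeal that, taken literally, assumes the conclusion.
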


For example, applying the procedure from Lemma \ref{lm1} to the diagram on Figure \ref{f3}
\begin{figure}[h!]
\begin{center} 
\unitlength=0.7mm
\special{em:linewidth 0.4pt}
\linethickness{0.4pt}
\begin{picture}(83.00,90.00)
\put(2.00,43.00){\circle*{2.00}}
\put(12.00,43.00){\circle*{2.00}}
\put(22.00,43.00){\circle*{2.00}}
\put(32.00,43.00){\circle*{2.00}}
\put(42.00,43.00){\circle*{2.00}}
\put(52.00,43.00){\circle*{2.00}}
\put(62.00,43.00){\circle*{2.00}}
\put(72.00,43.00){\circle*{2.00}}
\put(82.00,43.00){\circle*{2.00}}
\put(2.00,43.00){\line(1,0){10.00}}
\put(12.00,43.00){\line(1,0){10.00}}
\put(22.00,43.00){\line(1,0){10.00}}
\put(32.00,43.00){\line(1,0){10.00}}
\put(42.00,43.00){\line(1,0){10.00}}
\put(52.00,43.00){\line(1,0){10.00}}
\put(62.00,43.00){\line(1,0){10.00}}
\put(72.00,43.00){\line(1,0){10.00}}
\bezier{132}(62.00,43.00)(72.00,56.00)(82.00,43.00)
\bezier{132}(62.00,43.00)(52.00,56.00)(42.00,43.00)
\bezier{132}(12.00,43.00)(22.00,56.00)(32.00,43.00)
\bezier{212}(12.00,43.00)(26.00,65.00)(42.00,43.00)
\bezier{392}(12.00,43.00)(33.00,85.00)(62.00,43.00)
\bezier{516}(2.00,43.00)(33.00,100.00)(62.00,43.00)
\bezier{676}(2.00,43.00)(34.00,117.00)(82.00,43.00)
\bezier{132}(22.00,43.00)(31.00,30.00)(42.00,43.00)
\bezier{212}(22.00,43.00)(30.00,22.00)(52.00,43.00)
\bezier{120}(52.00,43.00)(63.00,32.00)(72.00,43.00)
\bezier{304}(12.00,43.00)(29.00,11.00)(52.00,43.00)
\bezier{412}(2.00,43.00)(30.00,-2.00)(52.00,43.00)
\bezier{556}(2.00,43.00)(35.00,-17.00)(72.00,43.00)
\bezier{740}(2.00,43.00)(34.00,-40.00)(82.00,43.00)
\put(57.00,64.00){\makebox(0,0)[cc]{1}}
\put(72.00,46.00){\makebox(0,0)[cc]{7}}
\put(16.00,56.00){\makebox(0,0)[cc]{2}}
\put(40.00,55.00){\makebox(0,0)[cc]{3}}
\put(52.00,46.00){\makebox(0,0)[cc]{6}}
\put(33.00,48.00){\makebox(0,0)[cc]{4}}
\put(22.00,46.00){\makebox(0,0)[cc]{5}}
\put(38.00,6.00){\makebox(0,0)[cc]{1}}
\put(48.00,23.00){\makebox(0,0)[cc]{2}}
\put(62.00,40.00){\makebox(0,0)[cc]{7}}
\put(17.00,30.00){\makebox(0,0)[cc]{3}}
\put(22.00,35.00){\makebox(0,0)[cc]{4}}
\put(41.00,38.00){\makebox(0,0)[cc]{5}}
\put(32.00,39.00){\makebox(0,0)[cc]{6}}
\end{picture}
\end{center}
\caption{Reading the normal form of an element of $F$ off its diagram.}
\label{f3}
\end{figure}
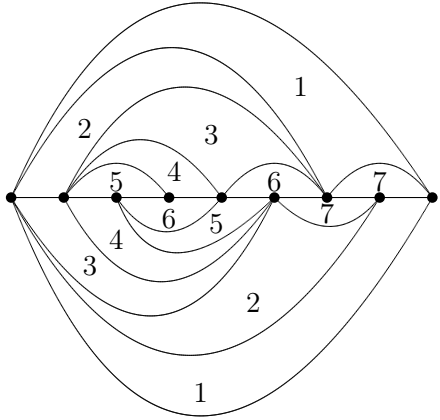
\noindent we get the normal form
$x_0x_1^3x_4(x_0^2x_1x_2^2x_5)\iv$.

\subsection{The relation between $F$ as a diagram group and $F$ as a group of homeomorphisms}\label{sec:bra}

One can define an isomorphism from $F$ viewed as a diagram group to $F$ as a group of homeomorphisms of $[0,1]$. To do so, we define pairs of branches of a diagram $\Delta$. 

Let $\Delta$ be a diagram in $F$, i.e., a diagram in $\DG(\kk,x)$, where $\kk$ is the Dunce hat. 
All diagrams in $F$ considered in this paper will be assumed to have no dipoles of type $2$. Thus, we will not mention it when considering a diagram $\Delta$ in $F$. Since $\Delta$ has no dipoles of type $2$, $\Delta\equiv\Delta^+\circ\Delta^-$.

A \emph{tree-diagram} is an $(x,x^n)$-diagram $\Psi$ over $\kk$ for some $n\in\mathbb{N}$ where all cells are $(x,x^2)$-cells. 
We note that if $\Delta$ is a diagram in $F$ with no dipoles of type $2$ then $\Delta^+$ is a tree-diagram and so is $(\Delta^-)^{-1}$.

Given a tree-diagram $\Psi$, one can put a vertex at the middle of each edge of $\Psi$ and inside every cell $\pi$ of $\Psi$ draw two edges; from the vertex on $\topp(\pi)$ to the vertex on the left bottom edge of $\pi$ and  from the vertex on $\topp(\pi)$ to the vertex on the right bottom edge of $\pi$. 
The result is a finite binary tree with $n$ leaves, where $n$ is the number of edges in $\bott(\Psi)$. 
A \emph{path} on a rooted  binary tree is a directed simple path starting from the root. A \emph{branch} in a binary tree is a maximal path. That gives rise to the following definition of paths on tree-diagrams. Note the difference from $1$-paths in diagrams defined above.

\begin{Definition}\label{def:branch}
Let $\Psi$ be a  tree-diagram over $\kk$. A \emph{path} on $\Psi$ is a sequence of edges $e_1,\dots,e_m$ such that 
\begin{enumerate}
\item[(1)] $e_1=\topp(\Psi)$.
\item[(2)] For each $i=1,\dots,m-1$, there is an $(x,x^2)$-cell $\pi_i$ in $\Psi$  with $\topp(\pi_i)=e_i$ and such that $e_{i+1}$ is either the left or the right bottom edge of $\pi_i$.
\end{enumerate}
A maximal path on $\Psi$ is called a \emph{branch} of $\Psi$. If $p=e_1,\dots,e_m$ is a path on $\Psi$, then the \emph{label} $\lab(p)$ of $p$ is defined to be a binary word $u$ of length $m-1$, $u\equiv u_1\cdots u_{m-1}$\footnote{Throughout this paper, for words $u$ and $v$, $u\equiv v$ denotes letter-by-letter equality.}, where for each $i$, the letter $u_i\equiv 0$ if it corresponds to a step to the left (i.e., if $e_{i+1}$ is the left bottom edge of the cell $\pi_i$); $u_i\equiv 1$ if it corresponds to a step to the right.

If $\Psi$ is a tree-diagram, paths on $\Psi^{-1}$ are defined in a similar way. The initial edge of a path on $\Psi^{-1}$ is $\bott(\Psi^{-1})$. Every other edge in the path is one of the edges on the top $1$-path of an $(x^2,x)$-cell in $\Psi^{-1}$ whose bottom edge is the preceding edge in the path. 
 If $\Delta$ is a diagram in $F$ (with no dipoles of type $2$) then paths on $\Delta^+$ are said to be \emph{positive paths} on $\Delta$. Paths on $\Delta^-$ are \emph{negative paths} on $\Delta$. Positive and negative branches of $\Delta$ are defined in a similar way. 
\end{Definition}

A path $p$ on a tree-diagram $\Psi$ is uniquely determined by its label $u$. Thus, we will often consider the path and its label as the same object. If $p$ is a path on $\Psi$ and $\lab(p)\equiv u$, we will denote by $p^+$ or by $u^+$ the last edge in the path. Since for each branch $p$ of $\Psi$ the terminal edge $p^+$ lies on the $1$-path $\bott(\Psi)$, the branches of $\Psi$ are naturally ordered from left to right. If $p$ is a positive (resp. negative) path on a diagram $\Delta$, then the terminal edge $p^+$ is an edge of $\Delta^+$ (resp. $\Delta^-$). If $p$ is a positive or negative branch of $\Delta$, then $p^+$ lies on the horizontal $1$-path of $\Delta$. 

Let $\Psi$ be a tree-diagram over $\kk$. We make the following observation about consecutive branches in $\Psi$.
It will be used often throughout the paper with no specific reference.
\begin{Remark}\label{cons}
Let $u_1$ and $u_2$ be (the labels of) consecutive branches of $\Psi$. Let $u$ be the longest common prefix of $u_1$ and $u_2$ ($u$ can be empty). 
Then $$u_1\equiv u01^{m}\ \mbox{ and }\ u_2\equiv u10^{n}$$
for some $m,n\ge 0$. 
\end{Remark}

Let $\Delta$ be a diagram in $F$. Let $u_1,\dots,u_n$ be the (labels of the) positive branches of $\Delta$ and $v_1,\dots,v_n$ be the (labels of the)
 negative branches of $\Delta$, ordered from left to right. For each $i=1,\dots,n$, we say that $\Delta$ has  a \emph{pair of branches} $u_i\rightarrow v_i$. The function $g$ from $F$ corresponding to this diagram takes binary expansion $.u_i\omega$ to $.v_i\omega$ for every $i$ and every infinite binary word $\omega$. We will also say that the element $g$ takes the branch $u_i$ to the branch $v_i$.



If $e$ is an edge on the horizontal $1$-path of $\Delta$, then one can replace $e$ by a dipole $\pi_1\circ\pi_2$ of type $1$ to get an equivalent diagram $\Delta'$ with no dipoles of type $2$. It is obvious that $\Delta$ and $\Delta'$ represent the same homeomorphism of $[0,1]$. 

A slightly different way of describing the function in $F$ corresponding to a given diagram $\Delta$ is the following. For each finite binary word $u$, we let the \emph{interval associated with $u$}, denoted by $[u]$, be the dyadic interval $[.u,.u1^\N]$. If $\Delta$ is a diagram representing a homeomorphism $f\in F$, we let $u_1,\dots,u_n$ be the positive branches of $\Delta$ and $v_1,\dots,v_n$ be the negative branches of $\Delta$. Then the intervals $[u_1],\dots,[u_n]$ (resp. $[v_1],\dots,[v_n]$) form a subdivision of the interval $[0,1]$. The function $f$ maps each interval $[u_i]$ linearly onto the interval $[v_i]$.

Below, when we say that a function $f$ has a pair of branches $u\rightarrow v$, the meaning is that some diagram representing $f$ has this pair of branches. In other words, this is equivalent to saying that $f$ maps $[u]$ linearly onto $[v]$. In particular, if $f$ takes the branch $u$ to the branch $v$, then for any finite binary word $w$, $f$ takes the branch $uw$ to the branch $vw$, where $uw$ and $vw$ are concatenated words. 
Note also that if $f$ has a pair of branches $u\rightarrow v$ then the reduced diagram $\Delta$ of $f$ has a pair of branches $u_1\rightarrow v_1$ where $u\equiv u_1w$ and $v\equiv v_1w$ for some common (possibly empty) suffix $w$. 

We will often be interested in finite dyadic fractions $\alpha\in \mathcal D$ fixed by a function $f\in F$. 
More generally, if $S\subset(0,1)$, then we say that an element $f\in F$ \emph{fixes} $S$, if it fixes $S$ pointwise. 
The following lemma will be useful. 

\begin{Lemma}\label{4parts}
Let $f\in F$ be an element which fixes some finite dyadic fraction $\alpha\in \mathcal D$. Let $u\equiv u'1$ be the finite binary word such that $\alpha=.u$. Then the following assertions hold.
\begin{enumerate}
\item[(1)] $f$ has a pair of branches $u0^{m_1}\rightarrow u0^{m_2}$ for some $m_1,m_2\ge 0$. 
\item[(2)] $f$ has a pair of branches $u'01^{n_1}\rightarrow u'01^{n_2}$ for some $n_1,n_2\ge 0$.
\item[(3)] If $f'(\alpha^+)=2^k$ for $k\neq 0$, then every diagram representing $f$ has a pair of branches $u0^m\rightarrow u0^{m-k}$ for some $m\ge \max\{0,k\}$. 
\item[(4)] If $f'(\alpha^-)=2^\ell$ for $\ell\neq 0$, then every diagram representing $f$ has a pair of branches $u'01^n\rightarrow u'01^{n-\ell}$ for some $n\ge \max\{0,\ell\}$. 
\end{enumerate}
\end{Lemma}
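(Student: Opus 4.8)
The plan is to work entirely with dyadic intervals, using the dictionary ``$f$ has a pair of branches $u\to v$'' $\iff$ ``$f$ maps $[u]$ linearly onto $[v]$'', together with the arithmetic of the single fixed point $\alpha=.u$. Since $u\equiv u'1$ ends in $1$, the value $\alpha=.u$ has an \emph{odd} numerator over $2^{|u|}$. The first step is to pin down the two families in the statement geometrically. Because $.u0^m=.u=\alpha$, we have $[u0^m]=[\alpha,\alpha+2^{-|u|-m}]$, so these are exactly the dyadic intervals having $\alpha$ as their \emph{left} endpoint, shrinking to $\alpha$ from the right as $m$ grows. Dually, a short computation gives $[u'01^n]=[\alpha-2^{-|u|-n},\alpha]$, so these are exactly the dyadic intervals having $\alpha$ as their \emph{right} endpoint, shrinking to $\alpha$ from the left.

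For parts (1) and (2), which only assert existence, I would use that $f$ is piecewise-linear and fixes $\alpha$, hence is linear on a one-sided neighborhood $[\alpha,\alpha+\varepsilon]$, with slope $2^{k}$ there for some $k\in\Z$ (possibly $0$). For $m$ large enough, $[u0^m]\subseteq[\alpha,\alpha+\varepsilon]$, and since $f$ fixes the left endpoint $\alpha$ and scales lengths by $2^{k}$, it maps $[u0^{m}]$ onto $[\alpha,\alpha+2^{k-|u|-m}]=[u0^{m-k}]$; taking $m\ge\max\{0,k\}$ gives the pair of branches $u0^{m}\to u0^{m-k}$, which is (1). Part (2) is the mirror argument on the left side, using that $f$ fixes the right endpoint $\alpha$ of $[u'01^{n}]$, giving $u'01^{n}\to u'01^{n-\ell}$ for $f'(\alpha^{-})=2^{\ell}$ and $n$ large.

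The genuine content is in (3) and (4), where the conclusion must hold for \emph{every} diagram of $f$; equivalently, $\alpha$ must be a breakpoint of the partition of $[0,1]$ into branch intervals of every diagram, i.e.\ $\alpha$ may never lie in the interior of a branch. This is the step I expect to be the main obstacle, and I would establish it by a $2$-adic valuation argument. Suppose $\alpha$ lay in the interior of a branch $[w]$. Any dyadic interval containing $\alpha$ in its interior has $w$ a prefix of $u'$ (the two binary expansions $u'1\,0^{\N}$ and $u'0\,1^{\N}$ of $\alpha$ first diverge after $u'$), so $|w|\le|u'|<|u|$. On $[w]$ the map $f$ is linear; its slope is then simultaneously $f'(\alpha^{+})$ and $f'(\alpha^{-})$, say $2^{k}$, and $f$ sends $[w]$ onto a branch $[w']$ with $f(.w)=.w'$ and $|w'|=|w|-k$. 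Writing $v_2$ for the $2$-adic valuation, $v_2(\alpha)=-|u|$ while $v_2(.w)\ge-|w|>-|u|$, so $v_2(\alpha-.w)=-|u|$. The relation $f(\alpha)=\alpha$ reads $\alpha-.w'=2^{k}(\alpha-.w)$, whence $v_2(\alpha-.w')=k-|u|$. But $v_2(.w')\ge-|w'|=k-|w|>k-|u|$, so $\alpha-.w'$ is the strictly smaller term in $\alpha=(\alpha-.w')+.w'$, and the ultrametric inequality forces $v_2(\alpha)=k-|u|$, contradicting $v_2(\alpha)=-|u|$ unless $k=0$. Thus if $f'(\alpha^{+})\ne1$ (the hypothesis of (3)) or $f'(\alpha^{-})\ne1$ (that of (4)), $\alpha$ cannot be interior to a branch, so it is a breakpoint in every diagram.

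With this in hand, (3) and (4) are immediate. Given any diagram of $f$, since $\alpha$ is a breakpoint there is a (unique) positive branch whose interval has $\alpha$ as its left endpoint; its label $w$ satisfies $.w=\alpha=.u$, forcing $w\equiv u0^{m}$ for some $m\ge0$. On $[u0^{m}]$ the map $f$ is linear with slope $f'(\alpha^{+})=2^{k}$ and fixes $\alpha$, so it maps $[u0^{m}]$ onto $[u0^{m-k}]$; the image being a genuine branch forces $m-k\ge0$, i.e.\ $m\ge\max\{0,k\}$, which is exactly (3). Part (4) is the mirror statement applied to the positive branch whose interval has $\alpha$ as its \emph{right} endpoint, namely $[u'01^{n}]$, using the slope $f'(\alpha^{-})=2^{\ell}$ to obtain $u'01^{n}\to u'01^{n-\ell}$ with $n\ge\max\{0,\ell\}$.
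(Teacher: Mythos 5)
Your proof is correct, and the key step is done by a genuinely different argument from the paper's. The crucial content of (3)--(4) is, as you identify, that $\alpha$ can never lie in the \emph{interior} of a branch interval of a diagram of $f$ when $f'(\alpha^{\pm})\neq 1$. The paper proves this dynamically: if $\alpha$ were interior to a positive branch $[u_1]$ with corresponding negative branch $[v_1]$, then $f(\alpha)=\alpha$ forces the two intervals to be nested, say $v_1\equiv u_1s$; the linear map $[u_1]\to[v_1]$ has unique fixed point $.u_1s^{\mathbb N}$, which must equal $\alpha$, and since $\alpha$ is finite dyadic this forces $s\equiv 0^r$ or $s\equiv 1^r$, contradicting that $u\equiv u'1$ ends in $1$ while $u_1$ is a proper prefix of $u$. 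You instead read the fixed-point equation $\alpha-.w'=2^{k}(\alpha-.w)$ through the $2$-adic valuation, using that $v_2(\alpha)=-|u|$ is strictly below the valuations of the endpoints $.w$ and $.w'$ (because any branch with $\alpha$ interior has label a prefix of $u'$, hence length $<|u|$); the ultrametric inequality then forces $k=0$. Both arguments are sound; yours is more mechanical, avoids the case split on which of $[u_1]$, $[v_1]$ contains the other, and disposes of (3) and (4) in a single stroke, while the paper's is closer in spirit to the orbital/dynamics viewpoint used elsewhere in the text. Your derivation of (1)--(2) from one-sided linearity at $\alpha$ is essentially the same observation as the paper's "prolong branches by inserting dipoles of type~1," just phrased on the function side rather than the diagram side.
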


\begin{proof}
We only prove parts (1) and (3). The proof of parts (2) and (4) is analogous. 

(1) Let $\Delta$ be a diagram of $f$ such that for some $m_1,m_2\ge 0$, $u0^{m_1}$ is a positive branch and $u0^{m_2}$ is a negative branch of $\Delta$ (such a diagram clearly exists; indeed, one can always prolong branches of $\Delta$ by inserting dipoles of type $1$). Since $f$ fixes $\alpha=.u$, it must take the branch $u0^{m_1}$ to the branch $u0^{m_2}$. 

(3) Let $\Delta$ be a diagram of $f$.  We claim that $u0^{m_1}$ must be a positive branch in $\Delta$ for some $m_1\ge 0$. Otherwise, $\Delta$ has a positive branch $u_1$ where $u_1$ is a proper prefix of $u$. In that case, $\alpha$ belongs to the interior of $[u_1]$. Let $v_1$ be the negative branch of $\Delta$ such that $u_1\rightarrow v_1$ is a pair of branches of $\Delta$. We assume that $|u_1|\le |v_1|$, the argument being similar in the opposite case.  
If $u_1$ is not a prefix of $v_1$ then $[u_1]$ and $[v_1]$ are disjoint in contradiction to $f$ fixing $\alpha$. Thus, $v_1\equiv u_1s$. If $s$ is empty then the function $f$ fixes the interval $[u_1]$, in contradiction to the slope $f'(\alpha^+)\neq 1$. 
Otherwise, $f$ fixes the number $.u_1s^{\mathbb{N}}$. Since $f$ is linear on $[u_1]$, the fixed point $.u_1s^{\mathbb{N}}$ must coincide with $\alpha$. Thus, $.u=.u_1s^{\N}$ which implies that $s\equiv 0^r$ or $s\equiv 1^r$ for some $r\in\mathbb{N}$, as $.u$ is finite dyadic. Then the equality $.u=.u_1s^{\mathbb N}$ gives a contradiction, since $u$ ends with $1$ and $u_1$ is a proper prefix of $u$. 


A similar argument shows that $\Delta$ must have a negative branch of the form $u0^{m_2}$ for some $m_2\ge 0$. 
As in (1) it follows that $\Delta$ has a pair of branches $u0^{m_1}\rightarrow u0^{m_2}$. 
The assumption in (3) and the linearity of $f$ on $[u0^{m_1}]$ imply that the slope on the interval is $2^k$. 
Thus $m_2=m_1-k$, and so $m_1\ge \max\{0,k\}$.
\end{proof}

\subsection{On orbitals and stabilizers in $F$}

We will often consider the action of $F$ on the interval $[0,1]$. Let $\PL_o(I)$ be the group of piecewise-linear orientation-preserving homeomorphisms of $[0,1]$ with finitely many breakpoints. Thompson's group $F$ is clearly a subgroup of $\PL_o(I)$. 

Let $G\le \PL_o(I)$. The \emph{support of $G$ in $[0,1]$}, denoted $\Supp(G)$, is the closure of the set $S$ of all points in $(0,1)$, which are not fixed by $G$. $S$ is a union of countably many open intervals. Each such open interval is called an orbital of $G$. Equivalently, an orbital of $G$ can be defined as the the convex hull of an orbit of a point $x$ in $(0,1)$, under the action of $G$, if $x$ is not fixed by $G$.
If $h\in \PL_o(I)$, then the support of $h$ and the orbitals of $h$ are defined to be the support and orbitals of the group $\la h\ra$. Notice that an element $h\in \PL_o(I)$ has finitely many orbitals and that the orbitals of $h$ coincide with the orbitals of $h^n$ for all $n\neq 0$. 

An interval $(a,b)$ is an orbital of $h$ if and only if $h$ fixes the points $a$ and $b$ and does not fix any number in $(a,b)$. The orbital $(a,b)$ is said to be a \emph{push-up} orbital of $h$, if for all $x\in (a,b)$, $f(x)>x$; equivalently, if the slope $f'(a^+)>1$. Similarly, the orbital $(a,b)$ is said to be a \emph{push-down} orbital of $h$, if for all $x\in (a,b)$, $f(x)<x$; equivalently, if the slope $f'(a^+)<1$. Notice that every orbital of $h$ is either a push-up or a push-down orbital. If $(a,b)$ is a push-up orbital of $h$ then $(a,b)$ is a push-down orbital of $h^{-1}$. 

The following observation will be used below with no specific reference. We say that an element $h$ has support in an interval $J$ if the support of $h$ is contained in $J$.

\begin{Remark}
If $h,g\in \PL_o(I)$ and $(a,b)$ is an orbital of $h$, then $h^g$ has an orbital $(g(a),g(b))$.
If the support of $h$ is contained in $J_1$, then the support of $h^g$ is contained in $g(J_1)$.
If $h$ fixes an interval $J_2$ then $h^g$ fixes the interval $g(J_2)$. 
\end{Remark}


\subsection{The derived subgroup of $F$}\label{der_sub}

The derived subgroup of $F$ is a simple group \cite{CFP}. It can be characterized as the subgroup of $F$ of all functions $f$ with slope $1$ both at $0^+$ and at $1^-$. In other words, it is the subgroup of all functions in $F$ with support in $(0,1)$. 

%

We will often be interested in subgroups of $F$ which are not contained in any finite index subgroup of $F$. 
Since $[F,F]$ is infinite and simple, every finite index subgroup of $F$ contains the derived subgroup of $F$. Thus, for a subgroup $H\le F$ we have $H[F,F]=F$ if and only if $H$ is not contained in any proper subgroup of finite index in $F$. 
To determine if $H[F,F]=F$, one can consider the image of $F$ in the abelianization $\mathbb{Z}^2$ of $F$. The abelianization map $\pi_{ab}\colon F\to \mathbb{Z}^2$ sends an element $f\in F$ to $(\log_2(f'(0^+)),\log_2(f'(1^-)))$ (see, for example, \cite{CFP}). 
Clearly, $H[F,F]=F$ if and only if 
 $\pi_{\ab}(H)=\pi_{\ab}(F)=\mathbb{Z}^2$. 

Let $a<b$ be finite dyadic fractions in $\mathcal D$. We denote by $F_{[a,b]}$ the subgroup of $F$ of all functions with support in $[a,b]$. The group $F_{[a,b]}$ is isomorphic to $F$. Indeed, they are conjugate subgroups of $\PL_2(\mathbb{R})$, the group of all piecewise linear homeomorphisms of $\mathbb{R}$ with finitely many breakpoints, all of which are finite dyadic fractions and where all slopes are integer powers of $2$. This isomorphism implies that the derived subgroup of $F_{[a,b]}$ is the subgroup of all functions with slope $1$ both at $a^+$ and at $b^-$. In other words, it is the subgroup of $F$ of all functions with support in $(a,b)$. 

\section{The Stallings $2$-core of subgroups of diagram groups}\label{sec:cor}

The Stallings $2$-core of a subgroup of a diagram group was defined in 1999 by Guba and Sapir and appeared first in print in \cite{GS1}. The motivation was to develop a method for checking if a subgroup $H$ of $F$ is a strict subgroup of $F$ (equivalently if $\{x_0,x_1\}\not\subseteq H$).  This section follows \cite{GS1} closely. 

Recall the procedure (first discovered by Stallings \cite{Sta}) of checking if an element $g$ of a free group $F_n$ belongs to the subgroup $H$ generated by elements $h_1,...,h_k$. Take paths labeled by $h_1,...,h_k$. Identify the initial and terminal vertices of these paths to obtain a bouquet of circles $K'$ with a distinguished vertex $o$. Then \emph{fold} edges as follows: if there exists a vertex with two outgoing edges of the same label, we identify the edges. As a result of all the foldings (and removing the hanging trees), we obtain the \emph{Stallings core} of the subgroup $H$ which is a finite automaton $A(H)$ with $o$ as its input/output vertex. Then $g\in H$ if and only if $A(H)$ accepts the reduced form of $g$. It is well known that the Stallings core does not depend on the generating set of the subgroup $H$.

In the case of diagram groups, an analogous construction was given in \cite{GS1}. Instead of automata we have directed $2$-complexes and instead of words -- diagrams. 

\begin{Definition} Let $\kk=\la E_{\kk}\mid \F_{\kk}^+\ra$ be a directed 2-complex. A \emph{2-automaton} over $\kk$ is a directed 2-complex $\La=\la E_{\La}\ |\F_{\La}^+\ra$ with two distinguished 1-paths $p_{\La}$ and $q_{\La}$ (the input and output 1-paths), together with a map $\phi$ from $\La$ to $\kk$ which takes vertices to vertices, edges to edges and cells to cells, which is a homomorphism of directed graphs and commutes with the maps $\topp, \bott$ and $\iv$.
We shall call $\phi$ an \emph{immersion}.
\end{Definition}

Given a diagram $\Delta$ over $\kk$, we can naturally view $\Delta$ as a directed $2$-complex, by considering every cell in $\Delta$ to be a pair of inverse cells.  Then $\Delta$ is a 2-automaton with a natural immersion $\phi_\Delta$ and the distinguished $1$-paths $\topp(\Delta)$ and $\bott(\Delta)$.

\begin{Definition} Let $\La, \La'$ be two 2-automata over $\kk$. A map $\psi$ from $\La'$ to $\La$ which takes vertices to vertices, edges to edges and cells to cells, which is a homomorphism of directed graphs and commutes with the maps $\topp, \bott, \iv$ and the immersions is called a \emph{morphism} from $\La'$ to $\La$ provided $\psi(p_{\La'})=p_{\La}, \psi(q_{\La'})=q_{\La}$.
\end{Definition}

\begin{Definition} We say that a 2-automaton $\La$ over $\kk$ \emph{accepts} a diagram $\Delta$ over $\kk$ if there is a morphism $\psi$ from the 2-automaton $\Delta$ to the 2-automaton $\La$.
\end{Definition}

Let $\Delta_i$, $i\in\mathcal I$ be reduced diagrams from the diagram group $\DG(\kk,u)$ i.e., diagrams over $\kk$ with the same label $u$ of their top and bottom $1$-paths. Then we can identify all $\topp(\Delta_i)$ with all $\bott(\Delta_i)$ and obtain a 2-automaton $\La'$ over $\kk$ with the distinguished $1$-paths $p=q=\topp(\Delta_i)=\bott(\Delta_i)$. We can view $\La'$ as a ``bouquet of spheres''. That automaton accepts any concatenation of diagrams $\Delta_i$ and their inverses. 

To get a 2-automaton that accepts all reduced diagrams in the subgroup generated by $\{\Delta_i\mid i\in\mathcal I\}$, we do an analog of the Stallings foldings. Namely, let $\La'$ be the 2-automaton as above.  Now every time we see two cells that have the same image under the immersion of $\La'$ and share the top (resp. bottom) $1$-path, then we identify their bottom (resp. top) $1$-paths and identify the cells too. This operation is called \emph{folding of cells} (see \cite[Remark 8.8]{GSdc}).

The result (after infinitely many foldings if $\mathcal I$ is infinite) is a directed $2$-complex and the immersion of $\La'$ induces an immersion of the new directed 2-complex. Thus we again get a 2-automaton. Let $\La$ be the 2-automaton obtained after all possible foldings were applied to $\La'$.
The following $3$ lemmas were proved in \cite{GS1}.

\begin{Lemma}
The $2$-automaton $\La$ does not depend on the order in which foldings were applied to $\La'$. 
\end{Lemma}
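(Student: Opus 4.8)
The final statement is a lemma asserting that the $2$-automaton $\La$ obtained from $\La'$ by applying all possible foldings of cells does not depend on the order in which the foldings were applied. This is the analogue of the classical confluence statement for Stallings foldings of graphs: the core $A(H)$ of a subgroup of a free group is independent of the order of folding. Here the objects being folded are not just edges but whole cells of a directed $2$-complex, subject to the immersion into the Dunce hat $\kk$.

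**The proof plan:**

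The plan is to prove confluence by a diamond-lemma / Newman's-lemma style argument, exploiting the fact that each folding is a quotient of the underlying directed $2$-complex and that foldings only ever identify objects, never create new freedom. First I would set up the formal framework: a folding step takes a $2$-automaton $\La_1$ and produces a quotient $2$-automaton $\La_2$ by identifying two cells (together with the top or bottom paths forced by them) that have the same image under the immersion and share a top (resp. bottom) path. The immersion descends to the quotient, so the operation is well defined, and crucially each folding strictly decreases a nonnegative integer complexity (for instance the number of cells, or the number of cells plus edges, of the finite portion of $\La$ relevant to any fixed diagram being tested). This guarantees \emph{termination}: there are no infinite sequences of foldings on a finite sub-automaton, so for the finitely generated case every maximal sequence of foldings is finite. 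For the infinite index set $\mathcal I$ the statement is interpreted as a direct limit and termination is replaced by the existence of the limit.

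Next I would establish \emph{local confluence}: if $\La'$ admits two distinct applicable foldings $f_1$ and $f_2$ producing $\La_1$ and $\La_2$ respectively, then there is a common $2$-automaton $\La''$ reachable from both $\La_1$ and $\La_2$ by further foldings. The key point is that a folding is determined by a pair of cells with a common image in $\kk$ sharing a top or bottom path. If the two pending foldings $f_1,f_2$ involve disjoint data, they commute: applying $f_1$ does not destroy the applicability of $f_2$ and vice versa, so $\La''$ is obtained by applying both in either order. If they overlap — say they share one of the two cells being identified — then one checks case by case that the identifications generated are compatible: after performing $f_1$, the cells that $f_2$ wanted to identify have been mapped to cells that either are already equal or are still foldable, and performing the induced folding yields the same quotient as performing $f_2$ then the induced $f_1$. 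Because foldings only collapse objects that already have equal images under the fixed immersion $\phi$ into $\kk$, no choice is ever irreversibly lost; the equivalence relation generated on vertices, edges, and cells is the same regardless of the order in which its generating identifications are imposed.

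Finally I would combine termination with local confluence via Newman's Lemma to conclude global confluence: every maximal folding sequence terminates at the \emph{same} fully folded $2$-automaton $\La$, namely the quotient of $\La'$ by the smallest equivalence relation closed under the folding identifications and compatible with $\topp$, $\bott$, $\iv$, and $\phi$. Equivalently, $\La$ is characterized by a universal property: it is the terminal object among $2$-automata receiving a morphism from $\La'$ in which all same-image, path-sharing cells have been identified; such a universal object is manifestly order-independent. The main obstacle I anticipate is the careful bookkeeping in the overlapping case of local confluence, where identifying cells forces identifications of their boundary $1$-paths, which in turn may trigger cascading foldings of adjacent cells; one must verify that these induced identifications generated by $f_1$-then-$f_2$ and by $f_2$-then-$f_1$ generate the \emph{same} congruence on $\La'$. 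The cleanest way to dispatch this is to bypass the step-by-step diamond entirely and prove directly that the congruence generated by all applicable foldings is well defined and independent of order, so that $\La$ is simply the quotient by that congruence; the diamond argument then only needs to certify that iterated foldings do indeed realize this full congruence, which follows from termination.
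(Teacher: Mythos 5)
The paper does not actually prove this lemma; it is quoted from \cite{GS1} (``The following $3$ lemmas were proved in \cite{GS1}''), so there is no in-text argument to compare against. Your proposal is nevertheless correct and is the standard way to establish order-independence of foldings: the decisive move is your final observation that $\La$ can be characterized as the quotient of $\La'$ by the smallest equivalence relation on vertices, edges and cells that is compatible with $\topp$, $\bott$, $\iv$ and $\phi$ and closed under the folding identifications (such a smallest congruence exists because the folding-closedness condition is preserved under intersections), after which one only needs termination --- or, in the infinite case, exhaustiveness of the folding sequence --- to see that every maximal sequence realizes exactly this congruence; this cleanly sidesteps the fiddly overlapping cases of a direct diamond argument.
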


\begin{Lemma}
If the 2-automaton $\La$ accepts a diagram $\Delta$ in $\DG(\kk,u)$, then it also accepts the reduced diagram equivalent to $\Delta$. Thus, one can talk about the subgroup of $\DG(\kk,u)$ of all diagrams accepted by $\La$. 
\end{Lemma}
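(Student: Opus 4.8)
The plan is to reduce the statement to a single dipole cancellation and then induct on the number of cells of $\Delta$. Write $\psi\colon\Delta\to\La$ for the morphism witnessing acceptance. If $\Delta$ is already reduced there is nothing to prove, so suppose $\Delta$ contains a dipole $\pi_1\circ\pi_2$, with $\bott(\pi_1)=\topp(\pi_2)$ and with the labels of $\pi_1,\pi_2$ mutually inverse, and let $\Delta'$ be the diagram obtained by deleting these two cells and identifying $\topp(\pi_1)$ with $\bott(\pi_2)$. Since $\Delta'$ has two fewer cells than $\Delta$ and is equivalent to $\Delta$ (hence has the same reduced form), it suffices to produce a morphism $\psi'\colon\Delta'\to\La$; the lemma then follows by induction on the number of cells, the base case being a reduced diagram, which is accepted by hypothesis. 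Removing dipoles one at a time terminates at the reduced diagram equivalent to $\Delta$, and that reduced diagram is uniquely determined since the process is confluent by \cite[Lemma 3.10]{GuSa97}, so establishing the single-step claim is enough.

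The crucial step is to show that $\psi$ carries the dipole to a ``dipole'' in $\La$, i.e. that $\psi(\pi_2)=\psi(\pi_1)^{-1}$; this is exactly where the folded structure of $\La$ enters. Set $c_1=\psi(\pi_1)$ and $c_2=\psi(\pi_2)$. Because $\psi$ commutes with the immersions and $\pi_1,\pi_2$ carry mutually inverse labels, we have $\phi_\La(c_1)=\phi_\Delta(\pi_1)$ and $\phi_\La(c_2^{-1})=\phi_\La(c_2)^{-1}=\phi_\Delta(\pi_2)^{-1}=\phi_\Delta(\pi_1)$, so $c_1$ and $c_2^{-1}$ have the same image in $\kk$. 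Moreover $\psi$ commutes with $\topp$ and $\bott$, so $\bott(c_1)=\psi(\bott(\pi_1))$ while $\bott(c_2^{-1})=\topp(c_2)=\psi(\topp(\pi_2))$; since $\bott(\pi_1)=\topp(\pi_2)$ these coincide, that is, $c_1$ and $c_2^{-1}$ share their bottom path. By construction of $\La$ no two distinct cells with the same image under the immersion can share a bottom path, since otherwise a folding would still be applicable; hence $c_1=c_2^{-1}$, i.e. $\psi(\pi_2)=\psi(\pi_1)^{-1}$.

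It remains to check that $\psi$ descends to $\Delta'$. On the two paths that are glued in $\Delta'$ we compute $\psi(\topp(\pi_1))=\topp(c_1)$ and $\psi(\bott(\pi_2))=\bott(c_2)=\bott(c_1^{-1})=\topp(c_1)$, so these paths already have equal image under $\psi$, edge by edge; hence $\psi$ factors through the quotient map $\Delta\to\Delta'$ to give a well-defined map $\psi'$ on the remaining vertices, edges and cells. Being a restriction of $\psi$, the map $\psi'$ is still a homomorphism of directed graphs commuting with $\topp,\bott,\iv$ and the immersions, and it preserves the distinguished paths because the dipole is interior, so $\topp(\Delta')=\topp(\Delta)$ and $\bott(\Delta')=\bott(\Delta)$ map to $p_\La$ and $q_\La$ as before. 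Thus $\psi'$ is a morphism, $\La$ accepts $\Delta'$, and the induction is complete. The second assertion then follows: acceptance is compatible with concatenation and with passing to inverse diagrams, by gluing or inverting the witnessing morphisms and using that the input and output paths of $\La$ coincide; by the first part these operations descend to reduced diagrams, so the reduced diagrams in $\DG(\kk,u)$ accepted by $\La$ form a subgroup. The one genuinely delicate point is the middle paragraph: everything hinges on $\La$ being fully folded, which is what upgrades the soft equality ``$c_1$ and $c_2^{-1}$ have the same image'' into the honest identity $\psi(\pi_2)=\psi(\pi_1)^{-1}$ needed to collapse the dipole.
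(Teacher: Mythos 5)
Your proof is correct, and it is essentially the standard argument: the paper itself gives no proof of this lemma (it is quoted from \cite{GS1}), and the argument there proceeds exactly as you do, removing one dipole at a time and using the fact that no folding is applicable to $\La$ to conclude that the images of the two dipole cells are mutually inverse cells sharing their bottom path, hence equal up to inversion, so that the morphism descends to the reduced diagram. The only cosmetic imprecision is the phrase ``the dipole is interior''; this is not needed, since in all cases the distinguished paths of $\Delta'$ are the images of those of $\Delta$ under the quotient map and therefore still map to $p_\La=q_\La$.
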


\begin{Lemma}\label{l:au}
The 2-automaton $\La$ accepts all reduced diagrams from the subgroup of the diagram group $\DG(\kk,u)$ generated by $\{\Delta_i\mid i\in\mathcal I\}$.
\end{Lemma}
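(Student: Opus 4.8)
The plan is to show that $\La$ accepts some (possibly non-reduced) diagram representing every element of $H$, and then invoke the preceding lemma to pass to reduced diagrams. Recall that $H$ consists of reduced $(u,u)$-diagrams, each of which is equivalent to some concatenation $C\equiv\Delta_{i_1}^{\varepsilon_1}\circ\cdots\circ\Delta_{i_k}^{\varepsilon_k}$ of the generating diagrams and their inverses, with $\varepsilon_j\in\{\pm 1\}$. Since the preceding lemma guarantees that whenever $\La$ accepts a diagram it also accepts the reduced diagram equivalent to it, it suffices to establish two assertions: (i) the folding quotient $\La'\to\La$ is a morphism of $2$-automata, so that every diagram accepted by $\La'$ is also accepted by $\La$; and (ii) $\La'$ accepts every concatenation $C$ as above.

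For (i), I would let $\rho\colon\La'\to\La$ be the canonical map induced by the sequence of foldings. Each elementary folding is a quotient that identifies two cells (and a pair of $1$-paths) having the same image under the immersion of $\La'$; hence $\rho$ takes vertices to vertices, edges to edges and cells to cells, and is a homomorphism of directed graphs. Because foldings only identify cells sharing a $\topp$- (resp. $\bott$-) path and then identify the opposite paths, $\rho$ commutes with $\topp, \bott$ and $\iv$; and since, as noted in the construction, the immersion of $\La$ is the one induced by that of $\La'$, the map $\rho$ commutes with the immersions. Finally $\rho(p_{\La'})=p_{\La}$ and $\rho(q_{\La'})=q_{\La}$, because the distinguished $1$-paths of $\La$ are by definition the images of those of $\La'$. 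Thus $\rho$ is a morphism, and composing any morphism $\Delta\to\La'$ with $\rho$ yields a morphism $\Delta\to\La$.

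For (ii), recall that $\La'$ is the bouquet of spheres obtained by gluing the $\Delta_i$ along their common top/bottom path $p=q$; in particular, for each $i$ the identity on cells gives a morphism $\Delta_i\to\La'$ onto the $i$-th sphere, sending $\topp(\Delta_i)$ and $\bott(\Delta_i)$ to $p=q$, and, dually, $\Delta_i^{-1}$ maps into the same sphere via the inverse cells. Given $C\equiv\Delta_{i_1}^{\varepsilon_1}\circ\cdots\circ\Delta_{i_k}^{\varepsilon_k}$, I would define the morphism $\psi\colon C\to\La'$ factor by factor, using on the $j$-th factor the morphism just described. The only places where two factors meet are the $1$-paths obtained by identifying $\bott$ of the $j$-th factor with $\topp$ of the $(j+1)$-th; since every such path is a copy of $u$ sent to the common $1$-path $p=q$ by both adjacent local morphisms, the pieces agree on overlaps and glue to a single morphism $\psi$. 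As $\topp(C)$ and $\bott(C)$ are copies of $u$ mapped to $p_{\La'}=q_{\La'}$, the morphism $\psi$ witnesses that $\La'$ accepts $C$.

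Combining (i) and (ii), $\La$ accepts every concatenation $C$, and hence, by the preceding lemma, the reduced diagram equivalent to $C$; as these reduced diagrams are exactly the elements of $H$, the lemma follows. The main obstacle I anticipate is the careful bookkeeping in step (i)—verifying that the folding quotient genuinely descends to a morphism commuting with the immersions and respecting $\topp,\bott,\iv$—together with checking in step (ii) that the local morphisms are compatible on the shared $1$-paths, so that the glued map is well defined on the identified edges and vertices rather than merely on each factor separately.
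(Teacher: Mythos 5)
Your proof is correct and follows the standard Stallings-folding argument: the bouquet of spheres $\La'$ accepts every concatenation of the generators and their inverses, the folding quotient $\La'\to\La$ is a morphism of $2$-automata, and the preceding lemma passes from the (generally non-reduced) concatenation to its reduced form. The paper itself only cites \cite{GS1} for this lemma rather than reproving it, and the argument there is exactly the one you give, so there is nothing to add.
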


It was noted in \cite{GS1} that the $2$-automaton $\La$ is determined uniquely by the subgroup $H=\la \Delta_i\mid i\in\mathcal I\ra$ of $\DG(\kk,u)$ and does not depend on the chosen generating set $\{\Delta_i\mid i\in\mathcal I\}$ (as long as all diagrams $\Delta_i$ are taken to be reduced).  Thus, $\La$ can be called the {\em Stallings 2-core} of the subgroup $H$. We will denote the Stallings $2$-core of a subgroup $H$ by $\La(H)$.

We note that unlike for subgroups of free groups, the Stallings $2$-core of a subgroup $H$ can accept diagrams not in $H$. Following \cite{GS1}, we make the following definition.

\begin{Definition} The \emph{closure} $\Cl(H)$ of a subgroup $H$ of a diagram group $\DG(\kk,u)$ is the subgroup of $\DG(\kk,u)$ consisting of all diagrams that are accepted by the 2-core $\La(H)$ of $H$. If $H=\Cl(H)$ we say that $H$ is a \emph{closed} subgroup of $\DG(\kk,u)$.
\end{Definition}

We note that all usual conditions of the closure operation are satisfied, that is, $H\le \Cl(H)$ (Lemma \ref{l:au}), $\Cl(\Cl(H))=\Cl(H)$ (indeed, the core $\La(\Cl(H))$ coincides with the core of $H$)  and if $H_1\le H_2$, then $\Cl(H_1)\le\Cl(H_2)$. If $H$ is finitely generated, then the core $\La(H)$ is finite, and so the membership problem in $\Cl(H)$ is decidable.

\vskip .2cm

We demonstrate the construction of the Stallings $2$-core of the subgroup $H=\la x_0,x_1x_2x_1^{-1}\ra$ of Thompson's group $F$ and demonstrate how to check if an element $f\in F$ belongs to $\Cl(H)$.  Let us denote the positive cell of the Dunce hat $\la x\mid x\to x^2\ra$ by $\pi$. The diagrams for $x_0$ and $x_1x_2x_1\iv$ viewed as 2-automata are in Figure \ref{f:xxxx} below (the immersion to the Dunce hat maps all positive cells to $\pi$, and all edges to the only edge of the Dunce hat).

\begin{figure}[ht]
\begin{center}
\unitlength .7mm 
\linethickness{0.4pt}
\ifx\plotpoint\undefined\newsavebox{\plotpoint}\fi 
\begin{picture}(181.291,100)(0,0)
\put(21.75,43.25){\line(1,0){63.5}}
\qbezier(21.75,43.25)(56.25,101.375)(84.75,43)
\qbezier(22,43.5)(48,71.625)(66,43.25)
\qbezier(22,43.25)(57,-15.75)(85,43.25)
\qbezier(44.5,43.25)(62.875,6.25)(84.75,43.25)
\put(22,43.25){\circle*{1.581}}
\put(95.5,42.75){\circle*{1.581}}
\put(105.75,43){\circle*{1.581}}
\put(117,43){\circle*{1.581}}
\put(139.75,42.75){\circle*{1.581}}
\put(163.5,42.5){\circle*{1.581}}
\put(180.5,42.75){\circle*{1.581}}
\put(45,43.25){\circle*{1.581}}
\put(65.75,43.5){\circle*{1.581}}
\put(84.75,43.75){\circle*{1.581}}
\put(59.75,60.5){\makebox(0,0)[cc]{$\gamma_1$}}
\put(44.75,49.5){\makebox(0,0)[cc]{$\gamma_2$}}
\put(64,34.75){\makebox(0,0)[cc]{$\gamma_3$}}
\put(40.75,29.5){\makebox(0,0)[cc]{$\gamma_4$}}
\put(54.75,75){\makebox(0,0)[cc]{$e_1$}}
\put(44,60.5){\makebox(0,0)[cc]{$e_2$}}
\put(32.5,46.25){\makebox(0,0)[cc]{$e_3$}}
\put(55,45.75){\makebox(0,0)[cc]{$e_4$}}
\put(75,46){\makebox(0,0)[cc]{$e_5$}}
\put(61,23){\makebox(0,0)[cc]{$e_6$}}
\put(53.75,11.25){\makebox(0,0)[cc]{$e_7$}}
\put(94.5,43){\line(1,0){86.5}}
\qbezier(94.75,43)(132.375,140)(180.5,43)
\qbezier(180.5,43)(146.5,-41.625)(94.5,43.25)
\qbezier(105,42.75)(132.25,113.25)(180.5,42.75)
\qbezier(105.25,43)(136.5,86.5)(163.75,43)
\qbezier(116.75,43)(134.75,69.375)(163.75,42.25)
\qbezier(105.5,43)(145.875,-17.5)(179.75,43)
\qbezier(105.75,43)(130.875,12)(139.5,43)
\qbezier(139.5,43)(158.375,13.5)(179.75,43)
\put(132,83.75){\makebox(0,0)[cc]{$\gamma_5$}}
\put(135.5,70.75){\makebox(0,0)[cc]{$\gamma_6$}}
\put(130,58.5){\makebox(0,0)[cc]{$\gamma_7$}}
\put(135.75,48.25){\makebox(0,0)[cc]{$\gamma_8$}}
\put(159.75,36.25){\makebox(0,0)[cc]{$\gamma_9$}}
\put(124.75,36.5){\makebox(0,0)[cc]{$\gamma_{10}$}}
\put(143.25,23.5){\makebox(0,0)[cc]{$\gamma_{11}$}}
\put(136.5,6.5){\makebox(0,0)[cc]{$\gamma_{12}$}}
\put(138,94.5){\makebox(0,0)[cc]{$e_8$}}
\put(117.75,72.5){\makebox(0,0)[cc]{$e_9$}}
\put(99,45.5){\makebox(0,0)[cc]{$e_{10}$}}
\put(151.5,61.75){\makebox(0,0)[cc]{$e_{11}$}}
\put(111.75,46){\makebox(0,0)[cc]{$e_{12}$}}
\put(138.5,58.25){\makebox(0,0)[cc]{$e_{13}$}}
\put(127,45.5){\makebox(0,0)[cc]{$e_{14}$}}
\put(148.25,46){\makebox(0,0)[cc]{$e_{15}$}}
\put(169.75,46){\makebox(0,0)[cc]{$e_{16}$}}
\put(124.75,31.25){\makebox(0,0)[cc]{$e_{17}$}}
\put(158.25,30.75){\makebox(0,0)[cc]{$e_{18}$}}
\put(143.75,17){\makebox(0,0)[cc]{$e_{19}$}}
\put(162.5,6.25){\makebox(0,0)[cc]{$e_{20}$}}
\end{picture}
\end{center}
\caption{The diagrams for $x_0$ and $x_1x_2x_1\iv$}
\label{f:xxxx}
\end{figure}
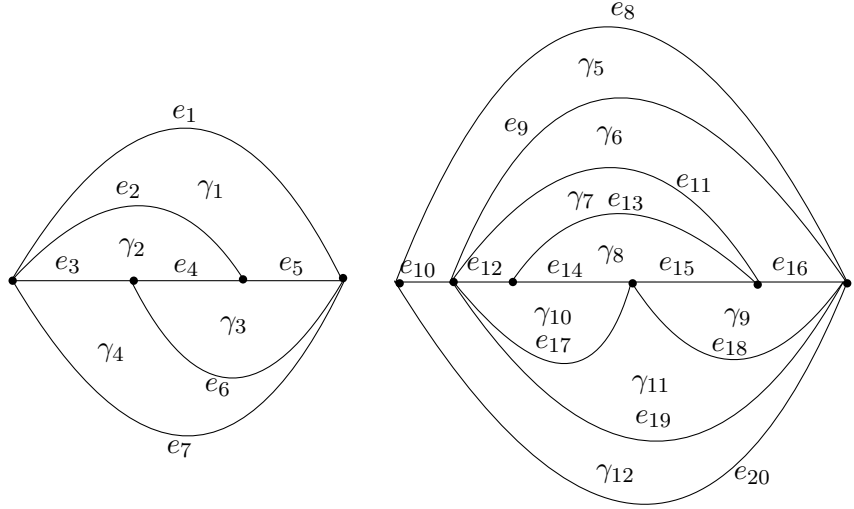

Together the two diagrams have 20 edges (labeled by $e_1,\ldots, e_{20}$) and 12 cells (labeled $\gamma_1, \ldots,  \gamma_{12}$). To construct the Stallings $2$-core $\La(H)$, we first need to identify the top and the bottom $1$-paths of both diagrams. So we set $e_1=e_7=e_8=e_{20}$. Now the positive cells $\gamma_1, \gamma_4, \gamma_5, \gamma_{12}$ need to be folded because these cells share the top $1$-path $e_1$. So we need to identify $\gamma_1=\gamma_4=\gamma_5=\gamma_{12}$ and the edges
$e_2=e_3=e_{10}$ and $e_5=e_6=e_9=e_{19}$. Now the cells $\gamma_3$, $\gamma_6$, $\gamma_{11}$ have common top edge $e_5$. So we need to fold these three cells. Thus  $\gamma_3=\gamma_6=\gamma_{11}$, $e_4=e_{11}=e_{17}$, $e_5=e_{16}=e_{18}$.  Then the cells $\gamma_7$ and $\gamma_{10}$ share the top edge $e_4$. So we set $\gamma_7=\gamma_{10}$, $e_{13}=e_{14}$. Furthermore $\gamma_9$ and $\gamma_3$ now share the top edge $e_5$. So we need to set $e_4=e_{15}$. No more foldings are needed, and the Stallings $2$-core $\La(H)$ is presented in Figure \ref{f:x1} (there the cells and edges are supposed to be identified according to their labels: all $e_1$ edges are the same, all $\gamma_7$-cells are the same, etc.).

\begin{figure}[ht!]
\begin{center}
\unitlength .6 mm 
\linethickness{0.4pt}
\ifx\plotpoint\undefined\newsavebox{\plotpoint}\fi 
\begin{picture}(181.291, 90)(0,0)
\put(21.75,43.25){\line(1,0){63.5}}
\qbezier(21.75,43.25)(56.25,101.375)(84.75,43)
\qbezier(22,43.5)(48,71.625)(66,43.25)
\qbezier(22,43.25)(57,-15.75)(85,43.25)
\qbezier(44.5,43.25)(62.875,6.25)(84.75,43.25)
\put(22,43.25){\circle*{1.581}}
\put(95.5,42.75){\circle*{1.581}}
\put(105.75,43){\circle*{1.581}}
\put(117,43){\circle*{1.581}}
\put(139.75,42.75){\circle*{1.581}}
\put(163.5,42.5){\circle*{1.581}}
\put(180.5,42.75){\circle*{1.581}}
\put(45,43.25){\circle*{1.581}}
\put(65.75,43.5){\circle*{1.581}}
\put(84.75,43.75){\circle*{1.581}}\put(59.75,60.5){\makebox(0,0)[cc]{$\gamma_1$}}
\put(44.75,49.5){\makebox(0,0)[cc]{$\gamma_2$}}
\put(64,34.75){\makebox(0,0)[cc]{$\gamma_3$}}
\put(40.75,29.5){\makebox(0,0)[cc]{$\gamma_1$}}
\put(54.75,75){\makebox(0,0)[cc]{$e_1$}}
\put(44,60.5){\makebox(0,0)[cc]{$e_2$}}
\put(32.5,46.25){\makebox(0,0)[cc]{$e_2$}}
\put(55,45.75){\makebox(0,0)[cc]{$e_4$}}
\put(75,46){\makebox(0,0)[cc]{$e_5$}}
\put(61,23){\makebox(0,0)[cc]{$e_5$}}
\put(53.75,11.25){\makebox(0,0)[cc]{$e_1$}}
\put(94.5,43){\line(1,0){86.5}}
\qbezier(94.75,43)(132.375,140)(180.5,43)
\qbezier(180.5,43)(146.5,-41.625)(94.5,43.25)
\qbezier(105,42.75)(132.25,113.25)(180.5,42.75)
\qbezier(105.25,43)(136.5,86.5)(163.75,43)
\qbezier(116.75,43)(134.75,69.375)(163.75,42.25)
\qbezier(105.5,43)(145.875,-17.5)(179.75,43)
\qbezier(105.75,43)(130.875,12)(139.5,43)
\qbezier(139.5,43)(158.375,13.5)(179.75,43)
\put(132,83.75){\makebox(0,0)[cc]{$\gamma_1$}}
\put(135.5,70.75){\makebox(0,0)[cc]{$\gamma_3$}}
\put(130,58.5){\makebox(0,0)[cc]{$\gamma_7$}}
\put(135.75,48.25){\makebox(0,0)[cc]{$\gamma_8$}}
\put(159.75,36.25){\makebox(0,0)[cc]{$\gamma_3$}}
\put(124.75,36.5){\makebox(0,0)[cc]{$\gamma_7$}}
\put(143.25,23.5){\makebox(0,0)[cc]{$\gamma_3$}}
\put(136.5,6.5){\makebox(0,0)[cc]{$\gamma_1$}}
\put(138,94.5){\makebox(0,0)[cc]{$e_1$}}
\put(117.75,72.5){\makebox(0,0)[cc]{$e_5$}}
\put(99,45.5){\makebox(0,0)[cc]{$e_2$}}
\put(151.5,61.75){\makebox(0,0)[cc]{$e_{4}$}}
\put(111.75,46){\makebox(0,0)[cc]{$e_{12}$}}
\put(138.5,58.25){\makebox(0,0)[cc]{$e_{13}$}}
\put(127,45.5){\makebox(0,0)[cc]{$e_{13}$}}
\put(148.25,46){\makebox(0,0)[cc]{$e_{4}$}}\put(169.75,46){\makebox(0,0)[cc]{$e_{5}$}}
\put(124.75,31.25){\makebox(0,0)[cc]{$e_{4}$}}
\put(158.25,30.75){\makebox(0,0)[cc]{$e_{5}$}}
\put(143.75,17){\makebox(0,0)[cc]{$e_5$}}
\put(162.5,6.25){\makebox(0,0)[cc]{$e_{1}$}}
\end{picture}
\end{center}
\caption{The Stallings $2$-core of $H=\la x_0, x_1x_2x_1\iv\ra$}
\label{f:x1}
\end{figure}

\nopagebreak[1000]

\begin{figure}[ht!]
\begin{center}
\unitlength .8mm 
\linethickness{0.4pt}
\ifx\plotpoint\undefined\newsavebox{\plotpoint}\fi 
\begin{picture}(120.791,100)(0,0)
\put(33.5,46.75){\line(1,0){86.75}}
\qbezier(33.5,47)(76.125,117)(120.25,47)
\qbezier(120.25,47)(77,-31.25)(33.75,46.5)
\qbezier(45.75,46.75)(77.75,94.125)(119.75,47)
\qbezier(119.75,47)(82.625,-.375)(46,46.75)
\qbezier(46,46.75)(76.375,73.625)(101.25,47)
\qbezier(74.75,46.5)(95.25,22.875)(118.75,46.75)
\put(33.5,46.75){\circle*{1.581}}
\put(46.75,46.75){\circle*{1.581}}
\put(74.75,46.75){\circle*{1.581}}
\put(101.25,46.5){\circle*{1.581}}
\put(120,46.75){\circle*{1.581}}
\put(77.5,85.25){\makebox(0,0)[cc]{$f_1$}}
\put(78.75,73.25){\makebox(0,0)[cc]{$f_3$}}
\put(40.5,49.75){\makebox(0,0)[cc]{$f_2$}}\put(82.75,62){\makebox(0,0)[cc]{$f_4$}}
\put(109,49.5){\makebox(0,0)[cc]{$f_5$}}
\put(61.5,50){\makebox(0,0)[cc]{$f_6$}}
\put(86.5,49.75){\makebox(0,0)[cc]{$f_7$}}
\put(99,38.5){\makebox(0,0)[cc]{$f_8$}}
\put(79.25,27){\makebox(0,0)[cc]{$f_9$}}
\put(75.75,10.5){\makebox(0,0)[cc]{$f_{10}$}}
\put(55.75,67.5){\makebox(0,0)[cc]{$\delta_1$}}
\put(70.5,64.75){\makebox(0,0)[cc]{$\delta_2$}}
\put(73.5,52.5){\makebox(0,0)[cc]{$\delta_3$}}
\put(88.75,41.75){\makebox(0,0)[cc]{$\delta_4$}}
\put(63.5,38.75){\makebox(0,0)[cc]{$\delta_5$}}
\put(57.25,22.75){\makebox(0,0)[cc]{$\delta_6$}}
\end{picture}
\end{center}
\caption{The $2$-automaton for $x_1$}
\label{f:x23}
\end{figure}
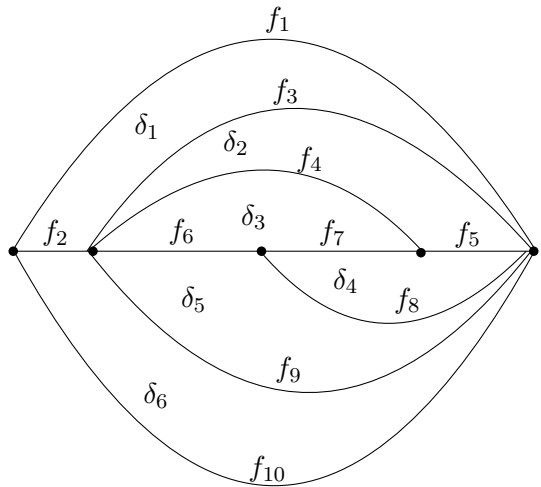

Now consider the element $x_1$. The diagram $\Delta$ for $x_1$ with labels of edges and cells is in Figure \ref{f:x23}. If $x_1\in \Cl(H)$, then we should have a morphism $\psi$ from $\Delta$ to $\La(H)$ sending $f_1$ and $f_{10}$ to $e_1$. Then $\psi(\delta_1)=\psi(\delta_6)=\gamma_1$ since $\La(H)$ has only one cell with top edge $e_1$. This forces $\psi(f_2)=e_2$, $\psi(f_3)=\psi(f_9)=e_5$. Since $\La(H)$ has only one positive cell with top edge $e_5$, we should have $\psi(\delta_2)=\gamma_3$. That means $\psi(f_4)=e_4, \psi(f_5)=e_5$. Again $\La(H)$ has only one positive cell with top edge $e_4$. Therefore $\psi(\delta_3)=\gamma_7$, hence $\psi(f_6)=e_{12}, \psi(f_7)=e_{13}$. Now $\psi$ must map the positive cell $\delta_4$ to a cell with bottom edges $\psi(f_7)=e_{13}$ and $\psi(f_5)=e_5$. But $\La(H)$ does not have such a cell, a contradiction.  Thus, $x_1\notin \Cl(H)$.

\vskip 1em

Now, let $\kk$ be a directed $2$-complex and let $\DG(\kk,u)$ be a diagram group over $\kk$. We make the following simple observation. 

\begin{Lemma}\label{diagram_group}
Let $H$ be a subgroup of the diagram group $\DG(\kk,u)$. Let $\La(H)$ be the core of $H$ and let $p=p_{\La(H)}$ be the distinguished $1$-path of $\La(H)$. The closure $\Cl(H)$ is naturally isomorphic to the diagram group $\DG(\La(H),p)$, where $\La(H)$ is viewed as a directed $2$-complex. 
\end{Lemma}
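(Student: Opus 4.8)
The plan is to establish the isomorphism $\Cl(H)\cong\DG(\La(H),p)$ by exhibiting a natural bijection between reduced diagrams accepted by $\La(H)$ and reduced $(p,p)$-diagrams over $\La(H)$, and checking it respects the group operations. First I would recall that by definition $\Cl(H)$ consists of all reduced diagrams $\Delta\in\DG(\kk,u)$ accepted by $\La(H)$; by the accepting definition this means there is a morphism $\psi_\Delta$ from the $2$-automaton $\Delta$ to $\La(H)$ sending $\topp(\Delta),\bott(\Delta)$ to $p=p_{\La(H)}=q_{\La(H)}$. The key observation is that such a morphism $\psi_\Delta\colon\Delta\to\La(H)$ is \emph{itself} exactly the data of a diagram over the directed $2$-complex $\La(H)$: since $\Delta$ is a diagram over $\kk$, it is a $2$-path diagram, and composing its cell/edge structure with $\psi_\Delta$ records each cell of $\Delta$ as a cell of $\La(H)$ while preserving $\topp,\bott$ and the left-to-right planar structure. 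Because $\psi_\Delta$ maps top and bottom paths to $p$, the resulting diagram over $\La(H)$ is a $(p,p)$-diagram, i.e. an element of $\DG(\La(H),p)$.

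Next I would define the map $\Phi\colon\Cl(H)\to\DG(\La(H),p)$ by sending (the reduced diagram representing) $\Delta$ to the $(p,p)$-diagram over $\La(H)$ determined by $\psi_\Delta$, and argue it is well defined. For well-definedness I must check that $\psi_\Delta$ is \emph{unique}: this is where the immersion hypothesis does the work. Since $\La(H)$ comes equipped with an immersion $\phi\colon\La(H)\to\kk$ (an immersion is locally injective on edges and cells in the appropriate sense coming from folding), and since $\Delta$ also carries its natural immersion $\phi_\Delta$ to $\kk$ with $\phi\circ\psi_\Delta=\phi_\Delta$, the morphism $\psi_\Delta$ is forced edge-by-edge and cell-by-cell once the image of $\topp(\Delta)$ is fixed to be $p$. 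Concretely, a diagram is built up from its top path by successively replacing $\topp(f)$ by $\bott(f)$; the image of the top path is prescribed, and at each atomic step the immersion property of $\La(H)$ guarantees at most one cell of $\La(H)$ lying over the corresponding cell of $\kk$ with the given top path, so $\psi_\Delta$ is uniquely determined. Uniqueness also shows $\Phi$ is injective, since two distinct reduced $(p,p)$-diagrams over $\La(H)$ map under $\phi$ to distinct reduced diagrams over $\kk$ (the composite is a diagram over $\kk$ accepted by $\La(H)$).

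To see $\Phi$ is surjective I would run the construction in reverse: given any reduced $(p,p)$-diagram $\Gamma$ over $\La(H)$, postcomposing every edge and cell label with the immersion $\phi$ yields a diagram $\phi_*(\Gamma)$ over $\kk$ with top and bottom path labeled $\phi(p)=u$, hence an element of $\DG(\kk,u)$; by construction $\Gamma$ furnishes a morphism witnessing that $\phi_*(\Gamma)$ is accepted by $\La(H)$, so (after reduction, using the earlier lemma that $\La(H)$ accepts the reduced diagram equivalent to any diagram it accepts) $\phi_*(\Gamma)\in\Cl(H)$ and $\Phi$ hits its class. Finally I would verify $\Phi$ is a homomorphism: the group operation on both sides is ``concatenation followed by reduction,'' and the uniqueness of the lift shows $\psi_{\Delta_1\circ\Delta_2}$ restricts to $\psi_{\Delta_1}$ and $\psi_{\Delta_2}$ on the two halves, so $\Phi(\Delta_1\Delta_2)=\Phi(\Delta_1)\Phi(\Delta_2)$ after noting that reduction of dipoles commutes with $\Phi$ (a dipole in $\Delta$ maps to a dipole in $\La(H)$ and vice versa, because mutually inverse cell labels are preserved by a morphism of $2$-automata). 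The main obstacle I anticipate is the \emph{uniqueness of the morphism} $\psi_\Delta$: everything else is bookkeeping, but the argument that the immersion property of the folded complex $\La(H)$ forces a unique lift is the crux, and it must be stated carefully enough to handle both positive and negative cells and to interact correctly with dipole reduction.
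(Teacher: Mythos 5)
Your proposal is correct and is essentially the paper's proof run in the opposite direction: the paper defines the map from $\DG(\La(H),p)$ to $\Cl(H)$ by relabeling along the immersion $\phi$ and proves injectivity by showing that a reduced diagram over $\La(H)$ stays reduced as a diagram over $\kk$, while you define the inverse map via the (unique) accepting morphism $\psi_\Delta$ — in both cases the crux is that no foldings are applicable to $\La(H)$. One small repair: your parenthetical justification for the claim that a dipole in $\Delta$ lifts to a dipole in $\Phi(\Delta)$ (the ``vice versa'' direction) is not preservation of inverse labels by a morphism — that only gives the pushforward direction — but is exactly the foldedness argument (two cells of $\La(H)$ with the same top, resp.\ bottom, path and the same $\phi$-image must coincide), i.e.\ the same fact you already invoke for uniqueness of $\psi_\Delta$.
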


\begin{proof}
The immersion $\phi$ from $\La(H)$ to $\kk$, enables to view any diagram over $\La(H)$ as a diagram over $\kk$. Indeed, if $\Delta$ is a diagram over $\La(H)$, then every edge (resp. cell) of $\Delta$ is labeled by an edge $e$ (resp. cell $f$) of $\La(H)$. One can relabel it by the edge $\phi(e)$ (resp. the cell $\phi(f)$) of $\kk$. In particular, every diagram in $\DG(\La(H),p)$ can be viewed as a diagram over $\kk$ which is obviously accepted by $\La(H)$, hence belongs to $\Cl(H)$. We claim that this mapping $\psi$ from $\DG(\La(H),p)$ to $\Cl(H)$ is an isomorphism. It is easy to see that $\psi$ is well defined (i.e., equivalent diagrams are mapped by $\psi$ to equivalent diagrams) and that $\psi$ is a homomorphism. To prove injectivity, let $\Delta$ be a reduced diagram in $\DG(\La(H),p)$ and let $\Delta'$ be its image in $\Cl(H)$. Suppose that cells $\pi_1'$ and $\pi_2'$ form a dipole in $\Delta'$ and let $\pi_1$ and $\pi_2$ be the cells of $\Delta$ which map onto $\pi_1'$ and $\pi_2'$. Let $f_1$ and $f_2$ be the labels of the cells $\pi_1$ and $\pi_2$ in $\La(H)$. Since $\bott(\pi_1)=\topp(\pi_2)$, we have $\bott(f_1)=\topp(f_2)=\bott(f_2^{-1})$. Since $\pi_1'\circ{\pi_2'}$ is a dipole in $\Delta'$, we have $\phi(f_1)=\phi(f_2)^{-1}=\phi(f_2^{-1})$. As no foldings are applicable to $\La(H)$, the cells $f_1$ and $f_2^{-1}$ must coincide. Hence, $\pi_1$ and $\pi_2$ have mutually inverse labels and so $\pi_1\circ\pi_2$ forms a dipole in $\Delta$, in contradiction to $\Delta$ being reduced. To prove that the mapping $\psi$ is onto, we observe that if $\Delta'$ is a diagram in $\Cl(H)$, then the morphism from $\Delta'$ to $\La(H)$, enables to view it as a diagram in $\DG(\La(H),p)$, which maps to $\Delta'$ by the homomorphism $\psi$. 
\end{proof}

We say that a $2$-automaton over $\kk$ is \emph{folded} if no foldings are applicable to it.

\begin{Definition}\label{com1}
Let $\Delta$ be a diagram in $\DG(\kk,u)$. Assume that there are diagrams $\Psi,\Delta_1$ and $\Delta_2$ in the diagram groupoid $\dd(\kk)$ such that $\Psi$ is a $(vw,u)$-diagram, $\Delta_1$ is a $(v,v)$-diagram and $\Delta_2$ is a $(w,w)$-diagram. Assume also that
$\Delta\equiv\Psi^{-1}\circ(\Delta_1+\Delta_2)\circ\Psi$ (see Figure \ref{fig:sum_core}). Then the diagrams 
$$\Psi^{-1}\circ(\Delta_1+\varepsilon(w))\circ\Psi \ \mbox{ and }\ \Psi^{-1}\circ(\varepsilon(v)+\Delta_2)\circ\Psi $$
are called \emph{components} of the diagram $\Delta$.
\end{Definition}

\begin{figure}[h!]
\centering
\includegraphics[width=.35\linewidth]{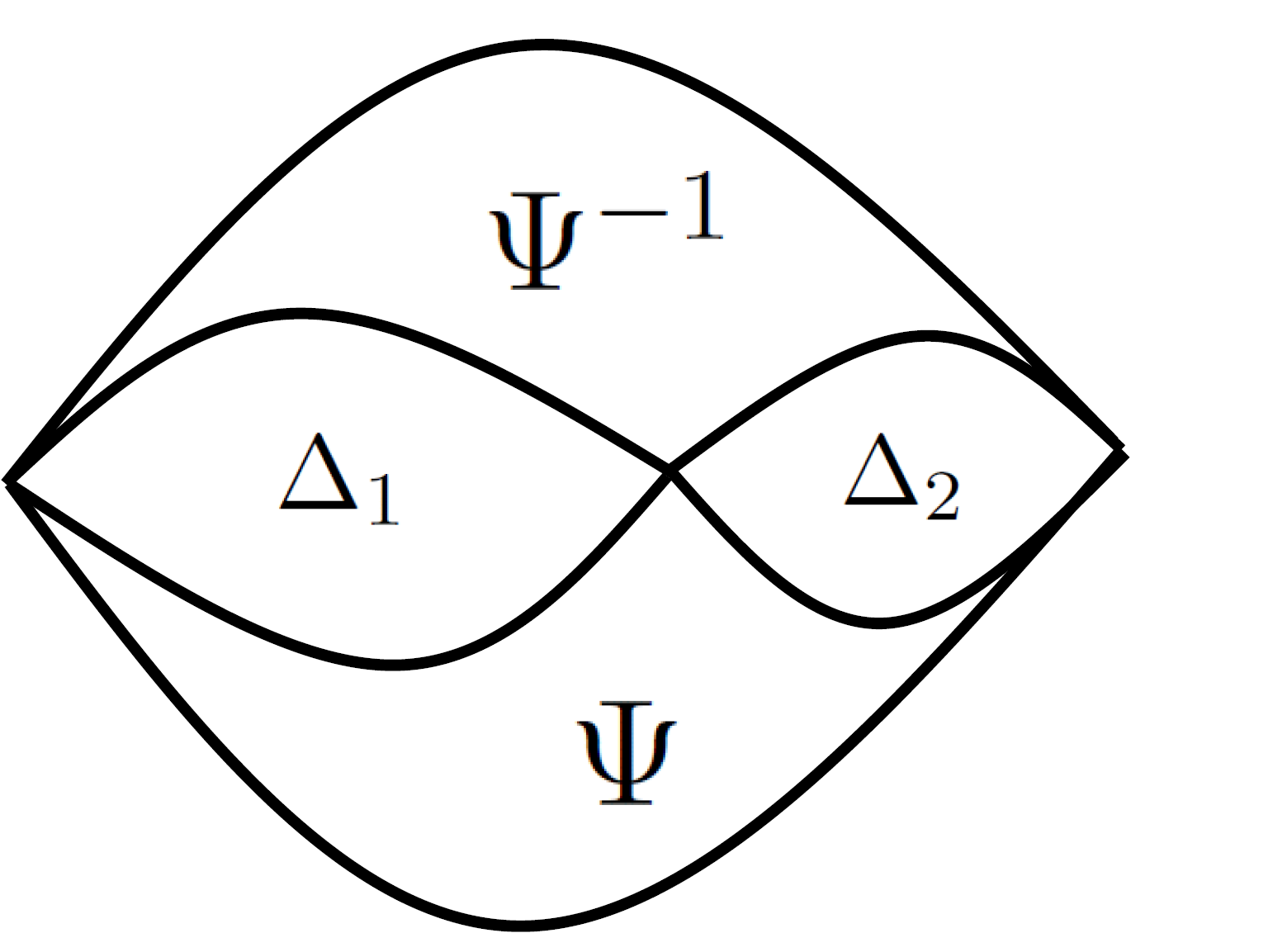}
\caption{$\Delta\equiv\Psi^{-1}\circ(\Delta_1+\Delta_2)\circ\Psi$} 
\label{fig:sum_core}
\end{figure}

Note that a diagram $\Delta$ can have more than $2$ components. We make the following simple observation. 
\begin{Remark}[Guba and Sapir]\label{r:GS}
Let $\La$ be a folded $2$-automaton over the directed $2$-complex $\kk$ with distinguished $1$-path $p_{\La}=q_{\La}$.
 Let $\Delta$ be a diagram in $\DG(\kk,u)$. If $\Delta$ is accepted by $\La$ then all components of $\Delta$ are also accepted by $\La$. In particular, if $H\le \DG(\kk,u)$, and $\Delta\in \Cl(H)$, then all components of $\Delta$ also belong to $\Cl(H)$. We say that $\Cl(H)$ is \emph{closed for components}.
\end{Remark}

The following conjecture is due to Guba and Sapir. The conjecture was made around 1999 but was never formulated in print \cite{SG_P}. 

\begin{Conjecture}[Guba and Sapir]\label{GSC}
Let $H$ be a subgroup of a diagram group $\DG(\kk,u)$. Then the closure $\Cl(H)$ is the minimal subgroup of $\DG(\kk,u)$ which contains $H$ and is closed for components. 
\end{Conjecture}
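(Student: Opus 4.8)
The plan is to prove the two inclusions separately. Write $M$ for the minimal subgroup of $\DG(\kk,u)$ that contains $H$ and is closed for components; this $M$ exists because any intersection of component-closed subgroups containing $H$ is again component-closed and contains $H$ (if $c$ is a component of $\Delta$ and $\Delta$ lies in each member of the family, then $c$ does too). The inclusion $M\subseteq\Cl(H)$ is the easy half: by Lemma~\ref{l:au} we have $H\le\Cl(H)$, by Remark~\ref{r:GS} the subgroup $\Cl(H)$ is closed for components, and $\Cl(H)$ is a subgroup of $\DG(\kk,u)$; since $M$ is the smallest subgroup with these two properties, $M\subseteq\Cl(H)$. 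All the content is therefore in the reverse inclusion $\Cl(H)\subseteq M$, i.e.\ in showing that every diagram accepted by $\La(H)$ already lies in $M$.

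I would reduce $\Cl(H)\subseteq M$ to the single assertion that $\Cl(H)$ is generated, as a group, by the components of the elements of $H$. Granting this, each such component is a component of an element of $H\subseteq M$, hence lies in $M$ because $M$ is closed for components; as $M$ is a subgroup, the group they generate, namely $\Cl(H)$, is contained in $M$. It is worth recording the conceptual reformulation this gives: since $\La(\Cl(K))=\La(K)$ and $\Cl$ is monotone, the chain $H\le M\le\Cl(H)$ yields $\Cl(M)=\Cl(H)$ and hence $\La(M)=\La(H)$, so the conjecture is equivalent to the statement that the minimal component-closed subgroup $M$ containing $H$ is itself closed, i.e.\ accepts nothing beyond its own members.

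To prove that $\Cl(H)$ is generated by components of elements of $H$, I would pass to the diagram-group picture of Lemma~\ref{diagram_group}, under which $\Cl(H)\cong\DG(\La(H),p)$ and the generators $h_i$ of $H$ become reduced diagrams over $\La(H)$ that collectively use every edge and every cell of $\La(H)$ (this is precisely because $\La(H)$ is obtained from them by folding). I would then induct on the number of cells of a reduced $(p,p)$-diagram $E$ over $\La(H)$. In the inductive step the aim is to locate, along the top path of $E$, a subdiagram matching a component (Definition~\ref{com1}) of one of the generators, multiply $E$ by the inverse of that component, which lies in the group generated by components of $H$, remove the resulting dipoles, and apply the inductive hypothesis to the shorter diagram. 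The matching is meant to exploit that $\La(H)$ is folded: no two cells of $\La(H)$ with the same image in $\kk$ share a top path or a bottom path, which is the two-dimensional analogue of the immersion property that makes Stallings' argument for free groups work, and which pins down the label of the leftmost top cell of $E$.

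The hard part will be exactly this peeling step, and I expect it to be the genuine obstacle, indeed the reason the excerpt establishes the conjecture only for $F$. In a general directed $2$-complex $\kk$ there is no positive/negative splitting $\Delta\equiv\Delta^+\circ\Delta^-$ and no left-to-right ordering of branches, so the leftmost top cell of $E$ need not occupy, in any single generator, the position it occupies in $E$: folding can weave together cells coming from several distinct generators, producing spherical diagrams over $\La(H)$ that are accepted yet are not visibly built from generator-components. These ``accidental'' diagrams are precisely the source of the strict inequality $H<\Cl(H)$, so controlling them is the entire difficulty of the conjecture. For $F$ this weaving is tamed by the dynamical picture, namely reading off, interval by interval, which element of $H$ governs each dyadic piece and thereby realizing the global homeomorphism as a product of components (the topological-full-group description behind Theorem~\ref{thm:GS_int}), but that linear interval structure is unavailable over an arbitrary $\kk$. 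A secondary difficulty is that for infinitely generated $H$ the core $\La(H)$ is infinite and need not be locally finite, so both the folded-immersion argument and the induction on cells must be arranged to survive without any finiteness hypothesis.
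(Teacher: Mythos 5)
You were asked to prove a statement that is, in fact, an open conjecture: the paper itself proves it only in the special case where $\kk$ is the Dunce hat, i.e.\ for subgroups of $F$ (Theorem~\ref{thm:GS}), and explicitly records that the general case remains open. Measured against that, your proposal is honest but incomplete in exactly the way one would expect. The easy inclusion $M\subseteq\Cl(H)$ is correct and is precisely the paper's combination of Lemma~\ref{l:au} and Remark~\ref{r:GS}; your reformulation via $\Cl(M)=\Cl(H)$, so that the conjecture is equivalent to the minimal component-closed subgroup $M$ being closed (cf.\ Corollary~\ref{cor:clo}), is also sound. But the entire content of the conjecture is the reverse inclusion, and there you offer only a plan whose key step you yourself flag as the genuine obstacle — so this is a correct framing, not a proof. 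One further caution: your reduction to ``$\Cl(H)$ is generated by $H$ together with the components of elements of $H$'' is a priori \emph{strictly stronger} than the conjecture. The subgroup $M$ is the closure under iterated component-taking (components of products of components, and so on), and the fact that a single round suffices is itself a nontrivial theorem — Lemma~\ref{clo_dya}, which the paper proves only for $F$, using the dyadic-piecewise description $\DPiec(H)$; sufficiency of your stronger claim is fine, but you would be committing to proving more than is asked, with no evidence it holds over a general $\kk$.

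Regarding the case the paper does settle: your proposed mechanism (induction on the number of cells of a reduced $(p,p)$-diagram over $\La(H)$, peeling off a generator component matched at the leftmost top cell) is not the paper's route, and the failure mode you identify is real even for $F$ — a cell of $\La(H)$ is an amalgam of cells from several generators, so no single generator need realize the local picture of $E$, and multiplying by a generator-component inverse need not reduce cell count. The paper avoids diagram surgery entirely. Its engine is Lemma~\ref{path_lem}, proved by tracking the folding sequence $\La'_0,\La'_1,\dots$ used to build the core: if two paths $u,v$ on $\La(H)$ terminate on the same edge, then for every sufficiently long binary word $w$ some $h\in H$ has the pair of branches $uw\rightarrow vw$. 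Applying this to all pairs of branches of a reduced diagram accepted by $\La(H)$ and refining to a common dyadic subdivision shows every $f\in\Cl(H)$ is dyadic-piecewise-$H$ (Theorem~\ref{thm:GS}), and Lemma~\ref{clo_dya} identifies $\DPiec(H)$ with $M$. Your closing diagnosis — that the left-to-right order of branches and the interval dynamics are the special features of $F$ that make this work, and are unavailable over a general directed $2$-complex — agrees with the paper's actual proof; so your proposal correctly locates the difficulty, but it does not overcome it, and as things stand nothing in the literature does.
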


In Section \ref{sec:GSC} we prove Conjecture \ref{GSC} for subgroups of Thompson's group $F$. The conjecture for general diagram groups remains open.

\section{Paths on the core of a subgroup $H\le F$}\label{sec:paths}

Let $\kk$ be the Dunce hat. From now on, all $2$-automata $\La$ considered in this paper are $2$-automata over $\kk$, unless explicitly stated otherwise. Positive cells of $\La$ are mapped by the immersion $\phi_{\La}$ to the positive cell of the Dunce hat. 
Other than diagrams $\Delta$ in $F$ which are sometimes viewed as $2$-automata, we will only consider $2$-automata $\La$ where the distinguished $1$-paths $p_{\La}$ and $q_{\La}$ coincide and are composed of a single edge $e$ so that $e$ is mapped by $\phi_{\La}$ to the unique edge of the Dunce hat. We assume that all $2$-automata $\La$ below satisfy these properties, even if it is not mentioned explicitly. 
Since every positive cell $\pi$ in $\La$ is mapped to the positive cell of $\kk$, the top $1$-path $\topp(\pi)$ is composed of one edge and the bottom $1$-path $\bott(\pi)$ is composed of two (left and right) edges. 

We will need to distinguish between two types of foldings applicable to a $2$-automaton $\La$ over $\kk$. If two positive cells $\pi_1$ and $\pi_2$ of $\La$ share their top $1$-paths but not their bottom $1$-paths, then folding $\pi_1$ and $\pi_2$ (and their inverse cells) is considered a folding \emph{of type 1}. If $\pi_1$ and $\pi_2$ share their bottom $1$-paths, then a folding of $\pi_1$ and $\pi_2$ is a folding \emph{of type 2}.

We define paths on a $2$-automaton $\La$ in a similar way to the definition of paths on tree-diagrams over $\kk$ (see Section \ref{sec:bra}).

\begin{Definition}\label{def:path}
Let $\La$ be a $2$-automaton over $\kk$ with distinguished edge $p_{\La}=q_{\La}$.
A finite sequence of edges $e_1,\dots,e_n$ in the $2$-automaton $\La$ is said to be a \emph{path} on $\La$ if
\begin{enumerate}
\item[(1)] $e_1=p_{\La}=q_{\La}$; and 
\item[(2)] for each $i=1,\dots,n-1$, the edge $e_i$ is the top edge of some positive cell $\pi_i$ in $\La$ and $e_{i+1}$ is a bottom left or right edge of the same cell.
\end{enumerate}
The label $\lab(p)$ of a path $p=e_1,\dots,e_n$ on $\La$ is defined in the same way as the label of a path on a tree-diagram $\Psi$ (see Definition \ref{def:branch}). 
\end{Definition}

Note that if $\La$ is a $2$-automaton to which no foldings of type $1$ are applicable then every edge in $\La$ is the top edge of at most one positive cell. Then, a path $p$ on $\La$ is uniquely determined by its label. In that case, we will often abuse notation and refer to a path in terms of its label.
In particular, given a finite binary word $u$, we could refer to a path $u$ on $\La$ and to a path $u$ on a $2$-automaton $\La'$ (where no foldings of type $1$ are applicable) at the same time. We can also refer to $u$ as a (positive or negative) path on a diagram $\Delta$. This should not cause any confusion as we are careful to mention the $2$-automaton or diagram we are referring to. 
 
Note that if $\La$ is a $2$-automaton and $u$ is a finite binary word then $u$ does not necessarily label a path on $\La$. If every edge in $\La$ is the top edge of some positive cell, then every binary word $u$ labels (at least one) path on $\La$. If $p$ is a path on the $2$-automaton $\La$, then $p^+$ denotes the last edge of the path on $\La$. If no foldings of type $1$ are applicable to $\La$ and $\lab(p)\equiv u$, then $u^+$ also denotes the last edge of the path. 


The following remarks will often be used below with no specific reference.

\begin{Remark}\label{min_tree}
Let $\La$ be a $2$-automaton over $\kk$ with distinguished edge $p_{\La}=q_{\La}$. Let $v$ be a finite binary word. Then $v0$ labels a path on $\La$ if and only if $v1$ labels a path on $\La$. Thus, if a word $u$ labels a path on $\La$ and $\Psi$ is the minimal tree-diagram over $\kk$ with branch $u$ then every branch $b$ of $\Psi$  labels a path on $\La$. Indeed, every branch $b\not\equiv u$ in $\Psi$ is of the form $b\equiv va$ where $v$ is a proper prefix of $u$ and $a\in\{0,1\}$. 
\end{Remark}

\begin{Remark}\label{mor_path}
Let $\La'$ and $\La$ be $2$-automata over the Dunce hat $\kk$ such that $p_{\La'}=q_{\La'}$ and $p_{\La}=q_{\La}$ are composed of a single edge. A morphism $\psi$ from $\La'$ to $\La$ naturally sends every path on the $2$-automaton $\La'$ to a path on $\La$ with the same label. 
\end{Remark}

\begin{Lemma}\label{lif_2}
Let $\La'$ and $\La$ be $2$-automata over $\kk$ such that $\La$ results from $\La'$ by  at most countably many applications of foldings of type $2$. Then any path on $\La$ can be lifted to a unique path on $\La'$. In particular, by Remark \ref{mor_path},
there is a $1-1$ correspondence between paths on $\La'$ and paths on $\La$.
\end{Lemma}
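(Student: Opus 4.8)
The plan is to reduce to a single folding and then invert the folding morphism explicitly, lifting a path on $\La$ one edge at a time; the whole argument turns on the fact that a type-$2$ folding only identifies the \emph{top} edges of two cells that already share their bottom edges. First I would reduce to one step: a sequence of type-$2$ foldings factors as $\La'=\La_0\to\La_1\to\cdots\to\La_k=\La$, and a composite of bijections on paths is again a bijection, so it suffices to treat a single folding. Throughout I use that no two distinct positive cells share a top edge (equivalently, that no type-$1$ folding is applicable), so that, as noted just before the lemma, a path is determined by its label; I will check that this property is preserved by the folding. So suppose $\La$ arises from $\La'$ by folding positive cells $\pi_1,\pi_2$ with $\topp(\pi_1)=t_1$, $\topp(\pi_2)=t_2$ and a common bottom path: the folding identifies $t_1$ with $t_2$ into one edge $t$ and merges $\pi_1,\pi_2$ into a single cell $\pi$, leaving every other edge and cell untouched. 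Let $\psi\colon\La'\to\La$ be the resulting morphism; by Remark \ref{mor_path} it sends each path on $\La'$ to a path on $\La$ with the same label, and it is the inverse of this map that I must build.

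Next I would lift a path $f_1,\dots,f_n$ on $\La$ by induction on its length, keeping a lifted edge $\tilde f_i$ with $\psi(\tilde f_i)=f_i$ and $\tilde f_1=p_{\La'}$. At the $i$-th step $f_i$ is the top of the cell the path uses and $f_{i+1}$ is its left or right bottom according to the $i$-th letter of the label; I take the cell of $\La'$ over it whose top is $\tilde f_i$ and let $\tilde f_{i+1}$ be the corresponding bottom. If $f_i\neq t$ this is forced, since $\tilde f_i$ is then the only preimage of $f_i$ and $\psi$ is injective on the cells in question. The single delicate case is $f_i=t$, where the cell is $\pi$; its two preimages $\pi_1,\pi_2$ carry the tops $t_1,t_2$, so exactly one of them has top $\tilde f_i$, and because $\pi_1$ and $\pi_2$ \emph{share} their bottom path, the resulting $\tilde f_{i+1}$ is the same whether $\tilde f_i=t_1$ or $\tilde f_i=t_2$. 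This is the heart of the matter: the very definition of a type-$2$ folding makes the continuation of the lift independent of which preimage of $t$ has been reached, so the lifted sequence is a genuine path over $f_1,\dots,f_n$.

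Finally I would observe that the lift is unique, since every choice above is forced, and that the folding creates no new pair of cells with a common top: the only coincidence it introduces is at $t$, where $\pi$ is the unique cell of $\La$ with top $t$. Hence the ``a path is determined by its label'' property descends to $\La$, the two automata have exactly the same set of path-labels, and $\psi$ together with the lift are mutually inverse bijections between the paths on $\La'$ and those on $\La$. I expect the merged edge $t$ to be the only real obstacle: everywhere else the lift is transparently forced, and essentially all of the work lies in verifying that the shared-bottom condition keeps the lift both consistent and single-valued at $t$.
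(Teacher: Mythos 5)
Your proof is correct and follows essentially the same route as the paper's: reduce to a single type-$2$ folding, lift the path edge by edge by induction on its length, and observe that at the merged top edge the two preimage cells share their bottom path, so the continuation of the lift is forced either way. The only difference is that you explicitly assume no two positive cells of $\La'$ share a top edge (so that the folded cell is the unique cell of $\La$ over the merged edge $t$); the paper's proof instead splits cases on whether the cell the path uses is the folded cell and passes silently over the same configuration, so your version is if anything slightly more careful on the one delicate point.
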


\begin{proof}
Assume first that $\La$ results from $\La'$ by a single application of a folding of type $2$. Clearly, that induces a morphism $\psi$ from $\La'$ to $\La$, so by Remark \ref{mor_path}, any path on $\La'$ is mapped to a path on $\La$. Let $\pi_1$ and $\pi_2$ be the positive cells of $\La'$ which are folded in the transition to $\La$. In particular, in $\La'$, $\bott(\pi_1)=\bott(\pi_2)$; the cells $\pi_1$ and $\pi_2$ become identified in $\La$ (i.e., $\psi(\pi_1)=\psi(\pi_2)$) and the top edges $\topp(\pi_1)$ and $\topp(\pi_2)$ are folded to a single edge $e$ of $\La$. Let $p=e_1,\dots,e_n$ be a path on $\La$. We prove by induction on $n$ that $p$ can be lifted to a unique path $q=e_1',\dots,e_n'$ on $\La'$ such that for all $i$, $\psi(e_i')=e_i$. If $n=1$, the result is clear. Assume that the lemma holds for $n$ and let $p=e_1,\dots,e_n,e_{n+1}$. By assumption, the path $e_1,\dots,e_n$ can be lifted to a unique path $e_1',\dots,e_n'$. Notice that $e_n$ must be the top edge of some positive cell $\pi$ in $\La$ such that $e_{n+1}$ is a bottom edge of $\pi$. 
If $\pi$ is not the folded cell, i.e., $\pi$ is not the cell $\psi(\pi_1)=\psi(\pi_2)$, then there is a unique cell 
$\pi'$ in $\La'$ such that $\pi=\psi(\pi')$. Then $e_n'$ is the top edge of $\pi'$. If $e_{n+1}$ is the left (resp. right) bottom edge of $\pi$, then one should take $e_{n+1}'$ to be the left (resp. right) bottom edge of $\pi'$. That would complete the lifting of the path $p$ and it is obviously the only choice for $e_{n+1}'$. If $\pi$ is the folded cell, then 
$e_n'=\topp(\pi_1)$ or $e_n'=\topp(\pi_2)$. In that case, if $e_{n+1}$ is a left (resp. right) bottom edge of $\pi$, one should take $e_{n+1}'$ to be the common left (resp. right) bottom edge of $\pi_1$ and $\pi_2$. 

If $\La$ results from $\La'$ by finitely many foldings of type $2$ we are done by induction on the number of foldings applied. Thus, assume that $\La$ results from $\La'$ by countably many foldings of type $2$. Then there is a sequence of $2$-automata $\La'_n$, $n\ge 0$ such that $\La'_0=\La'$, $\La'_n$ results from $\La'_{n-1}$ by a single application of a folding of type $2$ and such that $\La$ is the limit $2$-automaton.
Let $p=e_1,\dots,e_n$ be a path on $\La$. Since folding is a ``local'' operation and $p$ is finite, it can be lifted to a path $q'$ on $\La'_n$ for a large enough $n$. Then $q'$ can be lifted to a path $q$ on $\La'_0=\La'$ by the case where finitely many foldings are applied. Clearly, $q$ is a lifting of $p$. To prove uniqueness, assume that $q_1$ and $q_2$ are two liftings of $p$ to paths on $\La'$. Then for a large enough $m$, the images of $q_1$ and $q_2$ in $\La'_m$ (under the natural morphism) coincide. That contradicts the fact that any path on $\La'_m$ has a unique lifting to a path on $\La'$, as $\La'_m$ results from $\La'$ by finitely many applications of foldings of type $2$. 
\end{proof}

For the proof of Lemma \ref{path_lem} below we will have to consider paths on a $2$-automaton $\La$ which do not start from the distinguished edge $p_{\La}=q_{\La}$. A sequence $p$ of edges $e_1,\dots,e_n$ in $\La$ is called a \emph{trail} if it satisfies the second condition in Definition \ref{def:path}. 
If $p_1$ and $p_2$ are trails such that the terminal edge of $p_1$ is the initial edge of $p_2$, then the concatenation of trails is naturally defined. We denote the concatenation of $p_1$ and $p_2$ by $p_1p_2$.

\begin{Lemma}\label{con_lif}
Let $\La'$ and $\La$ be $2$-automata over $\kk$ such that $\La'$ projects onto $\La$. 
 Let $p$ be a path on $\La$. Then $p$ is a concatenation of trails $p_1,\dots,p_n$ such that each trail $p_i$ can be lifted to a trail on $\La'$. \qed
\end{Lemma}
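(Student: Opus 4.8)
The plan is to reduce the statement to the single-cell decomposition of $p$ recorded immediately before it. Writing $p=e_1,\dots,e_n$, recall that (as observed just above) $p$ is the concatenation of the $n-1$ elementary trails $p_i=e_i,e_{i+1}$ for $1\le i\le n-1$. Each $p_i$ is a single cell-step: by Definition \ref{def:path}(2), the edge $e_i$ is the top edge of a positive cell $\pi_i$ of $\La$ and $e_{i+1}$ is a bottom (left or right) edge of $\pi_i$. It therefore suffices to lift each elementary trail $p_i$ separately, and this is exactly what the statement asks for, since the lifts need not fit together into one path on $\La'$.

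For the lifting step I would use that $\La'$ projects onto $\La$, i.e.\ that there is a surjective morphism $\psi\colon\La'\to\La$ of $2$-automata over $\kk$. Surjectivity on cells yields a positive cell $\pi_i'$ of $\La'$ with $\psi(\pi_i')=\pi_i$. Because $\psi$ commutes with $\topp$ and $\bott$ and respects the left--right order of the two bottom edges of a positive cell, the edge $\topp(\pi_i')$ maps to $e_i$, and the bottom edge of $\pi_i'$ on the same side (left or right) as $e_{i+1}$ maps to $e_{i+1}$. Hence the two-edge sequence consisting of $\topp(\pi_i')$ followed by that bottom edge is a trail on $\La'$ mapping onto $p_i$; this is the desired lift of $p_i$. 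Assembling the pieces, $p=p_1p_2\cdots p_{n-1}$ is a concatenation of trails each of which lifts to a trail on $\La'$, as required.

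The one point that genuinely needs care is conceptual rather than computational: the claim is about liftability of the individual pieces, not about lifting $p$ globally. In contrast to Lemma \ref{lif_2}, where restricting to foldings of type $2$ guarantees a \emph{unique} global lift, under a general projection $\psi$ a cell of $\La$ can have several preimages in $\La'$, so the chosen lifts of consecutive trails $p_i$ and $p_{i+1}$ may fail to share an endpoint and thus need not concatenate. Recognizing that the elementary decomposition sidesteps exactly this difficulty — reducing to the trivial fact that every single cell-step lifts by surjectivity of $\psi$ on cells — is the main (and essentially the only) thing to get right, which is why the authors can dismiss the proof with \qed.
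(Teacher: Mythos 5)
Your proof is correct and follows the same route the paper intends: the remark immediately preceding the lemma decomposes $p$ into the elementary two-edge trails $p_i=e_i,e_{i+1}$, and each such single cell-step lifts because the projection is surjective on cells and respects $\topp$, $\bott$, and the left--right order of bottom edges. Your closing observation about why only piecewise (not global) liftability can be expected under a general projection is exactly the right reading of why the statement is phrased this way.
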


\begin{proof}
	Let $p=e_1,\dots,e_n$ be a path on $\La$. Then $p$ can be viewed as a concatenation of $n-1$ trails $p_1,\dots,p_{n-1}$ where for each $i$, $p_i=e_i,e_{i+1}$. Clearly, each trail $p_i$ can be lifted to $\La'$. 
\end{proof}

Let $H$ be a subgroup of $F$. We consider paths on the core $\La(H)$. Note that if $e$ is an edge of $\La(H)$, then there is a path $p$ on $\La(H)$ such that $p^+=e$. Indeed, this is already true for the bouquet of spheres $\La'$ defined in the construction of $\La(H)$ and $\La'$ projects onto $\La(H)$.

\begin{Lemma}\label{path_lem}
Let $H$ be a subgroup of $F$ and let $\La(H)$ be the core of $H$. Let $p$ and $q$ be two paths on the core $\La(H)$
with labels $\lab(p)\equiv u$ and $\lab(q)\equiv v$. 
Assume that $p^+=q^+$ (i.e., the paths $p$ and $q$ terminate on the same edge of $\La(H))$. Then there is an integer $k\ge 0$, such that for any finite binary word $w$ of length $\ge k$, there is an element $h\in H$ with a pair of branches $uw\rightarrow vw$. 
\end{Lemma}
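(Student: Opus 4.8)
The plan is to reduce the statement to a purely ``interval'' assertion and then to build $h$ by lifting the two paths into the bouquet of spheres $\La'$ from which $\La(H)$ was folded. First note that, since $\La(H)$ is folded, no two positive cells of $\La(H)$ share a top edge; hence from the common terminal edge $e=p^+=q^+$ any label $w$ determines at most one continuation, and when that continuation exists the words $uw$ and $vw$ label paths on $\La(H)$ ending at one and the same edge $e'$. So it suffices to work in the situation where the two labels terminate at a common edge and to produce, for long enough $w$, an element $h\in H$ with pair of branches $uw\to vw$; recall that having such a pair of branches means precisely that $h$ carries the dyadic interval $[uw]$ linearly onto $[vw]$, and that $h$ may depend on $w$.

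Next I would lift. Recall that $\La'$ is the disjoint union of the reduced diagrams $\Delta_i$ of a generating set of $H$, with every $\topp(\Delta_i)$ and $\bott(\Delta_i)$ identified to the single distinguished edge $e_0$, and that $\La'$ projects onto $\La(H)$. In $\La'$ a maximal positive path starting at $e_0$ stays inside one sphere $\Delta_i$ and stops at an edge of the horizontal $1$-path of $\Delta_i$, its label being a full positive or negative branch of $\Delta_i$; traversing such a branch is what records the action of $h_i^{\pm1}$. By Lemma~\ref{con_lif} the path $uw$ on $\La(H)$ is a concatenation of trails, each of which lifts to a trail on $\La'$, so its lift splits into segments, each running through one sphere down to a leaf, with consecutive segments joined at edges of $\La'$ that are distinct but have a common image in $\La(H)$, i.e.\ edges that were folded together. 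The same applies to $vw$, and the two lifts terminate at edges $\tilde f_1,\tilde f_2$ of $\La'$ lying over the common edge $e'$.

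The heart of the matter is the following claim, which I would prove by induction on the length of the folding sequence $\La'\to\cdots\to\La(H)$: if two edges of $\La'$, each reached from $e_0$ by a \emph{full} branch $\gamma$, resp.\ $\gamma'$, of its sphere, are folded to a common edge of $\La(H)$, then some element of $H$ carries $[\gamma]$ linearly onto $[\gamma']$. For a folding of type $1$ the two cells share a top edge already identified at an earlier stage; the inductive hypothesis supplies an element of $H$ mapping the two branch-intervals of that top edge onto one another, and the \emph{same} element then maps the corresponding left children to each other and the right children to each other, which are exactly the edges produced by the folding. For a folding of type $2$ the two cells share their whole bottom path, so both children are already identified; here one must combine the hypotheses for the left and the right child into a single element mapping the parent interval $[\gamma]$ onto $[\gamma']$, using that the two linear pieces abut along the midpoint of $[\gamma]$. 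The base case is the identification of the copies of $e_0$, witnessed by the identity.

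Finally I would assemble $h$ from a single closed path at $e_0$: descend along the lift of $uw$, cross the central junction over $e'$ and each internal junction by inserting the bridging elements of $H$ produced by the claim, and ascend along the lift of $vw$ read backwards. Reading off the generators traversed yields a word in the generators, hence an element $h\in H$, and tracking branches through the traversal shows $h$ has pair of branches $uw\to vw$. The integer $k$ is forced by the lifts: one takes $|w|$ large enough that every segment of the lifts of $uw$ and of $vw$ ends at a \emph{leaf} edge of its sphere, so that all intervals invoked above are genuine dyadic branch-intervals and the linear pieces match; for $|w|\ge k$ this holds. I expect the main obstacle to be exactly the type-$2$ step of the inductive claim, namely guaranteeing a \emph{single} element of $H$ that realizes a full-interval map $[\gamma]\to[\gamma']$ rather than maps defined only on proper sub-branches — it is this coherence requirement that forces the passage to sufficiently long $w$.
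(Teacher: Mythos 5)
Your overall strategy -- lift to the bouquet of spheres and induct on the folding sequence -- is the same as the paper's, but your key inductive claim is false as stated, and the failure is exactly at the point you flag as ``the main obstacle.'' You claim that whenever two edges of $\La'$ reached by full branches $\gamma,\gamma'$ are folded together, \emph{some single element of $H$} carries $[\gamma]$ linearly onto $[\gamma']$. For a type-$2$ folding you propose to ``combine the hypotheses for the left and the right child into a single element mapping the parent interval $[\gamma]$ onto $[\gamma']$, using that the two linear pieces abut along the midpoint.'' But gluing an element of $H$ acting on $[\gamma 0]$ to an element of $H$ acting on $[\gamma 1]$ produces only a dyadic-piecewise-$H$ function, i.e.\ an element of $\Cl(H)$, not of $H$ -- this is precisely the phenomenon that makes $\Cl(H)$ strictly larger than $H$ in general, and if your claim were true the lemma would hold with no extension at all and every subgroup would be closed. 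Saying that the difficulty ``forces the passage to sufficiently long $w$'' does not repair the argument, because your inductive statement has no $w$ in it: you never explain how a long suffix lets you avoid the parent-level gluing.

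The paper's fix is to build the quantifier over long extensions into the induction itself: the statement proved for the intermediate automaton $\La'_n$ is that for \emph{every} $w$ of length at least $k_n$ there is $h\in H$ with pair of branches $uw\to vw$, where $k_n$ counts the type-$2$ foldings performed so far. At a type-$2$ folding one writes $w\equiv aw'$, uses the first letter $a$ to descend into a single child (the two cells share their bottom path, so both children are already identified), and invokes the inductive hypothesis for $w'$ at the child -- the parent-level map is never constructed. This is also why your restriction of the claim to \emph{full branches} of the spheres is problematic: in the type-$1$ step you apply the inductive hypothesis to the shared top edge of the two folded cells, which is typically an interior edge of its sphere rather than a leaf, so a branch-only hypothesis does not reach it; the extension-by-$w$ formulation handles interior edges uniformly. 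Finally, your ``assemble $h$ by crossing junctions'' step corresponds to the paper's secondary induction on the number of liftable trails, where the bridging elements are composed as $h_1h_2$ with the suffix $w$ re-threaded through each factor; that bookkeeping also depends on the $w$-quantified form of the statement.
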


\begin{proof}
We consider the construction of the core $\La(H)$. Let $\{\Delta_i\mid i\in \mathcal I\}$ be a generating set of $H$.
It is enough to consider the case where $\mathcal I$ is infinite. 
The first step in the construction of $\La(H)$ is to identify all the top and bottom edges of the generators $\Delta_i$ to get a $2$-automaton $\La'=\La'_0$. Next, we apply countably many foldings to $\La'$, so that if $\La'_n$, $n\in\mathbb{N}$ are the $2$-automata resulting in the process, then no folding is applicable to the limit automaton $\La=\La(H)$. 

It is enough to prove that the lemma holds for each of the $2$-automata $\La'_n$, $n\ge 0$. Indeed, if $p$ and $q$ are paths on the core $\La(H)$ such that $p^+=q^+$, then for a large enough $n\in\mathbb{N}$ they can be lifted to paths $p_n$, $q_n$ on $\La'_n$ such that $p_n^+=q_n^+$ and $\lab(p_n)\equiv \lab(p)$, $\lab(q_n)\equiv \lab(q)$. 

We make the following claim. For each $n\ge 0$, let $k_n$ be the number of foldings of type $2$ out of the $n$ foldings applied to $\La'$ to get the $2$-automaton $\La'_n$. Then the lemma holds for any pair of paths $p$ and $q$ on the $2$-automaton $\La'_n$ with the constant $k=k_n$. 

We prove the claim by induction on $n$. For $n=0$, let $p$ and $q$ be paths on $\La'_0$ such that $p^+=q^+=e$. Recall that $\La'_0$ is a bouquet  of the diagrams $\Delta_i$ (each, with the top and bottom edges identified). If $e$ is not an edge on the horizontal $1$-path of any of the diagrams $\Delta_i$, then the paths $p$ and $q$, and their labels $u$ and $v$, must coincide. Then for any word $w$ of length $\ge k_0=0$, the identity element of $H$ has the pair of branches $uw\rightarrow vw$. If $e$ lies on the horizontal $1$-path of some $\Delta_i$ and $p$ and $q$ do not coincide, 
then $u\rightarrow v$ is a pair of branches of the diagram $\Delta_i$ or its inverse. In particular, for every binary word $w$ of length $\ge k_0=0$, a diagram equivalent to $\Delta_i$ or $\Delta_i^{-1}$ has the pair of branches $uw\rightarrow vw$. 

Let $n\in\mathbb{N}$ and assume that the claim holds for $n-1$. We consider two cases. 

Case 1: The $n^{th}$ folding is a folding of type $1$. In that case $k_n=k_{n-1}$.

Let $p$ and $q$ be paths on $\La'_n$ such that $p^+=q^+$. By Lemma \ref{con_lif}, $p$ (resp. $q$) is a concatenation of trails $p_1,\dots,p_m$ (resp. $q_1,\dots,q_r$) which can be lifted to trails on $\La'_{n-1}$. We prove the claim by induction on $m+r$ (when $m$ and $r$ are taken to be the smallest possible for $p$ and $q$). Assume first that $m=1$ and $r=1$. 

Let $p'$ and $q'$ be liftings of $p$ and $q$ to paths on $\La'_{n-1}$. If $(p')^+=(q')^+$ then we are done by the induction hypothesis. Otherwise, the edges $(p')^+$ and $(q')^+$ of $\La'_{n-1}$ are identified in $\La'_n$ as a result of the unique folding of type $1$ applied to $\La'_{n-1}$. It follows that there are two positive cells $\pi_1$ and $\pi_2$ in $\La'_{n-1}$, such that $\topp(\pi_1)=\topp(\pi_2)$ and such that $(p')^+$ is the left or right bottom edge of $\pi_1$ and $(q')^+$ is the respective bottom edge of $\pi_2$. We assume that $(p')^+$ is the left bottom edge of $\pi_1$, the other case being similar. 
Let $z$ be a path on $\La'_{n-1}$ with terminal edge $z^+=\topp(\pi_1)=\topp(\pi_2)$ (such a path clearly exists). Then $z$ can be extended to paths $z_1$ and $z_2$ on $\La'_{n-1}$, with the same label $\lab(z_1)\equiv\lab(z_2)\equiv\lab(z)0$ such that $z_1^+=(p')^+$ and $z_2^+=(q')^+$. By the induction hypothesis, for any finite binary word $w$ of length $\ge k_{n-1}=k_n$, there are elements $h_1,h_2\in H$ with pairs of branches $\lab(z_1)w\rightarrow \lab(p')w$ and $\lab(z_2)w\rightarrow\lab(q')w$ respectively. Since $\lab(z_1)\equiv\lab(z_2)$, the element $h_1^{-1}h_2\in H$ has the pair of branches $\lab(p')w\equiv\lab(p)w\rightarrow \lab(q')w\equiv\lab(q)w$ as required. 


If $m>1$ and $p=p_1\cdots p_m$, we let $p_i'$, $i=1,\dots,m$ be the lifting of the trail $p_i$ to $\La'_{n-1}$. Let $e$ be the initial edge of $p_2'$. There is a path $p_1''$ on $\La'_{n-1}$ with terminal edge $e$. Let $s$ be the projection of $p_1''$ to a path on $\La'_n$. Then $p_1$ and $s$ terminate on the same edge. By the case $m=r=1$, we have that for any word $w'$ of length $\ge k_n$ there is an element $h_1$ in $H$ with a pair of branches $\lab(p_1)w'
\rightarrow \lab(s)w'$. 
We consider the path $sp_2p_3\cdots p_m$ on $\La'_n$. Note that $sp_2$ can be lifted to the path $p_1''p_2'$ on $\La'_{n-1}$, thus $sp_2p_3\cdots p_m$ is a concatenation of $m-1$ trails such that each one can be lifted to a trail on $\La'_{n-1}$. By the induction hypothesis, for each $w$ of length $\ge k_n$ there is an element $h_2$ in $H$ with a pair of branches $\lab(s)\lab(p_2\dots p_m)w\rightarrow \lab(q)w$. Then if one takes $w'\equiv \lab(p_2\cdots p_m)w$, then $|w'|\ge k_n$ and from the above there exists $h_1\in H$ with a pair of branches $\lab(p_1)\lab(p_2\cdots p_m)w
\rightarrow \lab(s)\lab(p_2\cdots p_m)w$. Then $h_1h_2$ is an element of $H$ with a pair of branches $\lab(p)w\rightarrow \lab(q)w$, as required. The argument for $r>1$ is similar.

Case 2: The $n^{th}$ folding is a folding of type $2$. In that case, $k_n=k_{n-1}+1$.

Let $p$ and $q$ be paths on $\La'_n$ such that $p^+=q^+$. Let $p_1$ and $q_1$ be liftings of $p$ and $q$ to paths on $\La'_{n-1}$ (see Lemma \ref{lif_2}). As in case (1), we only have to consider the case where $p_1^+$ and $q_1^+$ are distinct edges in $\La'_{n-1}$ which are identified in $\La'_n$ as a result of the folding applied to $\La'_{n-1}$. 
Since the folding is of type $2$, there are two positive cells $\pi_1$ and $\pi_2$ in $\La'_{n-1}$ such that $\bott(\pi_1)=\bott(\pi_2)$, the edge $p_1^+=\topp(\pi_1)$ and the edge $q_1^+=\topp(\pi_2)$. 
Let $w$ be a word of length $\ge k_{n}$. Then $w\equiv aw'$ where $a\in\{0,1\}$ and $|w'|\ge k_{n-1}$. The path $p_1$ can be extended to a path $p_1'$ on $\La'_{n-1}$ such that $\lab(p_1')\equiv \lab(p_1)a$ and the terminal edge $(p_1')^+$ is a bottom edge of $\pi_1$ (it is the left bottom edge if $a\equiv 0$ and the right bottom edge if $a\equiv 1$). Similarly, the path $q_1$ can be extended to a path $q_1'$ on $\La'_{n-1}$ such that $\lab(q_1')\equiv \lab(q_1)a$ and the terminal edge $(q_1')^+$ is a bottom edge of $\pi_2$. 
Clearly, in $\La'_{n-1}$, the edges $(p_1')^+$ and $(q_1')^+$ coincide. Since $|w'|\ge k_{n-1}$, by the induction hypothesis, there is an element $h\in H$ with a pair of branches $\lab(p_1')w'\rightarrow \lab(q_1')w'$. Since $\lab(p_1')w'\equiv \lab(p)w$ and $\lab(q_1')w'\equiv \lab(q)w$, $h$ has a pair of branches $\lab(p)w\rightarrow \lab(q)w$, as required. 
\end{proof}

\begin{Remark}
The proof of Lemma \ref{path_lem} implies that if $H$ is a finitely generated subgroup of $F$ then there is a uniform constant $k\in\mathbb{N}$ such that for any two paths $u$ and $v$ on the core $\La(H)$ and for any finite binary word $w$ of length $\ge k$, if $u^+=v^+$ then there is an element $h\in H$ with a pair of branches $uw\rightarrow vw$. 
\end{Remark}

The following simple lemma can be seen as a partial converse to Lemma \ref{path_lem}.

\begin{Lemma}\label{trivial}
Let $H\le F$ be a subgroup of $F$. If $h\in H$ has a pair of branches $u\rightarrow v$ such that $u$ and $v$ label paths on the core $\La(H)$, then the terminal edges $u^+$ and $v^+$ coincide. In particular, if $\Delta$ is a reduced diagram in $H$ then for any pair of branches $w_1\rightarrow w_2$ of $\Delta$, we have $w_1^+=w_2^+$ in $\La(H)$.
\end{Lemma}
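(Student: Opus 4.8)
The plan is to reduce everything to the reduced diagram of $h$, transport its branches into $\La(H)$ through an accepting morphism, and then exploit the fact that in a \emph{folded} $2$-automaton a path is determined by its label. The whole argument rests on one elementary geometric observation about a diagram $\Delta\equiv\Delta^+\circ\Delta^-$ in $F$: the positive branch $u_i$ and the negative branch $v_i$ of a pair $u_i\to v_i$ both terminate on the same edge of the horizontal $1$-path of $\Delta$, namely its $i$-th edge counted from the left. Thus inside $\Delta$ itself one trivially has $u_i^+=v_i^+$; the work is to show this equality survives the passage to $\La(H)$.

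First I would invoke $h\in H\le\Cl(H)$ (Lemma \ref{l:au}) to obtain the reduced diagram $\Delta_0$ of $h$, which is accepted by $\La(H)$; fix an accepting morphism $\psi\colon\Delta_0\to\La(H)$. Since $h$ has a pair of branches $u\to v$, the observation recorded in Section \ref{sec:bra} (relating a pair of branches of $f$ to a pair of branches of its reduced diagram) supplies a pair of branches $u_1\to v_1$ of $\Delta_0$ with $u\equiv u_1w$ and $v\equiv v_1w$ for a common (possibly empty) suffix $w$. Let $f$ be the edge of the horizontal $1$-path of $\Delta_0$ on which both $u_1$ and $v_1$ terminate. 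By Remark \ref{mor_path}, $\psi$ sends the positive branch $u_1$ to a path on $\La(H)$ with label $u_1$ whose terminal edge is $\psi(f)$, so $u_1^+=\psi(f)$ in $\La(H)$. The only delicate point is the negative branch: $v_1$ runs through negative cells of $\Delta_0^-$ starting at $\bott(\Delta_0)$, but since $\psi$ commutes with the involution $\iv$ and with the immersions, and since $\bott(\Delta_0)$ is sent to $q_{\La(H)}=p_{\La(H)}$, the image of $v_1$ is again a genuine path on $\La(H)$ in the sense of Definition \ref{def:path} (it traverses the positive cells $\psi(\rho)$ dual to the negative cells $\psi(\rho\iv)$ met by $v_1$), with label $v_1$ and terminal edge $\psi(f)$. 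Hence $v_1^+=\psi(f)=u_1^+$ in $\La(H)$.

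It remains to propagate this equality along the common suffix $w$. Here I would use that $\La(H)$ is folded, so that no folding of type $1$ applies and every edge is the top edge of at most one positive cell; consequently a path in $\La(H)$ is determined by its label, and the continuation of a path by a fixed word is unique whenever it exists. By hypothesis both $u\equiv u_1w$ and $v\equiv v_1w$ label paths on $\La(H)$; each of these paths reaches the common edge $e:=u_1^+=v_1^+$ after reading its prefix and then reads the same suffix $w$ from $e$, so by uniqueness of the continuation they end on the same edge, i.e. $u^+=v^+$ in $\La(H)$. For the ``in particular'' clause, if $\Delta$ is a reduced diagram in $H$ and $w_1\to w_2$ is a pair of its branches, then $\Delta$ is itself accepted, its branches automatically label paths on $\La(H)$, and the argument above with empty suffix (applied to $\Delta$ in place of $\Delta_0$) yields $w_1^+=w_2^+$ directly.

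I expect the main obstacle to be the careful treatment of the negative branch under $\psi$: one must verify that traversing negative cells in $\Delta_0^-$ and then applying a morphism respecting $\iv$ produces a bona fide path on $\La(H)$ (which by definition runs through positive cells out of $p_{\La(H)}$), carrying exactly the label $v_1$ and ending at $\psi(f)$. It is also worth emphasizing why one cannot simply use an arbitrary diagram of $h$: acceptance need not be preserved under insertion of dipoles, so the reduction to the reduced diagram $\Delta_0$ together with the suffix-propagation step is essential rather than cosmetic. Once the negative-branch identification is in place, both the equality $u_1^+=v_1^+$ and its propagation along $w$ are routine.
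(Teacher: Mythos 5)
Your proposal is correct and follows essentially the same route as the paper: pass to the reduced diagram of $h$, use the accepting morphism to push the positive and negative branches $u_1$ and $v_1$ to paths on $\La(H)$ ending on the same edge, and then propagate along the common suffix $w$ using that paths on the folded core are determined by their labels. The paper compresses the negative-branch and suffix-propagation steps into ``the natural morphism maps the branches to paths'' and ``clearly, it suffices,'' but your more explicit treatment of both points is exactly what those phrases stand for.
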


\begin{proof}
Let $\Delta_1$ be the reduced diagram of $h$. Then $\Delta_1$ has a pair of branches $u_1\rightarrow v_1$ such that $u\equiv u_1w$ and $v\equiv v_1w$ for some common suffix $w$. Clearly, it suffices to prove that the terminal edges $u_1^+$ and $v_1^+$ coincide in $\La(H)$. The natural morphism from $\Delta_1$ to the core $\La(H)$, maps the branches $u_1$ and $v_1$ of $\Delta_1$ to paths $p_1$ and $q_1$ on $\La(H)$ such that $\lab(p_1)\equiv u_1$ and $\lab(q_1)\equiv v_1$. Since the terminal edges of the branches $u_1$ and $v_1$ coincide in $\Delta_1$, the terminal edges $p_1^+=u_1^+$ and $q_1^+=v_1^+$ coincide in $\La(H)$.

For the last statement of the lemma, notice that if $\Delta$ is reduced, then $w_1$ and $w_2$ must label paths on $\La(H)$ since $\Delta$ is accepted by $\La(H)$. 
\end{proof}

Lemma \ref{path_lem} implies the following. 

\begin{Corollary}\label{01}
Let $H$ be a subgroup of $F$.
Consider the finite binary words $u\equiv \emptyset$, $v\equiv 0^m$ and $w\equiv 1^n$ for $m,n\in\mathbb{N}$. Let $s$ be a finite binary word which contains both digits $0$ and $1$ and assume that $u, v, w$ and $s$ label paths on the core $\La(H)$ (the empty word always labels a path on $\La(H)$). Then the terminal edges $u^+$, $v^+$, $w^+$ and $s^+$ on $\La(H)$ are all distinct edges.
\end{Corollary}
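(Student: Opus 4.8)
The plan is to show that no two of the four terminal edges coincide, by treating the $\binom{4}{2}=6$ pairs of words and deriving a contradiction in each case from Lemma~\ref{path_lem} together with the elementary fact that every element of $F$ fixes $0$ and $1$. The whole argument is a translation of ``same terminal edge'' into ``some $h\in H$ moves a prescribed endpoint'', followed by a case check.

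First I would isolate the translation step. Suppose two paths on $\La(H)$ with labels $a$ and $b$ have the same terminal edge. By Lemma~\ref{path_lem} there is a constant $k$ such that for every binary word $t$ with $|t|\ge k$ some $h\in H$ has a pair of branches $at\rightarrow bt$; that is, $h$ maps the dyadic interval $[at]$ linearly onto $[bt]$, so in particular $h(.at)=.bt$ and $h(.at1^\N)=.bt1^\N$. The useful point is that the suffix $t$ can be chosen to expose exactly the endpoint I want to track: taking $t=0^k$ gives $.at=.a$ and $.bt=.b$, hence $h(.a)=.b$; taking $t=1^k$ gives $.at1^\N=.a1^\N$ and $.bt1^\N=.b1^\N$, hence $h(.a1^\N)=.b1^\N$. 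Thus a coincidence of terminal edges produces an element of $F$ carrying the left endpoint $.a$ to $.b$, and (for a possibly different element) the right endpoint $.a1^\N$ to $.b1^\N$.

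Next I would tabulate these endpoints, using $m,n\ge 1$ and the hypothesis that $s$ contains both digits. The word $u\equiv\emptyset$ has left endpoint $0$ and right endpoint $1$; the word $v\equiv 0^m$ has left endpoint $0$ and right endpoint $2^{-m}\ne 1$; the word $w\equiv 1^n$ has left endpoint $1-2^{-n}\ne 0$ and right endpoint $1$; and $s$ has left endpoint $.s\ne 0$ (since $s$ contains a $1$) and right endpoint $.s1^\N\ne 1$ (since $s$ contains a $0$). Because every $h\in F$ is a strictly increasing self-homeomorphism of $[0,1]$, it satisfies $h(x)=0\iff x=0$ and $h(x)=1\iff x=1$. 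Hence an equation $h(.a)=.b$ is impossible whenever exactly one of $.a,.b$ equals $0$, and an equation $h(.a1^\N)=.b1^\N$ is impossible whenever exactly one of $.a1^\N,.b1^\N$ equals $1$.

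Finally I would verify that each of the six pairs is ruled out by one of these two obstructions: for $\{u,w\}$, $\{u,s\}$, $\{v,w\}$, $\{v,s\}$ exactly one member has left endpoint $0$, so the left-endpoint equation fails, while for $\{u,v\}$ and $\{w,s\}$ exactly one member has right endpoint $1$, so the right-endpoint equation fails. In every case the assumed coincidence of terminal edges forces some $h\in F$ to violate $h(0)=0$ or $h(1)=1$, a contradiction, so the four terminal edges are pairwise distinct. I do not anticipate a genuine obstacle; the only points requiring care are matching the suffix $t$ to the endpoint being tracked (appending zeros preserves the left endpoint and appending ones preserves the right endpoint) and invoking $m,n\ge 1$ together with the ``contains both digits'' hypothesis on $s$ to secure the strict inequalities $.s\ne 0$ and $.s1^\N\ne 1$.
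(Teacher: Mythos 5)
Your proof is correct and is essentially the paper's argument: the paper likewise invokes Lemma \ref{path_lem} to produce an element of $H$ with a pair of branches $ar\rightarrow br$ for a well-chosen suffix $r$ (e.g.\ $r\equiv 1^k$ for the pair $u,v$) and derives a contradiction from the fact that a homeomorphism of $[0,1]$ must fix $0$ and $1$. The only difference is presentational — the paper writes out one case and says the rest are similar, while you organize all six pairs via the left-endpoint/right-endpoint dichotomy.
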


\begin{proof}
If $u^+=v^+$, then by Lemma \ref{path_lem} there is an integer $k\in\mathbb{N}$ such that for every finite binary word $r$ of length $\ge k$, there is an element $h\in H$ with a pair of branches $ur\rightarrow vr$. Let $r\equiv 1^k$. Then there is an element $h\in H$ with a pair of branches $ur\equiv 1^k\rightarrow vr\equiv 0^m1^k$. Then $h$ maps $1=.1^{\N}$ to 
$.0^{m}1^{\mathbb N}=.0^{m-1}1\neq 1$, in contradiction to $h$ being a homeomorphism of $[0,1]$. 
The proof for the other cases is similar.
\end{proof}

Now let $H\le F$ and let $\La(H)$ be the core of $H$. 
Let $p_{\La(H)}=q_{\La(H)}$ be the distinguished $1$-path of $\La(H)$. 
We denote by $\iota(\La(H))$ the initial vertex of $p_{\La(H)}$ and by $\tau(\La(H))$ the terminal vertex of $p_{\La(H)}$. The vertex $\iota(\La(H))$ is called the \emph{initial vertex} of $\La(H)$ and $\tau(\La(H))$ is the \emph{terminal vertex} of the core $\La(H)$. Any other vertex of $\La(H)$ is an \emph{inner vertex}. An edge of $\La(H)$ is an \emph{inner edge} if both of its endpoints are inner vertices.
Note that every inner vertex in $\La(H)$ has at least one incoming and one outgoing edge. $\iota(\La(H))$ has only ougoing edges and $\tau(\La(H))$ has only incoming edges. 

As noted above, for any edge $e$ in $\La(H)$ there is a path $u$ on $\La(H)$ such that $u^+=e$. 
It is easy to see (or prove by induction) that if $e$ is incident to $\iota(\La(H))$ then $u$ can be taken to be of the form $u\equiv 0^m$ for $m\ge 0$. Conversely, if $u\equiv 0^k$ for some $k\ge 0$ then $u^+$ is incident to $\iota(\La(H))$. Similarly, if $e$ is incident to $\tau(\La(H))$ then there is a path $u\equiv 1^n$ for $n\ge 0$ such that $u^+=e$ and if $u\equiv 1^r$ for some $r\ge 0$ then $u^+$ is incident to $\tau(\La(H))$. Corollary \ref{01} implies the following. 

\begin{Corollary}\label{inner}
Let $H\le F$. Let $u$ be a finite binary word which labels a path on $\La(H)$. Then 
\begin{enumerate} 
\item[(1)] $u$ contains the digit $0$ if and only if $u^+$ is not incident to $\tau(\La(H))$. 
\item[(2)] $u$ contains the digit $1$ if and only if $u^+$ is not incident to $\iota(\La(H))$.
\item[(3)] $u$ contains both digits $0$ and $1$ if and only if $u^+$ is an inner edge of $\La(H)$. 
\end{enumerate}
\end{Corollary}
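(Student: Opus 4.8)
The plan is to reduce all three statements to two ingredients already available: the incidence remarks stated immediately before the corollary, which together say that $u^+$ is incident to $\iota(\La(H))$ if and only if $u^+=(0^m)^+$ for some path $0^m$, and $u^+$ is incident to $\tau(\La(H))$ if and only if $u^+=(1^n)^+$ for some path $1^n$; and the distinctness of terminal edges supplied by Corollary \ref{01}. I would prove (1) and (2) first, noting that they are mirror images of one another under the symmetry swapping $0\leftrightarrow 1$ and $\iota(\La(H))\leftrightarrow\tau(\La(H))$, and then obtain (3) as a purely formal consequence of (1) and (2).

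For (1) I would treat both implications by contrapositive. For the reverse implication, if $u$ contains no $0$ then $u\equiv 1^r$ for some $r\ge 0$, and the preceding remark gives directly that $u^+$ is incident to $\tau(\La(H))$. For the forward implication, suppose $u^+$ is incident to $\tau(\La(H))$; the remark then yields a path $1^n$ with $(1^n)^+=u^+$. Assuming toward a contradiction that $u$ contains a $0$, I would split on whether $u$ also contains a $1$: if not, then $u\equiv 0^m$ with $m\ge 1$, and Corollary \ref{01} applied to the words $0^m$ and $1^n$ (both of which label paths on $\La(H)$) forces $(0^m)^+\ne(1^n)^+$, contradicting $u^+=(1^n)^+$; and if $u$ does contain a $1$, then $u$ is a word of the mixed type $s$, and Corollary \ref{01} likewise forces $s^+\ne(1^n)^+$, the same contradiction. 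This proves (1), and (2) follows by the symmetric argument with the roles of the two digits and of the two distinguished vertices interchanged.

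Finally, for (3) I would use that an edge of $\La(H)$ is inner exactly when both of its endpoints are inner vertices, i.e.\ exactly when neither endpoint equals $\iota(\La(H))$ or $\tau(\La(H))$; equivalently, $u^+$ is an inner edge if and only if it is incident to neither the initial nor the terminal vertex. By (1) the condition ``$u^+$ not incident to $\tau(\La(H))$'' is equivalent to ``$u$ contains a $0$,'' and by (2) the condition ``$u^+$ not incident to $\iota(\La(H))$'' is equivalent to ``$u$ contains a $1$.'' Conjoining these gives that $u^+$ is inner if and only if $u$ contains both digits, which is (3).

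The only point demanding care — the main (and rather modest) obstacle — is the invocation of Corollary \ref{01} in the forward direction of (1): one must apply the distinctness of terminal edges to the correct pair of special words, namely a $0^m$- or $s$-type word against a $1^n$-type word, and confirm that both members of that pair genuinely label paths on $\La(H)$. Here the word $1^n$ is produced by the incidence-to-$\tau(\La(H))$ hypothesis via the stated remark, while the other word is simply $u$ itself, so both conditions are met. Everything else is either a direct citation of the incidence remarks or a boolean recombination of (1) and (2).
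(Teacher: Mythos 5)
Your proposal is correct and is essentially the argument the paper intends: the paper offers no written proof, simply asserting that the corollary follows from Corollary \ref{01} together with the incidence remarks about paths $0^m$ and $1^n$, and your write-up supplies exactly that reasoning (including the correct handling of the empty word and of which pairs of words Corollary \ref{01} must be applied to). Nothing further is needed.
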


An edge of $\La(H)$ is called a \emph{boundary edge} if it is not an inner edge. If it is incident to $\iota(\La(H))$ (resp. $\tau(\La(H))$, but is not the distinguished edge of $\La(H)$, then it is a \emph{left} (resp. \emph{right}) boundary edge. 

\begin{Remark}\label{rem:tree}
Given a subgroup $H\le F$, we describe the core $\La(H)$ as a $2$-automaton by listing its edges, listing its positive cells and noting what the distinguished edge is. Moreover, 
to describe a positive cell of $\La(H)$ uniquely it is enough to note the labels 
of its top and bottom $1$-paths. 

It is often convenient to describe $\La(H)$ using a labeled binary tree $T$, where every vertex is labeled by an edge of $\La(H)$; the root is labeled by the distinguished edge $p_{\La(H)}=q_{\La(H)}$ and every caret in $T$ corresponds to a positive cell of $\La(H)$ and vice versa. For example, the core $\La(H)$ for $H=\la x_0,x_1x_2x_1^{-1}\ra$, constructed in Section \ref{sec:cor}, can be described by the following binary tree. 

\Tree[.$e_1$ [.$e_2$ [.$e_2$ ] [.$e_4$ ] ] [.$e_5$ [.$e_4$ [.$e_{12}$ ] [.$e_{13}$ [.$e_{13}$ ] [.$e_4$ ] ] ]  [.$e_5$ ] ] ]

The distinguished edge of $\La(H)$ is $e_1$, the inner edges of $\La(H)$ are $e_4$, $e_{12}$ and $e_{13}$. The edge $e_2$ is a left boundary edge and $e_5$ is a right boundary edge. We note that even though vertices are not listed in the tree we can tell from the tree that there are $2$ inner vertices in $\La(H)$: $\iota(e_5)=\iota(e_4)=\iota(e_{12})$ and $\iota(e_{13})$. The vertices of $\La(H)$ will be discussed more in detail in Section \ref{sec:tra}.
\end{Remark}

\section{The closure of a subgroup $H\le F$}\label{sec:GSC}

In this section we prove Conjecture \ref{GSC} for the closure of subgroups of Thomspon group $F$. 
First we give an equivalent definition for components of an element of $F$. 

\begin{Definition}\label{com2}
Let $f$ be a function in $F$. If $f$ fixes a finite dyadic fraction $\alpha\in (0,1)$, then the following functions $f_1,f_2\in F$ are called \emph{components} of the function $f$, or \emph{components of $f$ at $\alpha$}.  
\[
   f_1(t) =
  \begin{cases}
   f(t) &  \hbox{ if }  t\in[0,\alpha] \\
    t       & \hbox{ if } t\in[\alpha,1]\
  \end{cases}  	\qquad	
   f_2(t) =
  \begin{cases}
   t &  \hbox{ if }  t\in[0,\alpha] \\
    f(t)       & \hbox{ if } t\in[\alpha,1]\
  \end{cases} 
\]
\end{Definition}

Note that a function $f\in F$ can have more than two components. Indeed, $f$ can fix more than one finite dyadic fraction. We claim that Definition \ref{com2} is equivalent to Definition \ref{com1}. Clearly, it is enough to consider components of reduced diagrams $\Delta$ in $F$. 

\begin{Lemma}
Let $f\in F$ be an element represented by a reduced diagram $\Delta$.
Then a function $g\in F$ is a component of the function $f$, as in Definition \ref{com2}, if and only if it is represented by a diagram $\Delta'$ which is a component of $\Delta$, as in Definition \ref{com1}.
\end{Lemma}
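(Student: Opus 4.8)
The plan is to translate the combinatorial decomposition of Definition~\ref{com1} into the interval picture of Section~\ref{sec:bra}, where both notions of component become transparent, and then argue the two directions of the equivalence. The key observation I would isolate first is that a decomposition $\Delta\equiv\Psi^{-1}\circ(\Delta_1+\Delta_2)\circ\Psi$, with $\Psi$ a $(vw,x)$-diagram and $v,w$ non-empty, singles out a finite dyadic fraction: namely the point $\alpha\in(0,1)$ that corresponds, under the subdivision of $[0,1]$ encoded by $\Psi$, to the vertex of the $1$-path $vw$ separating the $v$-part from the $w$-part. Since $\Delta_1$ is a $(v,v)$-diagram and $\Delta_2$ a $(w,w)$-diagram, the sum $\Delta_1+\Delta_2$ carries the $v$-part of $vw$ to itself and the $w$-part to itself; conjugating by $\Psi$ turns these into the subintervals $[0,\alpha]$ and $[\alpha,1]$. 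Thus $f=[\Delta]$ preserves $[0,\alpha]$ and $[\alpha,1]$ setwise, so it fixes $\alpha$.

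For the direction Definition~\ref{com1}$\Rightarrow$Definition~\ref{com2}, I would simply read off this $\alpha$ from the given decomposition and compare the two components as functions. The component $\Psi^{-1}\circ(\Delta_1+\varepsilon(w))\circ\Psi$ acts on $[0,\alpha]$ exactly as $f$ does (via $\Delta_1$) and as the identity on $[\alpha,1]$ (via $\varepsilon(w)$), so it is precisely $f_1$; symmetrically $\Psi^{-1}\circ(\varepsilon(v)+\Delta_2)\circ\Psi$ is $f_2$. Hence the function represented by a Definition~\ref{com1} component of $\Delta$ is a component of $f$ at the fixed point $\alpha$.

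For the converse, suppose $f$ fixes $\alpha=.u$ with $u\equiv u'1$. By Lemma~\ref{4parts}(1),(2), $f$ has pairs of branches $u0^{m_1}\rightarrow u0^{m_2}$ and $u'01^{n_1}\rightarrow u'01^{n_2}$; since $u0^{m_1}\equiv u'10^{m_1}$, Remark~\ref{cons} shows that $u'01^{n_1}$ and $u'10^{m_1}$ are consecutive positive branches meeting exactly at $.u'1=\alpha$, and likewise their images meet at $\alpha$ on the negative side. I would then take a diagram $\Delta_0$ of $f$ containing both of these as positive branches and both images as negative branches. Because $f$ preserves $[0,\alpha]$, its positive branches split into those tiling $[0,\alpha]$ and those tiling $[\alpha,1]$, with the pairing sending left to left and right to right; this exhibits $\Delta_0$ as a sum split at $\alpha$, i.e.\ in the form $\Psi^{-1}\circ(\Delta_1+\Delta_2)\circ\Psi$ whose two components are $f_1$ and $f_2$.

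The step I expect to be the main obstacle is realizing this split on the \emph{reduced} diagram $\Delta$ rather than on an arbitrary $\Delta_0$, which is what Definition~\ref{com1} (with $\equiv$) requires. When $f'(\alpha^+)\neq 1$ or $f'(\alpha^-)\neq 1$, parts (3) and (4) of Lemma~\ref{4parts} guarantee that \emph{every} diagram of $f$, and in particular the reduced one, already has $u0^{m}$ (resp.\ $u'01^{n}$) as a branch, so $\alpha$ is a subdivision point of $\Delta$ and the decomposition can be read directly off $\Delta$. The delicate case is $f'(\alpha^+)=f'(\alpha^-)=1$, where $f$ is the identity near $\alpha$: here one component is the identity and the other represents $f$ itself, and I would check directly that the split exists at the level of represented functions, inserting a single dipole of type~$1$ on the branch of $\Delta$ whose interval contains $\alpha$ when $\alpha$ is not already a forced standard subdivision point (an operation that does not change the function represented). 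This degenerate case is the only place where one must argue carefully that Definition~\ref{com1} still produces precisely the pair $\{f_1,f_2\}$.
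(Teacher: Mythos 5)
Your overall strategy is the same as the paper's: translate between the combinatorial decomposition of Definition \ref{com1} and the dynamical picture, and use Lemma \ref{4parts} to show that the splitting point is already a subdivision point of the \emph{reduced} diagram. The forward direction and the non-degenerate half of the converse are fine (the paper does spend more effort in the forward direction verifying that the first edge of $\topp(\Delta_2)$ lies in $\Delta^+$ and the first edge of $\bott(\Delta_2)$ lies in $\Delta^-$, which is what justifies your claim that the separating vertex of $vw$ really does correspond to a single dyadic fraction fixed by $f$; you should not skip that verification).

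The genuine problem is your treatment of the degenerate case $f'(\alpha^-)=f'(\alpha^+)=1$. Inserting a dipole of type $1$ into $\Delta$ produces an \emph{equivalent but different} diagram $\Delta_0$, and a decomposition $\Delta_0\equiv\Psi^{-1}\circ(\Delta_1+\Delta_2)\circ\Psi$ exhibits $f_1,f_2$ as components of $\Delta_0$, not of $\Delta$. Definition \ref{com1} is stated with $\equiv$ (isotopy of the given diagram), and the lemma asserts that $f_1,f_2$ are represented by components of the reduced diagram $\Delta$ itself; your argument does not deliver this when $\alpha$ lies in the interior of one of the intervals $[u_i]$ of the reduced subdivision. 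The fix the paper uses is to move the splitting point rather than the diagram: let $[\beta_1,\beta_2]$ be the maximal interval containing $\alpha$ on which $f$ is the identity; since $f$ is nontrivial, some endpoint $\beta\in\{\beta_1,\beta_2\}$ lies in $(0,1)$, is finite dyadic, and is not fixed together with a one-sided neighborhood, so Lemma \ref{4parts}(3) or (4) applies at $\beta$ and splits the reduced $\Delta$ there; and the components of $f$ at $\beta$ coincide, as functions, with the components of $f$ at $\alpha$ because $f$ is the identity on the interval between them. With that substitution your proof goes through.
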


\begin{proof}
We first consider components of the diagram $\Delta$. 
Assume that there are diagrams $\Psi,\Delta_1$ and $\Delta_2$ in the diagram groupoid $\dd(\kk)$ (where $\kk$ is the Dunce hat), such that $\Delta_1$ is a spherical $(x^n,x^n)$-diagram, $\Delta_2$ is a spherical $(x^m,x^m)$-diagram, $\Psi$ is an $(x^{n+m},x)$-diagram and such that
$$\Delta\equiv\Psi^{-1}\circ(\Delta_1+\Delta_2)\circ\Psi.$$

We consider the diagrams $\Psi^{\pm 1}$, $\Delta_1$ and $\Delta_2$ as subdiagrams of $\Delta$. 
Let $e$ be the first edge of $\topp(\Delta_2)$ (see Figure \ref{fig:components}). We claim that $e$ must be an edge of the positive subdiagram $\Delta^+$. Indeed, if $e$ is not an edge of $\Delta^+$, then $e$ must be the bottom edge of an $(x^2,x)$-cell $\pi$ which belongs to $\Psi^{-1}$. (Indeed, every edge of $\Delta$ which is not an edge of $\Delta^+$ is the bottom edge of some $(x^2,x)$-cell.)
The edge $e$ is also the $(n+1)$-edge of $\bott(\Psi^{-1})$. Let $e'$ be the corresponding edge of $\Psi$. That is, $e'$ is the $(n+1)$-edge in $\topp(\Psi)$, when $\Psi$ 
is viewed as a subdiagram of $\Delta$. Since $\Delta_1$ is an $(x^n,x^n)$-diagram, the edge $e'$ is the first edge of $\bott(\Delta_2)$. Clearly, the edge $e'$ is the top edge of an $(x,x^2)$-cell of $\Psi$, corresponding to the cell $\pi$ of $\Psi^{-1}$. Therefore, $e'$ belongs to the positive subdiagram $\Delta^+$ of $\Delta$. That contradicts the assumption that $e$ is not an edge of $\Delta^+$, as $e$ lies above $e'$. 
A similar argument shows that the first edge $e'$ of $\bott(\Delta_2)$ is an edge of the negative subdiagram $\Delta^-$.

Let $u$ be the left-most positive branch of $\Delta$ which visits the edge $e$ and let $v$ be the left-most negative branch of $\Delta$ which visits the edge $e'$. It is obvious that $u\rightarrow v$ is a pair of branches of the diagram $\Delta$. Indeed, $u^+$ and $v^+$ are the left-most edge on the horizontal $1$-path of the subdiagram $\Delta_2$ of $\Delta$. 
Notice that $u$ has a prefix $u_1$ which is a branch of $\psi^{-1}$ (with terminal edge $e$) and that $u\equiv u_10^{k_1}$ for some $k_1$. Similarly, $v$ has an initial subpath $v_1$ such that $v\equiv v_10^{k_2}$ for some $k_2$ and such that $v_1^+=e'$ (see Figure \ref{fig:components}). Since $e$ and $e'$ are corresponding edges of $\Psi^{-1}$ and $\Psi$, the labels $u_1$ and $v_1$ coincide and so $\Delta$ has a pair of branches $u_10^{k_1}\rightarrow u_10^{k_2}$.

\begin{figure}
\centering
\includegraphics[width=.45\linewidth]{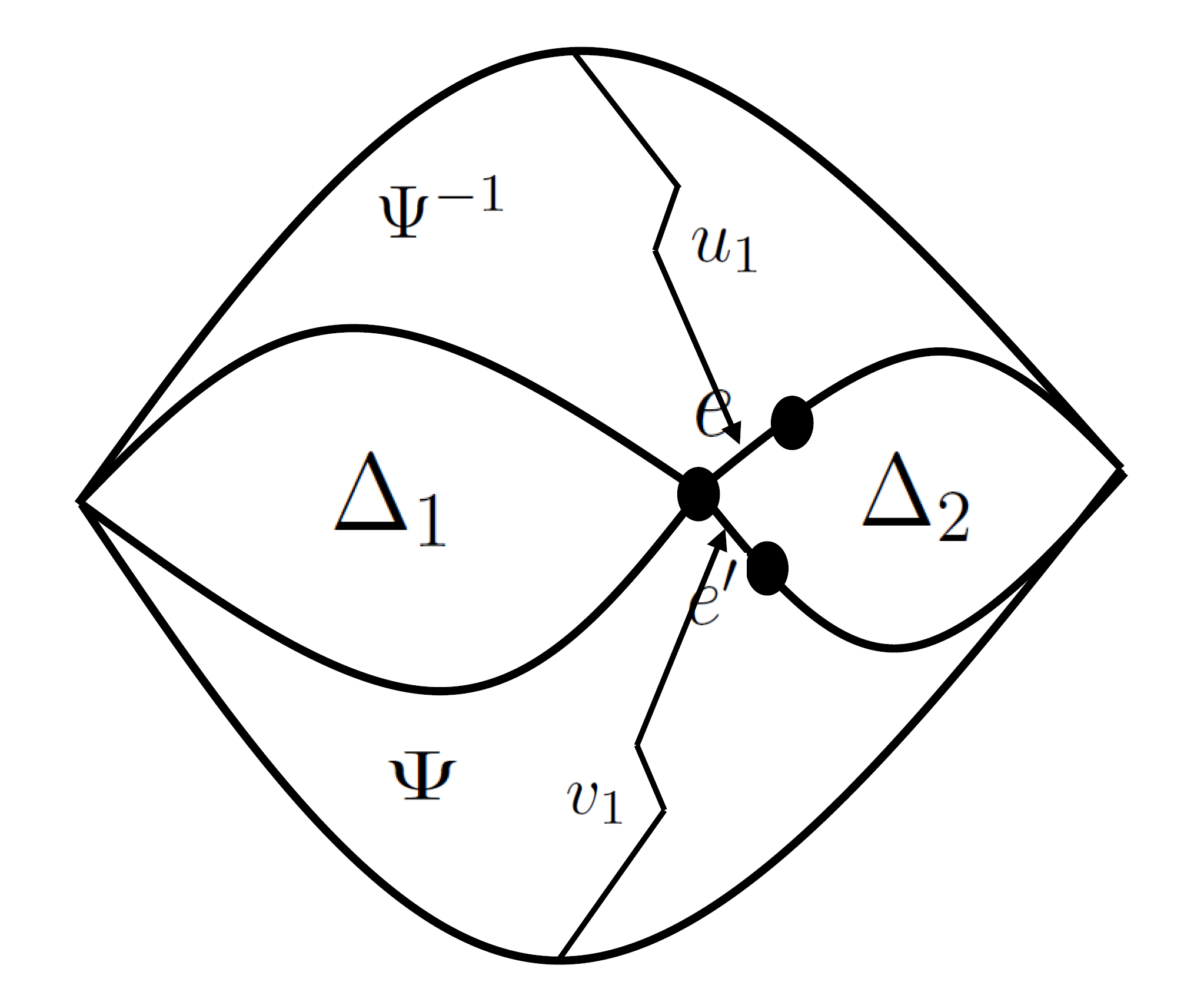}
\caption{The diagram $\Delta$ has components as in Definition \ref{com1}} 
\label{fig:components}
\end{figure}

Let $\alpha=.u_1$. Then the function $f$ fixes $\alpha$. The components 
$$\Psi^{-1}\circ(\Delta_1+\varepsilon(x^m))\circ\Psi \ \mbox{ and }\ \Psi^{-1}\circ(\varepsilon(x^n)+\Delta_2)\circ\Psi $$
of $\Delta$ correspond to the components $f_1$ and $f_2$ of $f$ at $\alpha$, respectively. Indeed, replacing $\Delta_2$ by $\varepsilon(x^m)$ does not affect the pairs of branches of $\Delta$ associated with the action of $f$ on dyadic intervals in $[0,\alpha]$. It replaces the pairs of branches of $\Delta$ associated with the action of $f$ on the interval $[\alpha,1]$ by trivial branches; i.e., branches of the form $b\rightarrow b$ for finite binary words $b$. 
Clearly, the corresponding function of $F$ is $f_1$. A similar argument works for the second component.

In the other direction, assume that a non trivial function $f\in F$ fixes a finite dyadic fraction $\alpha$. We can assume that $f$ does not fix an open neighborhood of $\alpha$. Otherwise, one can replace $\alpha$ by a dyadic fraction $\beta$ such that $f$ fixes the interval $[\alpha,\beta]$ or $[\beta,\alpha]$ and such that $f$ does not fix an open neighborhood of $\beta$ (note that the components of $f$ at $\alpha$ and at $\beta$ coincide in that case). 
We assume that $f$ does not fix a right neighborhood of $\alpha$, the argument for $f$ not fixing a left neighborhood of $\alpha$ is similar.  
Let $u$ be a finite binary word ending with $1$ such that $\alpha=.u$. 
By Lemma \ref{4parts}(3), the reduced diagram $\Delta$ representing $f$ has a pair of branches $u0^{k_1}\rightarrow u0^{k_2}$ for some $k_1,k_2\ge 0$. In particular, $u$ labels a positive and a negative path on $\Delta$. Let $\Psi'$ be the minimal tree-diagram such that $u$ is a branch of $\Psi'$ and let $\Psi\equiv \Psi'^{-1}$. Then $\Psi$ can be viewed as a subdiagram of $\Delta^-$ such that $\bott(\Psi)=\bott(\Delta^-)$ and ${\Psi}^{-1}$ can be viewed as a subdiagram of $\Delta^+$ such that $\topp({\Psi}^{-1})=\topp(\Delta^+)$. 
Let $e$ be the terminal edge of the positive path $u$ in $\Delta$ and $e'$ be the terminal edge of the negative path $u$ in $\Delta$. Clearly, $e$ lies on $\bott(\Psi^{-1})$ and $e'$ lies on $\topp(\Psi)$. The initial vertices $\iota(e)$ and $\iota(e')$ are vertices on the horizontal $1$-path of $\Delta$. The pair of branches $u0^{k_1}\rightarrow u0^{k_2}$ of $\Delta$ implies that $\iota(e)$ and $\iota(e')$ coincide. Thus, if one removes the subdiagrams $\Psi^{-1}$ and $\Psi$ from $\Delta$, the resulting diagram is a sum of two spherical diagrams $\Delta_1$ and $\Delta_2$ such that $\tau(\Delta_1)=\iota(\Delta_2)=\iota(e)$. One can show as above that the components of $\Delta$ defined by the subdiagrams $\Psi^{\pm 1}$, $\Delta_1$ and $\Delta_2$ as in Definition \ref{com1} correspond to the components of $f$ at $\alpha$. 
\end{proof}

To prove Conjecture \ref{GSC}, we prove a stronger result. Namely, that $\Cl(H)$ is generated by the subgroup $H$ and all components of functions in $H$. We will need the following definition.

\begin{Definition}\label{piecewise}
Let $H$ be a subgroup of $F$. A function $f\in F$ is said to be \emph{dyadic-piecewise-$H$} if there exist $n\in \mathbb{N}$, finite dyadic fractions $\alpha_1<\dots<\alpha_{n-1}$ in $(0,1)$ and functions $h_1,\dots,h_n\in H$ such that 
\[
   f(t) =
  \begin{cases}
   h_1(t) &  \hbox{ if }  t\in[0,\alpha_1] \\
   h_2(t)       & \hbox{ if } t\in[\alpha_1,\alpha_2]\\
		\vdots\\
   h_{n-1}(t)	& \hbox{ if } t\in[\alpha_{n-2},\alpha_{n-1}]\\	
	   h_{n}(t)	& \hbox{ if } t\in[\alpha_{n-1},1]\
\end{cases}
\]
In particular, for each $i=1,\dots,n-1$, we have $h_i(\alpha_i)=h_{i+1}(\alpha_i)$.
We say that $H$ is \emph{dyadic-piecewise-closed} if all dyadic-piecewise-$H$ functions belong to $H$. 
\end{Definition}

\begin{Remark}
Let $H$ be a subgroup of $F$. We let $\DPiec(H)$ be the set of all dyadic-piecewise-$H$ functions. Then $\DPiec(H)$ is a dyadic-piecewise-closed subgroup of $F$. 
\end{Remark}

\begin{Lemma}\label{clo_dya}
Let $H$ be a subgroup of $F$. Let $H_1$ be the subgroup of $F$ generated by all elements of $H$ together with all components of functions in $H$. Let $\bar{H}$ be the minimal subgroup of $F$ which contains $H$ and is closed for components. Then
$$H_1=\bar{H}=\DPiec(H).$$
\end{Lemma}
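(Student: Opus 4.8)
The plan is to establish the cycle of inclusions $H_1\le\bar H\le\DPiec(H)\le H_1$, so that all three groups coincide. The first two inclusions follow immediately from the defining properties, and essentially all of the work goes into the last one.

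For $H_1\le\bar H$: since $\bar H$ contains $H$ and is closed for components, it contains every component of every function of $H\subseteq\bar H$, hence it contains the subgroup these generate, which is $H_1$. For $\bar H\le\DPiec(H)$: by the Remark immediately preceding this lemma, $\DPiec(H)$ is a subgroup, and it clearly contains $H$ (take $n=1$). I would check that it is closed for components: if $f\in\DPiec(H)$ fixes a finite dyadic $\alpha$, then the component $f_1$ of Definition \ref{com2} agrees on $[0,\alpha]$ with the finitely many pieces of $f$ lying there (each a restriction of an element of $H$) and equals the identity, an element of $H$, on $[\alpha,1]$; thus $f_1$, and likewise $f_2$, is dyadic-piecewise-$H$. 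By minimality of $\bar H$ this gives $\bar H\le\DPiec(H)$.

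The main obstacle is $\DPiec(H)\le H_1$. The difficulty is structural: if $f$ is dyadic-piecewise-$H$ with pieces $h_1,\dots,h_n\in H$ on the intervals cut by dyadic points $\alpha_1<\cdots<\alpha_{n-1}$, the $h_i$ need not fix the breakpoints $\alpha_i$, so $f$ itself need not fix any dyadic fraction and cannot be split into components directly. I would resolve this by adjusting $f$ so that a breakpoint becomes fixed. Set $g:=f h_1^{-1}\in F$ (reading products left to right as in the paper, so that $g(t)=h_1^{-1}(f(t))$). On $[0,\alpha_1]$ we have $f=h_1$, whence $g=\mathrm{id}$, while on each later interval $g$ equals $h_1^{-1}h_i\in H$. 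Thus $g$ is the identity on $[0,\alpha_1]$ and is dyadic-piecewise-$H$ on $[\alpha_1,1]$, and $f=g h_1$ with $h_1\in H\le H_1$. It therefore suffices to treat functions of this special shape.

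To finish I would prove, by induction on the number of pieces $m$, the claim that every $p\in F$ which is the identity on some $[0,\gamma]$, $\gamma$ finite dyadic, and is dyadic-piecewise-$H$ with $m$ pieces on $[\gamma,1]$, lies in $H_1$. For $m=1$ the single piece $g_1\in H$ satisfies $g_1(\gamma)=\gamma$ by continuity at $\gamma$, so $p$ is precisely the component of $g_1$ at $\gamma$ that is trivial on $[0,\gamma]$; hence $p\in H_1$. For the inductive step, let $\gamma=\gamma_0<\gamma_1<\cdots<\gamma_m=1$ be the breakpoints and $g_1,\dots,g_m\in H$ the pieces. Continuity at $\gamma$ again forces $g_1(\gamma)=\gamma$, so the component $c\in H_1$ of $g_1$ that is trivial on $[0,\gamma]$ and equals $g_1$ on $[\gamma,1]$ is available. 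Put $p':=p c^{-1}$, i.e. $p'(t)=c^{-1}(p(t))$: a direct check shows the two pieces cancel on $[\gamma,\gamma_1]$, so $p'$ is the identity on all of $[0,\gamma_1]$, while on each later interval it equals $g_1^{-1}g_j\in H$; hence $p'$ is dyadic-piecewise-$H$ with at most $m-1$ pieces on $[\gamma_1,1]$. By the induction hypothesis $p'\in H_1$, and then $p=p'c\in H_1$. Applying the claim to $g$ yields $g\in H_1$, and so $f=g h_1\in H_1$. This closes the cycle $\DPiec(H)\le H_1\le\bar H\le\DPiec(H)$, proving $H_1=\bar H=\DPiec(H)$. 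The delicate point to get right is precisely the cancellation computation defining $p'$, and the repeated verification that each auxiliary single-piece function fixes the relevant breakpoint and is therefore genuinely a component of an element of $H$.
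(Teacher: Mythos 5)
Your proof is correct and takes essentially the same route as the paper: the cycle of inclusions $H_1\le \bar{H}\le \DPiec(H)\le H_1$, with the last inclusion proved by induction on the number of pieces, at each step factoring the function as a product of a piece-count-reduced function and a component (trivial on one side of the breakpoint) of an element of $H$ that fixes that breakpoint. The only cosmetic difference is that you pre-normalize to make the function trivial on an initial segment and peel pieces off from the left, whereas the paper peels from the right using the component of $h_nh_{n-1}^{-1}$ at $\alpha_{n-1}$.
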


\begin{proof}
We let $G=\DPiec(H)$. It is obvious that $H_1\subseteq \bar{H}$. 
To see that $\bar{H}\subseteq G$ it suffices to note that $G$ is closed for components. 
Indeed, if $f\in G$ fixes a finite dyadic fraction $\alpha\in (0,1)$ then the components 
\[
   f_1(t) =
  \begin{cases}
   f(t) &  \hbox{ if }  t\in[0,\alpha] \\
    t       & \hbox{ if } t\in[\alpha,1]\
  \end{cases}  	\qquad	
   f_2(t) =
  \begin{cases}
   t &  \hbox{ if }  t\in[0,\alpha] \\
    f(t)       & \hbox{ if } t\in[\alpha,1]\
  \end{cases} 
\]
are dyadic-piecewise-$G$, as $f$ and the identity belong to $G$. Since $G$ is dyadic-piecewise-closed, $f_1,f_2\in G$. Thus, it suffices to prove that $G\subseteq H_1$. 

Let $f\in G$ be dyadic-piecewise-$H$. We prove that $f$ belongs to $H_1$ by induction on the number $n$ of pieces in $f$.
If $n=1$, then $f\in H$. If $n=2$, then there are $h_1,h_2\in H$ and a finite dyadic fraction $\alpha_1\in(0,1)$ such that $h_1(\alpha_1)=h_2(\alpha_1)$ and such that 
\[
   f(t) =
  \begin{cases}
   h_1(t) &  \hbox{ if }  t\in[0,\alpha_1] \\
   h_2(t)       & \hbox{ if } t\in[\alpha_1,1]\
	\end{cases}
\]
Since $h_1(\alpha_1)=h_2(\alpha_1)$, we have that $h_2h_1^{-1}(\alpha_1)=\alpha_1$. Since $H_1$ contains all components of elements in $H$, the function 
\[
   k(t) =
  \begin{cases}
   t &  \hbox{ if }  t\in[0,\alpha_1] \\
   h_2h_1^{-1}(t)       & \hbox{ if } t\in[\alpha_1,1]\
	\end{cases}
\]
belongs to $H_1$. 
It suffices to notice that $f=kh_1\in H_1$. 

For $n>2$, let $h_1,\dots,h_n\in H$ and $\alpha_1,\dots,\alpha_{n-1}\in (0,1)$ be finite dyadic fractions such that $h_i(\alpha_i)=h_{i+1}(\alpha_i)$ for all $i=1,\dots,n-1$ and such that 
\[
   f(t) =
  \begin{cases}
   h_1(t) &  \hbox{ if }  t\in[0,\alpha_1] \\
   h_2(t)       & \hbox{ if } t\in[\alpha_1,\alpha_2]\\
		\vdots\\
   h_{n-1}(t)	& \hbox{ if } t\in[\alpha_{n-2},\alpha_{n-1}]\\	
	 h_{n}(t)	& \hbox{ if } t\in[\alpha_{n-1},1]\
\end{cases}
\] 
By the induction hypothesis, the function
\[
   k_1(t) =
  \begin{cases}
   h_1(t) &  \hbox{ if }  t\in[0,\alpha_1] \\
   h_2(t)       & \hbox{ if } t\in[\alpha_1,\alpha_2]\\
		\vdots\\
   h_{n-1}(t)	& \hbox{ if } t\in[\alpha_{n-2},1]\\	
\end{cases}
\]
belongs to $H_1$. 

Since $h_{n-1}(\alpha_{n-1})=h_n(\alpha_{n-1})$ we have that $h_{n}h_{n-1}^{-1}(\alpha_{n-1})=\alpha_{n-1}$. Since $H_1$ contains all components of functions in $H$, the function 
\[
   k_2(t) =
  \begin{cases}
   t &  \hbox{ if }  t\in[0,\alpha_{n-1}] \\
   h_{n}h_{n-1}^{-1}(t)       & \hbox{ if } t\in[\alpha_{n-1},1]\
\end{cases}
\]
belongs to $H_1$. It suffices to notice that $f=k_2k_1$. 
\end{proof}

\begin{Theorem}\label{thm:GS}
Let $H$ be a subgroup of $F$. Then $\Cl(H)=\DPiec(H)$.
 In particular, by Lemma \ref{clo_dya}, the closure of $H$ is the minimal subgroup of $F$ which contains $H$ and is closed for components. 
\end{Theorem}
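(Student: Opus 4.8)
The plan is to prove the two inclusions $\DPiec(H)\subseteq\Cl(H)$ and $\Cl(H)\subseteq\DPiec(H)$ separately, with essentially all the work in the second. For the first inclusion I would invoke Lemma \ref{clo_dya}, which identifies $\DPiec(H)$ with $\bar H$, the minimal subgroup of $F$ containing $H$ and closed for components. Since $H\le\Cl(H)$ (Lemma \ref{l:au}) and $\Cl(H)$ is closed for components by Remark \ref{r:GS}, minimality of $\bar H$ gives $\DPiec(H)=\bar H\subseteq\Cl(H)$ immediately. Thus the theorem reduces to showing that every function accepted by the core is dyadic-piecewise-$H$, and the concluding sentence of the theorem is then read off from Lemma \ref{clo_dya}.

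For $\Cl(H)\subseteq\DPiec(H)$, take $f\in\Cl(H)$ and let $\Delta$ be the reduced diagram representing $f$; by definition of the closure, $\Delta$ is accepted by $\La(H)$, so there is a morphism $\psi$ from the $2$-automaton $\Delta$ to $\La(H)$ sending $\topp(\Delta)$ and $\bott(\Delta)$ to the distinguished edge. Write the pairs of branches of $\Delta$ as $u_1\rightarrow v_1,\dots,u_n\rightarrow v_n$, so that $f$ maps each $[u_i]$ linearly onto $[v_i]$ and the intervals $[u_1],\dots,[u_n]$ subdivide $[0,1]$. Exactly as in the proof of Lemma \ref{trivial}, $\psi$ carries the positive branch $u_i$ and the negative branch $v_i$ to paths $p_i,q_i$ on $\La(H)$ with labels $u_i,v_i$; because $u_i^+=v_i^+$ on the horizontal $1$-path of $\Delta$, the terminal edges satisfy $p_i^+=q_i^+$ in $\La(H)$. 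Now Lemma \ref{path_lem} applies to each such pair: there is an integer $k_i\ge 0$ so that for every binary word $w$ with $|w|\ge k_i$ there is an element $h\in H$ with a pair of branches $u_iw\rightarrow v_iw$.

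The remaining step is to assemble these local elements into a single finite piecewise description. Set $k=\max_i k_i$ and, for each $i$, subdivide $[u_i]$ into the $2^{k}$ dyadic subintervals $[u_iw]$ with $|w|=k$. On $[u_iw]$ the function $f$ maps linearly onto $[v_iw]$ (since $f$ takes the branch $u_i$ to the branch $v_i$), while the element $h\in H$ furnished by Lemma \ref{path_lem} maps $[u_iw]$ linearly onto the same interval $[v_iw]$; two linear homeomorphisms between one fixed pair of intervals coincide, so $f$ agrees with $h$ on $[u_iw]$. As $i$ ranges over $1,\dots,n$ and $w$ over all words of length $k$, the intervals $[u_iw]$ form a finite subdivision of $[0,1]$ on each piece of which $f$ coincides with a member of $H$, whence $f\in\DPiec(H)$. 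Combined with the first inclusion this gives $\Cl(H)=\DPiec(H)$. The main obstacle is the middle paragraph: correctly translating acceptance by the core into the path-level hypothesis $p_i^+=q_i^+$ of Lemma \ref{path_lem}, since this is precisely what converts the purely combinatorial output of that lemma (pairs of branches of elements of $H$) into genuine pointwise agreement of the homeomorphism $f$ with elements of $H$ on dyadic subintervals.
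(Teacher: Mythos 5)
Your proposal is correct and follows essentially the same route as the paper: the inclusion $\DPiec(H)\subseteq\Cl(H)$ via Lemma \ref{clo_dya} and Remark \ref{r:GS}, and the reverse inclusion by reading the pairs of branches of the reduced diagram of $f$ as paths on $\La(H)$ with equal terminal edges, invoking Lemma \ref{path_lem}, and refining to the subdivision $\{[u_iw]:|w|=k\}$ of $[0,1]$. Your extra remark that $f$ and the element of $H$ agree on $[u_iw]$ because both map it linearly onto $[v_iw]$ just makes explicit a step the paper leaves implicit.
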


\begin{proof}
Since $\Cl(H)$ contains $H$ and is closed for components (see Remark \ref{r:GS}), by Lemma \ref{clo_dya}, $\DPiec(H)\subseteq \Cl(H)$. To prove the other direction, we show that if $f\in \Cl(H)$, then $f$ is dyadic-piecewise-$H$. 

Let $f\in \Cl(H)$ and let $\Delta$ be the reduced diagram of $f$. Let $u_i\rightarrow v_i$ for $i=1,\dots,n$ be the pairs of branches of $\Delta$. 
Since $f\in \Cl(H)$, the diagram $\Delta$ is accepted by the core $\La(H)$. It follows that for each $i=1,\dots,n$, $u_i$ and $v_i$ label paths on $\La(H)$ such that $u_i^+=v_i^+$. By Lemma \ref{path_lem}, there exists $k\in\mathbb{N}$, such that for all $i=1,\dots,n$ and each finite binary word $w$ of length $k$, there is a function $h_{i,w}\in H$ with a pair of branches $u_iw\rightarrow v_iw$. Notice that all of these pairs of branches are pairs of branches of the function $f$.
Thus, for each $i=1,\dots,n$ and every finite binary word $w\in\{0,1\}^k$, $f$ coincides with some function of $H$ on the interval $[u_iw]$. It remains to notice that the dyadic intervals $[u_iw]$ for $i=1,\dots,n$ and $w\in \{0,1\}^k$ form a dyadic subdivision of $[0,1]$.%
\end{proof}

We note that since elements of $F$ are piecewise-linear functions where all breakpoints are finite dyadic, Theorem \ref{thm:GS} 
can be formulated as follows: \emph{The closure of a subgroup $H$ of $F$ is the subgroup of $F$ of all piecewise-$H$ functions.} 
\vskip .1cm

Theorem \ref{thm:GS}  implies the following.

\begin{Corollary}\label{cor:clo}
A subgroup $H$ of $F$ is closed if and only if $H$ is closed for components. 
\end{Corollary}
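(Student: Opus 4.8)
The plan is to derive the corollary directly from Theorem~\ref{thm:GS} together with Lemma~\ref{clo_dya}, which between them identify $\Cl(H)$ with the minimal subgroup $\bar{H}$ of $F$ that contains $H$ and is closed for components. Indeed, Theorem~\ref{thm:GS} gives $\Cl(H)=\DPiec(H)$, while Lemma~\ref{clo_dya} gives $\bar{H}=\DPiec(H)$; combining these yields $\Cl(H)=\bar{H}$. Once this identification is in hand, both implications become formal, and I would present them in turn.

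First I would record the forward direction. Suppose $H$ is closed, so that $H=\Cl(H)$. By Remark~\ref{r:GS}, the closure of any subgroup of $F$ is closed for components; hence $\Cl(H)$ is closed for components, and therefore so is $H$.

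For the converse, suppose $H$ is closed for components. Then $H$ is itself a subgroup of $F$ which contains $H$ and is closed for components. Since $\bar{H}$ is by definition the \emph{minimal} such subgroup, minimality forces $\bar{H}\subseteq H$; as $H\subseteq\bar{H}$ always holds, this gives $\bar{H}=H$. Invoking the identification $\Cl(H)=\bar{H}$ established above, we conclude $\Cl(H)=\bar{H}=H$, so $H$ is closed.

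There is essentially no obstacle here: all the content has already been discharged in the proof of Theorem~\ref{thm:GS} and in Lemma~\ref{clo_dya}. The only point requiring any care is to invoke the definition of $\bar{H}$ as the minimal component-closed subgroup containing $H$ correctly in the converse direction, so that the equality $\bar{H}=H$ follows purely from minimality once $H$ is exhibited as a component-closed subgroup containing $H$.
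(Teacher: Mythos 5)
Your proof is correct and follows exactly the route the paper intends: the corollary is stated as an immediate consequence of Theorem~\ref{thm:GS} (via Lemma~\ref{clo_dya} and Remark~\ref{r:GS}), and your two-directional argument—Remark~\ref{r:GS} for the forward implication and minimality of $\bar{H}$ for the converse—is precisely that derivation.
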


\begin{Corollary}\label{orb_CH}
Let $H$ be a subgroup of $F$, then the actions of $H$ and of $\Cl(H)$ on the interval $[0,1]$ have the same orbits. 
\end{Corollary}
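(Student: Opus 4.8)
The plan is to deduce the statement directly from the identification $\Cl(H)=\DPiec(H)$ established in Theorem \ref{thm:GS}. Since $H\le \Cl(H)$, any two points lying in the same $H$-orbit automatically lie in the same $\Cl(H)$-orbit, so each $\Cl(H)$-orbit is a union of $H$-orbits. Hence it suffices to prove the reverse containment: that for every $f\in\Cl(H)$ and every point $x\in[0,1]$, the image $f(x)$ lies in the $H$-orbit of $x$.

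To do this, I would first fix $f\in\Cl(H)$ and apply Theorem \ref{thm:GS} to view $f$ as a dyadic-piecewise-$H$ function in the sense of Definition \ref{piecewise}. That is, there exist finite dyadic fractions $0=\alpha_0<\alpha_1<\cdots<\alpha_{n}=1$ and functions $h_1,\dots,h_n\in H$ such that $f(t)=h_i(t)$ for every $t\in[\alpha_{i-1},\alpha_i]$. I emphasize that this is the only nontrivial input; the rest is a pointwise observation.

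Next, for an arbitrary $x\in[0,1]$ I would choose an index $i$ with $x\in[\alpha_{i-1},\alpha_i]$. By the piecewise description, $f(x)=h_i(x)$, and since $h_i\in H$ this means $f(x)$ lies in the $H$-orbit of $x$. At a breakpoint $x=\alpha_i$, where $x$ belongs to two adjacent pieces, the two admissible indices yield the same value because $h_i(\alpha_i)=h_{i+1}(\alpha_i)$, so there is no ambiguity. As $x$ was arbitrary in all of $[0,1]$ (not merely among the dyadic fractions), this shows that the $\Cl(H)$-orbit of $x$ is contained in its $H$-orbit. Combined with the trivial reverse inclusion noted above, the two orbits coincide, which is exactly the claim.

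The step I expect to carry the entire weight is the invocation of Theorem \ref{thm:GS}; once $\Cl(H)$ is known to consist precisely of the dyadic-piecewise-$H$ functions, the argument is immediate. Consequently I anticipate no real obstacle here, the only point requiring minor care being the behaviour at the finitely many breakpoints $\alpha_i$, which is handled by the matching condition $h_i(\alpha_i)=h_{i+1}(\alpha_i)$ built into Definition \ref{piecewise}.
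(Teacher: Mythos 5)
Your proposal is correct and is exactly the argument the paper intends: the corollary is stated as an immediate consequence of Theorem \ref{thm:GS} ($\Cl(H)=\DPiec(H)$), and your pointwise observation that a dyadic-piecewise-$H$ function moves each point the way some element of $H$ does is precisely the missing detail. No issues.
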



\section{Transitivity of the action of $H$ on the set $\mathcal D$}\label{sec:tra}

 In this section we consider the action of a subgroup $H\le F$ on the set of finite dyadic fractions $\mathcal D$. By Corollary \ref{orb_CH}, it is enough to consider the action of $\Cl(H)$ on $\mathcal D$.

We begin with the following lemma.

\begin{Lemma}\label{cor_ide}
Let $H$ be a subgroup of $F$. If $u$ and $v$ label paths on the core $\La(H)$ such that $u^+=v^+$, then there is a function $f\in \Cl(H)$ with a pair of branches $u\rightarrow v$. Moreover, $f$ is represented by a diagram $\Delta$ accepted by $\La(H)$ which has the pair of branches $u\rightarrow v$. 
\end{Lemma}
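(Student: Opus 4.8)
The plan is to realize the desired function $f$ explicitly as a dyadic-piecewise-$H$ homeomorphism, reading off the elements of $H$ that govern its pieces from Lemma~\ref{path_lem}. First I would apply Lemma~\ref{path_lem} to the paths $u$ and $v$ (which satisfy $u^+=v^+$) to obtain an integer $k\ge 0$ such that for every $w\in\{0,1\}^k$ there is an element $h_w\in H$ with a pair of branches $uw\rightarrow vw$. The two boundary elements $h_{0^k}$ and $h_{1^k}$ will play a special role. Since trailing zeros do not change a finite dyadic value, the pair $u0^k\to v0^k$ gives $h_{0^k}(.u)=h_{0^k}(.u0^k)=.v0^k=.v$; as $h_{0^k}$ fixes $0$ and is increasing, it maps $[0,.u]$ onto $[0,.v]$. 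Symmetrically, $h_{1^k}(.u1^{\N})=.v1^{\N}$ and $h_{1^k}$ maps $[.u1^{\N},1]$ onto $[.v1^{\N},1]$.

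Next I would define $f$ by
\[
   f(t)=
  \begin{cases}
   h_{0^k}(t) & t\in[0,.u],\\
   h_w(t) & t\in[uw],\ w\in\{0,1\}^k,\\
   h_{1^k}(t) & t\in[.u1^{\N},1].
  \end{cases}
\]
(If $u$, hence $v$, contains no digit $1$ then $[0,.u]$ is degenerate and the first line is omitted; similarly the last line is omitted when $u$ contains no $0$. By Corollary~\ref{inner} these degeneracies occur simultaneously for $u$ and $v$, so $f$ still maps onto $[0,1]$.) The verification that $f\in F$ is a gluing check: across $.u$ and across $.u1^{\N}$ the two adjacent pieces are governed by the \emph{same} element ($h_{0^k}$ on $[0,.u]\cup[u0^k]$ and $h_{1^k}$ on $[u1^k]\cup[.u1^{\N},1]$), so continuity there is automatic; and for consecutive middle pieces $[uw],[uw']$ both $h_w$ and $h_{w'}$ send the shared endpoint to the shared endpoint of $[vw]$ and $[vw']$. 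Each middle piece $[uw]\to[vw]$ is affine of slope $2^{|u|-|v|}$, so these pieces assemble into the single linear map $[u]\to[v]$; thus $f|_{[u]}$ realizes the pair of branches $u\rightarrow v$, and $f$ is an increasing piecewise-linear bijection of $[0,1]$ with dyadic breakpoints and slopes powers of $2$, i.e. $f\in F$.

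Membership then follows at once: by construction $f$ coincides with an element of $H$ on each interval of the dyadic subdivision $\{[0,.u]\}\cup\{[uw]:w\in\{0,1\}^k\}\cup\{[.u1^{\N},1]\}$, so $f\in\DPiec(H)$, whence $f\in\Cl(H)$ by Theorem~\ref{thm:GS}. For the ``moreover'' clause I would exhibit an accepted diagram carrying the exact pair $u\rightarrow v$. Take the diagram $\Delta$ of $f$ whose pairs of branches are $u\rightarrow v$ together with the pairs of branches of $h_{0^k}$ contained in $[0,.u]\to[0,.v]$ and those of $h_{1^k}$ contained in $[.u1^{\N},1]\to[.v1^{\N},1]$. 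Every complementary pair is a pair of branches of an element of $H$, so by Lemma~\ref{trivial} its two labels are paths on $\La(H)$ with equal terminal edges; the pair $u\rightarrow v$ has this property by hypothesis. Hence all pairs of branches of $\Delta$ are ``good,'' and the map sending each edge of $\Delta$ reached by a positive (resp.\ negative) path $p$ to the edge $p^+$ of $\La(H)$ is a well-defined morphism: well-definedness on the horizontal $1$-path is exactly the condition $u_i^+=v_i^+$, and since $\La(H)$ is folded each cell of $\Delta$ has a unique image cell. Thus $\Delta$ is accepted by $\La(H)$ and has the pair of branches $u\rightarrow v$, as required.

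The step I expect to be the main obstacle is engineering $f$ so that it is simultaneously (a) a genuine, continuous element of $F$ and (b) dyadic-piecewise-$H$: naively extending the linear map $[u]\to[v]$ by the identity off $[u]$ fails continuity unless $.u=.v$. The device that resolves this is the choice of the boundary fillers $h_{0^k}$ and $h_{1^k}$ from Lemma~\ref{path_lem}, whose defining pairs of branches force exactly the boundary matchings $h_{0^k}(.u)=.v$ and $h_{1^k}(.u1^{\N})=.v1^{\N}$; once these are in place, the remaining gluings, the membership $f\in\Cl(H)$, and the acceptance of $\Delta$ are routine consequences of Theorem~\ref{thm:GS} and Lemma~\ref{trivial}.
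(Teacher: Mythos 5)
Your construction of $f$ is correct and is essentially the paper's argument for the first assertion: the paper also invokes Lemma~\ref{path_lem} to get the family $h_w$, $w\in\{0,1\}^k$, assembles a dyadic-piecewise-$H$ function agreeing with $h_w$ on $[uw]$, and concludes $f\in\Cl(H)$ via Theorem~\ref{thm:GS}; you merely make explicit the boundary fillers $h_{0^k}$, $h_{1^k}$ and the gluing checks, which is fine.

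The ``moreover'' clause, however, contains a genuine gap. You justify acceptance of your diagram $\Delta$ by saying that every complementary pair of branches is a pair of branches of an element of $H$, ``so by Lemma~\ref{trivial} its two labels are paths on $\La(H)$ with equal terminal edges.'' Lemma~\ref{trivial} does not conclude that the labels are paths on the core --- it \emph{assumes} this (or reducedness of the diagram) as a hypothesis. The complementary pairs you use come from a \emph{non-reduced} diagram of $h_{0^k}$ (one having $u0^k\rightarrow v0^k$ as a pair of branches), and branches of a non-reduced diagram of an element of $H$ need not label paths on $\La(H)$ at all: e.g.\ if $u^+$ is not the top edge of any positive cell, then $u0^j1$ labels no path on the core even though it can occur as a branch of such a diagram. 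Your construction can be salvaged --- if one takes the diagram of $h_{0^k}$ obtained from its reduced diagram by inserting a single minimal identity tree at the edge $a^+$, where $a$ is the reduced branch with $u0^k\equiv ac$, then the new branches lying to the \emph{left} of $u0^k$ all have the form $p0$ with $p1$ a prefix of $u$ and hence do label paths on the core --- but this argument is missing, and an arbitrary choice of diagram would break the claim. The paper sidesteps the issue entirely: it takes the \emph{reduced} diagram $\Delta'$ of $f$ (all of whose branches automatically label paths on the core with matching terminal edges), writes $u\equiv u_1s$, $v\equiv v_1s$ with $u_1\rightarrow v_1$ a pair of branches of $\Delta'$, and inserts the minimal identity diagram with branch $s\rightarrow s$ at the single edge $u_1^+=v_1^+$; the hypothesis that $u$ and $v$ themselves label paths on $\La(H)$ then guarantees that every newly created branch does too. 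You should either adopt that route or supply the missing analysis of which branches of your chosen diagrams of $h_{0^k}$ and $h_{1^k}$ survive into $\Delta$.
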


\begin{proof}
By Lemma \ref{path_lem}, there is some $k\in\mathbb{N}$ such that for each finite binary word $w\in\{0,1\}^k$, there is a function $h_{w}\in H$ with a pair of branches $uw\rightarrow vw$. Using the functions $h_w$, one can construct a dyadic-piecewise-$H$ function $f$, such that for all $w\in\{0,1\}^k$, $f$ coincides with $h_{w}$ on the interval $[uw]$. In other words, $f$ takes the branch $uw$ onto $vw$. Since that is true for all $w\in\{0,1\}^k$, $f$ has the pair of branches $u\rightarrow v$. By Theorem \ref{thm:GS}, $f\in\Cl(H)$.

 For the last statement, let $\Delta'$ be a reduced diagram of $f$. If $u\rightarrow v$ is a pair of branches of $\Delta'$, then we are done, as $\Delta'$ is accepted by $\La(H)$. Otherwise, $u\equiv u_1s$ and $v\equiv v_1s$ for some common suffix $s$, such that $u_1\rightarrow v_1$ is a pair of branches of $\Delta'$. In particular, on $\La(H)$, $u_1^+=v_1^+$.  Let $\Delta''$ be the minimal diagram of the identity with a pair of branches $s\rightarrow s$. The minimality of $\Delta''$ and the fact that $u_1s,v_1s$ label paths on $\La(H)$ guarantee that for each pair of branches $b\rightarrow b$ of $\Delta''$, $u_1b$ and $v_1b$ label paths on $\La(H)$ (see Remark \ref{min_tree}). Since $u_1^+=v_1^+$, we have $(u_1b)^+=(v_1b)^+$ in $\La(H)$ for each such $b$. 
Now, let $u_1^+=v_1^+=e$ be the edge on the horizontal $1$-path of $\Delta'$. We let $\Delta$ be the diagram resulting from $\Delta'$ by the replacement of the edge $e$ by the diagram $\Delta''$. The diagram $\Delta$ is equivalent to $\Delta'$ and as such represents $f$. By construction, it has the pair of branches $u\rightarrow v$. The above arguments show that for each pair of branches $w_1\rightarrow w_2$ of $\Delta$, $w_1$ and $w_2$ are paths on $\La(H)$ which terminate on the same edge. Hence, $\Delta$ is accepted by $\La(H)$. 
\end{proof}

\begin{Lemma}\label{path_tra}
Let $H$ be a subgroup of $F$. Let $w_1$ and $w_2$ be two finite binary words ending with $1$. 
The finite dyadic fractions $.w_1$ and $.w_2$ belong to the same orbit of the action of $H$ on $\mathcal D$ if and only if one of the following (mutually exclusive) conditions holds.
\begin{enumerate}
\item The words $w_1$ and $w_2$ do not label paths on the core $\La(H)$. In addition, if $w_1\equiv u_1s_1$ and $w_2\equiv u_2s_2$ such that $u_1$ and $u_2$ are the longest prefixes of $w_1$ and $w_2$ which label paths on $\La(H)$, then $u_1^+=u_2^+$ in $\La(H)$ and the suffixes $s_1$ and $s_2$ coincide.
\item There exist $m_1,m_2\ge 0$ such that the words $w_10^{m_1}$ and $w_20^{m_2}$ label paths on $\La(H)$ and the terminal edges $(w_10^{m_1})^+$ and $(w_20^{m_2})^+$ coincide.
\end{enumerate}
\end{Lemma}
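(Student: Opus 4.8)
The plan is to prove both implications, treating the reverse direction as routine and the forward direction as the crux. I would first record \emph{mutual exclusivity}: every prefix of a path-label is itself a path-label (immediate from Definition \ref{def:path}), so a word $w_10^{m_1}$ can label a path on $\La(H)$ only if $w_1$ does. Hence Condition (2) forces $w_1,w_2$ to label paths, while Condition (1) forbids it, so at most one holds. I would also fix $w_1\equiv w_1'1$, $w_2\equiv w_2'1$ and use throughout that the dyadic fraction $.w$ has the two expansions $w0^{\N}$ and $w'01^{\N}$.

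For the reverse direction I would manufacture the required element with Lemma \ref{cor_ide} and pass to $H$-orbits via Corollary \ref{orb_CH}. Under Condition (2), $(w_10^{m_1})^+=(w_20^{m_2})^+$ yields $f\in\Cl(H)$ with a pair of branches $w_10^{m_1}\to w_20^{m_2}$; as the left endpoints of $[w_10^{m_1}]$ and $[w_20^{m_2}]$ are $.w_1$ and $.w_2$, we get $f(.w_1)=.w_2$. Under Condition (1), $u_1^+=u_2^+$ yields $f\in\Cl(H)$ with a pair of branches $u_1\to u_2$, so $f(.u_1\omega)=.u_2\omega$ for every infinite $\omega$; taking $\omega\equiv s_10^{\N}$ and using $s_1\equiv s_2$ gives $f(.w_1)=f(.u_1s_1)=.u_2s_2=.w_2$.

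For the forward direction, suppose $g\in H$ with $g(.w_1)=.w_2$ and let $\Delta$ be its reduced diagram, which is accepted by $\La(H)$, so every pair of branches $u_i\to v_i$ of $\Delta$ satisfies $u_i^+=v_i^+$ with $u_i,v_i$ labeling paths (Lemma \ref{trivial}). I locate $.w_1$ in the subdivision of $[0,1]$ by the positive intervals $[u_i]$. If $.w_1$ is a common endpoint of two consecutive intervals (a left endpoint of some $[u_i]$), then $u_i\equiv w_10^{m}$ and, since left endpoints map to left endpoints, $v_i\equiv w_20^{m'}$, which is exactly Condition (2). If $.w_1$ is interior to some $[u_i]$, then $u_i$ is a proper prefix with $w_1\equiv u_it$, and combining $g(.u_i\omega)=.v_i\omega$ with $g(.w_1)=.w_2$ (and the fact that $g$ sends interior to interior, so $v_i$ is a proper prefix of $w_2$) forces $w_2\equiv v_it$ with the \emph{same} suffix $t$, since equality of the dyadic tails, both ending in $1$, implies letter-by-letter equality.

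The crux is converting the interior case into Condition (1) or (2), and here I would exploit that $\La(H)$ is \emph{folded}: no type-$1$ folding applies, so paths are determined by their labels and each edge is the top edge of at most one positive cell. Thus, since $u_i^+=v_i^+$, for every word $r$ the label $u_ir$ traces a path if and only if $v_ir$ does, and then $(u_ir)^+=(v_ir)^+$, because the trail read off $r$ from the common edge $u_i^+$ is unique. Applying this as $r$ ranges over prefixes of $t$, the longest prefix $r$ of $t$ with $u_ir$ a path coincides with the longest with $v_ir$ a path; writing $t\equiv rs$ makes $u_1\equiv u_ir$ and $u_2\equiv v_ir$ the longest path-labeling prefixes of $w_1,w_2$, with $s_1\equiv s\equiv s_2$ and $u_1^+=u_2^+$. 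If $s$ is empty then $w_1,w_2$ both label paths with $w_1^+=w_2^+$ (Condition (2)); otherwise neither labels a path and Condition (1) holds. The same dichotomy rules out the mixed situation in which exactly one of $w_1,w_2$ labels a path. I expect this propagation-of-terminal-edges argument under foldedness to be the main obstacle, since it is precisely what pins down both the suffix equality $s_1\equiv s_2$ and the identification of the longest prefixes.
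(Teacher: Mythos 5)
Your proposal is correct and follows essentially the same route as the paper's proof: the reverse direction via Lemma \ref{cor_ide} and Corollary \ref{orb_CH}, and the forward direction via the reduced diagram of $h$, splitting on whether the relevant positive branch is an extension $w_10^{m}$ or a proper prefix of $w_1$, then using Lemma \ref{trivial} and the longest path-labeling prefix to land in condition (2) or (1). The only difference is that you make explicit (via foldedness and uniqueness of trails) the step the paper dismisses as ``clearly $(uw)^+=(vw)^+$,'' which is a harmless elaboration.
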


\begin{proof}
If condition $(1)$ is satisfied,  then by Lemma \ref{cor_ide}, there is a function $f\in\Cl(H)$ with the pair of branches $u_1\rightarrow u_2$. Since $s_1\equiv s_2$, $f$ maps the fraction $.w_1=.u_1s_1$ to $.w_2=.u_2s_2$.
If condition (2) is satisfied, then by Lemma \ref{cor_ide} there is a function $f\in\Cl(H)$ with a pair of branches $w_10^{m_1}\rightarrow w_20^{m_2}$. In particular, it takes the dyadic fraction $.w_1$ to $.w_2$. Thus, if condition (1) or (2) holds, then $.w_1$ and $.w_2$ belong to the same orbit of the action of $\Cl(H)$ on $\mathcal D$, and thus, by Lemma \ref{orb_CH}, to the same orbit of the action of $H$ on $\mathcal D$. 

In the other direction, assume that $.w_1$ and $.w_2$ belong to the same orbit of $H$ and let $h\in H$ be such that $h(.w_1)=.w_2$. Let $\Delta$ be a reduced diagram of $h$. Let $u$ be the unique positive branch of $\Delta$ which is a prefix of $w_10^\N$. Let $v$ be the negative branch of $\Delta$ such that $u\rightarrow v$ is a pair of branches of $\Delta$. 
There are two cases to consider.

(1) $u$ is a strict prefix of $w_1$. In that case, let $w_1\equiv us$. Since $h(.us)=.vs=.w_2$ and $vs$ and $w_2$ both end with the digit $1$, we must have $w_2\equiv vs$. By Lemma \ref{trivial}, we have $u^+=v^+$ on $\La(H)$. Let $w$ be the longest prefix of $s$ such that $uw$ (equiv., $vw$) labels a path on $\La(H)$. Clearly, $(uw)^+=(vw)^+$ . If $w\equiv s$, then condition (2) in the lemma is satisfied with $m_1=m_2=0$. Otherwise, one can write $s\equiv w s'$. Then condition (1) is satisfied with $u_1\equiv uw$, $u_2\equiv vw$ and $s_1\equiv s_2\equiv s'$. 

(2) $u\equiv w_10^{m_1}$ for some $m_1\ge 0$. In that case, $v\equiv w_20^{m_2}$ for some $m_2\ge 0$. Otherwise, $h(.w_1)=.v\neq.w_2$.
By Lemma \ref{trivial}, we have $u^+=v^+$ in $\La(H)$. In other words, condition (2) in the lemma holds.
\end{proof}

To formulate a simple criterion for the transitivity of the action of a subgroup $H\le F$ on the set of finite dyadic fractions $\mathcal D$, we make the following definition. 

\begin{Definition}\label{Gamma}
Let $H$ be a subgroup of $F$. We define a directed graph $\Gamma(H)$ as follows. 
The set of vertices of $\Gamma(H)$ is the set of edges of the core $\La(H)$ which are not incident to $\iota(\La(H))$. For each pair of vertices $e,e'$ in $\Gamma(H)$ there is a directed edge from $e$ to $e'$ in $\Gamma(H)$ if and only if there is a positive cell $\pi$ in $\La(H)$ such that $e$ is the top edge of $\pi$ and $e'$ is the left bottom edge of $\pi$.
\end{Definition}

Note that each vertex $e$ of $\Gamma(H)$ can have at most one outgoing edge. Thus we have the following.
 
\begin{Remark}\label{derivation}
Let $e_1$ and $e_2$ be two vertices of $\Gamma(H)$ which lie in the same connected component of $\Gamma(H)$ (when $\Gamma(H)$ is considered as an unoriented graph). Then there are directed paths $p_1$ and $p_2$ in $\Gamma(H)$, with initial vertices $e_1$ and $e_2$ respectively, and the same terminal vertex. 
\end{Remark}

Let $u$ and $v$ be two finite binary words which contain the digit $1$ and label paths on $\La(H)$. Then $u^+$ and $v^+$ are vertices of $\Gamma(H)$ (see Corollary \ref{inner}). By Remark \ref{derivation} and the definition of $\Gamma(H)$, $u^+$ and $v^+$ belong to the same connected component of $\Gamma(H)$ if and only if, for some $m,n\ge 0$, $u0^m$ and $v0^n$ label paths on $\La(H)$ such that $(u0^m)^+=(v0^n)^+$. 



\begin{Theorem}\label{thm:tra}
Let $H$ be a finitely generated subgroup of $F$. Then the following assertions hold. 
\begin{enumerate}
\item[(1)] If there is an edge $e$ in $\La(H)$ which is not the top edge of any positive cell in $\La(H)$, then the action of $H$ on $\mathcal D$ has infinitely many orbits. 
\item[(2)] If every edge $e$ in $\La(H)$ is the top edge of some positive cell in $\La(H)$ then the number of orbits of the action of $H$ on $\mathcal D$ is equal to the number of connected components of $\Gamma(H)$ when $\Gamma(H)$ is viewed as an unoriented graph. 
\end{enumerate}
\end{Theorem}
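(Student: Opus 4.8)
The plan is to deduce both parts from the orbit criterion of Lemma \ref{path_tra}, together with the translation of its condition (2) into connectivity in the graph $\Gamma(H)$ recorded in Remark \ref{derivation} and the observation following it. Throughout I would use that, since $\La(H)$ is folded, no foldings of type $1$ apply, so a path on $\La(H)$ is determined by its label, and that for every edge $e$ there is a path $u$ with $u^+=e$.

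For part (1), I would fix a path $u$ with $u^+=e$, where $e$ is the edge that is the top of no positive cell. Because $e$ is not such a top edge, no path on $\La(H)$ properly extends $u$; in particular neither $u0$ nor $u1$ labels a path. I then consider the infinite family of finite dyadic fractions $.w_n$ with $w_n\equiv u0^n1$, $n\ge 0$. Each $w_n$ ends with $1$, and since $u0$ already fails to label a path, $u$ is the longest prefix of $w_n$ labeling a path, with suffix $s_n\equiv 0^n1$. For $m\neq n$ the suffixes $s_m$ and $s_n$ differ, so condition (1) of Lemma \ref{path_tra} fails; and since $w_m$ does not label a path (it has the non-path prefix $u0$), no word $w_m0^{m_1}$ labels a path, so condition (2) fails as well. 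By Lemma \ref{path_tra} the fractions $.w_m$ and $.w_n$ lie in distinct $H$-orbits, giving infinitely many orbits.

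For part (2), the hypothesis that every edge is the top of a positive cell means every binary word labels a path on $\La(H)$, so condition (1) of Lemma \ref{path_tra}, which requires the words \emph{not} to label paths, is vacuous. Hence for $w_1,w_2$ ending in $1$, the fractions $.w_1$ and $.w_2$ are $H$-equivalent if and only if condition (2) holds, i.e. $(w_10^{m_1})^+=(w_20^{m_2})^+$ for some $m_1,m_2\ge 0$. Since $w_1,w_2$ contain the digit $1$, Corollary \ref{inner} shows $w_1^+,w_2^+$ are vertices of $\Gamma(H)$, and by Remark \ref{derivation} (and the observation following it) condition (2) is equivalent to $w_1^+$ and $w_2^+$ lying in the same connected component of $\Gamma(H)$. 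Thus $.w\mapsto(\text{component of }w^+)$ is a well-defined injection from $H$-orbits on $\mathcal D$ into components of $\Gamma(H)$. For surjectivity, and hence the equality of the two counts, given a vertex $e$ of $\Gamma(H)$ I would choose a path $u$ with $u^+=e$; since $e$ is not incident to $\iota(\La(H))$, $u$ contains a $1$, so writing $u\equiv u'0^k$ with $u'$ ending in $1$, the successive left-steps $u'0^j\to u'0^{j+1}$ are directed edges of $\Gamma(H)$ (every intermediate edge is a vertex of $\Gamma(H)$ by Corollary \ref{inner}), yielding a directed path from $(u')^+$ to $e$. Hence $e$ lies in the component of $(u')^+$, and the fraction $.u'$ hits that component.

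The step requiring the most care is part (2): one must correctly identify finite dyadic fractions with vertices of $\Gamma(H)$ via the canonical representative ending in $1$, verify that condition (1) of Lemma \ref{path_tra} is genuinely vacuous under the hypothesis, and ensure the trailing-zero surjectivity argument stays inside a single connected component, i.e. that truncating trailing zeros corresponds exactly to traversing the out-edges of $\Gamma(H)$ backwards.
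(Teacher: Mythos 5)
Your proof is correct and follows essentially the same route as the paper: part (1) applies Lemma \ref{path_tra} to an infinite family of dyadic fractions whose canonical representatives all have the same maximal path-labeling prefix $u$ but pairwise distinct suffixes (the paper uses $.u1^n$ where you use $.u0^n1$, an immaterial difference), and part (2) combines Lemma \ref{path_tra} with Corollary \ref{inner} and Remark \ref{derivation} exactly as the paper does, including the same trailing-zeros argument for surjectivity onto the components of $\Gamma(H)$.
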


\begin{proof}
To prove part (1), assume that there is an edge $e$ in $\La(H)$ which is not the top edge of any positive cell in the core. Let $u$ be a path on $\La(H)$ such that $u^+=e$. Clearly, if $u$ is a strict prefix of a finite binary word $v$ then $v$ does not label a path on $\La(H)$. We consider the infinite set of finite dyadic fractions $B=\{.u1^n: n\in\mathbb{N}\}$. Lemma \ref{path_tra} implies that any two distinct fractions in $B$ do not belong to the same orbit of $H$. Thus, the action of $H$ on $\mathcal D$ has infinitely many orbits as required. 

For part (2), assume that every edge $e$ in $\La(H)$ is the top edge of some positive cell $\pi$. Then every finite binary word $u$ labels a path on $\La(H)$. 

Let $w_1',w_2'$ be two finite binary words which contain the digit $1$. We claim that the edges ${w_1'}^+$ and ${w_2'}^+$ belong to the same connected component of $\Gamma(H)$ (when viewed as vertices in the graph) if and only if the finite dyadic fractions $\alpha_1=.w_1'$ and $\alpha_2=.w_2'$ belong to the same orbit of the action of $H$ on $\mathcal D$. That would complete the proof of the lemma. 

Let $w_1$ (resp. $w_2$) be the longest prefix of $w_1'$ (resp. $w_2'$) which terminates in the digit $1$. Then $w_1'\equiv w_10^{n_1}$ for some $n_1\ge 0$ (resp. $w_2'\equiv w_20^{n_2}$ for some $n_2\ge 0$). Thus, ${w_1'}^+$ and $w_1^+$ (resp. ${w_2'}^+$ and $w_2^+$) belong to the same connected component of $\Gamma(H)$. Note also that $\alpha_1=.w_1$ and $\alpha_2=.w_2$. Thus, we can replace $w_1'$ by $w_1$ and $w_2'$ by $w_2$. 


Since $w_1$ and $w_2$ label paths in $\La(H)$ and end with the digit $1$, by Lemma \ref{path_tra}, if $.w_1$ and $.w_2$ belong to the same orbit of $H$ then for some $m_1,m_2\ge 0$, we have $(w_10^{m_1})^+=(w_20^{m_2})^+$ in $\La(H)$. 
Notice that the vertex $(w_10^{m_1})^+$ (resp. $(w_20^{m_2})^+$) of $\Gamma(H)$ belongs to the same connected component as the vertex $w_1^+$ (resp. $w_2^+$). Thus, if $.w_1$ and $.w_2$ belong to the same orbit of $H$, then $w_1^+$ and $w_2^+$ belong to the same connected component in $\Gamma(H)$; that of the vertex $(w_10^{m_1})^+$.

In the other direction, assume that $w_1^+$ and $w_2^+$
belong to the same connected component of $\Gamma(H)$. Then by Remark \ref{derivation} and the comments succeeding it, for some $m,n\ge 0$, the edges $(w_10^m)^+$ and $(w_20^n)^+$ coincide in $\La(H)$. By Lemma \ref{cor_ide}, there is an element $k\in\Cl(H)$ with a pair of branches $w_10^m\rightarrow w_20^n$. In particular, $\alpha_1=.w_1$ and $\alpha_2=.w_2$ belong to the same orbit of the action of $\Cl(H)$, and thus of $H$, on the set of finite dyadic fractions $\mathcal D$. 
\end{proof}

If $H$ is a finitely generated subgroup of $F$ then Theorem \ref{thm:tra} gives an algorithm for deciding the transitivity of the action of $H$ on $\mathcal D$. We note that an edge $e$ of $\La(H)$ is a vertex of $\Gamma(H)$ if and only if it is an outgoing edge of some inner vertex in $\La(H)$. Condition (2) in Theorem \ref{thm:tra} can be formulated in terms of the number of inner vertices of the core $\La(H)$ using the following proposition. 

The definition of the initial vertex of $\La(H)$ extends naturally to all $2$-automata considered in the proof of the following proposition. 

\begin{Proposition}\label{inner_vertex}
Let $H$ be a subgroup of $F$ and let $e_1,e_2$ be edges of $\La(H)$ which are not incident to $\iota(\La(H))$. Then $e_1,e_2$ belong to the same connected component of $\Gamma(H)$ (when viewed as an unoriented graph), if and only if $\iota(e_1)=\iota(e_2)$ in $\La(H)$. 
\end{Proposition}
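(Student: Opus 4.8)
The plan is to prove the two implications separately; the implication ``equal initial vertices $\Rightarrow$ same component'' carries all the content, the reverse being immediate. For the easy direction, suppose $e_1,e_2$ lie in one connected component of $\Gamma(H)$. Every directed edge of $\Gamma(H)$ joins the top edge $e$ of a positive cell $\pi$ to its left bottom edge $e'$, and since $\topp(\pi)$ and $\bott(\pi)$ have a common initial vertex we get $e_-=e'_-$. Thus the assignment $e\mapsto e_-$ is constant along every edge of $\Gamma(H)$, hence along every connected component, giving ${e_1}_-={e_2}_-$.

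For the converse I would argue through the folding construction of $\La(H)$, in the spirit of the proof of Lemma \ref{path_lem}. Let $\La'=\La'_0$ be the bouquet of the reduced generating diagrams with their top and bottom edges identified, and let $\La'_0\to\La'_1\to\cdots$ be the sequence of foldings with limit $\La(H)$. For each $\La'_n$ let $\Gamma_n$ be the graph built from its positive cells exactly as in Definition \ref{Gamma}. I would prove by induction on $n$ the invariant $(\star_n)$: \emph{any two edges of $\La'_n$ that are not incident to $\iota(\La'_n)$ and have the same initial vertex lie in one connected component of $\Gamma_n$}. Because a folding only identifies edges and cells, each map $\Gamma_n\to\Gamma_{n+1}$ is a quotient and connectivity is never destroyed; moreover, if ${e_1}_-={e_2}_-$ holds in $\La(H)$, then this vertex identification, together with representatives of $e_1$ and $e_2$, already appears at some finite stage $\La'_N$, so $(\star_N)$ and the quotient maps yield the statement in $\La(H)$ (this also handles the case of an infinite core).

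The base case $(\star_0)$ reduces to a single generator, since in the bouquet only $\iota$ and $\tau$ are shared between distinct generators and $\tau$ has no outgoing edge. Within one reduced diagram $\Delta\equiv\Delta^+\circ\Delta^-$ all vertices lie on the horizontal $1$-path $\iota=h_0,h_1,\dots,h_n=\tau$. Fix an inner vertex $h_i$. Using that $\Delta^+$ is a tree-diagram, the edges of $\Delta^+$ issuing from $h_i$ form a single chain under ``pass to the left bottom edge'' — a left spine — which is a directed path in $\Gamma_0$ ending at the unique horizontal leaf edge leaving $h_i$. The identical analysis applied to the tree-diagram $(\Delta^-)^{-1}$ shows that the edges of $\Delta^-$ issuing from $h_i$ also form a left spine ending at that \emph{same} horizontal leaf edge. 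Hence the two spines meet, and all edges leaving $h_i$ lie in one component of $\Gamma_0$.

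For the inductive step I would track how each folding alters the coincidences of initial vertices. A folding of type $2$ identifies the top edges of two positive cells that already share their bottom path; the left and right vertices of the two cells already coincide, so \emph{no} new vertex identification occurs and $(\star_n)$ follows from $(\star_{n-1})$ as connectivity only grows. A folding of type $1$ identifies the bottom paths of two positive cells $\pi,\pi'$ with a common top edge $t$: their left vertices already agree (both $t_-$) and so do their right vertices (both $t_+$), and the only genuinely new identification is that of the two middle vertices, which become a single vertex $m$. The edges issuing from $m$ in $\La'_n$ are the images of the edges issuing from the middle vertex of $\pi$ and from the middle vertex of $\pi'$; by $(\star_{n-1})$ the former are all connected to the right bottom edge of $\pi$ and the latter to the right bottom edge of $\pi'$, and these two right bottom edges are identified by the folding, so all edges issuing from $m$ lie in one component of $\Gamma_n$. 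Combined with the induction hypothesis, which still governs every unaffected vertex, this gives $(\star_n)$. I expect the main obstacle to be precisely the base case together with the type-$1$ step: one must carefully justify the combinatorial ``left spine'' description of the edges at a vertex of a tree-diagram and, crucially, that the positive and negative spines at an inner vertex of a single generator share a common horizontal leaf edge — this shared edge is what glues the two sides of the diagram together in $\Gamma_0$ and makes the invariant true at the outset.
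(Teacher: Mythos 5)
Your proof is correct, and its skeleton — induction along the folding sequence $\La'_0\to\La'_1\to\cdots$, with the easy direction read off a single positive cell, the bouquet as base case, and separate treatment of the two folding types — is the same as the paper's; your ``left spine'' argument is a careful fleshing-out of the step the paper dismisses with ``it is easy to see,'' and your type-$2$ step matches the paper's. The genuine divergence is in the type-$1$ step, and it is a real simplification. The paper treats all three vertex pairs $(x_i,y_i)$, $i=1,2,3$, on the bottom paths of the two folded cells as potentially newly identified, and its hardest sub-case — the terminal vertices $x_3,y_3$ — forces a detour back to the bouquet $\La'_0$, a choice of top-most outgoing edges at $\tau(\pi_1')$ and $\tau(\pi_2')$, and an appeal to Lemmas \ref{cor_ide} and \ref{trivial} to produce outgoing anchor edges whose images are connected in $\Gamma(H)$. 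You instead observe that a type-$1$ folding identifies cells that already share their top edge, so by the directed-$2$-complex axiom ($\topp$ and $\bott$ of a cell have common endpoints) the outer vertices satisfy $x_1=y_1$ and $x_3=y_3$ \emph{before} the folding; only the middle vertices are newly merged, and there the two right bottom edges are outgoing anchors that the folding itself identifies. This makes the whole inductive step local and self-contained, at the cost of relying on the precise convention that cells folded in a type-$1$ folding share their top \emph{path} (a single edge here); the paper's longer route would survive a more permissive folding notion in which the outer vertices need not coincide in advance. Your invariant is also stated for the intermediate graphs $\Gamma_n$ rather than for images in $\Gamma(H)$, which is formally stronger than the paper's inductive claim but costs nothing since the maps $\Gamma_n\to\Gamma(H)$ preserve connectivity. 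One point worth a sentence in a final write-up: in degenerate situations a single folding can cascade (e.g.\ if the two bottom paths share an edge), identifying more than just the middle vertices; the paper does not address this either, but since you lean on ``the only genuinely new identification is the middle one,'' you should note why such degeneracies either cannot occur or are absorbed by the induction.
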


\begin{proof}
To prove that if $e_1$ and $e_2$ belong to the same connected component of $\Gamma(H)$ then $\iota(e_1)=\iota(e_2)$ it suffices to consider the case where $e_1$ and $e_2$ are adjacent vertices of $\Gamma(H)$. Without loss of generality, we can assume that there is a positive cell $\pi$ in $\La(H)$ with top edge $e_1$ and bottom edge $e_2$. Clearly, that implies that $\iota(e_1)=\iota(e_2)=\iota(\pi)$ in $\La(H)$. 
To prove the other direction, we consider the construction of the core $\La(H)$. Let $\La'_n$, $n\ge 0$ be the $2$-automata constructed in the process of constructing $\La(H)$, as defined in the proof of Lemma \ref{path_lem}. It suffices to prove the following lemma. 

\begin{Lemma}
For all $n\ge 0$, if $e_1$ and $e_2$ are edges of $\La'_n$ such that $\iota(e_1)=\iota(e_2)\neq\iota(\La'_n)$, then the images of $e_1$ and $e_2$ in $\La(H)$ (under the natural morphism) belong to the same connected component of $\Gamma(H)$. 
\end{Lemma}

\begin{proof}
To simplify notation we denote the morphism from $\La'_n$ to $\La(H)$ by $\psi$, with no reference to the index $n$. This should not cause confusion as the morphisms from $\La'_n$ to $\La(H)$ are compatible with the natural morphisms from $\La'_i$ to $\La'_j$ for $i<j$. 

We prove the lemma by induction on $n$. If $n=0$ then $\La'_0$ is a bouquet of the generating diagrams $\Delta_i$ (each, with the top and bottom edges identified). Let $e_1$ and $e_2$ be edges of $\La'_0$ such that $\iota(e_1)=\iota(e_2)\neq \iota(\La'_0)$. Then $e_1$ and $e_2$ are edges in one of the diagrams $\Delta_i$. Let $e'$ be the outgoing edge of $\iota(e_1)=\iota(e_2)$ which lies on the horizontal $1$-path of $\Delta_i$. It is easy to see that there is a directed path from $\psi(e_1)$ to $\psi(e')$ in $\Gamma(H)$. Similarly, for $\psi(e_2)$ and $\psi(e')$. Thus, $\psi(e_1)$ and $\psi(e_2)$ belong to the same connected component of $\Gamma(H)$. 

Now, let $n\in\mathbb{N}$ and assume that the claim holds for $n-1$.  We first consider the case where the $n^{th}$ folding is a folding of type $2$. In that case, no vertices are identified in the transition from $\La'_{n-1}$ to $\La'_n$. Let $e_1$ and $e_2$ be edges in $\La'_n$ such that $\iota(e_1)=\iota(e_2)\neq\iota(\La'_n)$. Then $e_1$ and $e_2$ can be lifted to edges $e_1'$ and $e_2'$ in $\La'_{n-1}$. Clearly, 
$\iota(e_1')=\iota(e_2')\neq \iota(\La'_{n-1})$. Thus, by induction we have that $\psi(e_1)=\psi(e_1')$ and $\psi(e_2')=\psi(e_2)$ belong to the same connected component of $\Gamma(H)$, as required. 

Therefore, we can assume that the $n^{th}$ folding is of type $1$. Let $\pi_1$ and $\pi_2$ be the positive cells of $\La'_{n-1}$ which are folded in the transition to $\La'_n$. 
 Let $p_1,p_2$ (resp. $q_1,q_2$) be the left and right bottom edges of $\pi_1$ (resp. $\pi_2$). Let $x=\tau(p_1)=\iota(p_2)$ and $y=\tau(q_1)=\iota(q_2)$.  Then in the transition to $\La'_n$, the vertex $x$ is identified with $y$ to give a vertex $z$ of $\La'_n$. No other vertices of $\La'_{n-1}$ get identified. 

Let $e_1$ and $e_2$ be edges of $\La'_n$ such that $\iota(e_1)=\iota(e_2)\neq\iota(\La'_n)$. We can assume that $\iota(e_1)=\iota(e_2)=z$. Otherwise, we are done by the induction hypothesis, as in the case of a folding of type $2$.  The edges $e_1$ and $e_2$ can be lifted to edges $e_1',e_2'$ in $\La'_{n-1}$. Clearly, $\iota(e_1'),\iota(e_2')\in\{x,y\}$. If $\iota(e_1')=\iota(e_2')$, we are done by the induction hypothesis. Thus, assume that $\iota(e_1')=x$ and $\iota(e_2')=y$.
Since $\iota(p_2)=x$ and $\iota(q_2)=y$, we have by induction, that $\psi(e_1')$ and $\psi(p_2)$ belong to the same connected component of $\Gamma(H)$. Similarly, $\psi(e_2')$ and $\psi(q_2)$ belong to the same connected component of $\Gamma(H)$. Since 
$\psi(p_2)=\psi(q_2)$ (indeed, $p_2$ and $q_2$ get identified already in the transition to $\La'_n$), we get that $\psi(e_1)=\psi(e_1')$ belongs  to the same connected component of $\psi(e_2)=\psi(e_2')$, as required. 
\end{proof}
\end{proof} 

Since any inner vertex in $\La(H)$ has at least one outgoing edge, the following is an immediate corollary of Theorem \ref{thm:tra} and Proposition \ref{inner_vertex}.

\begin{Corollary}\label{cor:tra}
Let $H$ be a finitely generated subgroup of $F$. Then the following assertions hold. 
\begin{enumerate}
\item[(1)] If there is an edge $e$ in $\La(H)$ which is not the top edge of any positive cell in $\La(H)$, then the action of $H$ on $\mathcal D$ has infinitely many orbits. 
\item[(2)] If every edge $e$ in $\La(H)$ is the top edge of some positive cell in $\La(H)$ then the number of orbits of the action of $H$ on $\mathcal D$ is equal to the number of inner vertices of $\La(H)$. 
\end{enumerate}
\end{Corollary}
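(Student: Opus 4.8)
The plan is to deduce the statement directly from Theorem \ref{thm:tra} and Proposition \ref{inner_vertex}, the only real task being to convert the count of connected components of $\Gamma(H)$ into a count of inner vertices of $\La(H)$. Part (1) is word-for-word Theorem \ref{thm:tra}(1), so I would simply invoke it. For part (2) the hypothesis is exactly that of Theorem \ref{thm:tra}(2), which already equates the number of orbits of $H$ on $\mathcal D$ with the number of connected components of $\Gamma(H)$; hence it suffices to exhibit a bijection between the connected components of $\Gamma(H)$ and the inner vertices of $\La(H)$.

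To build this bijection I would consider the map $e\mapsto e_-$ sending a vertex $e$ of $\Gamma(H)$ (that is, an edge of $\La(H)$ not incident to $\iota(\La(H))$) to its initial vertex in $\La(H)$. First I would check that this map lands in the set of inner vertices: since $e$ is not incident to $\iota(\La(H))$ we have $e_-\neq\iota(\La(H))$, and since $\tau(\La(H))$ has only incoming edges we also have $e_-\neq\tau(\La(H))$; thus $e_-$ is an inner vertex. Next, Proposition \ref{inner_vertex} says precisely that two vertices $e_1,e_2$ of $\Gamma(H)$ lie in the same connected component if and only if ${e_1}_-={e_2}_-$, so the map factors through an injection from the set of connected components of $\Gamma(H)$ into the set of inner vertices of $\La(H)$. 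Finally, surjectivity uses the remark that every inner vertex $w$ has at least one outgoing edge $e$; such an $e$ has $e_-=w\neq\iota(\La(H))$ and $e_+\neq\iota(\La(H))$ (as $\iota(\La(H))$ has no incoming edges), so $e$ is a vertex of $\Gamma(H)$ mapping to $w$. This establishes the bijection, and combining it with Theorem \ref{thm:tra}(2) gives that the number of orbits equals the number of inner vertices, which is part (2).

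The argument is essentially formal once Proposition \ref{inner_vertex} is available; the only point demanding care — and the step I would treat as the crux — is the bookkeeping about which vertices of $\La(H)$ can arise as $e_-$ for a vertex $e$ of $\Gamma(H)$. This relies on the structural facts that $\iota(\La(H))$ carries only outgoing edges while $\tau(\La(H))$ carries only incoming ones, as recorded in the discussion preceding Corollary \ref{inner}, together with the observation about outgoing edges of inner vertices that is flagged immediately before the corollary statement.
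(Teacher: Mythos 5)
Your proposal is correct and follows exactly the paper's route: the paper derives this corollary from Theorem \ref{thm:tra} and Proposition \ref{inner_vertex}, citing only the fact that every inner vertex has an outgoing edge. You have merely spelled out the bijection $e\mapsto e_-$ that the paper leaves implicit, and all the bookkeeping (that $e_-$ is inner, and that every inner vertex arises this way) is handled correctly.
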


In particular, we have the following. 

\begin{Corollary}\label{cor_tra}
Let $H\le F$. Then $H$ acts transitively on $\mathcal D$ if and only if the following assertions hold. 
\begin{enumerate}
\item[(1)] Every edge in $\La(H)$ is the top edge of some positive cell in the core. 
\item[(2)] There is a unique inner vertex in $\La(H)$. 
\end{enumerate}
\end{Corollary}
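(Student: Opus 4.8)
The plan is to read transitivity as the assertion that the $H$-action on $\mathcal D$ has a single orbit, and then to extract the orbit count from the core $\La(H)$ using the machinery already assembled. I would stress from the outset that although Theorem \ref{thm:tra} and Corollary \ref{cor:tra} are phrased for finitely generated $H$ (where the orbit count is a genuine finite number computable from the finite core), the equivalence needed here rests only on Lemma \ref{path_tra} and Proposition \ref{inner_vertex}, both valid for an arbitrary subgroup $H\le F$. Hence the \emph{transitivity} criterion extends to all subgroups $H\le F$, and I would prove the two implications directly from those results.

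For the forward implication, I would assume $H$ acts transitively and derive (1) and (2). To obtain (1), I argue by contradiction: if some edge $e$ of $\La(H)$ is not the top edge of any positive cell, pick a path $u$ with $u^+=e$; then no proper extension of $u$ labels a path on $\La(H)$, and Lemma \ref{path_tra} shows the fractions in $B=\{.u1^n:n\in\mathbb{N}\}$ lie in pairwise distinct orbits --- exactly the computation in the proof of Theorem \ref{thm:tra}(1) --- contradicting the existence of a single orbit. Granting (1), every finite binary word labels a path on $\La(H)$, so the correspondence in the proof of Theorem \ref{thm:tra}(2) identifies orbits with connected components of $\Gamma(H)$, and Proposition \ref{inner_vertex} identifies these components with inner vertices of $\La(H)$; a single orbit then forces a single inner vertex, which is (2).

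For the converse I would assume (1) and (2) and show any two fractions are $H$-equivalent. Write two arbitrary finite dyadic fractions as $.w_1$ and $.w_2$ with $w_1,w_2$ ending in $1$. By (1) both words label paths on $\La(H)$, and since each contains the digit $1$, Corollary \ref{inner} shows $w_1^+$ and $w_2^+$ are not incident to $\iota(\La(H))$; their initial vertices $(w_1^+)_-$ and $(w_2^+)_-$ are therefore inner vertices (neither can be $\tau(\La(H))$, which has no outgoing edge). Condition (2) makes these initial vertices equal, so by Proposition \ref{inner_vertex} the edges $w_1^+$ and $w_2^+$ lie in one connected component of $\Gamma(H)$; this yields $m,n\ge 0$ with $(w_10^m)^+=(w_20^n)^+$, and Lemma \ref{cor_ide} produces an element of $\Cl(H)$ carrying $.w_1$ to $.w_2$. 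By Corollary \ref{orb_CH} this element witnesses a single $H$-orbit, so $H$ is transitive.

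The main obstacle is not a deep difficulty but the bookkeeping that makes ``number of orbits $=$ number of inner vertices'' precise: one must check that assigning to each connected component of $\Gamma(H)$ the common initial vertex of its edges is a bijection onto the inner vertices. Injectivity is Proposition \ref{inner_vertex}, while surjectivity uses that every inner vertex $z$ has an outgoing edge $e$, that $e$ is not incident to $\iota(\La(H))$ (its initial vertex is $z\neq\iota(\La(H))$ and $\tau(\La(H))$ admits no outgoing edge), so that $e$ is a vertex of $\Gamma(H)$ with $e_-=z$. The other point requiring care is confirming that condition (1) is genuinely necessary, i.e.\ that its failure produces infinitely many orbits rather than merely more than one; the set $B=\{.u1^n:n\in\mathbb{N}\}$ above is exactly what rules this out.
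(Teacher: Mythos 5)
Your proof is correct and follows essentially the same route as the paper, which derives the statement from Theorem \ref{thm:tra} together with Proposition \ref{inner_vertex} (via the orbit-count/inner-vertex correspondence you spell out). Your explicit check that the argument does not actually use finite generation of $H$ — only Lemma \ref{path_tra}, Lemma \ref{cor_ide} and Proposition \ref{inner_vertex}, all valid for arbitrary subgroups — is a worthwhile clarification, since the paper silently passes from the finitely generated case of Corollary \ref{cor:tra} to the general statement.
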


In practice, constructing the graph $\Gamma(H)$ is a simple way to count the number of inner vertices of $\La(H)$. Thus, we would often apply Theorem \ref{thm:tra}. 

\begin{Example}\label{Jones3}
The action of the subgroup $H=\la x_0x_1,x_1x_2,x_2x_3\ra$ of $F$ on the set of finite dyadic fractions $\mathcal D$ has two orbits.
\end{Example}

\begin{proof}
The core $\La(H)$ can be described by the following binary tree.

\Tree[.$e$ [.$f$ [.$f$ ] [.$m$ [.$h$ ] [.$l$ ] ] ] [.$g$ [.$h$ [.$h$ ] [.$n$ [.$l$ ] [.$h$ ] ] ] [.$k$ [.$l$ [.$l$ ] [.$m$ ] ] [.$g$ ] ] ] ]

\noindent We note that every edge of $\La(H)$ is the top edge of some positive cell. 
The graph $\Gamma(H)$ is the following. 

\begin{figure}[h]
	\centering
	\includegraphics[width=0.5\columnwidth]{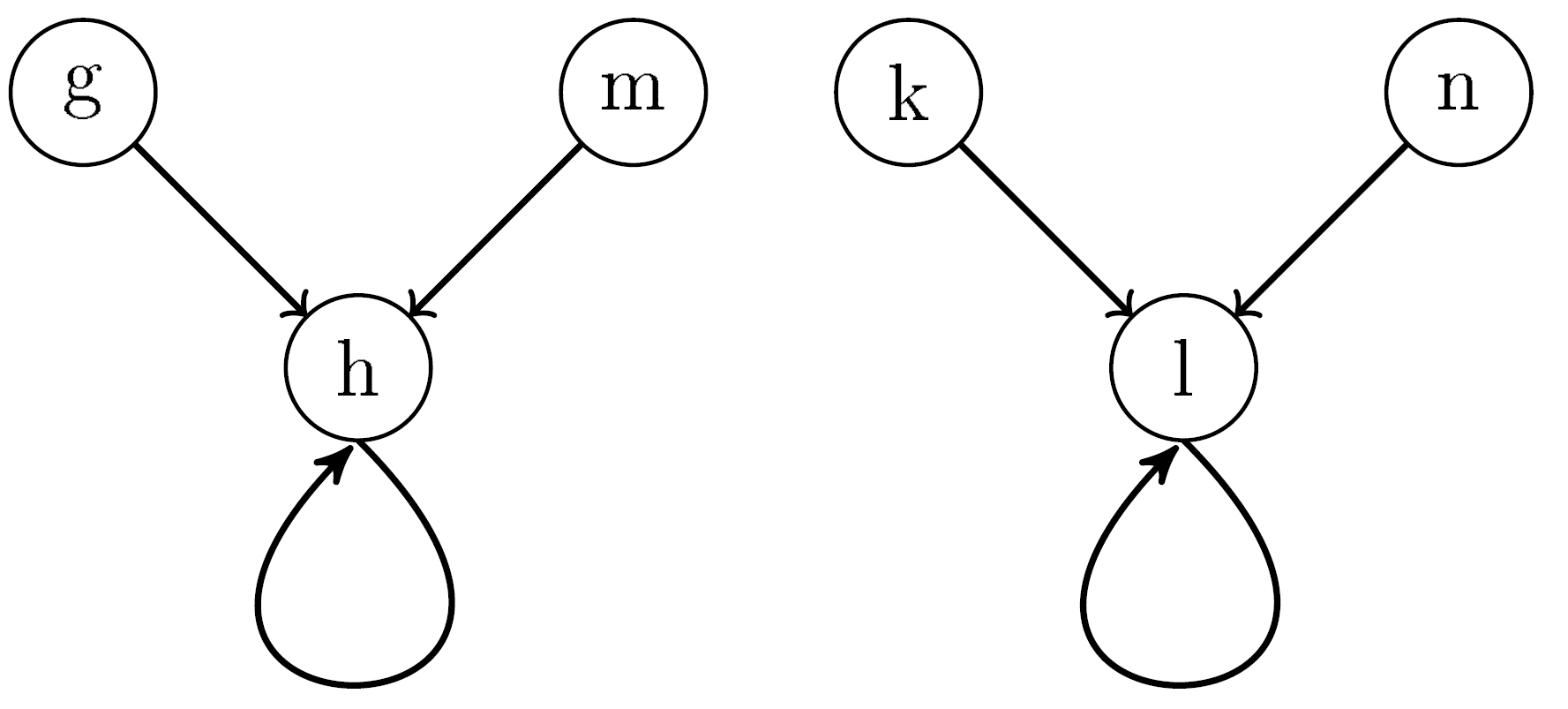}
	\caption{The graph $\Gamma(H)$}
	\label{pic1}
\end{figure}


\noindent Thus, by Theorem \ref{thm:tra}, the action of $H$ on $\mathcal D$ has two orbits. 
\end{proof}

%
%
%
%
%
%

\begin{Remark}
The subgroup $H$ from Example \ref{Jones3} is the group $\overrightarrow F$ recently defined by Jones, in his study of the relation between Thompson's group $F$ and links \cite{Jones}. It was proved in \cite{Jones} (see also \cite{GS}) that the action of $\overrightarrow F$ on $\mathcal D$ has two orbits. Two finite dyadic fractions $\alpha_1$ and $\alpha_2$ belong to the same orbit if and only if the sum of digits in their binary representation is equal modulo $2$. Considering the core $\La(\overrightarrow F)$ and the proof of Theorem \ref{thm:tra} it is easy to see that this is the case. 
\end{Remark}

%
%
%
%
%

\section{The generation problem in $F$}\label{sec:gen}

Let $X$ be a finite subset of $F$. We are interested in determining whether $X$ generates $F$. 
Let $H=\la X\ra$. We make the observation that $H=F$ if and only if (1) $H[F,F]=F$ and (2) $[F,F]\subseteq H$.
To determine if $H[F,F]=F$ it suffices to consider the image of $H$ in the abelianization of $F$. Thus, the generation problem in $F$ reduces to determining whether a finitely generated subgroup $H$ contains the derived subgroup of $F$. (Since $[F,F]$ is simple and the center of $F$ is trivial  \cite{CFP}, this is equivalent to determining if  $H$ is a normal subgroup of $F$.) We start with a condition for $\Cl(H)$ to contain $[F,F]$. 

\begin{Lemma}\label{fin_ind}
Let $H\le F$ be a subgroup of $F$. Then $\Cl(H)$ contains the derived subgroup of $F$ if and only if the following assertions hold.
\begin{enumerate}
\item[(1)] Every finite binary word $u$ labels a path on $\La(H)$.
\item[(2)] For any pair of finite binary words $u$ and $v$ which contain both digits $0$ and $1$, we have $u^+=v^+$ on $\La(H)$. 
\end{enumerate}
Equivalently, in terms of the structure of the core, $\Cl(H)$ is a normal subgroup of $F$ if and only if 
\begin{enumerate}
\item[(1')] every edge in $\La(H)$ is the top edge of some positive cell; and
\item[(2')] there is a unique inner edge in $\Cl(H)$. 
\end{enumerate}
\end{Lemma}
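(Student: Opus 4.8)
The plan is to reduce everything to the single equivalence $[F,F]\subseteq\Cl(H)\iff$ (1) and (2), treating the reformulation in terms of the core and the passage to normality as bookkeeping. The equivalence (1)$\iff$(1$'$) is immediate from the observation in Section \ref{sec:paths} that every edge of $\La(H)$ is the top of a positive cell iff every binary word labels a path, together with the fact that every edge of $\La(H)$ is the terminal edge $u^+$ of some path $u$. Assuming (1), the equivalence (2)$\iff$(2$'$) follows from Corollary \ref{inner}(3): the words containing both digits are exactly the labels of paths terminating on inner edges, so (2) says precisely that $\La(H)$ has a single inner edge. Finally, $\Cl(H)\supseteq[F,F]$ is equivalent to $\Cl(H)$ being normal, since $F/[F,F]\cong\mathbb{Z}^2$ is abelian (so any subgroup containing $[F,F]$ is normal) while, $F$ having trivial center and simple derived subgroup, every nontrivial normal subgroup contains $[F,F]$ \cite{CFP}.

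For the direction (1),(2)$\Rightarrow[F,F]\subseteq\Cl(H)$ I would mimic the proof of Theorem \ref{thm:GS}. Take $f\in[F,F]$ with reduced diagram $\Delta$ and pairs of branches $u_i\to v_i$. Because $f$ has slope $1$ at $0^+$ and at $1^-$, the leftmost pair is $0^a\to 0^a$ and the rightmost pair is $1^b\to 1^b$ (the leftmost and rightmost leaves of a binary tree are the only pure powers of $0$ and of $1$, and the slope condition forces the two exponents of each extreme pair to agree); every remaining branch contains both digits. Hence every pair satisfies $u_i^+=v_i^+$: trivially for the two extreme pairs, and by (2) for the middle ones, while (1) guarantees that each $u_i,v_i$ labels a path. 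Lemma \ref{path_lem} then supplies a $k$ such that on each interval $[u_iw]$, $w\in\{0,1\}^k$, the function $f$ coincides with some element of $H$; since these intervals tile $[0,1]$, $f$ is dyadic-piecewise-$H$ and so lies in $\DPiec(H)=\Cl(H)$ by Theorem \ref{thm:GS}.

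For the converse I would first record that $[F,F]$ acts transitively on $\mathcal D$: given $\alpha,\beta\in\mathcal D$, choose dyadic $0<c<d<1$ with $\alpha,\beta\in(c,d)$ and take an element of $F_{[c,d]}\subseteq[F,F]$ moving $\alpha$ to $\beta$. Then $\Cl(H)\supseteq[F,F]$ acts transitively on $\mathcal D$, so by Corollary \ref{orb_CH} $H$ does too, and Corollary \ref{cor_tra} yields (1) (every edge is the top of a positive cell). For (2) the key sub-claim is that for any two binary words $u,v$ each containing both digits there is $g\in[F,F]$ with a pair of branches $u\to v$. Indeed $\overline{[u]}$ and $\overline{[v]}$ then lie in $(0,1)$, hence in some $(c,d)$ with $0<c<d<1$ dyadic, and an element of $F_{[c,d]}\subseteq[F,F]$ can be built carrying $[u]$ affinely onto $[v]$. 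Such a $g$ lies in $\Cl(H)$, and $u,v$ label paths by (1), so Lemma \ref{trivial} applied to $\Cl(H)$ (whose core is $\La(H)$) gives $u^+=v^+$; as $u,v$ were arbitrary words containing both digits, (2) follows.

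I expect the only genuine difficulty to be the construction in the sub-claim (equivalently, the transitivity of $[F,F]$): producing an element of $F$ supported in $(0,1)$ that carries one standard dyadic interval affinely onto another. This is elementary — a routine tree-pair (forest) construction once $[u]$ and $[v]$ are enclosed in a subinterval $[c,d]\subset(0,1)$ and the complementary intervals are matched identically — but it is the one place requiring more than bookkeeping with the earlier lemmas, so I would isolate it as a separate claim and check there the nested and disjoint cases of $[u]$ and $[v]$.
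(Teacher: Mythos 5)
Your proposal is correct and follows essentially the same route as the paper: the key step in both directions is the same decomposition of the branches of a reduced $[F,F]$-diagram into the two trivial extreme pairs $0^a\to 0^a$, $1^b\to 1^b$ plus middle pairs whose words contain both digits, together with the construction of elements of $[F,F]$ carrying $[u]$ linearly onto $[v]$ with that pair of branches. The only (harmless) deviations are that you derive (1) via transitivity on $\mathcal D$ and Corollary \ref{cor_tra} where the paper reads it off directly from the fact that $0^n1$ and $1^n0$ label paths, and you conclude the converse via Lemma \ref{path_lem} and $\DPiec(H)=\Cl(H)$ where the paper simply observes that such a diagram is accepted by $\La(H)$.
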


\begin{proof}
Condition (1') is equivalent to the condition that every finite binary word $u$ labels a path on $\La(H)$. When condition (1') holds, condition (2') is equivalent to condition (2) by Corollary \ref{inner}. 

Assume that $\Cl(H)$ contains the derived subgroup of $F$. Then every reduced diagram in $[F,F]$ is accepted by $\La(H)$. 
Recall that $[F,F]$ is the subgroup of $F$ of all functions with slope $1$ both at $0^+$ and at $1^-$. For any pair of finite binary words $u$ and $v$ which contain both digits $0$ and $1$, there is an element $f\in [F,F]$ which maps $[u]$ linearly onto $[v]$. It is easy to construct such an element so that the reduced diagram of $f$ has the pair of branches $u\rightarrow v$. It follows from Lemma \ref{trivial} that $u$ and $v$ label paths in $\La(H)$ such that $u^+=v^+$.
Thus, condition (2) is satisfied. For (1) it suffices to show that for all $n\in\mathbb{N}$, $0^n$ and $1^n$ also label paths on $\La(H)$. Since $0^n1$ and $1^n0$ are paths on the core, we have the result. 

Conversely, if (1) and (2) are satisfied, then every diagram in $[F,F]$ is accepted by the core $\La(H)$. Indeed, a diagram $\Delta$ is accepted by $\La(H)$ if and only if for any pair of branches $u\rightarrow v$ in $\Delta$, both $u$ and $v$ label paths on $\La(H)$ and $u^+=v^+$ in $\La(H)$. If $\Delta\in [F,F]$, then the first and last branches of $\Delta$ are of the form $0^m\rightarrow 0^m$ and $1^n\rightarrow 1^n$ for some $m,n\in\mathbb{N}$. All other pairs of branches $u_i\rightarrow v_i$ are such that both $u_i$ and $v_i$ contain the digits $0$ and $1$. Conditions (1) and (2) clearly imply that $\Delta$ is accepted by $\La(H)$ and thus that $[F,F]\subseteq \Cl(H)$. 
\end{proof}

\begin{Remark}\label{coreF2}
The core $\La(F)$ of Thompson's group $F$ is given by the finite binary tree

\Tree[.$e$ [.$f$ [.$f$ ] [.$h$ [.$h$ ] [.$h$ ] ] ] [.$g$ [.$h$ ] [.$g$ ] ] ]

In particular, it has a unique inner edge, a distinguished edge, a left boundary edge and a right boundary edge. It is the smallest core (in terms of the number of edges and cells) of a non-cyclic subgroup $H\le F$. 
\end{Remark}

The rest of this section is dedicated to the proof of Theorem \ref{thm:der_int}. Namely, we have to show that subgroups $H\le F$ such that $\Cl(H)$ contains the derived subgroup of $F$ and for which Condition  (2) from  Theorem \ref{thm:der_int} is satisfied, contain the derived subgroup of $F$. We begin by studying   subgroups $H\le F$ such that $\Cl(H)\supseteq [F,F]$. For that we introduce some new machinery.

Recall that by Lemma \ref{path_lem}, if $H$  is a subgroup of $F$ and $u,v$ label paths on $\La(H)$ such that $u^+=v^+$, then for any long enough extension $w$, $uw\rightarrow vw$ is a pair of branches of some element in $H$. We define $2$ additional $2$-automata related to a subgroup $H$ of $F$, where the goal is to have this property for paths on the core, with no need to consider extensions (i.e., to have that if $u^+=v^+$ on ``some core'' then  there is an element in $H$ with the pair of branches $u\rightarrow v$). We will need the following remark.

\begin{Remark}\label{fol_12}
Let $\La'$ be a $2$-automaton over the Dunce hat $\kk$ to which no foldings of type $1$ are applicable. If $\La$ results from $\La'$ by a folding of type $2$ then no folding of type $1$ is applicable to $\La$.
\end{Remark}

\begin{Definition}
Let $H$ be a subgroup of $F$ generated by a set $X=\{\Delta_i :i\in\mathcal{I}\}$ of reduced diagrams. We define a $2$-automaton over $\kk$ as follows.  First, we identify all $\topp(\Delta_i)$ with all $\bott(\Delta_i)$ and obtain a 2-automaton $\La'$ over $\kk$ with the distinguished $1$-paths $p_{\La'}=q_{\La'}=\topp(\Delta_i)=\bott(\Delta_i)$.  
Then we apply foldings of type $1$ to $\La'$ as long as possible. The resulting $2$-automaton $\La_{\sem}(X)$, to which no folding of type $1$ is applicable, is called the \emph{semi-core of $H$ associated with the generating set $X$}. 
\end{Definition}

The semi-core of $H$ associated with a generating set $X$ depends only on $X$; i.e., it does not depend on the order in which foldings were applied to $\La'$. Remark \ref{fol_12} implies that if one applies all possible foldings of type $2$ to $\La_{\sem}(X)$, the resulting $2$-automaton is the core $\La(H)$. In particular, there is a natural surjective morphism $\psi$ from $\La_{\sem}(X)$ to $\La(H)$. By Lemma \ref{lif_2}, $\psi$ gives a $1-1$ correspondence between paths on 
$\La_{\sem}(X)$ and paths on $\La(H)$. It follows that a finite binary word $u$ labels a path on $\La_{\sem}(X)$ if and only if it labels a path on the core $\La(H)$ and that each word $u$ labels at most one path on the semi-core. 

The proof of Lemma \ref{path_lem} implies the following.  

\begin{Lemma}\label{sem_lem}
Let $H$ be a subgroup of $F$ generated by a set of reduced diagrams $X$. Let $u$ and $v$ be finite binary words which label paths on the semi-core $\La_{\sem}(X)$ such that $u^+=v^+$. Then there is a function $h\in H$ with a pair of branches $u\rightarrow v$. 
\end{Lemma}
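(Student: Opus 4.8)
The plan is to mimic the proof of Lemma~\ref{path_lem}, exploiting the fact that the semi-core is built using \emph{only} foldings of type~$1$. Recall from that proof that the core $\La(H)$ is constructed as the limit of a sequence of $2$-automata $\La'_0,\La'_1,\La'_2,\dots$, where $\La'_0=\La'$ is the bouquet of spheres obtained by identifying all $\topp(\Delta_i)$ with all $\bott(\Delta_i)$, and each $\La'_n$ arises from $\La'_{n-1}$ by a single folding. The semi-core $\La_{\sem}(X)$ is constructed in exactly the same way, except that every folding performed is of type~$1$; I would write $\La'_0,\La'_1,\dots$ for the corresponding sequence of intermediate automata, whose limit is $\La_{\sem}(X)$.

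The key ingredient is the inductive claim established inside the proof of Lemma~\ref{path_lem}: for each $n\ge 0$, if $p$ and $q$ are paths on $\La'_n$ with $p^+=q^+$, then for every finite binary word $w$ of length at least $k_n$ there is an element $h\in H$ with a pair of branches $\lab(p)w\rightarrow\lab(q)w$, where $k_n$ denotes the number of type-$2$ foldings performed among the first $n$ foldings. That induction---its base case $n=0$ together with Case~$1$ (the $n$-th folding of type~$1$, for which $k_n=k_{n-1}$) and Case~$2$ (the $n$-th folding of type~$2$, for which $k_n=k_{n-1}+1$)---applies verbatim to \emph{any} sequence of foldings applied to $\La'_0$, and in particular to the type-$1$ sequence producing the semi-core. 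Since no folding of type~$2$ is ever applied in this sequence, we have $k_n=0$ for all $n$, so one may take $w$ to be the empty word. Thus for paths $p,q$ on $\La'_n$ with $p^+=q^+$ there is already an element $h\in H$ with a pair of branches $\lab(p)\rightarrow\lab(q)$, with no need to pass to an extension.

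It then remains to transfer this conclusion to the semi-core itself. Given paths $u$ and $v$ on $\La_{\sem}(X)$ with $u^+=v^+$, each is a finite sequence of edges and cells, all of which already appear at some finite stage of the folding process; hence for $n$ large enough both $u$ and $v$ lift to paths $u_n,v_n$ on $\La'_n$ with $\lab(u_n)\equiv u$, $\lab(v_n)\equiv v$ and $u_n^+=v_n^+$, exactly as in the opening reduction of the proof of Lemma~\ref{path_lem}. Applying the claim above with constant $0$ to $u_n$ and $v_n$ produces the desired element $h\in H$ with a pair of branches $u\rightarrow v$.

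I expect no serious obstacle beyond the bookkeeping already carried out in Lemma~\ref{path_lem}; the only point requiring care is to confirm that the inductive argument there never used any feature special to the core's folding sequence, so that restricting to type-$1$ foldings (which forces $k_n=0$) is legitimate. This is immediate once one observes that each inductive step in that proof branches solely on the \emph{type} of the $n$-th folding, and that the type-$1$ step leaves the constant unchanged.
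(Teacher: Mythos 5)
Your proposal is correct and is exactly the argument the paper intends: the paper's own proof consists of the single observation that no foldings of type $2$ occur in the construction of $\La_{\sem}(X)$, so the induction in the proof of Lemma~\ref{path_lem} runs with $k_n=0$ throughout and one may take $w$ to be the empty word. Your write-up simply makes explicit the bookkeeping (the reduction to a finite stage $\La'_n$ and the verbatim applicability of the base case and Case~1) that the paper leaves implicit.
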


Indeed, no foldings of type $2$ were applied in the construction of $\La_{\sem}(X)$.

Let $H$ be a subgroup of $F$ generated by a set of reduced diagrams $X$. Let $\La_{\sem}(X)$ be the associated semi-core of $H$. We define a $2$-automaton $\La_{\bra}(H)$ as follows. 
For each edge $e$ in $\La_{\sem}(X)$, let $u_e$ be a path on $\La_{\sem}(X)$ such that $u_e^+=e$ (such a path clearly exists).
One can define an equivalence relation $R$ on the set of edges of $\La_{\sem}(X)$, where two edges $e_1,e_2$ are equivalent if and only if there is an element $h\in H$ with a pair of branches $u_{e_1}\rightarrow u_{e_2}$. 
Lemma \ref{sem_lem} implies that $R$ does not depend on the choice of paths $u_e$. Let $\psi$ be the morphism from $\La_{\sem}(X)$ to $\La(H)$. Lemma \ref{trivial} implies that if $e_1$ and $e_2$ are equivalent modulo $R$, then $\psi(e_1)=\psi(e_2)$. Thus, we can identify all equivalent edges of $\La_{\sem}(X)$ and $\psi$ will induce a morphism from the resulting $2$-automaton to $\La(H)$. Let $\La$ be the resulting $2$-automaton. We identify cells of $\La$ which share their top $1$-path as well as their bottom $1$-path and call the resulting $2$-automaton the \emph{branches-core} of $H$, denoted by $\La_{\bra}(H)$. Since $\La(H)$ does not contain two distinct cells with the same top and bottom $1$-paths, it is obvious that $\psi$ induces a morphism from $\La_{\bra}(H)$ to $\La(H)$. 

%
%
%

The construction of $\La_{\bra}(H)$ from $\La_{\sem}(X)$ implies 
 that if one applies all possible foldings of type $2$ to $\La_{\bra}(H)$, the result is the core $\La(H)$. Indeed, this is already true for $\La_{\sem}(X)$ and in the construction of $\La_{\bra}(H)$ from $\La_{\sem}(X)$ only edges (resp. cells) which become identified in $\La(H)$ can become identified in $\La_{\bra}(H)$. Thus, by Lemma \ref{lif_2}, there is a $1-1$ correspondence between paths on $\La_{\bra}(H)$ and paths on $\La(H)$. In particular, a finite binary word $u$ labels a path on $\La_{\bra}(H)$ if and only if it labels a path on $\La(H)$ and every finite binary word $u$ labels at most one path on $\La_{\bra}(H)$. The following is immediate from the construction and is the reason for the name of the branches-core.

\begin{Lemma}\label{bra_cor}
Let $H$ be a subgroup of $F$ and let $u$ and $v$ be paths on $\La_{\bra}(H)$. Then $u^+=v^+$ if and only if there is an element $h\in H$ with a pair of branches $u\rightarrow v$. 
\end{Lemma}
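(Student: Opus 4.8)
The plan is to read the statement directly off the way $\La_{\bra}(H)$ is assembled from the semi-core, so that the entire argument reduces to tracking which edges get identified at each stage. First I would fix paths $u$ and $v$ on $\La_{\bra}(H)$. Using the label-preserving $1$--$1$ correspondence between paths on $\La_{\bra}(H)$ and paths on $\La_{\sem}(X)$ (both are in bijection with paths on $\La(H)$, and every binary word labels at most one path on either automaton), I would lift $u$ and $v$ to the unique paths $\tilde u$ and $\tilde v$ on $\La_{\sem}(X)$ carrying the same labels. Writing $\rho\colon\La_{\sem}(X)\to\La_{\bra}(H)$ for the canonical quotient morphism, we have $\rho(\tilde u)=u$, $\rho(\tilde v)=v$, and hence $\rho(\tilde u^+)=u^+$ and $\rho(\tilde v^+)=v^+$ (Remark \ref{mor_path}).

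The key observation is that the edge set of $\La_{\bra}(H)$ is exactly the set of $R$-equivalence classes of edges of $\La_{\sem}(X)$. Indeed, the passage from $\La_{\sem}(X)$ to $\La_{\bra}(H)$ first identifies edges according to the relation $R$, and then identifies cells that share both their top and their bottom paths; the latter step collapses no further edges, since two such cells already have identical incident edges. Consequently $u^+=v^+$ in $\La_{\bra}(H)$ if and only if $\tilde u^+$ and $\tilde v^+$ are $R$-equivalent in $\La_{\sem}(X)$.

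It then remains to translate $R$-equivalence of $\tilde u^+$ and $\tilde v^+$ into the existence of $h\in H$ with a pair of branches $u\rightarrow v$. By definition $R$ declares two edges equivalent when some element of $H$ carries a chosen path reaching the first onto a chosen path reaching the second, and Lemma \ref{sem_lem} guarantees this is independent of the chosen paths, so I may take the witnessing paths to be $\tilde u$ and $\tilde v$ themselves, whose labels are $u$ and $v$. This gives both directions at once: if there is $h\in H$ with a pair of branches $u\rightarrow v$, then $\tilde u^+\mathrel{R}\tilde v^+$ by the very definition of $R$, whence $u^+=v^+$; conversely, if $u^+=v^+$ then $\tilde u^+\mathrel{R}\tilde v^+$, and unwinding the definition of $R$ (together with the independence supplied by Lemma \ref{sem_lem}) produces the required $h\in H$ with pair of branches $u\rightarrow v$.

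The only points where I would be careful — and the nearest thing to an obstacle — are the two structural claims underlying the construction: that the cell-folding step has no side effect on edges, and that $R$ is genuinely an equivalence relation. The first is the harmless remark above about cells sharing all incident edges. The second follows from the group structure of $H$: reflexivity from the identity (pair of branches $u\rightarrow u$), symmetry from inverses (if $h$ has $u\rightarrow v$ then $h^{-1}$ has $v\rightarrow u$), and transitivity from composition (if $g$ has $u\rightarrow v$ and $h$ has $v\rightarrow w$, then $gh$ has $u\rightarrow w$, composition being from left to right). Both are routine given the earlier lemmas, so no essential difficulty remains; the proof is in the end just an unwinding of the construction, which is precisely why the branches-core earns its name.
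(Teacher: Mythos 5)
Your proof is correct and follows essentially the same route as the paper's: lift $u$ and $v$ through the label-preserving bijection to paths on $\La_{\sem}(X)$, and combine Lemma \ref{sem_lem} with the definition of $R$ (the ``independence of the chosen paths'' you invoke is exactly the composition $h_1h_2h_3$ the paper writes out). Your additional checks that $R$ is an equivalence relation and that the cell-identification step collapses no edges are harmless elaborations of points the paper leaves implicit.
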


\begin{proof}
As noted above, there is a $1-1$ correspondence between paths on $\La_{\sem(X)}$ to paths on $\La(H)$ to paths on $\La_{\bra}(H)$. Let $u$ and $v$ be paths on $\La_{\bra}(H)$ such that $u^+=v^+$ in $\La_{\bra}(H)$. We consider $u$ and $v$ as paths on $\La_{\sem}(X)$. If $u^+=v^+$, then by Lemma \ref{sem_lem}, there is an element $h\in H$ with a pair of branches $u\rightarrow v$. Otherwise, let $e_1=u^+$ and $e_2=v^+$ in $\La_{\sem}(X)$. Let $u_{e_1}$ and $u_{e_2}$ be the paths on $\La_{\sem}(X)$ chosen in the construction of $\La_{\bra}(H)$, so that $u_{e_1}^+=e_1=u^+$ and $u_{e_2}^+=e_2=v^+$. The edges $e_1$ and $e_2$ being identified in $\La_{\bra}(H)$, means that there is an element $h_2\in H$ with a pair of branches $u_{e_1}\rightarrow u_{e_2}$. By Lemma \ref{sem_lem}, there is an element $h_1\in H$ with a pair of branches $u\rightarrow u_{e_1}$ and an element $h_3\in H$ with a pair of branches $u_{e_2}\rightarrow v$. Hence $h_1h_2h_3\in H$ has the pair of branches $u\rightarrow v$, as required. The proof in the other direction is similar. 
\end{proof}

Lemma \ref{bra_cor} and the $1-1$ correspondence between paths on $\La(H)$ and paths on $\La_{\bra}(H)$ imply that the branches-core of $H$ is determined uniquely by $H$. 

We note that if $H$ is generated by a finite set of reduced diagrams $X$, then one can construct the core $\La(H)$ and the semi-core of $H$ associated with $X$. We do not know how to construct the branches-core of $H$, but studying it with relation to the core $\La(H)$ is useful. 

\begin{Lemma}\label{ide_fin}
Let $H$ be a subgroup of $F$ such that $\Cl(H)$ contains the derived subgroup $[F,F]$. Let $\La_{\bra}(H)$ be the branches-core of $H$. Then there is a positive cell $\pi$ in $\La_{\bra}(H)$ such that the top edge of $\pi$ coincides with the $2$ bottom edges of $\pi$. 
\end{Lemma}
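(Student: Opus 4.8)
The plan is to translate the statement, via Lemma \ref{bra_cor}, into a statement about branches of elements of $H$. A positive cell of $\La_{\bra}(H)$ whose top edge equals both of its bottom edges is exactly a cell whose top edge $c$ satisfies $c=(u0)^+=(u1)^+=u^+$ for the (unique) path $u$ ending on $c$; by Lemma \ref{bra_cor} this happens precisely when there is an inner word $u$ together with elements $h_1,h_2\in H$ having pairs of branches $u\to u0$ and $u\to u1$. So it suffices to produce one inner word $u$ admitting both of these exact branches in $H$.

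First I would record what the hypothesis buys. Since $\Cl(H)\supseteq[F,F]$, Lemma \ref{fin_ind} gives that every finite binary word labels a path on $\La(H)$ and that $\La(H)$ has a unique inner edge $e$; in particular $w^+=e$ for every word $w$ containing both digits. Feeding pairs of inner words into Lemma \ref{path_lem} then yields many exact branches in $H$: for the pair $(010,0100)$ and suffix $0^{k-1}$ one obtains $h_1\in H$ with the pair of branches $010^{k}\to 010^{k+1}$, i.e. $h_1\colon u\to u0$ for $u=010^{k}$; and for the pair $(01,011)$ with suffix $1^{j}$ one obtains an element of $H$ with branches $01^{j+1}\to 01^{j+2}$, i.e. a ``right self-loop'' $h_R\colon a\to a1$ at $a=01^{j+1}$.

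Second, I would transport the right self-loop from $a$ to $u$. If one can find $g\in H$ with the \emph{exact} pair of branches $u\to a$ (that is, $g$ linear on all of $[u]$ with $g([u])=[a]$), then $g$ sends $[u1]$ onto $[a1]$, so $g h_R g^{-1}$ has the pair of branches $u\to u1$; together with $h_1\colon u\to u0$ this exhibits the desired double self-loop cell at $u^+$. Thus the whole argument reduces to producing a single full-interval branch $u\to a$ in $H$ between the inner word $u=010^{k}$ (ending in $0$) and $a=01^{j+1}$ (ending in $1$).

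The hard part is exactly this last step, because it demands a \emph{last-digit-changing} exact branch, and the output of Lemma \ref{path_lem} never is one: the branches it produces always carry a common suffix $w$ (so $uw\to vw$ preserves the final digit), and that suffix cannot be stripped for free, since the reduced-diagram shortening of Section \ref{sec:bra} only removes a suffix down to the maximal interval of linearity, which one does not control. The way I would force such a branch is geometric: an element of $H$ sending an interval properly into itself (as $h_1$ and $h_R$ do, pushing an endpoint inward) must, having only finitely many breakpoints, compensate by expanding on an adjacent region, and the preimage of an interval abutting a dyadic point of the ``wrong'' type is necessarily a clean interval of the opposite final digit; this is what manufactures genuine last-digit-changing exact branches in $H$. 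Pinning down such a branch precisely between $u$ and $a$ — equivalently, showing $u^+=a^+$ in $\La_{\bra}(H)$ — is the crux, and once it is in hand the transport above closes the argument. An alternative route for $g$ would be to use $\Cl(H)=\DPiec(H)$ (Theorem \ref{thm:GS}): take an element of $[F,F]$ with the clean branch $u\to a$ and pass to the piece of its dyadic-piecewise-$H$ decomposition containing $[u]$, the difficulty there being to guarantee that some decomposition keeps $[u]$ inside a single piece.
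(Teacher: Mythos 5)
Your reduction is fine: by Lemma \ref{bra_cor}, a positive cell of $\La_{\bra}(H)$ whose top edge coincides with both of its bottom edges is exactly an inner word $u$ admitting elements of $H$ with the \emph{exact} pairs of branches $u\rightarrow u0$ and $u\rightarrow u1$, and Lemma \ref{path_lem} does supply $u\equiv 010^{k}$ with $u\rightarrow u0$ and $a\equiv 01^{j+1}$ with $a\rightarrow a1$. But the step you yourself flag as the crux --- producing $g\in H$ with the exact pair of branches $u\rightarrow a$, equivalently showing $u^{+}=a^{+}$ in $\La_{\bra}(H)$ --- is a genuine gap, not a technicality. The heuristic you offer (finitely many breakpoints force a compensating expansion whose preimage is a clean interval of the opposite final digit) does not pin down a branch between the two specific words $u$ and $a$; and the alternative route via $\Cl(H)=\DPiec(H)$ fails for exactly the reason you note, since the decomposition coming from Theorem \ref{thm:GS} always refines $[u]$ into the subintervals $[uw]$ with $|w|=k$ and gives you no control over keeping $[u]$ in one piece. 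Worse, the ability to realize last-digit-changing exact branches in $H$ is precisely what the paper extracts \emph{from} this lemma (it is how Corollary \ref{3k} is proved), so your plan comes close to assuming the conclusion.

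The paper's proof avoids this entirely and is structured quite differently: it argues by contradiction inside the tower of automata between $\La_{\bra}(H)$ and $\La(H)$. From $u\rightarrow u0$ one knows the cell $\pi$ with top edge $e=(010^{k})^{+}$ has left bottom edge equal to $e$; supposing its right bottom edge $e_{2}$ differs from $e$, one passes to the automaton $\La'$ obtained from $\La_{\bra}(H)$ by a minimal sequence of type-$2$ foldings whose last folding identifies $e_{2}$ with $e$. In $\La'$, every path with prefix $w\equiv 010^{k}$ terminates on $e$ or on $e_{2}$ according to its last digit. Taking $h\in H$ with a pair of branches $w1^{k'}\rightarrow w1^{k'+1}$ and applying Lemma \ref{4parts}(4) to its reduced diagram $\Delta$, the last-digit dichotomy in $\La'$ forces two consecutive pairs of branches of $\Delta$ to bound a dipole, contradicting reducedness. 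To salvage your outline you would need an argument of this kind (or some other mechanism) certifying $u^{+}=a^{+}$ in the branches-core; as written, the proof is incomplete at its central step.
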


\begin{proof}
Since $\Cl(H)$ contains $[F,F]$, by Lemma \ref{fin_ind}, every finite binary word $u$ labels a path on $\La(H)$ (and thus, also on $\La_{\bra}(H)$). Thus, as noted above,
 for any word $u$ there is a unique path on $\La_{\bra}(H)$ labeled by $u$.
By Lemma \ref{lif_2}, The same is true for any $2$-automaton $\La'$ resulting from $\La_{\bra}(H)$ by applications of foldings of type $2$.

Consider the words $01$ and $010$. By Lemmas \ref{fin_ind} and \ref{path_lem}, there is $k\in\mathbb{N}$ for which there is an element in $H$ with a pair of branches $010^k\rightarrow 0100^k\equiv 010^{k+1}$. It follows that the edges $e=(010^k)^+$ and $e_1=(010^{k+1})^+$ of $\La_{\bra}(H)$ coincide. Clearly, the edge $e$ is the top edge of some positive cell $\pi$ in $\La_{\bra}(H)$. $e_1$ is the left bottom edge of $\pi$. It suffices to prove that the right bottom edge $e_2=(010^k1)^+$ coincides with $e=e_1$. 

Assume by contradiction that $e_2\neq e$. Since the images of $e_2$ and $e$ in $\La(H)$ coincide, there is a finite sequence $S$ of foldings of type $2$ such that, when applied to $\La_{\bra}(H)$, $S$ results in the identification of $e_2$ and $e$. We take $S$ to be such a minimal sequence of foldings and consider the state of the automaton $\La_{\bra}(H)$ right before the last folding in $S$ is applied to it. We denote this automaton by $\La'$ and observe that in $\La'$, $e_2\neq e$ (when we refer to edges and cells of $\La_{\bra}(H)$ as edges and cells of $\La'$, the meaning should be clear). 

Since a folding of type $2$ can be applied to $\La'$ to identify $e_2$ and $e$, it follows that in $\La'$, $e_2$ is the top edge of a positive cell $\pi'$ such that $\bott(\pi')=\bott(\pi)$. Since the bottom right edge of $\pi$ is $e_2$, the same is true for $\pi'$. In other words, the top edge of $\pi'$ coincides with its right bottom edge. 

Let $w\equiv 010^k$. We claim that for every path $u\equiv wv$ for some finite binary word $v$, the terminal edge $u^+$ in $\La'$ is either $e$ or $e_2$. If the last digit of $u$ is $0$ then $u^+=e$, otherwise, $u^+=e_2$. Indeed, this is clearly true if $v$ is empty. If $v$ is of length $n$ for $n\ge 1$ then $v\equiv v'a$ where $a$ is the last digit of $v$. By induction, $(wv')^+$ is either $e$ or $e_2$. Thus, if $a\equiv 0$ then $u^+$ is the left bottom edge of either $\pi$ or $\pi'$. In either case, $u^+=e$. Similarly, if $a\equiv 1$ we get that $u^+=e_2$.

Note that in $\La(H)$, we have $w^+=(w1)^+$. Thus, by Lemma \ref{path_lem}, for some $k'\ge 0$ there is an element $h\in H$ with a pair of branches $w1^{k'}\rightarrow w1^{k'+1}$. In particular, $h$ fixes the finite dyadic fraction $\alpha=.010^{k-1}1$ (indeed, $w\equiv 010^k$) and $h'(\alpha^-)=\frac{1}{2}$. 

Let $\Delta$ be a reduced diagram of $h$. By Lemma \ref{4parts}(4), 
 the diagram $\Delta$ has a pair of branches $w1^{n}\rightarrow w1^{n+1}$ for some $n\ge 0$. (In the notation of \ref{4parts}(4), $u\equiv 010^{k-1}1$, $u'\equiv 010^{k-1}$ and  $\ell=-1$. Thus, $\Delta$ has a pair of branches of the form $u'01^n\equiv w1^n\rightarrow u'01^{n+1}\equiv w1^{n+1}$). We claim that $n>0$. Indeed, if $n=0$ then on $\La_{\bra}(H)$, and thus on $\La'$, we have $w^+=(w1)^+$, in contradiction to the fact that on $\La'$, $w^+=e$ and $(w1)^+=e_2$. Thus, $n>0$.

 Let $u_1,\dots, u_r$ (resp. $v_1,\dots,v_m$) be the positive (resp. negative) branches of $\Delta$, with prefix $w$, ordered from left to right. Since $n>0$, we have $r,m>1$. Clearly, $u_r\equiv w1^n$ and $v_m\equiv w1^{n+1}$ so that $u_r\rightarrow v_m$ is a pair of branches of $\Delta$. 
We assume that $r\le m$ (otherwise, one can consider $\Delta^{-1}$). There is a pair of consecutive positive branches $u_i,u_{i+1}$, $1\le i<r$ such that the edges $u_i^+,u_{i+1}^+$ on the bottom $1$-path of $\Delta^+$ form the bottom $1$-path of an $(x,x^2)$-cell $\pi_1$ in $\Delta^+$. Indeed, if one deletes the prefix $w$ from each of the branches $u_1,\dots,u_r$, the result is the set of branches of a subdiagram of $\Delta^+$. 

Let $j=m-(r-i)$. Then $v_j, v_{j+1}$ are the negative branches of $\Delta$ such that $u_i\rightarrow v_j$ and $u_{i+1}\rightarrow v_{j+1}$ are pairs of branches of $\Delta$. We claim that the edges $v_j^+$ and $v_{j+1}^+$ on $\topp(\Delta^-)$ (which coincide with $u_i^+$ and $u_{i+1}^+$ as edges of $\Delta$), form the top $1$-path of an $(x^2,x)$-cell of $\Delta^-$. That will give a contradiction to the assumption that $\Delta$ is reduced.

Since $u_i^+$ and $u_{i+1}^+$ form the bottom $1$-path of a cell in $\Delta^+$, the last letter in $u_i$ is $0$ and the last letter in $u_{i+1}$ is $1$. Let $p_1$ and $p_2$ be the paths on $\La'$ labeled by $u_i$ and $u_{i+1}$ respectively. Then $p_1^+=e$ and $p_2^+=e_2$ (indeed, $w$ is a prefix of $u_i$ and $u_{i+1}$). If $q_1$ and $q_2$ are the paths on $\La'$ labeled by $v_j$ and $v_{j+1}$ respectively, then we must have $p_1^+=q_1^+$ and $p_2^+=q_2^+$ on $\La'$. Indeed, the corresponding paths on $\La_{\bra}(H)$ already have the same terminal edges, since $h\in H$ has the pairs of branches $u_i\rightarrow v_{j}$ and $u_{i+1}\rightarrow v_{j+1}$.

It follows that $q_1^+=e$ and $q_2^+=e_2$ which in turn implies that the label $v_j$ ends with $0$ and that $v_{j+1}$ ends with $1$. Two consecutive negative branches $v_j,v_{j+1}$ of a diagram satisfy this property if and only if the terminal edges $v_j^+$ and $v_{j+1}^+$ form the top $1$-path of an $(x^2,x)$-cell in the diagram. Thus, $\Delta$ is not reduced and the proof of the lemma is complete.
\end{proof}

\begin{Corollary}\label{3k}
Let $H\le F$ be a subgroup such that $\Cl(H)$ contains the derived subgroup of $F$.
Let $u$ and $v$ be finite binary words which contain both digits $0$ and $1$. Then there exists an integer $k\ge 0$ such that for each pair of finite binary words $w_1,w_2$ of length $\ge k$ there is an element $h\in H$ with a pair of branches $uw_1\rightarrow vw_2$. 
\end{Corollary}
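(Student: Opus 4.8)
The plan is to reduce everything to a statement about terminal edges of paths on the branches-core and then exploit a single self-looping cell. By Lemma \ref{bra_cor}, producing an element $h\in H$ with a pair of branches $uw_1\to vw_2$ is \emph{equivalent} to showing that $(uw_1)^+=(vw_2)^+$ on $\La_{\bra}(H)$. So I would aim to prove that, for $w_1,w_2$ long enough, the paths $uw_1$ and $vw_2$ both terminate on one fixed edge of $\La_{\bra}(H)$.

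First I would invoke Lemma \ref{ide_fin}: since $\Cl(H)\supseteq[F,F]$, there is a positive cell $\pi$ in $\La_{\bra}(H)$ whose top edge $e_0$ coincides with both of its bottom edges. Because no foldings of type $1$ are applicable to $\La_{\bra}(H)$ and cells sharing both their top and bottom paths have been identified, $\pi$ is the unique positive cell with top edge $e_0$. Consequently, once a path terminates on $e_0$, appending either digit sends it through $\pi$ back to $e_0$; by induction, if $t_0$ is any word with $t_0^+=e_0$ then $(t_0s)^+=e_0$ for \emph{every} finite binary word $s$. I would fix such a $t_0$, for instance the word $010^k$ produced in the proof of Lemma \ref{ide_fin}, noting that it contains both digits.

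The heart of the argument is to show that appending a sufficiently long suffix to $u$ (resp.\ $v$) forces the terminal edge to be exactly $e_0$. Since $\Cl(H)\supseteq[F,F]$, Lemma \ref{fin_ind} guarantees that every word labels a path on $\La(H)$ and that $\La(H)$ has a unique inner edge $\bar e$; as $u$ contains both digits, $u^+=\bar e$ on $\La(H)$ by Corollary \ref{inner}, and likewise $t_0^+=\bar e$. Thus $u$ and $t_0$ are paths on $\La(H)$ with the same terminal edge, so Lemma \ref{path_lem} yields a $k_1\ge 0$ such that for every $w$ with $|w|\ge k_1$ there is $h\in H$ with a pair of branches $uw\to t_0w$. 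By Lemma \ref{bra_cor} this gives $(uw)^+=(t_0w)^+=e_0$ on $\La_{\bra}(H)$. Running the same reasoning with $v$ in place of $u$ produces a $k_2\ge 0$ with $(vw)^+=e_0$ whenever $|w|\ge k_2$.

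Taking $k=\max\{k_1,k_2\}$ then finishes the proof: for any $w_1,w_2$ of length $\ge k$ we have $(uw_1)^+=e_0=(vw_2)^+$ on $\La_{\bra}(H)$, and Lemma \ref{bra_cor} delivers the required $h\in H$ with the pair of branches $uw_1\to vw_2$. I expect the main obstacle to be the conceptual first move rather than any calculation: recognizing that the self-looping cell of Lemma \ref{ide_fin} is precisely the device that \emph{decouples} the two suffixes $w_1$ and $w_2$. It lets any sufficiently long tail be absorbed at $e_0$, so that $uw_1$ and $vw_2$ are driven to a common terminal edge no matter how $w_1$ and $w_2$ differ; everything else is bookkeeping with the $1$--$1$ correspondence between paths on $\La(H)$ and on $\La_{\bra}(H)$.
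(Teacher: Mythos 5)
Your proposal is correct and follows essentially the same route as the paper: both arguments hinge on the self-looping cell of Lemma \ref{ide_fin} in $\La_{\bra}(H)$, the resulting absorption of arbitrary suffixes at that edge, and Lemma \ref{path_lem} applied to $u$ (resp.\ $v$) and a word terminating on the looping edge. The only cosmetic difference is that the paper explicitly composes three elements $h_{uw_1,ww_1}h_{ww_1,ww_2}h_{vw_2,ww_2}^{-1}$, whereas you let Lemma \ref{bra_cor} perform that composition implicitly by working with terminal edges on the branches-core.
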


\begin{proof}
We consider the branches-core $\La_{\bra}(H)$. By Lemma \ref{ide_fin} there is a positive cell $\pi$ in $\La_{\bra}(H)$ such that the top edge and bottom edges of $\pi$ coincide. We denote this edge by $e$. It is clear that $e$ is an inner edge of $\La_{\bra}(H)$. (Inner edges of $\La_{\bra}(H)$ are defined in a similar way to inner edges of $\La(H)$.) Let $w$ be a path on $\La_{\bra}(H)$ such that $w^+=e$. It follows that for any finite binary word $w'$, the path $ww'$ on $\La_{\bra}(H)$ terminates on the edge $e$. Thus, by Lemma \ref{bra_cor}, for any pair of binary words $w_1,w_2$ there is an element $h_{ww_1,ww_2}$ in $H$ with a pair of branches $ww_1\rightarrow ww_2$. 

Now let $u$ and $v$ be finite binary words which contain both digits $0$ and $1$. Let $p_u,p_v$ and $p_w$ be the paths on $\La(H)$ with labels $u,v$ and $w$ respectively. Since $u$, $v$ and $w$ all contain both digits $0$ and $1$, by Lemma \ref{fin_ind}, $p_u^+=p_v^+=p_w^+$ in $\La(H)$. Then by Lemma \ref{path_lem} there exists $k\ge 0$ such that for any finite binary word $s$ of length $\ge k$ there are elements $h_{us,ws}$ and $h_{ws,vs}$ in $H$ with pairs of branches $us\rightarrow ws$ and $ws\rightarrow vs$, respectively. 

We claim that the lemma holds for $k$. Indeed, let $w_1,w_2$ be a pair of binary words of length $\ge k$. 
Then the element $h_{uw_1,ww_1}h_{ww_1,ww_2}h_{ww_2,vw_2}$ is an element of $H$ with a pair of branches $uw_1\rightarrow vw_2$.
\end{proof}

Let $H$ be a subgroup of $F$ and let 
 $J$ be a closed sub-interval of $[0,1]$. We denote by $H_J$ the subgroup of $H$ of all functions which fix the interval $J$ pointwise. Recall that a subgroup $G\le F$ has an orbital $(a,b)$ if it fixes $a$ and $b$ but does not fix any point in $(a,b)$. 

\begin{Lemma}\label{H_u}
Let $H$ be a subgroup of $F$ such that $\Cl(H)$ contains the derived subgroup of $F$.
Let $u$ be a finite binary word which contains the digit $1$. Then $(0,.u)$ is an orbital of the group $H_{[u]}$. 
\end{Lemma}

\begin{proof}
Since $[F,F]\le \Cl(H)$, $H$ acts transitively on the set of finite dyadic fractions $\mathcal D$ and in particular, $H$ is not abelian (see Section \ref{sec:sg} below). Thus, there is a nontrivial element $h\in H\cap [F,F]$. Let $b$ be the minimal number in $(0,1)$ such that $h$ fixes $[b,1]$ and note that $b$ is finite dyadic. Since $H$ acts transitively on $\mathcal D$, we can assume (by passing to a conjugate of $h$ if necessary) that $b=.u$.
By the choice of $b$, the element $h$ has an orbital of the form $(a,b)$ for some $a<b$. 
We will prove that $H_{[b,1]}$ does not fix any number in $(0,b)$, so in particular $H_{[u]}$ (which contains $H_{[b,1]}$) does not fix any number in $(0,b)=(0,.u)$. 

Let $u'$ be a finite binary word such that the right endpoint of $[u']$ is $b$. Clearly, that is also true for $u'1^k$ for all $k\in\mathbb{N}$. Thus, we can assume that $[u']$ is contained in $(a,b]$. In particular, $u'$ contains both digits $0$ and $1$.  
Assume by contradiction that $H_{[b,1]}$ fixes a point $x\in (0,b)$. Clearly, $x\in (0,a]$. If $x$ is not finite dyadic, we let $\omega$ be the unique infinite binary word such that $x=.\omega$. If $x$ is finite dyadic, we let $\omega$ be the infinite binary word with a tail of zeros such that $x=.\omega$. 
Let $v$ be a prefix of $\omega$ which contains both digits $0$ and $1$. We also assume that $v$ is long enough so that $b\notin [v]$. By Corollary \ref{3k}, for some large enough $k$ there is an element $f$ in $H$ with a pair of branches $vw\rightarrow u'1^k$ where $w$ is of length $k$ and $vw$ is a prefix of $\omega$. In particular, $x\in [vw]$. By the choice of $\omega$, $x$ is not the right endpoint of $[vw]$.  Notice also that $f(b)>f(.vw1^{\mathbb{N}})=.u'1^{\mathbb{N}}=b$. 

We consider the element $fhf^{-1}$. Since $f(x)\in [u'1^k]\setminus\{.u'1^{\mathbb{N}}\}\subseteq(a,b)$, we have $h(f(x))\neq f(x)$. Indeed, $(a,b)$ is an orbital of $h$. Thus $fhf^{-1}(x)\neq x$. On the other hand, since $h$ fixes the interval $[b,1]$, the conjugate $h^{f^{-1}}$ fixes the interval 
$f^{-1}([b,1])\supseteq [b,1]$. Hence $h^{f^{-1}}\in H_{[b,1]}$, in contradiction to $x$ being a fixed point of $H_{[b,1]}$. 
\end{proof}

\begin{Lemma}\label{2}
	Let $H$ be a subgroup of $F$, such that $\Cl(H)$ contains the derived subgroup of $F$. Let $a<b$ be finite dyadic fractions and let $u'$ be a non-empty finite binary word. Then there is an element $g\in H$ such that $g([a,b])\subseteq [u']$. 
\end{Lemma}

\begin{proof}
	Let $v\equiv u'01$, so that $.v$ is a dyadic fraction in $(0,1)$. Since $\Cl(H)$ contains the derived subgroup of $F$, $H$ acts transitively on the set of finite dyadic fractions $\mathcal D$. Hence, there is an element $h\in H$ such that $h(b)=.v$. In particular, $h(a)<h(b)=.v$.
	Let $u\equiv u'1$ and note that $.v1^{\mathbb{N}}=.u$. By Lemma \ref{H_u}, the interval $(0,.u)$ is an orbital of $H_{[u]}$. Since $h(a)<h(b)=.v<.u$, there is an element $f\in H_{[u]}$ such that $f(h(a))>.v$. Note that $f(h(b))<f(.u)=.u$. Hence $[f(h(a)),f(h(b))]\subseteq [.v,.u]=[v]\subseteq [u']$. Therefore, the element $g=hf\in H$ is as required. 
\end{proof}

To prove Theorem \ref{thm:der_int} we turn to consider subgroups $H$ of $F$ which satisfy both the conditions in the theorem. Recall that the second condition is that there is an element $h\in H$ which fixes a finite dyadic fracion $\alpha$ such that $h'(\alpha^{-})=1$ and $h'(\alpha^+)=2$. We make the following observation. 

\begin{Remark}\label{new}
Let $h\in F$. Then $h$ fixes a finite dyadic fraction $\alpha\in \mathcal D$  such that $h'(\alpha^{-})=1$ and $h'(\alpha^{+})=2$ if and only if there exist non-empty finite binary words $u',u$ such that the following hold: 
\begin{enumerate}
	\item[(1)] The dyadic fraction $.u'1^{\mathbb{N}}=.u$ (and in particular $u$ contains the digit $1$).
	\item[(2)] $h$ fixes the interval $[u']=[.u',.u]$ pointwise and has the pair of branches $u0\rightarrow u$. 
\end{enumerate}
\end{Remark}

\begin{proof}
Assume that $h$ fixes a finite dyadic fraction $\alpha$ and that $h'(\alpha^-)=1$ and $h'(\alpha^+)=2$. Let $u$ be a finite binary word ending with $1$ such that $\alpha=.u$. By Lemma \ref{4parts}(3), $h$ has a pair of branches $u0^m\rightarrow u0^{m-1}$ for some $m\in\mathbb{N}$. Replacing $u$ by $u0^{m-1}$ we get that $h$ has the pair of branches $u0\rightarrow u$.
Since $\alpha\in \mathcal D$, there is a non-empty finite binary word $u'$ such that $\alpha=.u'1^{\mathbb{N}}$. Since $h$ fixes a small left neighborhood of $\alpha$, one can take $u'$ to be long enough (by replacing it by $u'1^\ell$ for some large $\ell\in\mathbb{N}$ if necessary) so that $[u']=[.u',\alpha]=[.u',.u]$ is fixed pointwise by $h$. 

In the other direction, if (1) and (2) hold then for $\alpha=.u$, the element $h$ fixes $\alpha$, $h'(\alpha^-)=1$ and $h'(\alpha^+)=2$. 
\end{proof}

 Let $H$ be a subgroup of $F$ and let $u$ and $w$ be finite binary words. We say that $w$ is \emph{$H$-equivalent to a $0$-extension of $u$} if there is an element $h\in H$ with a pair of branches $u0^k\rightarrow w$ for some $k\in\mathbb{N}$.
 
 \begin{Remark}\label{new2}
 	Let $H$ be a subgroup of $F$ such that $\Cl(H)$ contains the derived subgroup of $F$. Let $u$ be a finite binary word which contains the digit $1$. Let $w$ be a finite binary word which contains both digits $0$ and $1$. Then every long enough extension of $w$ is  $H$-equivalent to a $0$-extension of $u$. (That is, there exists $m\in\mathbb{N}$ such that for any finite binary word $w'$ of length $\ge m$, the word $ww'$ is $H$-equivalent to a $0$-extension of $u$.)
 \end{Remark}

\begin{proof}
	It follows immediately from Corollary \ref{3k} for the words $u0$ and $w$. 
\end{proof}

In the following $2$ technical lemmas the condition that $h$ fixes some finite dyadic fraction $\alpha$, has slope $1$ to the left of $\alpha$ and slope $2$ to the right of $\alpha$ is formulated in terms of the equivalent condition from Remark \ref{new}.

\begin{Lemma}\label{0}
	Let $H$ be a subgroup of $F$ such that $\Cl(H)$ contains the derived subgroup of $F$. Let $u,u',w$ be non-empty finite binary words 
	 and let $h\in H$ be such that the following hold: 
	\begin{enumerate}
		\item[(1)] The dyadic fraction $.u'1^{\mathbb{N}}=.u$.
		\item[(2)] $h$  fixes the interval $[u']=[.u',.u]$ pointwise and has the pair of branches $u0\rightarrow u$.
		\item[(3)] $w$ is $H$-equivalent to a $0$-extension of $u$. 
	\end{enumerate}
	Let $a\in (0,.w)$ and let $n,m\ge 0$.  Then there
		 is an element $g_\ell\in H$ such that $g_\ell$ is a conjugate of a power of $h$, such that $g_\ell$ has the pair of branches $w0^n\rightarrow w0^m$ and such that $g_\ell$ fixes $[a,.w]$ pointwise.

\end{Lemma}

\begin{proof}
By assumption, $w$ is $H$-equivalent to a $0$-extension of $u$. Hence, there exists $k\in\mathbb{N}$ and an element $f\in H$ such that $f$ has the pair of branches $u0^k\rightarrow w$. Let $x=f^{-1}(a)$, so that  $x<f^{-1}(.w)=.u$. By Lemma \ref{H_u}, the interval $(0,.u)$ is an orbital of $H_{[u]}$. Since $x\in (0,.u)$ and $.u'<.u$, there is an element 
$g\in H_{[u]}$ (as such, with a pair of branches $u\rightarrow u$) such that $g(x)>.u'$. 
We consider the element $q=g^{-1}f\in H$. Note that $q$ has the pair of branches $u0^{k}\rightarrow w$. Indeed, $g^{-1}$ takes $u0^{k}$ to itself and $f$ takes $u0^{k}$ to $w$. 
In addition, $q([u'])=f(g^{-1}([.u',.u]))\supseteq f([x,.u])=[a,.w]$. 

Now, recall that $h$ has the pair of branches $u0\rightarrow u$. It follows that 
$h^{n-m}$ has a pair of branches of the form $u0^{n}\rightarrow u0^{m}$. Indeed, if $n\ge m$ then $h^{n-m}$ has a pair of branches of the from $u0^{n-m}\rightarrow u$. Then one can add a common suffix $0^{m}$ and get that $h$ has the pair of branches $u0^{n}\rightarrow u0^{m}$. If $n<m$, then $h^{n-m}=(h^{m-n})^{-1}$ and the result follows from the previous case. 

To finish, let $g_{\ell}=(h^{n-m})^{q}\in H$. From the above, it follows that  $g_\ell$ has a pair of branches $w0^{n}\rightarrow w0^{m}$. Indeed, 
$q^{-1}$ has a pair of branches $w0^{n}\rightarrow u0^{k}0^{n}$,the element  $h^{n-m}$ has a pair of 
branches $u0^{n}0^{k}\rightarrow u0^{m}0^{k}$ and $q$ has a pair of branches 
$u0^{k}0^{m}\rightarrow w0^{m}$. Thus, $g_\ell$ takes the branch $w0^n$ to $w0^m$. In addition, since $h$ fixes the interval $[u']$, $g_{\ell}$ fixes the interval $q([u'])\supseteq [a,.w]$, as required. 
\end{proof}

\begin{Lemma}\label{1}
	Let $H$ be a subgroup of $F$ such that $\Cl(H)$ contains the derived subgroup of $F$. Let $u,u',w_1,w_2$ be non-empty finite binary words 
	and let $h,g\in H$ be  such that  the following hold: 
	\begin{enumerate}
		\item[(1)] The dyadic fraction $.u'1^{\mathbb{N}}=.u$.
		\item[(2)] $h$  fixes the interval $[u']=[.u',.u]$ pointwise and has the pair of branches $u0\rightarrow u$.
		\item[(3)] $w_1$ and $w_2$ are $H$-equivalent to $0$-extensions of $u$.
		\item[(4)]  $g$ has the pair of branches $w_10^{m_1}\rightarrow w_20^{m_2}$
		for some $m_1,m_2\ge 0$.
	\end{enumerate}
 	Let $a_1\in (0,.w_1)$ and let $n_1,n_2\ge 0$. Then there 
	are elements $g_{\ell}$ and $g_r$ in $H$ such that $g_\ell$ and $g_r$ are conjugates of powers of $h$ and such that the element
	$g_1=g_{\ell}gg_r$ coincides with $g$ on the interval $[a_1,.w_1]$ and has a pair of branches 
	$w_10^{n_1}\rightarrow w_20^{n_2}$.
\end{Lemma}

\begin{proof}
	Note that the conditions of Lemma \ref{0} hold for the words $u,u',w\equiv w_1$, the function $h$ and $a=a_1, n=n_1, m=m_1$. 
	Thus, by Lemma \ref{0}, there is an element $g_\ell\in H$ such that $g_\ell$ is a conjugate of a power of $h$, $g_\ell$ has the pair of branches $w_10^{n_1}\rightarrow w_10^{m_1}$ and such that $g^{\ell}$ fixes $[a_1,.w_1]$ pointwise. 
	
	Let $a_2=g(a_1)$ and note that $a_2\in (0,.w_2)$. The  conditions of Lemma \ref{0} hold for the words $u,u',w\equiv w_2$, the function $h$ and $a=a_2, n=m_2, m=n_2$. 
	Thus, by Lemma \ref{0}, there is an element $g_r\in H$ such that $g_r$ is a conjugate of a power of $h$, $g_r$ has the pair of branches $w_20^{m_2}\rightarrow w_20^{n_2}$ and such that $g^{\ell}$ fixes $[a_2,.w_2]$ pointwise.
	
	We let $g_1=g_{\ell}gg_r$. Then $g_1$ has the pair of branches $w_10^{n_1}\rightarrow w_20^{n_2}$. Since $g_{\ell}$ fixes the interval $[a_1,.w_1]$ pointwise and $g_r$ fixes the image $g([a_1,.w_1])=[a_2,.w_2]$ pointwise, the functions $g$ and $g_1$ coincide on $[a_1,.w_1]$, as necessary. 
\end{proof}

The following lemma is the key to the proof of Theorem \ref{thm:der_int}.

\begin{Lemma}\label{3}
	Let $H$ be a subgroup of $F$ such that $\Cl(H)$ contains the derived subgroup of $F$. Assume that there is an element $h\in H$ which fixes a finite dyadic fraction $\alpha\in \mathcal D$ such that $h'(\alpha^-)=1$ and 
	$h'(\alpha^+)=2$.
	Let $a<b$ be finite dyadic fractions in $(0,1)$ and let $f\in [F,F]$.
	Then there is an element $g_1\in H\cap [F,F]$ such that $g_1$ coincides with $f$ on $[a,b]$. 
\end{Lemma}

\begin{proof}
	By Remark \ref{new} there are non-empty finite binary words $u',u$ such that $.u'1^{\mathbb{N}}=.u$ and such that $h$ fixes the interval $[.u']$ and has the pair of branches $u0\rightarrow u$. 
		
	Since $f\in [F,F]$ it fixes a small neighborhood of $0$ and a small neighborhood of $1$. Thus,
	we can choose $a_1<a$ and $b_1>b$ in $(0,1)$ such that $f$ fixes the intervals $[0,a_1]$ and $[b_1,1]$.
	Let $\Delta$ be a diagram of $f$. Let $u_i\rightarrow v_i$, $i=1,\dots,n$ be the pairs of branches of $\Delta$.
	Replacing $\Delta$ by an equivalent diagram if necessary, we can assume that $a_1\notin [u_1]\cup [u_2]$ and that 
	$b_1\notin [u_{n-1}]\cup [u_n]$. In particular $u_1\equiv v_1$, $u_2\equiv v_2$, $u_{n-1}\equiv v_{n-1}$ and $u_n\equiv v_n$. 
	By Remark \ref{new2}, we can also assume that for all $i\in\{2,\dots,n-1\}$ the words $u_i$ and $v_i$ are $H$-equivalent to a $0$-extension of $u$. 
	
	We start by proving that there exists $g\in H$ such that $g$ is a product of conjugates of $h$ and has the pairs of branches $u_{i}\rightarrow v_{i}$ for $i=2,\dots,n-1$. In particular, it will coincide with $f$ on $[.u_2,.u_n]\supseteq[a,b]$.    
	
	Let $f_{1}=1$. We can construct elements $f_{2},\dots,f_{n-1}$ inductively so that for every 
	$j\in\{2,\dots,n-1\}$,
	\begin{enumerate}
		\item[(1)] $f_j$ has the pair of branches $u_j\rightarrow v_j$;
		\item[(2)] $f_j$ coincides with $f_{j-1}$ on $[.u_2,.u_j]$; and
		\item[(3)] $f_j=\ell_j f_{j-1}r_j$ where $\ell_j$ and $r_j$ belong to $H$ and are conjugates of powers of $h$.
	\end{enumerate}
	Then for $g=f_{n-1}$ we will clearly have the result.
	
	For $j=2$, since $u_2\equiv v_2$, we take $f_2=f_1=1$. Clearly, all 3 conditions are satisfied for $f_2$. 
	
	Now assume that for some $j\in \{2,\dots,n-2\}$, the function $f_j$ was constructed to satisfy the $3$ properties above. To construct $f_{j+1}$ we proceed as follows. The diagram 
	$\Delta$ and $f_j$ have the pair of branches $u_j\rightarrow v_j$. Since $u_j$ contains the digit $0$, we can let $p$ be the prefix of $u_j$ such that $u_j\equiv p01^c$ for some $c\ge 0$. Similarly, let $q$ be the prefix of $v_j$ such that $v_j\equiv q01^d$ for some
	$d\ge 0$. Then the pair of branches $u_{j+1}\rightarrow v_{j+1}$ of $\Delta$ must be of the form $u_{j+1}\equiv p10^{c_1}\rightarrow v_{j+1}\equiv q10^{d_1}$ for some $c_1,d_1\ge 0$. 
	Similarly, if $\Delta'$ is a diagram of $f_{j}$ with the pair of branches $u_j\rightarrow v_j$, then the next pair of branches in the diagram must be of the form $p10^{c_2}\rightarrow q10^{d_2}$ for some $d_1,d_2\ge 0$. Let $k=\max\{c_1,d_1\}$. Then by adding the common suffix $0^k$, we get that $f_j$  has the pair of branches $p10^{k+c_2}\rightarrow q10^{k+d_2}$; i.e., the pair of branches 
	$u_{j+1}0^{k-c_1+c_2}\rightarrow v_{j+1}0^{k-d_1+d_2}$. Applying Lemma \ref{1} with $u',u,h$, $g=f_j$, $a_1=.u_2$, $w_1\equiv u_{j+1}$, $w_2\equiv v_{j+1}$, $m_1=k-c_1+c_2$, $m_2=k-d_1+d_2$ and $n_1=n_2=0$, we get that there are elements $\ell_{j+1}$ and $r_{j+1}$ in $H$ which are conjugates of powers of $h$ such that the element $\ell_{j+1} f_j r_{j+1}$  has the pair of branches $u_{j+1}\rightarrow v_{j+1}$ and coincides with $f_j$ on the interval $[.u_2,.u_{j+1}]$. We let $f_{j+1}=\ell_{j+1} f_j r_{j+1}$.
	
	Now, let $g=f_{n-1}$ and note that $g\in H$. Since $g$ is a product of conjugates of $h$, we have that $g'(0^+)=(h'(0^+))^l$ and $g'(1^-)=(h'(1^-))^{l}$  for some $l\in\mathbb{Z}$. By Lemma \ref{2} there is an element $h_1\in H$ such that $h_1([.u_2,.u_n])\subseteq [u']$.
	
	We let $g_1=(h^{h_1^{-1}})^{-l}g$. Clearly, $g_1\in H$. We claim that $g_1\in [F,F]$ and that $g_1$ coincides with $g$ (and thus with $f$) on $[.u_2,.u_n]\supseteq [a,b]$. 
	Since $g'(0^+)=(h'(0^+))^l$, we have that $g_1'(0^+)=1$. Similarly, $g_1'(1^-)=1$. Therefore $g_1\in [F,F]$. Since $h$ fixes the interval $[u']$, $h^{h_1^{-1}}$ fixes the interval $h_1^{-1}([u'])\supseteq[.u_2,.u_n]$, thus $g_1$ coincides with $g$ on $[.u_2,.u_n]$ as required. 
\end{proof}

We are now ready to prove Theorem \ref{thm:der_int}. Recall the theorem. 

\begin{Theorem}\label{main1}
Let $H$ be a subgroup of $F$. Then $H$ contains the derived subgroup $[F,F]$ if and only if the following $2$ conditions are satisfied.
\begin{enumerate}
\item[(1)] $\Cl(H)$ contains the derived subgroup of $F$ (equivalently, $\La(H)$ satisfies the conditions in Lemma \ref{fin_ind}). 
\item[(2)] There is an element $h\in H$ which fixes a finite dyadic fraction $\alpha$ such that $h'(\alpha^-)=1$ and 
$h'(\alpha^+)=2$.
\end{enumerate}
\end{Theorem}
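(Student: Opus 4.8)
The forward implication is easy. If $[F,F]\le H$ then $[F,F]\le H\le\Cl(H)$, which is (1); for (2) choose $\alpha=\tfrac12$ and any one-bump element $h\in F$ supported in $[\tfrac12,\tfrac34]$ with $h'((\tfrac12)^+)=2$. Being the identity on $[0,\tfrac12]$ and on $[\tfrac34,1]$, this $h$ has support compactly contained in $(0,1)$, so $h\in[F,F]\le H$, and it fixes $\alpha$ with $h'(\alpha^-)=1$ and $h'(\alpha^+)=2$. Such an $h$ plainly exists (pick dyadic breakpoints realizing the slope $2$ at $\tfrac12$ and returning to the diagonal by $\tfrac34$).

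For the converse, assume (1) and (2). First I would record two consequences of (1). Since $[F,F]\le\Cl(H)$ and $[F,F]$ already acts transitively on $\mathcal D$, Corollary \ref{orb_CH} (orbits of $H$ equal orbits of $\Cl(H)$) gives that $H$ itself acts transitively on $\mathcal D$; moreover, by Lemma \ref{path_lem} and Corollary \ref{3k}, $H$ contains honest elements carrying any sufficiently small dyadic interval affinely onto any other, so $H$ can conjugate an element supported in one small dyadic interval to be supported in any prescribed one. Next I would reduce the goal. Every $f\in[F,F]$ has support compactly contained in $(0,1)$ and finitely many orbitals, whose endpoints are fixed points of $f$ and hence finite dyadic; restricting $f$ to each orbital writes it as a product of one-bump elements of $F$, so $[F,F]$ is generated by the one-bump elements of $F$ whose orbital is a dyadic interval inside $(0,1)$. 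Thus it suffices to prove that each such one-bump element lies in $H$, and by the transport property above it is enough to realize, over a single conveniently chosen dyadic interval, the full family of one-bump elements with that orbital.

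The engine for producing honest elements of $H$ with prescribed local behaviour combines the remaining tools. Corollary \ref{3k} lets me conjugate the slope-jump element $h$ of (2) into any prescribed dyadic interval containing both digits; a chain-rule computation shows that conjugation carries a fixed point with left and right slopes $1$ and $2$ to a fixed point with exactly the slopes $1$ and $2$, so the $1\to2$ germ survives the move. Lemma \ref{H_u}, applied to $H$ and to translated copies of the configuration it provides (moved by elements of $H$ via transitivity), supplies genuinely nontrivial elements of $H$ supported in a prescribed dyadic interval and acting there without interior fixed points. Forming commutators and products of these pushers with the transported slope-jump germ, and using the transport property to adjust the tails, I would assemble over each dyadic interval the one-bump elements realizing every admissible orbital and germ, and conclude $[F,F]\le H$.

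The hard part is this last step: staying inside $H$ rather than inside $\Cl(H)$. The obstruction is transparent from Theorem \ref{thm:GS} and Lemma \ref{clo_dya}: $\Cl(H)=\DPiec(H)$ is generated by $H$ together with the components of its elements, so the naive way to localize an element — passing to a component — only lands in $\Cl(H)$, and commutators of ``balanced'' elements remain balanced and therefore cannot by themselves create the slope-$2$ germ. Condition (2) is precisely the extra input that breaks this balance by exhibiting one nontrivial germ inside $H$; the delicate work is to propagate that single germ to all interior dyadic points while remaining in $H$, using Corollary \ref{3k} and the conjugation-invariance of the jump, and to control the tails of the transported elements so that what is assembled is a genuine one-bump generator of $[F,F]$ and not merely an element of $\Cl(H)$.
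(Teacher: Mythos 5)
Your forward direction is fine and matches the paper. For the converse, you have correctly identified the right ingredients (Corollary \ref{3k}, Lemma \ref{H_u}, conjugates of the slope-jump element $h$ from condition (2), and transitivity on $\mathcal D$), and you have correctly located where the difficulty sits, but the proposal does not actually close the argument; there are two genuine gaps.

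First, your reduction of $[F,F]$ to one-bump generators is justified incorrectly. You assert that the orbital endpoints of $f\in[F,F]$ are fixed points of $f$ ``and hence finite dyadic.'' That is false: orbital endpoints of elements of $F$ are only rational in general (this is exactly why Section \ref{ss:sol_alg} of the paper has to introduce \emph{dyadic-orbitals}), and the restriction of $f$ to an orbital with a non-dyadic rational endpoint is not an element of $F$, since that endpoint becomes a non-dyadic breakpoint. The conclusion you want (that $[F,F]$ is generated by one-bump elements, whose orbitals are then automatically dyadic intervals because a one-bump element of $F$ has its orbital endpoints as breakpoints) is true, but it needs a different justification, e.g.\ that the subgroup generated by one-bump elements is a nontrivial normal subgroup of the simple group $[F,F]$. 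As written, your decomposition step fails.

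Second, and more seriously, the core of the converse --- producing, inside $H$ rather than $\Cl(H)$, an element agreeing with a prescribed $f\in[F,F]$ on a prescribed interval --- is only sketched. You say you would ``form commutators and products of these pushers with the transported slope-jump germ'' and ``control the tails,'' and you yourself flag this as ``the delicate work,'' but no mechanism is given. This is precisely what the paper's internal Lemmas \ref{1}--\ref{3} supply: Lemma \ref{1} shows that left- and right-multiplying a given $g\in H$ by suitable conjugates of powers of $h$ adjusts the tails $0^{m_1},0^{m_2}$ of one pair of branches to any prescribed $0^{n_1},0^{n_2}$ without disturbing $g$ on an initial interval (this is where the orbital $(0,.v)$ of $H_{[v]}$ from Lemma \ref{H_u} is used --- not, as you suggest, to produce elements of $H$ supported in a prescribed interval, which $H_{[v]}$ does not directly give); Lemma \ref{3} then runs an induction over consecutive pairs of branches of $f$ to build an element of $H\cap[F,F]$ agreeing with $f$ on $[a,b]$. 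Without an argument of this kind, nothing in your proposal rules out that your assembled elements only live in $\Cl(H)=\DPiec(H)$. The plan is pointed in the right direction, but the proof is not there.
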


\begin{proof}
Clearly, if $H$ contains the derived subgroup of $F$ then conditions (1) and (2) hold. In the opposite direction, assume that $H$ satisfies conditions (1) and (2). To prove that $H$ contains the derived subgroup of $F$, we note that the derived subgroup of $F$ is the union of the derived subgroups of the subgroups $F_{[a,b]}$ over all intervals $[a,b]\subseteq (0,1)$ with dyadic endpoints. (Recall that $F_{[a,b]}$ is the subgroup of all functions in $F$ with support in $[a,b]$. Its derived subgroup is the group of all functions in $F$ with support in $(a,b)$). Let $[a,b]\subseteq (0,1)$ be an interval with dyadic endpoints. It will suffice to prove that $H$ contains the derived subgroup of $F_{[a,b]}$. Since $F_{[a,b]}$ is isomorphic to Thompson's group $F$ it is generated by two elements $y_0$ and $y_1$. Note that $y_0,y_1\in [F,F]$. 
We construct elements $h_0,h_1\in H\cap [F,F]$ such that  
\begin{enumerate}
	\item[(A)] for $j=0,1$, the element $h_j$ coincides with $y_j$ on $[a,b]$; and
	\item[(B)] the intersection of the support of $h_0$ and the support of $h_1$ is contained in $[a,b]$. 
\end{enumerate}
For $j=0$, by Lemma \ref{3}, there is an element $h_0\in H\cap [F,F]$ such that $h_0$ coincides with $y_0$ on $[a,b]$. 
Since $h_0\in [F,F]$ there are finite dyadic fractions $a_1<b_1$ in $(0,1)$ such that $[a,b]\subseteq (a_1,b_1)$ and the support of $h_0$ is contained in  $(a_1,b_1)$. We apply Lemma \ref{3} to get an element $h_{1}\in H\cap [F,F]$ which coincides with $y_1$ on $[a_1,b_1]$. Then conditions (A) and (B) are satisfied for $h_0$ and $h_1$. The conditions imply that the commutator subgroup of $\la h_0,h_1\ra$ coincides with the commutator subgroup of $\la y_0,y_1\ra=F_{[a,b]}$. Therefore, the commutator subgroup of $F_{[a,b]}$ is contained in $H$ as required. 
\end{proof}

Theorem \ref{main1} implies the following. 

\begin{Theorem}\label{main}
Let $H$ be a subgroup of $F$. Then $H=F$ if and only if the following conditions are satisfied. 
\begin{enumerate}
\item[(1)] $\Cl(H)$ contains the derived subgroup of $F$. 
\item[(2)] $H[F,F]=F$
\item[(3)] There is an element $h\in H$ which fixes a finite dyadic fraction $\alpha$ such that $h'(\alpha^-)=1$ and 
$h'(\alpha^+)=2$.
\end{enumerate}
\end{Theorem}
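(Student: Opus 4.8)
The plan is to deduce Theorem~\ref{main} directly from Theorem~\ref{main1}, combined with the elementary group-theoretic observation recorded at the start of Section~\ref{sec:gen}.

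First I would recall and justify that observation: for any subgroup $H\le F$ one has $H=F$ if and only if both $H[F,F]=F$ and $[F,F]\subseteq H$. The forward implication is immediate, taking $H=F$. For the converse, I would note that once $[F,F]\subseteq H$ we have $H[F,F]=H$, so the hypothesis $H[F,F]=F$ forces $H=F$. This reduces the whole problem to characterizing, among subgroups satisfying the ``finite index'' condition $H[F,F]=F$, exactly those that in addition contain the derived subgroup $[F,F]$.

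Next I would invoke Theorem~\ref{main1}, which asserts that $[F,F]\subseteq H$ if and only if (i) $\Cl(H)$ contains $[F,F]$ and (ii) there is an element $h\in H$ fixing a finite dyadic fraction $\alpha\in(0,1)$ with $h'(\alpha^-)=1$ and $h'(\alpha^+)=2$. These are precisely conditions (1) and (3) of the present statement, while condition (2) is verbatim the remaining requirement $H[F,F]=F$. Matching the three conditions against the two ingredients of the observation then yields the equivalence: $H=F$ if and only if (1), (2) and (3) all hold.

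Since the genuine content---that conditions (1) and (3) already force $[F,F]\subseteq H$---has been carried out in the proof of Theorem~\ref{main1}, I expect no real obstacle to remain at this stage. The only point deserving a word of care is the bookkeeping: condition (2), namely $H[F,F]=F$, is exactly the ``cofinite'' ingredient needed to upgrade the inclusion $[F,F]\subseteq H$ to the equality $H=F$, and it is logically independent of (1) and (3), so that all three hypotheses are genuinely required for the conclusion.
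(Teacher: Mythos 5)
Your proposal is correct and follows exactly the paper's own argument: conditions (1) and (3) give $[F,F]\subseteq H$ by Theorem~\ref{main1}, and condition (2) then upgrades this to $H=F$. The only difference is that you spell out the elementary reduction ($H=F$ iff $H[F,F]=F$ and $[F,F]\subseteq H$) in slightly more detail than the paper does.
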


\begin{proof}
If $H=F$ then $H$ clearly satisfies the conditions in the theorem. In the other direction, if $H$ satisfies conditions (1),(3) then by Theorem \ref{main1}, $H$ contains the derived subgroup $[F,F]$. Thus, by condition (2), we have $H=F$.
\end{proof}

Given a finite number of elements $h_1,\dots,h_n\in F$, it is simple to check if conditions (1) and (2) in the theorem hold for the subgroup $H$ they generate. In the next section we give an algorithm, called the \emph{Tuples algorithm}, for checking if condition (3) of Theorem \ref{main} is satisfied, given that condition (1) holds. Thus, we get an algorithm solving the generation problem in Thompson's group $F$. 

\section{The Tuples algorithm}\label{sec:tuples}

Let $H\le F$ be a subgroup of $F$ generated by a finite set $X$ such that $\Cl(H)$ contains the derived subgroup of $F$. 
In this section we show that the following problem is decidable.

\begin{Problem}\label{pro}
Determine whether there exists an element $h\in H$ which has a dyadic fixed point $\alpha$ such that the slope of $h$ at $\alpha^-$ is 1 and the slope of $h$ at $\alpha^+$ is $2$. 
\end{Problem}

\begin{Lemma}\label{1221}
Let $H\le F$ be a subgroup such that $\Cl(H)$ contains the derived subgroup of $F$.
Then the following are equivalent.
\begin{enumerate}
\item There is an element $h_1\in H$ which fixes a finite dyadic fraction $\alpha_1$ such that $h_1'(\alpha_1^-)=1$ and $h_1'(\alpha_1^+)=2$. 
\item There is an element $h_2\in H$ which fixes a finite dyadic fraction $\alpha_2$ such that $h_2'(\alpha_2^-)=2$ and $h_2'(\alpha_2^+)=1$. 
\end{enumerate}
\end{Lemma}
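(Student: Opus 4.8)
The plan is to show that, under the standing hypothesis $[F,F]\subseteq\Cl(H)$, \emph{each} of the two conditions is equivalent to the much stronger statement $[F,F]\subseteq H$; the equivalence of the two conditions then follows at once. The essential tool is Theorem~\ref{main1}, which is already available.

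First I would prove the implication (1)$\Rightarrow$(2). Assume there is $h_1\in H$ fixing a dyadic $\alpha_1$ with $h_1'(\alpha_1^-)=1$ and $h_1'(\alpha_1^+)=2$. Together with the hypothesis $[F,F]\subseteq\Cl(H)$ this is precisely the pair of conditions of Theorem~\ref{main1}, so we conclude $[F,F]\subseteq H$. It then suffices to exhibit a single element of $[F,F]$ realizing a $(2,1)$ germ. For this I would fix a dyadic interval $[a,b]$ with $0<a<b<1$ and carry the standard generator $x_0$ across the isomorphism $F\cong F_{[a,b]}$ of Section~\ref{der_sub}. The image $\widehat{x_0}$ has support in $[a,b]\subset(0,1)$, hence $\widehat{x_0}\in[F,F]\subseteq H$; and since $x_0'(\tfrac14^-)=2$ while $x_0'(\tfrac14^+)=1$, and conjugation by an affine map preserves slopes and sends dyadic breakpoints to dyadic breakpoints, the element $\widehat{x_0}$ fixes the image of $\tfrac14$ (a dyadic point of $(a,b)$) with left slope $2$ and right slope $1$. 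This verifies condition (2).

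For the converse (2)$\Rightarrow$(1) I would pass to the flip automorphism. Let $\tau(t)=1-t$; conjugation by the order‑reversing, dyadic‑preserving $\tau$ is an automorphism of $\PL_o(I)$ preserving powers‑of‑two slopes, so it maps $F$ to $F$ and fixes $[F,F]$ setwise, and, because $\Cl=\DPiec$ (Theorem~\ref{thm:GS}) commutes with conjugation by $\tau$, it satisfies $\Cl(\tau H\tau)=\tau\,\Cl(H)\,\tau\supseteq\tau[F,F]\tau=[F,F]$. If $h_2\in H$ realizes a $(2,1)$ germ at $\alpha_2$, then a chain‑rule computation (in which the two factors $\tau'=-1$ cancel, so slope magnitudes are preserved while left and right are interchanged) shows that $\tau h_2\tau\in\tau H\tau$ fixes $1-\alpha_2$ with left slope $1$ and right slope $2$. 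Thus $\tau H\tau$ satisfies both the hypothesis and condition (1) of Theorem~\ref{main1}, whence $[F,F]\subseteq\tau H\tau$ and therefore $[F,F]=\tau[F,F]\tau\subseteq H$. Finally, as in the first paragraph, $[F,F]\subseteq H$ yields a $(1,2)$ germ in $H$: carrying $x_0^{-1}$, which fixes $\tfrac34$ with left slope $1$ and right slope $2$, into $[a,b]$ produces an element of $[F,F]\subseteq H$ with the required germ. This gives condition (1).

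The substantive content is entirely in Theorem~\ref{main1}; the remaining points are bookkeeping. The two that I expect to require care are, first, confirming that the $(2,1)$‑ and $(1,2)$‑germ witnesses really lie in $[F,F]$ (ensured by pushing their support into a compact dyadic subinterval), and second, the flip step: verifying $\tau F\tau=F$, $\tau[F,F]\tau=[F,F]$ and $\Cl(\tau H\tau)=\tau\Cl(H)\tau$, and checking that conjugation by the order‑reversing $\tau$ exchanges the left and right germs while preserving the slope magnitude. Each of these is routine once Theorem~\ref{main1} and Theorem~\ref{thm:GS} are in hand, but I would state the flip step explicitly since it is what makes the argument symmetric in the two conditions.
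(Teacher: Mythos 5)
Your overall strategy is sound and genuinely different from the paper's. The paper proves the lemma directly and more cheaply: from $[F,F]\subseteq\Cl(H)$ it extracts (via Lemmas \ref{fin_ind} and \ref{path_lem}) an element $h\in H$ fixing $\alpha=.01$ with $h'(\alpha^+)=2$, uses transitivity of the action of $H$ on $\mathcal D$ to conjugate the given $h_2$ into an element $f$ fixing that same $\alpha$ with germ $(2,1)$, and then takes $h_1=hf^{-m}$ to kill the left slope of $h$. Your route instead upgrades each condition to the full statement $[F,F]\subseteq H$ via Theorem \ref{main1}, using the flip $\tau(t)=1-t$ for the symmetric direction; the flip bookkeeping ($\tau F\tau=F$, $\tau[F,F]\tau=[F,F]$, $\Cl(\tau H\tau)=\tau\Cl(H)\tau$ via Theorem \ref{thm:GS}, and the exchange of left and right germs with slope magnitudes preserved) is all correct, and there is no circularity since Theorem \ref{main1} precedes this lemma and does not use it.

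There is, however, a concrete error in your witnesses. The element $x_0$ does not fix $\tfrac{1}{4}$: on $[0,\tfrac{1}{4}]$ it is $t\mapsto 2t$, so $x_0(\tfrac{1}{4})=\tfrac{1}{2}$, and in fact $x_0$ fixes no point of $(0,1)$ at all (its unique orbital is $(0,1)$); likewise $x_0^{-1}$ does not fix $\tfrac{3}{4}$. These points are breakpoints of $x_0^{\pm1}$ with the one-sided slopes you quote, but the lemma, and condition (2) of Theorem \ref{main1}, require the dyadic point to be \emph{fixed} by the element, and this is used essentially (via Lemma \ref{4parts}) in the proof of Theorem \ref{main1}. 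So as written, the step ``$\widehat{x_0}$ fixes the image of $\tfrac{1}{4}$'' fails. The repair is immediate: for a dyadic interval $[a,b]\subset(0,1)$, the copy $\widehat{x_0}$ of $x_0$ in $F_{[a,b]}$ lies in $[F,F]$, fixes $a$, and has left slope $1$ and right slope $2$ there (a witness for condition (1)), while $\widehat{x_0}^{\,-1}$ fixes $b$ with left slope $2$ and right slope $1$ (a witness for condition (2)). With that substitution your argument goes through.
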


\begin{proof}
We show that (2) implies (1). The converse implication is similar. 
By Lemma \ref{fin_ind}, $(010)^+=(01)^+$  in the core $\La(H)$. Thus, by Lemma \ref{path_lem} for a large enough $k$ there is an element $h\in H$ with a pair of branches $010^{k+1}\rightarrow 010^{k}$. In particular, $\alpha=.01$ is a fixed point of $h$ and the slope $h'(\alpha^+)=2$.

Since $\Cl(H)$ contains $[F,F]$, the action of $\Cl(H)$, and thus, of $H$, on the set of finite dyadic fractions $\mathcal D$ is transitive. Thus, there is an element $g\in H$ such that $g(\alpha_2)=\alpha$. 
We consider the element $f=h_2^g$. Since $\alpha_2$ is a fixed point of $h_2$, $\alpha$ is a fixed point of $f$. Similarly, $f'(\alpha^-)=2$ and $f'(\alpha^+)=1$. 

Since $h\in F$, the slope $h'(\alpha^-)=2^m$ for some $m\in\mathbb{Z}$. We consider the element $h_1=hf^{-m}\in H$. 
Clearly, $h_1$ fixes $\alpha$. In addition $h_1'(\alpha^-)=2^m\cdot 2^{-m}=1$ and $h_1'(\alpha^+)=2$. Thus, $h_1$ is an element satisfying condition (1) for $\alpha_1=\alpha$. 
\end{proof}

\begin{Definition}
Let $H\le F$ be a subgroup of $F$. We denote by $\mathcal S_H$ the subset of $\mathbb{Z}^2$ of all vectors $(a,b)$ such that there is an element $h\in H$ and a finite dyadic fraction $\alpha\in(0,1)$ such that $h$ fixes $\alpha$, $h'(\alpha^-)=2^a$ and $h'(\alpha^+)=2^b$.
\end{Definition}

It is obvious that if $H$ acts transitively on $\mathcal D$ then $\mathcal S_H$ is a subgroup of $\mathbb{Z}^2$. Lemma \ref{1221} implies the following.

\begin{Corollary}
Let $H\le F$ be a subgroup such that $\Cl(H)$ contains the derived subgroup of $F$, then $(0,1)\in \mathcal S_H$ if and only if $\mathcal S_H=\mathbb{Z}^2$. 
\end{Corollary}

Thus, to solve Problem \ref{pro} it suffices to determine if $\mathcal S_H=\mathbb{Z}^2$. For the rest of this section we fix a finitely generated subgroup $H\le F$ and a generating set $X=\{g_1,\dots,g_n\}$. We assume that $\Cl(H)$ contains $[F,F]$. By Lemma \ref{fin_ind}, every finite binary word $u$ labels a path on $\La(H)$. 

\begin{Definition}[The equivalence relation $R_X$]
We define an equivalence relation $R_X$ on the set of all finite binary words $\mathcal B$ (such that $\emptyset\in \mathcal B$). 
Let $\La_{\sem}(X)$ be the semi-core of $H$ associated with the generating set $X$, when diagrams in $X$ are taken in reduced form (see Section \ref{sec:gen}). 
Two finite binary words $u$ and $v$ are said to be \emph{$R_X$-equivalent} if $u^+=v^+$ in $\La_{\sem}(X)$. We write 
$u\sim_X v$ and denote the equivalence class of $u$ in 
$R_X$ by $[u]_X$.
\end{Definition}

By Lemma \ref{sem_lem}, if $u\sim_X v$ then there is an element $h\in H$ with a pair of branches $u\rightarrow v$. Note also that the number of equivalence classes in $R_X$ is finite (and computable). Indeed, it is equal to the number of edges in $\La_{\sem}(X)$. We remark that $[\emptyset]_X=\{\emptyset\}$. 

Let $\Psi$ be a tree-diagram over $\kk$. Recall that by Remark \ref{cons}, if $u_1$ and $u_2$ are consecutive branches of $\Psi$ and $u$ is the longest common prefix of $u_1$ and $u_2$, then $u_1\equiv u01^{m}\ \mbox{ and }\ u_2\equiv u10^{n}$ for some $m,n\ge 0$.

\begin{Definition}[Tuples associated with a diagram in $H$]
Let $\Delta$ be a diagram of an element in $H$. 
Let $u_1$ and $u_2$ be a pair of consecutive positive branches of $\Delta$ and $v_1$ and $v_2$ be the corresponding pair of consecutive negative branches of $\Delta$, so that $u_1\rightarrow v_1$ and $u_2\rightarrow v_2$ are pairs of branches of $\Delta$. 
Let $u$ be the longest common prefix of $u_1$ and $u_2$. By Remark \ref{cons}, $$u_1\equiv u01^{m_1}\ \mbox{ and }\ u_2\equiv u10^{n_1}\ \mbox{ for some } \ m_1,n_1\ge 0.$$ 
Let $v$ be the longest common prefix of $v_1$ and $v_2$. By Remark \ref{cons}, $$v_1\equiv v01^{m_2}\ \mbox{ and }\ v_2\equiv v10^{n_2}\ \mbox{ for some }\ m_2,n_2\ge 0.$$ 
We define the \emph{tuple associated with the consecutive pairs of branches $u_1\rightarrow v_1$ and $u_2\rightarrow v_2$ of the diagram $\Delta$} to be the tuple
$$(m_1-m_2,n_1-n_2,[u]_X\rightarrow [v]_X),$$ where $[u]_X$ and $[v]_X$ are the equivalence classes of $u$ and $v$ in $R_X$. The tuple can be viewed as an element of $\mathbb{Z}\times\mathbb{Z}\times (R_X/\sim_X \times R_X/\sim_X)$. 
\end{Definition}

Usually, we will refer to tuples as tuples associated with a diagram without mentioning the consecutive pairs of branches. 

\begin{Definition}[The groupoid $\mathcal T_H$]
We define the set $\mathcal T_H$ to be the set of all tuples associated with diagrams of elements in $H$. We define two operations on tuples in $\mathcal T_H$ as follows. 

\noindent\emph{Taking inverse}: For a tuple $t=(a,b,[u]_X\rightarrow [v]_X)$ in $\mathcal T_H$ we define the inverse tuple 
$$t^{-1}=(-a,-b,[v]_X\rightarrow [u]_X).$$

\noindent\emph{(Partial) addition}: Given two tuples $(a,b,[u]_X\rightarrow [v]_X)$ and $(c,d,[v]_X\rightarrow [w]_X)$, we let 
$$(a,b,[u]_X\rightarrow [v]_X)+(c,d,[v]_X\rightarrow [w]_X)=
(a+c,b+d,[u]_X\rightarrow[w]_X).$$
\end{Definition}

The following lemma shows that $\mathcal T_H$ is closed under the operations of taking inverses and addition. It follows easily that $\mathcal T_H$ is a groupoid. 

\begin{Lemma}\label{+}
The set $\mathcal T_H$ is closed under taking inverses and addition.
\end{Lemma}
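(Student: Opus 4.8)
The plan is to realize both $t^{-1}$ and $t_1+t_2$ as tuples genuinely associated with diagrams of elements of $H$, by manipulating the diagrams that produce the given tuples and invoking Lemma \ref{sem_lem} to glue the middle classes.

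For closure under inverses, suppose $t=(a,b,[u]_X\rightarrow[v]_X)\in\mathcal{T}_H$ arises from a diagram $\Delta$ of some $h\in H$ via the consecutive positive branches $u_1\equiv u01^{m_1}$, $u_2\equiv u10^{n_1}$ and the corresponding consecutive negative branches $v_1\equiv v01^{m_2}$, $v_2\equiv v10^{n_2}$, with $a=m_1-m_2$ and $b=n_1-n_2$. I would pass to the inverse diagram $\Delta^{-1}$, which represents $h^{-1}\in H$; its positive branches are the negative branches of $\Delta$ and conversely, and its pairs of branches are $v_i\rightarrow u_i$. The consecutive positive branches $v_1,v_2$ of $\Delta^{-1}$ have longest common prefix $v$ (they continue by $0$ and $1$), and the corresponding negative branches $u_1,u_2$ have longest common prefix $u$. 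Reading off the tuple associated with this pair of consecutive pairs of branches of $\Delta^{-1}$ gives exactly $(m_2-m_1,n_2-n_1,[v]_X\rightarrow[u]_X)=t^{-1}$, so $t^{-1}\in\mathcal{T}_H$.

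For closure under addition, suppose $t_1=(a,b,[u]_X\rightarrow[v]_X)$ comes from a diagram $\Delta_1$ of $h_1\in H$ and $t_2=(c,d,[v]_X\rightarrow[w]_X)$ from a diagram $\Delta_2$ of $h_2\in H$, where the matching middle class means that the negative common prefix $v^{(1)}$ coming from $\Delta_1$ and the positive common prefix $v^{(2)}$ coming from $\Delta_2$ satisfy $(v^{(1)})^+=(v^{(2)})^+$ in $\La_{\sem}(X)$. By Lemma \ref{sem_lem} there is $g\in H$ with a pair of branches $v^{(1)}\rightarrow v^{(2)}$, whence $g$ maps $v^{(1)}z\rightarrow v^{(2)}z$ for every suffix $z$. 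Write the relevant data of $\Delta_2$ as $v^{(2)}01^{\mu_1}\rightarrow w01^{\mu_2}$ and $v^{(2)}10^{\nu_1}\rightarrow w10^{\nu_2}$ (so $c=\mu_1-\mu_2$, $d=\nu_1-\nu_2$), and the negative side of $\Delta_1$ as $v^{(1)}01^{m_2}$, $v^{(1)}10^{n_2}$. The one point needing care is matching exponents so that the composition stays a single linear piece on the relevant intervals; this is the main obstacle. I would resolve it by prolonging the two pairs of branches of $\Delta_1$ (inserting dipoles of type $1$ at the right end of $u01^{m_1}\rightarrow v^{(1)}01^{m_2}$ and at the left end of $u10^{n_1}\rightarrow v^{(1)}10^{n_2}$). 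This replaces $\Delta_1$ by an equivalent diagram still representing $h_1$ and raises the matched positive/negative exponents by equal amounts, leaving $a=m_1-m_2$, $b=n_1-n_2$ and the classes $[u]_X$, $[v]_X$ unchanged; so I may assume $m_2\ge\mu_1$ and $n_2\ge\nu_1$.

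Then I would consider $f=h_1gh_2\in H$ (composition from left to right) and trace the two consecutive branches with prefix $u$. On $[u01^{m_1}]$ the map $h_1$ is linear onto $[v^{(1)}01^{m_2}]$, $g$ is linear onto $[v^{(2)}01^{m_2}]$, and since $m_2\ge\mu_1$ this interval lies inside $[v^{(2)}01^{\mu_1}]$, on which $h_2$ is linear; hence $f$ is linear on $[u01^{m_1}]$ and carries it onto $[w01^{\mu_2+m_2-\mu_1}]$, and symmetrically it carries $[u10^{n_1}]$ onto $[w10^{\nu_2+n_2-\nu_1}]$. Because these two domain intervals are adjacent (they share the breakpoint $.u1$), I can choose a diagram of $f$ in which $u01^{m_1}$ and $u10^{n_1}$ are consecutive positive branches, with corresponding consecutive negative branches $w01^{\mu_2+m_2-\mu_1}$ and $w10^{\nu_2+n_2-\nu_1}$; the positive and negative common prefixes are exactly $u$ and $w$. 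Reading off the associated tuple yields $\bigl(m_1-(\mu_2+m_2-\mu_1),\,n_1-(\nu_2+n_2-\nu_1),\,[u]_X\rightarrow[w]_X\bigr)=(a+c,\,b+d,\,[u]_X\rightarrow[w]_X)=t_1+t_2$, so $t_1+t_2\in\mathcal{T}_H$. The groupoid structure then follows formally.
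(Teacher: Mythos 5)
Your proposal is correct and follows essentially the same route as the paper: inverses via the inverse diagram, and addition via the composition $h_1gh_2$ where $g$ comes from Lemma \ref{sem_lem} and dipoles of type $1$ are inserted to align the exponents. The only cosmetic difference is that the paper equalizes the middle exponents exactly ($m_2=k_1$, $n_2=l_1$) while you only arrange an inequality and carry the offset through the final computation; both yield $(a+c,b+d,[u]_X\rightarrow[w]_X)$.
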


\begin{proof}
It is obvious that $\mathcal T_H$ is closed under taking inverses. Indeed, if $t=(a,b,[u]_X\rightarrow [v]_X)\in \mathcal T_H$, then the tuple $t$ is associated with a pair of consecutive branches of a diagram $\Delta$ in $H$ . Then $t^{-1}$ is associated with the corresponding pair of consecutive branches of the diagram $\Delta^{-1}\in H$. 

Let $t_1=(a,b,[u]_X\rightarrow [v]_X)$ and $t_2=(c,d,[v]_X\rightarrow [w]_X)$ be tuples in $\mathcal T_H$. 
The tuple $t_1$ belonging to $\mathcal T_H$ implies that there is a diagram  $\Delta_1$ of an element in $H$ which has consecutive pairs of branches $$u_101^{m_1}\rightarrow v_101^{m_2}\ \mbox{ and }\ u_110^{n_1}\rightarrow v_110^{n_2}$$ such that $m_1-m_2=a$, $n_1-n_2=b$, $u_1\in [u]_X$ and $v_1\in [v]_X$.
Similarly, there is a diagram $\Delta_2$ with consecutive pairs of branches  
$$v_201^{k_1}\rightarrow w_201^{k_2}\ \mbox{ and }\ v_210^{l_1}\rightarrow w_210^{l_2},$$ where $k_1-k_2=c$, $l_1-l_2=d$, $v_2\in [v]_X$ and $w_2\in [w]_X$. 

We can assume that $m_2=k_1$ and $n_2=l_1$. Indeed, if $m_2<k_1$, we consider the edge $e$ on the horizontal $1$-path of $\Delta_1$ which is the common terminal edge of the positive branch $u_101^{m_1}$ and the negative branch $v_101^{m_2}$. We replace the edge $e$ with the diagram of the identity with branches $b\rightarrow b$ for all $b\in\{0,1\}^{k_1-m_2}$. 
The resulting diagram is equivalent to $\Delta_1$ and has consecutive pairs of branches $u_101^{m_1+k_1-m_2}\rightarrow v_101^{k_1}$ and $u_110^{n_1}\rightarrow v_110^{n_2}$. Thus, one can replace $m_1$ with $m_1+k_1-m_2$ and $m_2$ with $k_1$. In a similar way, one can treat the case where $m_2>k_1$ or $n_2\neq l_1$. Thus, we can assume that $m_2=k_1$ and $n_2=\ell_1$. 

Since $v_1,v_2\in [v]_X$, there is an element $h\in H$ with a pair of branches $v_1\rightarrow v_2$. (If $v\equiv\emptyset$, we take $h$ to be the identity.)
Let $h_1$ be the element of $H$ represented by $\Delta_1$ and let $h_2$ be the element represented by $\Delta_2$. We consider the element $g=h_1hh_2$. 
$g$ has the following consecutive pairs of branches $u_101^{m_1}\rightarrow w_201^{k_2}$ and $u_110^{n_1}\rightarrow w_210^{l_2}$. It suffices to note that $m_1-k_2=m_1-m_2+k_1-k_2=a+c$, $n_1-l_2=n_1-n_2+l_1-l_2=b+d$, $u_1\in [u]_X$ and $w_2\in [w]_X$. Thus, the tuple 
$$t_1+t_2=(a+c,b+d,[u]_X\rightarrow[w]_X)\in\mathcal T_H.$$
\end{proof}

For a finite binary word $u$, we let $\mathcal T_H([u]_X)$ be the set of all tuples in $\mathcal T_H$ such that the last coordinate is of the form $[u]_X\rightarrow [u]_X$. Such tuples are called \emph{spherical tuples}. 
Clearly, for each $u$, $\mathcal T_H([u]_X)$ is a commutative group with neutral element $(0,0,[u]_X\rightarrow[u]_X)$.

We let $\Psi\colon \mathcal T_H\to \mathbb{Z}^2$ be the natural homomorphism such that 
$$\Psi((a,b,[u]_X\rightarrow [v]_X))=(a,b).$$
Under this homomorphism, each group $\mathcal T_H([u]_X)$ embeds into $\mathbb{Z}^2$. 

\begin{Lemma}\label{conj}
Let $u$ and $v$ be finite binary words. Then the groups $\mathcal T_H([u]_X)$ and $\mathcal T_H([v]_X)$ are conjugate in the groupoid $\mathcal T_H$.
\end{Lemma}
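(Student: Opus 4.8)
The plan is to treat this as a statement about connectivity in the groupoid $\mathcal T_H$. Recall that $\mathcal T_H$ is a groupoid (Lemma \ref{+}): its composition is the partial addition of tuples, its inversion is $t\mapsto t^{-1}$, and the vertex group at an object $[u]_X$ is exactly $\mathcal T_H([u]_X)$. In any groupoid a single morphism $t$ from $A$ to $B$ induces an isomorphism $s\mapsto t^{-1}+s+t$ from the vertex group at $A$ onto the vertex group at $B$, with inverse $r\mapsto t+r+t^{-1}$; this is precisely the assertion that the two vertex groups are conjugate in $\mathcal T_H$. Hence the whole lemma reduces to producing, for arbitrary words $u,v$, a single tuple in $\mathcal T_H$ whose last coordinate is $[u]_X\rightarrow[v]_X$. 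Moreover, since $\mathcal T_H$ is closed under addition (Lemma \ref{+}), it suffices to connect every object to one fixed hub, which I will take to be $[\emptyset]_X$: if $t_u\colon[u]_X\to[\emptyset]_X$ and $t_v\colon[v]_X\to[\emptyset]_X$ are such tuples, then $t_u+t_v^{-1}$ is the required morphism $[u]_X\to[v]_X$.

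The heart of the argument is thus the construction of a morphism $[u]_X\to[\emptyset]_X$ for an arbitrary finite binary word $u$. Here I use that, since $\Cl(H)$ contains $[F,F]$, the group $H$ acts transitively on $\mathcal D$ (as invoked in Lemma \ref{1221}, via Corollary \ref{orb_CH}). Let $\mu_u=.u1$ be the finite dyadic point separating the intervals $[u0]$ and $[u1]$. By transitivity choose $g\in H$ with $g(\mu_u)=\tfrac12$. Now take a diagram $\Delta$ of $g$, refined by inserting dipoles of type $1$ so that, for suitably large $a$ and $b$, the words $u01^a$ and $u10^b$ are consecutive positive branches of $\Delta$; by Remark \ref{cons} their longest common prefix is $u$. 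Because $g$ is an increasing homeomorphism with $g(\mu_u)=\tfrac12$ and has only finitely many breakpoints, for $a,b$ large enough $g$ is linear on $[u01^a]$ and on $[u10^b]$ and carries them onto dyadic intervals with right endpoint $\tfrac12$ and left endpoint $\tfrac12$ respectively; these are exactly the intervals $[01^c]$ and $[10^d]$ for some $c,d\ge0$. Hence the corresponding consecutive negative branches of $\Delta$ are $01^c$ and $10^d$, whose longest common prefix is the empty word. The tuple associated with this pair of consecutive pairs of branches is therefore $(a-c,\,b-d,\,[u]_X\rightarrow[\emptyset]_X)$, an element of $\mathcal T_H$, giving the desired morphism $[u]_X\to[\emptyset]_X$.

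Combining the two steps finishes the proof: for any $u,v$ the tuple $t_u+t_v^{-1}$ connects $[u]_X$ to $[v]_X$, and conjugation by it is the isomorphism $\mathcal T_H([u]_X)\to\mathcal T_H([v]_X)$ witnessing conjugacy. The point I expect to require the most care is the construction in the second paragraph, namely the verification that a single element $g\in H$ (not merely of $\Cl(H)$) simultaneously realizes the prescribed source class $[u]_X$ on the positive side and the hub class $[\emptyset]_X$ on the negative side. The two observations that make this go through are that a tuple may be read off \emph{any} diagram of $g$, so branches may be prolonged at will, and that only a \emph{single} connecting morphism is needed thanks to the groupoid structure; the latter is what lets me avoid trying to splice two separately constructed elements, a splice that would in general land only in $\Cl(H)$ rather than in $H$. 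The remaining checks, that the negative common prefix is exactly $\emptyset$ and that the two branches are genuinely consecutive (again Remark \ref{cons}), are routine.
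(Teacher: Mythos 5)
Your proof is correct, but it takes a genuinely different route from the paper's. The paper argues entirely inside the core machinery: since $u01$ and $v01$ both contain the digits $0$ and $1$, Lemma \ref{fin_ind} gives $(u01)^+=(v01)^+$ in $\La(H)$, so Lemma \ref{path_lem} produces a single $h\in H$ with a pair of branches $u01^{k+1}\rightarrow v01^{k+1}$; the next pair of branches is forced to be $u10^{m_1}\rightarrow v10^{m_2}$, and the associated tuple $(0,m_1-m_2,[u]_X\rightarrow[v]_X)$ is the desired connecting morphism. You instead exploit the dynamical consequence of the same standing hypothesis $[F,F]\le\Cl(H)$, namely transitivity of $H$ on $\mathcal D$ (via Corollary \ref{cor_tra} and Corollary \ref{orb_CH}), to move $.u1$ to $\tfrac12$ and read a tuple $[u]_X\rightarrow[\emptyset]_X$ off a refined diagram of the transporting element; the groupoid/hub reduction then finishes via Lemma \ref{+}. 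Your key verification --- that the negative branches flanking $\tfrac12$ must be of the form $01^c$ and $10^d$, so the negative common prefix is exactly $\emptyset$ --- is sound, because those intervals are genuine negative branches of a diagram of $g\in H$ and a standard dyadic interval with right (resp.\ left) endpoint $\tfrac12$ has no other possible form; your worry about splicing two elements of $H$ is correctly dissolved by using a single transporter. What the paper's route buys is economy (no appeal to transitivity, one application of Lemma \ref{path_lem}, and a connecting tuple with first coordinate $0$); what yours buys is a cleaner conceptual picture of $\mathcal T_H$ as a connected groupoid with $[\emptyset]_X$ as a basepoint, at the cost of routing through the dynamical characterization. Both rest on the same hypothesis and both are valid.
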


\begin{proof}
We consider the finite binary words $u'\equiv u01$ and $v'\equiv v01$. Since $u'$ and $v'$ contain both digits $0$ and $1$ and $\Cl(H)$ contains $[F,F]$, $(u')^+$ and $(v')^+$ coincide in $\La(H)$ (see Lemma \ref{fin_ind}). By Lemma \ref{path_lem} for $k$ large enough there is an element $h\in H$ with a pair of branches $u'1^k\equiv u01^{k+1}\rightarrow v'1^k\equiv v01^{k+1}$. Let $\Delta$ be a diagram of $h$ with this pair of branches. The following pair must be of the form $u10^{m_1}\rightarrow v10^{m_2}$ for some $m_1,m_2\ge 0$. Thus, the tuple $$t=((k+1)-(k+1),m_1-m_2,[u]_X\rightarrow[v]_X)=(0,m,[u]_X\rightarrow[v]_X)\in \mathcal T_H$$ 
for $m=m_1-m_2$. Conjugating $\mathcal T_H([u]_X)$ by the tuple $t$ gives the group $\mathcal T_H([v]_X)$.
\end{proof}

\begin{Corollary}\label{coincide}
Let $u$ and $v$ be finite binary words. Then $$\Psi(\mathcal T_H([u]_X))=\Psi(\mathcal T_H([v]_X)).$$
\end{Corollary}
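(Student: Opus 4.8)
The plan is to read the corollary off Lemma \ref{conj} directly, using the fact that $\Psi$ takes values in the \emph{abelian} group $\mathbb{Z}^2$, so that conjugation inside the groupoid $\mathcal{T}_H$ is invisible to $\Psi$.

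First I would apply Lemma \ref{conj} to obtain a tuple $t\in\mathcal{T}_H$ with source $[u]_X$ and target $[v]_X$ that conjugates $\mathcal{T}_H([u]_X)$ onto $\mathcal{T}_H([v]_X)$; explicitly, $\mathcal{T}_H([v]_X)=t^{-1}+\mathcal{T}_H([u]_X)+t$, where each partial sum is defined because the objects compose as $[v]_X\to[u]_X\to[u]_X\to[v]_X$.

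Next, for an arbitrary spherical tuple $s\in\mathcal{T}_H([u]_X)$ I would compute the image of its conjugate. Since $\Psi$ is a homomorphism of groupoids, it respects partial addition and inverses, and since its target $\mathbb{Z}^2$ is abelian,
\[
\Psi(t^{-1}+s+t)=-\Psi(t)+\Psi(s)+\Psi(t)=\Psi(s).
\]
Thus conjugation by $t$ carries every element of $\mathcal{T}_H([u]_X)$ to an element of $\mathcal{T}_H([v]_X)$ having the same image under $\Psi$. Because this conjugation is onto $\mathcal{T}_H([v]_X)$ by Lemma \ref{conj}, the image sets $\Psi(\mathcal{T}_H([u]_X))$ and $\Psi(\mathcal{T}_H([v]_X))$ coincide, which is the assertion.

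I do not expect a genuine obstacle: the only thing to watch is the bookkeeping of sources and targets so that all partial additions in the groupoid are legitimate, together with the observation that $\Psi$ being a homomorphism into an abelian group is precisely what forces conjugation to act trivially on $\Psi$-values.
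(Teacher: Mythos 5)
Your proof is correct and matches the paper's (implicit) argument: the paper states this as an immediate consequence of Lemma \ref{conj}, precisely because $\Psi$ is a homomorphism into the abelian group $\mathbb{Z}^2$, so conjugation inside $\mathcal{T}_H$ leaves $\Psi$-images unchanged. No issues.
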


\begin{Lemma}\label{S_H}
Let $u$ be a finite binary word. Then $$\Psi(\mathcal T_H([u]_X))=\mathcal S_H.$$ 
\end{Lemma}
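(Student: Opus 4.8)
The plan is to prove the two inclusions $\mathcal S_H\subseteq\Psi(\mathcal T_H([u]_X))$ and $\Psi(\mathcal T_H([u]_X))\subseteq\mathcal S_H$ separately, invoking Corollary~\ref{coincide} so that the particular base word $u$ never matters: it lets me replace $[u]_X$ by any convenient class $[u']_X$ at will.

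For $\mathcal S_H\subseteq\Psi(\mathcal T_H([u]_X))$, I would begin from a vector $(a,b)\in\mathcal S_H$ witnessed by $h\in H$ fixing a dyadic fraction $\alpha$ with $h'(\alpha^-)=2^a$ and $h'(\alpha^+)=2^b$. Writing $\alpha=.u'1$ and applying Lemma~\ref{4parts}(3),(4) (using parts (1),(2) when a one-sided slope is trivial), the reduced diagram of $h$ has the positive branch $u'01^n$ immediately left of $\alpha$ and the positive branch $u'10^m$ immediately right of $\alpha$, carried respectively to the negative branches $u'01^{\,n-a}$ and $u'10^{\,m-b}$. These two positive branches are consecutive with common prefix exactly $u'$, and crucially the two corresponding negative branches also have common prefix exactly $u'$; this is forced by $h(\alpha)=\alpha$ together with the uniqueness of the binary representation of a dyadic fraction ending in $1$. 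Reading off the associated tuple gives $(n-(n-a),\,m-(m-b),\,[u']_X\to[u']_X)=(a,b,[u']_X\to[u']_X)$, a spherical tuple with $\Psi$-image $(a,b)$. Hence $(a,b)\in\Psi(\mathcal T_H([u']_X))=\Psi(\mathcal T_H([u]_X))$ by Corollary~\ref{coincide}.

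For the reverse inclusion I would take a spherical tuple $t=(a,b,[u]_X\to[u]_X)\in\mathcal T_H$ arising from a diagram of some $h\in H$ with consecutive pairs of branches $u_{\mathrm{pos}}01^{m_1}\to v_{\mathrm{neg}}01^{m_2}$ and $u_{\mathrm{pos}}10^{n_1}\to v_{\mathrm{neg}}10^{n_2}$, where $u_{\mathrm{pos}},v_{\mathrm{neg}}\in[u]_X$, $a=m_1-m_2$ and $b=n_1-n_2$. Here $h$ carries the breakpoint $\alpha=.u_{\mathrm{pos}}1$ to $\beta=.v_{\mathrm{neg}}1$, and the linearity of $h$ on the relevant intervals gives $h'(\alpha^-)=2^{\,c+a}$ and $h'(\alpha^+)=2^{\,c+b}$, where $c=|u_{\mathrm{pos}}|-|v_{\mathrm{neg}}|$. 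Since $u_{\mathrm{pos}}\sim_X v_{\mathrm{neg}}$, Lemma~\ref{sem_lem} supplies $g\in H$ with a pair of branches $v_{\mathrm{neg}}\to u_{\mathrm{pos}}$; this $g$ is linear of slope $2^{-c}$ on the whole interval $[v_{\mathrm{neg}}]$, whose midpoint is $\beta$, so $g(\beta)=\alpha$ and $g'(\beta^-)=g'(\beta^+)=2^{-c}$. Setting $f=hg$ (composition left to right, i.e.\ $f=g\circ h$), the point $\alpha$ is fixed, and the chain rule yields $f'(\alpha^-)=2^{-c}\cdot2^{\,c+a}=2^a$ and $f'(\alpha^+)=2^{-c}\cdot2^{\,c+b}=2^b$, so $(a,b)=\Psi(t)\in\mathcal S_H$.

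The main obstacle is exactly the length bookkeeping that distinguishes the two directions: a spherical tuple only records the class $[u]_X$, not a genuine fixed point, so a priori the breakpoint it describes is \emph{moved} by $h$ (whenever $|u_{\mathrm{pos}}|\neq|v_{\mathrm{neg}}|$) and the naive slope vector is off by the diagonal shift $(c,c)$. The device that absorbs this shift is the auxiliary element $g$ with branches $v_{\mathrm{neg}}\to u_{\mathrm{pos}}$, which is linear through $\beta$ and precisely cancels the factor $2^{c}$; conversely, in the forward direction the hypothesis $h(\alpha)=\alpha$ forces the positive and negative common prefixes to coincide, so the shift vanishes and the tuple's $\Psi$-image already equals the slope vector. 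I would be careful to verify the uniqueness-of-representation step and the fact that $\beta$ is the midpoint (hence an interior point) of $[v_{\mathrm{neg}}]$, since the clean cancellation rests on $g$ being genuinely linear at $\beta$.
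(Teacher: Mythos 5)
Your proof is correct and follows essentially the same route as the paper: one direction reads the tuple $(a,b,[u']_X\to[u']_X)$ off the branches supplied by Lemma \ref{4parts} at the fixed point, and the other uses the $R_X$-equivalence $u_{\mathrm{pos}}\sim_X v_{\mathrm{neg}}$ (via Lemma \ref{sem_lem}) to post-compose with an element carrying $v_{\mathrm{neg}}$ back to $u_{\mathrm{pos}}$, then invokes Corollary \ref{coincide} to change the base class. The only difference is cosmetic: you track the length shift $c$ and cancel it by the chain rule, where the paper composes the branch pairs directly so the shift never appears.
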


\begin{proof}
Recall that $\mathcal S_H$ is the subgroup of $\mathbb{Z}^2$ of all vectors $(a,b)$ for which there is an element $h\in H$ which fixes a finite dyadic fraction $\alpha$ such that $h'(\alpha^-)=2^a$ and $h'(\alpha^+)=2^b$. 

Let $t=(a,b,[u]_X\rightarrow[u]_X)\in \mathcal T_H([u]_X)$. To prove that $(a,b)\in \mathcal S_H$, we consider a diagram $\Delta$ of an element $h_1$ in $H$ with which $t$ is associated. In particular, $\Delta$ has consecutive pairs of branches of the form $$u_101^{m_1}\rightarrow u_201^{m_2}\ \mbox{ and }\ u_110^{n_1}\rightarrow u_210^{n_2}$$
 such that $m_1-m_2=a$, $n_1-n_2=b$ and $u_1,u_2\in[u]_X$. Since $u_1\sim_X u_2$ there is an element $h_2\in H$ with a pair of branches $u_2\rightarrow u_1$. Let $h=h_1h_2$. Then $h$ has consecutive pairs of branches 
$$u_101^{m_1}\rightarrow u_101^{m_2}\ \mbox{ and } \ u_110^{n_1}\rightarrow u_110^{n_2}.$$ In particular, $h$ fixes $\alpha=.u_11$. 
In addition $h'(\alpha^{-})=2^{m_1-m_2}=2^a$ and $h'(\alpha^{+})=2^{n_1-n_2}=2^b$. Thus $(a,b)\in \mathcal S_H$.

In the other direction, let $(a,b)\in\mathcal S_H$. Let $h\in H$ be an element which fixes a finite dyadic fraction $\alpha\in (0,1)$ such that $h'(\alpha^-)=2^a$ and $h'(\alpha^+)=2^b$. In particular, in a small enough left (resp. right) neighborhood of $\alpha$, the slope of $h$ is $2^a$ (resp. $2^b$). Let $v$ be a finite binary word ending with the digit $1$ so that $\alpha=.v$. Let $v'$ be the prefix of $v$ such that $v\equiv v'1$. For all $k\ge 0$, the interval $[v'01^k]$ is a left neighborhood of $\alpha$. For large enough $k>a$, the interval $[v'01^k]$ is a small enough left neighborhood of $\alpha$ so that $h$ has slope $2^a$ on the interval. Since $h$ fixes $\alpha$, the interval $[v'01^k]$ is mapped linearly onto $[v'01^{k-a}]$. In other words, $h$ has the pair of branches $v'01^k\rightarrow v'01^{k-a}$. Let $\Delta$ be a diagram of $h$ which has this pair of branches. Clearly, the following pair must be of the form $v'10^{m_1}\rightarrow v'10^{m_2}$, for some $m_1,m_2\ge 0$. Since $h'(\alpha^+)=2^b$, we have $m_1-m_2=b$. Thus, the tuple $$t=(k-(k-a),m_1-m_2,[v']_X\rightarrow[v']_X)=(a,b,[v']_X\rightarrow[v']_X)\in \mathcal T_H([v']_X).$$ 
It follows that $(a,b)\in\Psi(T_H([v']_X))=\Psi(T_H([u]_X))$, by Corollary \ref{coincide}. 
\end{proof}

To determine whether $\mathcal S_H=\mathbb{Z}^2$, it suffices to find a finite generating set $M$ of $\mathcal S_{H}$.
We start by choosing a generating set of $\mathcal T_H$.

Recall that $X=\{g_1,\dots,g_n\}$ is the fixed generating set of $H$. 
For each $i$, we let $\Delta_i$ be the reduced diagram of $g_i$.
We let $Y$ be the set of all tuples in $\mathcal T_H$ associated with consecutive pairs of branches of the diagrams
 $\Delta_i^{\pm 1}$. For each equivalence class $[u]_X$ of $R_X$, we add to $Y$ the tuple $0_{[u]_X}=(0,0,[u]_X\rightarrow [u]_X)$.  (Notice that all tuples $0_{[u]_X}\in\mathcal T_H$. Indeed, one can consider a diagram of the identity element of $F$ with consecutive pairs of branches of the form $u0\rightarrow u0$ and $u1\rightarrow u1$.) 
To prove that $Y$ is a generating set of $\mathcal T_H$, we will need the following two lemmas. 
The proof of Lemma \ref{add_caret} is simple and is left as an exercise to the reader.

\begin{Lemma}\label{add_caret}
Let $\Delta$ be a diagram of an element in $H$. Let $u\rightarrow v$ be a pair of branches of  $\Delta$. Let $\Delta'$ be the diagram resulting by replacing the edge on the horizontal $1$-path of $\Delta$ at the end of the positive branch $u$  by a dipole of type $1$. Then the tuples in $\mathcal T_H$ corresponding to consecutive pairs of branches of $\Delta'$ are exactly the tuples associated with $\Delta$ and the tuple $(0,0,[u]_X\rightarrow [v]_X)$. \qed
\end{Lemma}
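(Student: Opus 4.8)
The plan is to follow what the dipole insertion does at the level of branches first, and only then read off the effect on the associated tuples, where the whole content is the computation via Remark \ref{cons}. Inserting a dipole of type $1$ at the terminal edge $u^+$ of the positive branch $u$ produces a diagram $\Delta'$ that is equivalent to $\Delta$ (a type-$1$ dipole can be removed to recover $\Delta$), so $\Delta'$ represents the same element of $H$ and all of its tuples lie in $\mathcal T_H$. The only effect on the pairs of branches is that the single pair $u\rightarrow v$ is replaced by the two pairs $u0\rightarrow v0$ and $u1\rightarrow v1$ (the function maps $[u]$ linearly onto $[v]$, hence $[u0]$ onto $[v0]$ and $[u1]$ onto $[v1]$), while every other pair of branches of $\Delta$ is untouched.

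Next I would locate precisely which consecutive pairs are affected. Writing the positive branches of $\Delta$ from left to right as $u_1<\cdots<u_n$ with $u=u_j$, the consecutive pairs of $\Delta'$ agree with those of $\Delta$ except that the pair $(u_{j-1},u_j)$ becomes $(u_{j-1},u_j0)$, the pair $(u_j,u_{j+1})$ becomes $(u_j1,u_{j+1})$, and one genuinely new consecutive pair $(u_j0,u_j1)$ appears between them. (When $j=1$ or $j=n$ the corresponding boundary pair is simply absent, and the argument below degenerates accordingly.)

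The computation then has three pieces. For the new middle pair $(u_j0,u_j1)$, with negative partner $(v0,v1)$, the longest common prefixes are $u$ and $v$, and in the notation of Remark \ref{cons} one has $u_j0\equiv u\,0\,1^{0}$, $u_j1\equiv u\,1\,0^{0}$, $v0\equiv v\,0\,1^{0}$, $v1\equiv v\,1\,0^{0}$; all four exponents vanish and the tuple is exactly $(0,0,[u]_X\rightarrow[v]_X)$. For the two modified boundary pairs, the key observation is that appending a single letter to one of the two consecutive positive branches increases exactly one of its exponents $m_1,n_1$ by one, and appending the same letter to the corresponding negative branch increases the matching exponent $m_2,n_2$ by one, so the differences $m_1-m_2$ and $n_1-n_2$ are unchanged; moreover the longest common prefixes $p,q$ do not lengthen, since the two consecutive branches already diverge strictly inside position $|p|$ (resp. $|q|$). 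Hence $[p]_X,[q]_X$ are unchanged and the tuple of $(u_{j-1},u_j0)$ equals that of $(u_{j-1},u_j)$, and likewise for $(u_j1,u_{j+1})$ versus $(u_j,u_{j+1})$.

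Putting these together, the set of tuples associated with $\Delta'$ is the set of tuples associated with $\Delta$ together with the single additional tuple $(0,0,[u]_X\rightarrow[v]_X)$, which is the claim. I do not expect any real obstacle here: the only care needed is to check that the common prefix never lengthens when one letter is appended and to handle the leftmost/rightmost cases where a boundary pair is missing; the rest is the bookkeeping of exponents through Remark \ref{cons}.
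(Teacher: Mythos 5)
Your proof is correct; the paper explicitly leaves this lemma as an exercise, and your argument — splitting $u\rightarrow v$ into $u0\rightarrow v0$ and $u1\rightarrow v1$, checking that the new middle pair yields $(0,0,[u]_X\rightarrow[v]_X)$, and verifying via Remark \ref{cons} that the two boundary pairs keep their common prefixes and have both matching exponents shifted by the same amount so their tuples are unchanged — is exactly the intended bookkeeping. No gaps.
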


\begin{Lemma}\label{con_12}
Let $h\in H$. Then $h$ has a diagram $\Delta$ which satisfies the following conditions.
\begin{enumerate}
\item[(1)] For each pair of branches  $u\rightarrow v$ of $\Delta$, we have $[u]_X=[v]_X$.
\item[(2)] All the tuples in $\mathcal T_H$ associated with the diagram $\Delta$ belong to the sub-groupoid of $\mathcal T_H$ generated by $Y$. 
\end{enumerate}
\end{Lemma}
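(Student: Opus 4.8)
The plan is to induct on the length $k$ of a word $g_{i_1}^{\epsilon_1}\cdots g_{i_k}^{\epsilon_k}$ in the generators representing $h$, using the reduced diagrams $\Delta_{i_j}^{\epsilon_j}$ as the building blocks. Before the induction I would isolate two facts. \emph{First}, every generator diagram $\Delta_i^{\pm1}$ already satisfies (1): for a pair of branches $u\to v$ the terminal edges $u^+$ and $v^+$ are literally the same edge on the horizontal $1$-path of $\Delta_i^{\pm1}$, so they remain identified in the bouquet $\La_0'$ and hence in $\La_{\sem}(X)$, giving $[u]_X=[v]_X$; and it satisfies (2) by the very definition of $Y$. \emph{Second}, property (1) is stable under extension: if $[u]_X=[v]_X$ then $[uw]_X=[vw]_X$ for every binary word $w$. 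This holds because $u^+=v^+=e$ in $\La_{\sem}(X)$, no folding of type $1$ is applicable to the semi-core, and (by Lemma \ref{fin_ind}) every edge is the top edge of a unique positive cell, so following the letters of $w$ from $e$ lands $uw$ and $vw$ on the same edge.

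Next I would check that inserting a dipole of type $1$ at the end of a positive branch preserves (1) and (2). By Lemma \ref{add_caret} this keeps all old tuples and adds only the tuple $(0,0,[u]_X\to[v]_X)$ for the affected pair $u\to v$; when (1) holds this equals $0_{[u]_X}\in Y$, and by the extension fact the two new pairs $u0\to v0$, $u1\to v1$ still satisfy (1). Hence any diagram obtained from one satisfying (1)--(2) by finitely many type-$1$ dipole insertions still satisfies (1)--(2). In particular, given two diagrams $\Delta_1,\Delta_2$ of elements of $H$ satisfying (1)--(2), I can refine each by such insertions (subdividing positive branches of $\Delta_2$ and, via the paired positive branch, negative branches of $\Delta_1$) until the set of negative branches of $\Delta_1$ equals the set of positive branches of $\Delta_2$, namely the common refinement of the two dyadic subdivisions, all the while preserving (1)--(2).

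For the inductive step write $h=h'g$ with $g=g_i^{\pm1}$; let $\Delta_1$ be a diagram of $h'$ satisfying (1)--(2) (induction hypothesis) and $\Delta_2=\Delta_i^{\pm1}$. After the common refinement, index the matched middle branches as $\sigma_1,\dots,\sigma_N$, so that $\Delta_1$ has pairs $U_j\to\sigma_j$ and $\Delta_2$ has pairs $\sigma_j\to W_j$. Since the negative tree of $\Delta_1$ and the positive tree of $\Delta_2$ now have the same set of branches, they are the same tree-diagram $\Psi$; thus $\Delta_1^-\circ\Delta_2^+=\Psi^{-1}\circ\Psi$ is a stack of type-$2$ dipoles, and removing them leaves $\Delta:=\Delta_1^+\circ\Delta_2^-$, a diagram of $h$ with no type-$2$ dipoles and pairs of branches $U_j\to W_j$. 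Property (1) for $\Delta$ follows from $[U_j]_X=[\sigma_j]_X=[W_j]_X$ by transitivity. For (2), a direct computation with the tuple definition shows that the tuple of $\Delta$ attached to $(U_j\to W_j,\,U_{j+1}\to W_{j+1})$ equals the sum in $\mathcal T_H$ of the tuple of $\Delta_1$ at $(U_j\to\sigma_j,\,U_{j+1}\to\sigma_{j+1})$ and the tuple of $\Delta_2$ at $(\sigma_j\to W_j,\,\sigma_{j+1}\to W_{j+1})$: writing $U_j=U01^{a_1},U_{j+1}=U10^{b_1}$, $\sigma_j=\Sigma01^{c},\sigma_{j+1}=\Sigma10^{d}$, $W_j=W01^{a_2},W_{j+1}=W10^{b_2}$, the two summands are $(a_1-c,b_1-d,[U]_X\to[\Sigma]_X)$ and $(c-a_2,d-b_2,[\Sigma]_X\to[W]_X)$, whose sum is $(a_1-a_2,b_1-b_2,[U]_X\to[W]_X)$, exactly the tuple of $\Delta$. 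The middle classes match (both are the class $[\Sigma]_X$ of the longest common prefix of $\sigma_j,\sigma_{j+1}$), so the groupoid addition is defined; since both summands lie in the sub-groupoid generated by $Y$, so does the tuple of $\Delta$, completing the step.

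I expect the main obstacle to be the bookkeeping that makes the tuple \emph{additive} across the middle subdivision, together with the correlated-refinement argument guaranteeing that the two subdivisions can be matched by type-$1$ dipole insertions without destroying property (1). Once one observes that matching the negative branches of $\Delta_1$ to the positive branches of $\Delta_2$ turns $\Delta_1\circ\Delta_2$ into $\Delta_1^+\circ\Delta_2^-$ after clearing type-$2$ dipoles, the additivity is the computation displayed above; the remaining care is simply to carry properties (1)--(2) through each refinement, which the two preliminary facts and Lemma \ref{add_caret} handle.
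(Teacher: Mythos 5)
Your proposal is correct and follows essentially the same route as the paper: induction on word length in the generators, with the base case handled by the definition of $Y$, the inductive step by inserting dipoles of type $1$ to match the negative branches of the first diagram with the positive branches of the second, and condition (2) verified by showing the tuples of the concatenation are sums of the tuples of the two factors. Your explicit additivity computation and the extension fact $[u]_X=[v]_X\Rightarrow[uw]_X=[vw]_X$ are exactly the observations the paper uses (the latter appears there in the preliminary remark about dipole insertions), so there is nothing substantively different to compare.
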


\begin{proof}
Before proving the lemma we make the observation that if a diagram $\Delta$ satisfies the conditions in the lemma and $\Delta'$ results from $\Delta$ by the replacement of an edge on the horizontal $1$-path of $\Delta$ by a dipole of type $1$, then $\Delta'$ also satisfies the conditions in the lemma. Indeed, inserting the dipole means replacing a pair of branches $u_1\rightarrow v_1$ by two pairs of branches $u_10\rightarrow v_10$ and $u_11\rightarrow v_11$. Since $[u_1]_X=[v_1]_X$ implies that $[u_10]_X=[v_10]_X$ and $[u_11]_X=[v_11]_X$ (indeed, no foldings of type $1$ are applicable to $\La_{\sem}(X)$), condition (1) of the lemma is satisfied for $\Delta'$. Conditions (1) and (2) for $\Delta$ and Lemma \ref{add_caret} imply that condition (2) of the lemma is satisfied for $\Delta'$.

To prove the lemma we use induction on the word-length $m$ of $h$ with respect to the generating set $X$. 
If $m=1$, then $h=g_j^{\pm 1}$ for $g_j\in X$ and one can take the reduced diagram 
$\Delta_j$ or its inverse.  
By the definition of $\La_{\sem}(X)$, condition (1) is satisfied. Condition (2) is clearly satisfied by the definition of the set $Y$. 

Assume that the lemma is satisfied for every element of $H$ of word length smaller than $m$ and let $h$ be an element of word length $m$. Then $h=fg_i^{\pm 1}$ where $f\in H$ is an element of word-length $m-1$ and $g_i\in X$. 
We assume that $h=fg_i$. The proof in the other case is similar. 
Let $\Delta$ be a diagram for $f$ which satisfies both conditions in the lemma. The reduced diagram 
$\Delta_i$ of the generator $g_i$ also satisfies the conditions. 
By inserting dipoles of type $1$ to $\Delta$ and $\Delta_i$, one can get equivalent diagrams
 $\Delta'$ and $\Delta_i'$ (which also satisfy conditions (1) and (2) above) such that $\Delta'^-\equiv((\Delta_i')^+)^{-1}$. 
Then $\Delta\Delta_i=\Delta'\Delta_i'={\Delta'}^+\circ{\Delta_i'}^-$ is a diagram of the element $h$. We denote ${\Delta'}^+\circ{\Delta_i'}^-$ by $\Delta_h$. 

We note that if $u\rightarrow v$ is a pair of branches of $\Delta_h$, then for some finite binary word $w$, $u\rightarrow w$ is a pair of branches of $\Delta'$ and $w\rightarrow v$ is a pair of branches of $\Delta_i'$. Since $\Delta'$ and $\Delta_i'$ satisfy condition (1), $[u]_X=[w]_X=[v]_X$ and condition (1) is satisfied for the diagram $\Delta_h$.

To see that every tuple in $\mathcal T_H$ associated with the diagram $\Delta_h$ belongs to the sub-groupoid generated by $Y$, it is enough to observe that the tuple associated with the $i$ and $i+1$ pairs of branches of 
$\Delta_h$ is the sum of the tuple associated with the $i$ and $i+1$ pairs of branches of $\Delta'$ and the tuple associated with the $i$ and $i+1$ pairs of branches of $\Delta_i'$. Then the result follows from condition (2) for the diagrams $\Delta'$ and $\Delta_i'$. 
\end{proof}

\begin{Lemma}\label{Y}
The set $Y$ generates the groupoid $\mathcal T_H$.
\end{Lemma}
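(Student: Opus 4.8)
The plan is to prove the two inclusions $\langle Y\rangle\subseteq\mathcal T_H$ and $\mathcal T_H\subseteq\langle Y\rangle$, where $\langle Y\rangle$ denotes the sub-groupoid of $\mathcal T_H$ generated by $Y$. The first inclusion is immediate: every tuple in $Y$ already lies in $\mathcal T_H$ (the tuples associated with $\Delta_i^{\pm1}$ by the very definition of $\mathcal T_H$, since $\Delta_i^{\pm1}$ represents $g_i^{\pm1}\in H$, and each $0_{[u]_X}$ as observed right after the definition of $Y$), and $\mathcal T_H$ is a groupoid by Lemma \ref{+}. So the whole content is the reverse inclusion: I must show that \emph{every} tuple associated with \emph{every} diagram of \emph{every} element of $H$ belongs to $\langle Y\rangle$.

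Fix $h\in H$, an arbitrary diagram $\Delta$ of $h$ (with no dipoles of type $2$, as is our standing convention), and a tuple $t$ associated with some consecutive pair of branches of $\Delta$. By Lemma \ref{con_12}, $h$ also admits a ``good'' diagram $\Delta_h$: one for which every pair of branches $u\to v$ satisfies $[u]_X=[v]_X$, and for which every associated tuple already lies in $\langle Y\rangle$. The decisive step is to compare $\Delta$ with $\Delta_h$ through a common refinement. Since $\Delta$ and $\Delta_h$ both represent $h$ and neither has dipoles of type $2$, each is obtained from the (unique) reduced diagram of $h$ by inserting dipoles of type $1$ on the horizontal $1$-path; hence their positive parts both contain the positive part of the reduced diagram, and so there is a single diagram $\tilde\Delta$ of $h$ obtained from $\Delta$ by type-$1$ dipole insertions and \emph{also} obtained from $\Delta_h$ by type-$1$ dipole insertions.

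Now I would apply Lemma \ref{add_caret} along both chains of insertions. Building $\tilde\Delta$ up from $\Delta_h$: at each insertion the diagram remains good (this is precisely the observation opening the proof of Lemma \ref{con_12}), and the single new tuple it produces is $(0,0,[u]_X\to[v]_X)$ for a pair of branches $u\to v$ with $[u]_X=[v]_X$, i.e.\ the tuple $0_{[u]_X}\in Y$. Therefore every tuple associated with $\tilde\Delta$ lies in $\langle Y\rangle$. Building $\tilde\Delta$ up instead from $\Delta$: each insertion merely adds one new tuple and keeps all the previous ones, so the set of tuples associated with $\Delta$ is contained in the set of tuples associated with $\tilde\Delta$. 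Combining the two computations of the \emph{same} set of tuples of $\tilde\Delta$, every tuple associated with $\Delta$ — in particular $t$ — lies in $\langle Y\rangle$. As $\Delta$ and $t$ were arbitrary, $\mathcal T_H\subseteq\langle Y\rangle$, which finishes the argument.

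The step I expect to demand the most care is the existence of the common refinement $\tilde\Delta$ reachable from both $\Delta$ and $\Delta_h$ purely by type-$1$ dipole insertions. This rests on the confluence and uniqueness of reduction and on the fact that, for diagrams with no dipoles of type $2$, reduction only deletes matched carets from the positive part; hence the positive parts of $\Delta$ and of $\Delta_h$ both contain that of the reduced diagram of $h$, and two such binary trees always admit a common refinement, which then determines $\tilde\Delta$. Everything else is bookkeeping with Lemmas \ref{add_caret}, \ref{con_12} and \ref{+}.
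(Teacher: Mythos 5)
Your proposal is correct and follows essentially the same route as the paper: pass to the good diagram $\Delta'$ supplied by Lemma \ref{con_12}, take a common refinement of $\Delta$ and $\Delta'$ by insertions of dipoles of type $1$, and use Lemma \ref{add_caret} together with the opening observation in the proof of Lemma \ref{con_12} to conclude that all tuples of $\Delta$ appear among the tuples of the refinement and hence lie in $\la Y\ra$. Your added justification of the existence of the common refinement (via uniqueness of the reduced diagram and the fact that reduction of a diagram without type-$2$ dipoles only removes type-$1$ dipoles) is a point the paper leaves implicit, and is sound.
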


\begin{proof}
Let $\Delta$ be a diagram of an element $h$ in $H$. It suffices to show that all tuples in $\mathcal T_H$ associated with $\Delta$ belong to the sub-groupoid of $\mathcal T_H$ generated by $Y$. 
By Lemma \ref{con_12}, $h$ can be represented by a diagram $\Delta'$ such that
\begin{enumerate}
\item[(1)] for every pair of branches $u\rightarrow v$ of $\Delta'$ we have $[u]_X=[v]_X$;
\item[(2)] all the tuples in $\mathcal T_H$ associated with $\Delta'$ belong to the sub-groupoid generated by $Y$. 
\end{enumerate}
There is a diagram $\Delta''$ equivalent to both $\Delta$ and $\Delta'$ which results from $\Delta$ and from $\Delta'$ by insertions of dipoles of type $1$. 
It follows from the proof of Lemma \ref{con_12} that all tuples associated with $\Delta''$ also satisfy conditions (1) and (2) above and in particular, they also belong to the sub-groupoid generated by $Y$. Since $\Delta''$ results from $\Delta$ by insertion of dipoles of type $1$, it follows from Lemma \ref{add_caret}, that all tuples associated with $\Delta$ are also associated with $\Delta''$ and as such they all belong to the sub-groupoid $\la Y\ra$ as required.
\end{proof}

Let $N$ be the number of equivalence classes of the relation $R_X$ (i.e., the number of distinct edges in the semi-core $\La_{\sem}(X)$). Let $M'$ be the set of all spherical tuples in $\mathcal T_H$ of word length at most $N$  with respect to the generating set $Y$. Let $M=\Psi(M')$. Clearly, the set $M$ is a finite subset of $\mathcal S_H$. 

\begin{Lemma}\label{M}
The set $M$ is a generating set of $\mathcal S_H$. 
\end{Lemma}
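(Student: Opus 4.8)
The plan is to exploit the fact, established in Lemma \ref{Y}, that the finite set $Y$ generates the groupoid $\mathcal{T}_H$, together with Lemma \ref{S_H} and Corollary \ref{coincide}, which identify $\mathcal{S}_H$ with $\Psi(\mathcal{T}_H([u]_X))$ for \emph{every} finite binary word $u$. Since $M=\Psi(M')\subseteq \mathcal{S}_H$ is immediate (each element of $M'$ is a spherical tuple, so its $\Psi$-image lies in $\mathcal{S}_H$, which is a subgroup), the whole content is the reverse inclusion: every element of $\mathcal{S}_H$ lies in the subgroup $\langle M\rangle$ of $\mathbb{Z}^2$. Fixing a base object $[u]_X$, an arbitrary element of $\mathcal{S}_H=\Psi(\mathcal{T}_H([u]_X))$ has the form $\Psi(t)$ for a spherical tuple $t$ based at $[u]_X$. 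Because $Y$ generates $\mathcal{T}_H$ and $Y=Y^{-1}$ (it is built from the $\Delta_i^{\pm1}$ and the self-inverse zero tuples), I may write $t=y_1+\cdots+y_k$ with $y_\ell\in Y$, and this concatenation is precisely a closed walk in the underlying graph of the groupoid, passing through a sequence of objects $o_0=[u]_X,\,o_1,\dots,o_k=[u]_X$.

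The core of the argument is a pigeonhole reduction on the word length $k$, proving the following claim by strong induction: for any spherical tuple $t$ of $Y$-length $k$ (based at an arbitrary object), $\Psi(t)\in\langle M\rangle$. If $k\le N$, then $t\in M'$ by definition, so $\Psi(t)\in M$. If $k>N$, then among the objects $o_0,\dots,o_{k-1}$ — a list of $k>N$ entries taking at most $N$ values — two coincide, say $o_i=o_j$ with $0\le i<j\le k-1$. Split the walk into the inner loop $s=y_{i+1}+\cdots+y_j$ (based at $o_i$, of length $j-i$) and the outer loop $r=y_1+\cdots+y_i+y_{j+1}+\cdots+y_k$ (based at $o_0$, of length $k-(j-i)$); both are legitimate spherical tuples because $\mathrm{tgt}(y_i)=o_i=o_j=\mathrm{src}(y_{j+1})$, and both have length strictly between $0$ and $k$. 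As $\Psi$ is a homomorphism into the abelian group $\mathbb{Z}^2$, reordering is harmless and $\Psi(t)=\Psi(r)+\Psi(s)$; the induction hypothesis applied to $r$ and $s$ then gives $\Psi(t)\in\langle M\rangle$. Applying the claim to spherical tuples based at $[u]_X$ yields $\mathcal{S}_H\subseteq\langle M\rangle$, and combined with $\langle M\rangle\subseteq\mathcal{S}_H$ this proves $\langle M\rangle=\mathcal{S}_H$.

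The step I expect to require the most care is precisely that the inner loop $s$ is based at the possibly different object $o_i$, not at the original base $[u]_X$; the reduction only closes up because $M'$ was defined to collect spherical tuples based at \emph{all} objects, and because Corollary \ref{coincide} guarantees $\Psi\big(\mathcal{T}_H([o_i]_X)\big)=\Psi\big(\mathcal{T}_H([u]_X)\big)=\mathcal{S}_H$, so that $\Psi(s)$ is genuinely an element of $\mathcal{S}_H$ expressible through short spherical tuples. The matching of the length bound $N$ with the number of equivalence classes of $R_X$ (the number of objects of $\mathcal{T}_H$, equivalently the number of edges of $\La_{\sem}(X)$) is exactly what makes the pigeonhole terminate at word length $N$, and the finiteness of $Y$ ensures $M'$, and hence $M$, is a finite set, as needed for the algorithm.
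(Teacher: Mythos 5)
Your proposal is correct and follows essentially the same route as the paper's proof: both use Lemma \ref{S_H} and Lemma \ref{Y} to express an element of $\mathcal S_H$ as $\Psi$ of a spherical tuple written as a $Y$-word, and then run the same pigeonhole-plus-induction reduction on word length, splitting off an inner spherical loop at a repeated object and handling both pieces by the induction hypothesis. Your explicit remarks that the inner loop may be based at a different object (which is why $M'$ ranges over all base objects) and that composability of the outer walk is preserved are exactly the points the paper's argument relies on implicitly.
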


\begin{proof}
Let $(a,b)\in \mathcal S_H$. We claim that $(a,b)$ belongs to $\la M\ra$. Let $u$ be a finite binary word. By Lemma \ref{S_H}, the tuple $t=(a,b,[u]_X\rightarrow [u]_X)$ belongs to $T_H([u]_X)$. 
By Lemma \ref{Y}, $t$ is a product of tuples 
$$t=(a_1,b_1,[v_1]_X\rightarrow[v_2]_X)\cdots(a_m,b_m,[v_m]_X\rightarrow[v_{m+1}]_X)$$
where all tuples $(a_i,b_i,[v_i]_X\rightarrow[v_{i+1}]_X)$ belong to $Y$. Clearly, $[v_1]_X=[v_{m+1}]_X=[u]_X$. 
We prove the lemma by induction on $m$. 
If $m\le N$, then $t\in M'$. Then $(a,b)\in M$ and we are done. If $m>N$, then for some $i<j$ in $\{1,\dots,m\}$ we have $[v_i]_X=[v_j]_X$. Let $[v]_X=[v_i]_X=[v_j]_X$.
We let $$t'=(a_i,b_i,[v_i]_X\rightarrow[v_{i+1}]_X)\cdots (a_{j-1},b_{j-1},[v_{j-1}]_X\rightarrow[v_j]_X)\in\mathcal  T_H([v]_X).$$
Then $t'=(a',b',[v]_X\rightarrow[v]_X)$ for some $a',b'\in\mathbb{Z}$. By induction, 
$\Psi(t')=(a',b')\in \la M\ra$.
It remains to observe that 
\begin{equation*}
\begin{split}
(a,b)=\Psi(t)= & \Psi(t')\Psi((a_1,b_1,[v_1]_X\rightarrow[v_2]_X)\cdots (a_{i-1},b_{i-1},[v_{i-1}]_X\rightarrow[v_i]_X) \\
& (a_j,b_j,[v_j]_X\rightarrow[v_{j+1}]_X)\cdots (a_{m},b_{m},[v_{m}]_X\rightarrow[v_{m+1}]_X))
\end{split}
\end{equation*}
and apply the induction hypothesis. 
\end{proof}

Notice that Lemma \ref{M} provides an algorithm for the solution of Problem \ref{pro}. Indeed, given a finite subset $X$ of $F$, one can construct the finite generating set $M$ of $\mathcal S_H$. Then determining whether $M$ generates $\mathbb{Z}^2$ is a simple linear algebra problem.


\section{$F$ is a cyclic extension of a subgroup $K$ which has a maximal elementary amenable subgroup}\label{sec:max_ame}

In this section we apply the methods developed in this paper to prove the following.

\begin{Theorem}\label{B}
There is a chain of subgroups $B\le K\le F$ in Thompson's group $F$ such that
\begin{enumerate}
\item[(1)] $K$ is a normal subgroup of $F$ and the quotient $F/K$ is infinite cyclic. 
\item[(2)] $B$ is a maximal subgroup of $K$. Moreover, for any $f\in F\setminus B$, we have $K\le \la B,f\ra$.
\item[(3)] $B$ is elementary amenable, $\Cl(B)=B$ and the action of $B$ on the set of finite dyadic fractions $\mathcal D$ is transitive. 
\end{enumerate}
\end{Theorem}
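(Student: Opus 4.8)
The plan is to take the homomorphism $\chi\colon F\to\mathbb Z$, $\chi(f)=\log_2 f'(0^+)$, and set $K=\ker\chi$, the subgroup of all $f\in F$ with slope $1$ at $0^+$. Then $K$ is normal (a kernel), contains $[F,F]$, and $F/K\cong\mathbb Z$ is generated by the image of $x_0$, giving part (1) at once. Since the image of $K$ in the abelianization $\mathbb Z^2$ is $\{0\}\times\mathbb Z$, the group $K$ is generated by $[F,F]$ together with any element of slope $2$ at $1^-$. The whole construction will produce a closed, transitive, elementary amenable $B\le K$ that is maximal in $K$; properness $B\subsetneq K$ is automatic because $K\supseteq[F,F]$ is not elementary amenable while $B$ will be.

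\textbf{Construction of $B$ via its core.} Using the realizability criteria of Section \ref{ss:core}, I would exhibit a folded $2$-automaton $\La$ over the Dunce hat with a single inner vertex, a unique distinguished edge $e$, a unique left boundary edge, a unique right boundary edge, and \emph{exactly two} inner edges $h_1,h_2$ (loops at the inner vertex), arranged so that every edge is the top edge of some positive cell and so that $\La$ is genuinely the core $\La(B)$ of $B:=\DG(\La,e)$. Once $\La=\La(B)$ is confirmed, Lemma \ref{diagram_group} identifies $B$ with $\Cl(B)$, so $B$ is closed (equivalently, by Corollary \ref{cor:clo}, closed for components / dyadic-piecewise-closed); and by Corollary \ref{cor_tra} the single inner vertex together with ``every edge is a top edge'' gives that $B$ acts transitively on $\mathcal D$. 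The inner $2$-cells (those whose top is $h_1$ or $h_2$) carry all the freedom, and I would choose them with two further goals: (i) $B\le K$ and $B$ surjects onto $K/[F,F]\cong\mathbb Z$ (e.g.\ arrange $x_1\in B$, which has slope $1$ at $0^+$ and slope $\tfrac12$ at $1^-$); and (ii) $B$ contains an element fixing a finite dyadic fraction $\alpha$ with slopes $1$ at $\alpha^-$ and $2$ at $\alpha^+$, so that condition (2) of Theorem \ref{main1} already holds inside $B$ (realizing such an element by a suitable accepted pair of branches via Lemma \ref{4parts}).

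\textbf{Elementary amenability.} This is the step I expect to be the main obstacle, and it dictates the precise choice of the inner cells. The plan is to couple them so that $B$ is \emph{locally solvable}: every finitely generated $H\le B$ is solvable, whence $B$ is a directed union of solvable subgroups and so elementary amenable. Note $B$ itself cannot be solvable, since it is transitive on $\mathcal D$ while by Theorem \ref{sol_int}(1) solvable subgroups have infinitely many orbits; local solvability with unbounded derived lengths is exactly what is wanted. To verify it I would use the tower characterization behind Theorem \ref{Alg_sol_int}: the germ data of elements of $B$ at inner points is constrained by the two-loop structure of $\La$ at its inner vertex, and with the cells chosen so that no configuration of nested orbitals of unbounded height can occur inside a single finitely generated subgroup, one bounds the lengths of directed paths in $\mathcal P(H)$ for each finitely generated $H\le B$. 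The delicate point is that not every admissible choice works --- two independent loops $h_i\to h_ih_i$ would reproduce an $F$-like structure and destroy amenability --- so the loops must be linked to force solvable local dynamics.

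\textbf{Maximality.} Finally I would prove part (2): for every $f\in F\setminus B$, $K\le\langle B,f\rangle$. Since $B=\Cl(B)$, $f$ is not accepted by $\La(B)$, so its reduced diagram has a pair of branches $u\to v$ with $u^+\ne v^+$ in $\La(B)$. Because $f$ is an increasing homeomorphism fixing $0$ and $1$, any pair of branches $u\to v$ has $u$ and $v$ of the same type: by Corollary \ref{inner}, both $u^+,v^+$ are incident to $\iota(\La(B))$, both to $\tau(\La(B))$, or both inner. As $\La(B)$ has a unique left boundary edge, a unique right boundary edge and a unique distinguished edge, the only way to get $u^+\ne v^+$ is that $\{u^+,v^+\}=\{h_1,h_2\}$. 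Passing to $\langle B,f\rangle$, the element $f$ has the pair of branches $u\to v$, so by Lemma \ref{trivial} its core identifies $h_1$ with $h_2$; after re-folding one obtains a core with a single inner edge in which every edge is still a top edge, so by Lemma \ref{fin_ind} one gets $[F,F]\subseteq\Cl(\langle B,f\rangle)$. Condition (2) of Theorem \ref{main1} is inherited from $B$, so Theorem \ref{main1} yields $[F,F]\subseteq\langle B,f\rangle$; since $B$ surjects onto $K/[F,F]$, we conclude $K=\langle[F,F],B\rangle\subseteq\langle B,f\rangle$, and in particular $B$ is maximal in $K$. Beyond elementary amenability, the hardest point here is to guarantee that this rigidity (unique boundary edges, exactly two inner edges) really holds for the constructed $\La$ and that re-folding after $h_1=h_2$ leaves precisely one inner edge.
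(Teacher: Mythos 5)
Your overall architecture matches the paper's: an elementary amenable $B$ whose core has a unique inner vertex (transitivity via Corollary \ref{cor_tra}) but more than one inner edge (so $B$ is proper and closed), with $K$ recovered from $B$ and $[F,F]$, and maximality obtained by showing that adjoining any $f\notin B$ collapses the inner edges of the core to a single one, then invoking Lemma \ref{fin_ind} and Theorem \ref{main1}. But there is a decisive gap: the group $B$ is never constructed. You describe the combinatorial shape of a hoped-for folded automaton and explicitly defer both the choice of its inner cells and the verification of elementary amenability, which is precisely the content of the theorem. Certifying elementary amenability ``from the automaton side'' (local solvability via bounding directed paths in $\mathcal P(H)$ for every finitely generated $H\le B$) is not backed by any candidate, and Example \ref{B_1} of the paper shows why this direction is treacherous: the Brin--Navas group, realized differently inside $F$, has closure containing a copy of $F$. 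The paper goes the opposite way: it starts from a concrete elementary amenable group, namely $B=\la x_0x_1x_2^{-1}x_0^{-1},\,x_0x_1^{-2}\ra$ (an element with a single orbital together with an element pushing that orbital into one of its fundamental domains, a copy of the Brin--Navas group), then computes $\La(B)$, checks by inspection that every edge is a top edge and that there is a unique inner vertex, and verifies $\Cl(B)=B$ by running the generating-set algorithm on $\DG(\La(B),e)$. Elementary amenability is then free rather than being the ``main obstacle.''

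There is also an outright incompatibility in your choice of $K$. If $B\le\ker\chi$ with $\chi(f)=\log_2 f'(0^+)$, then every element of $B$ has slope $1$ at $0^+$; but your automaton has a unique left boundary edge, so by Corollary \ref{01} the paths $0$ and $00$ terminate on that same edge of $\La(B)$, and Lemma \ref{path_lem} then produces an element of $B$ with a pair of branches $0\,0^k\rightarrow 00\,0^k$, hence with slope $\tfrac12$ at $0^+$. So no transitive closed $B$ of the shape you describe can sit inside $\ker\chi$ (the same obstruction applies to any finite core). The repair is the one the paper uses: construct $B$ first, check that $\pi_{\ab}(B)$ is generated by a primitive vector of $\mathbb Z^2$ (here $(1,-2)$), and set $K=B[F,F]$, so that $F/K\cong\mathbb Z^2/\pi_{\ab}(B)\cong\mathbb Z$ and $B$ automatically generates $K$ together with $[F,F]$. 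Granting these repairs, your maximality argument is essentially the paper's Lemma \ref{eve_ide} combined with Theorem \ref{main1}; note only that the actual core has three inner edges $a,b,c$ rather than two, so the collapse to a single inner edge requires the case analysis carried out there rather than a one-step identification.
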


In particular, $B$ from Theorem \ref{B} is an elementary amenable subgroup of $F$ such that the lattice of subgroups of $F$ strictly containing $B$ is isomorphic to the lattice of subgroups of $\mathbb Z$. It is obvious that $K$ and $F$ are co-amenable. Note also that since $K$ contains the derived subgroup of $F$, it contains many copies of $F$. 

\begin{proof}[Proof of Theorem \ref{B}]
In \cite[Section 5]{Brin}, Brin defines an elementary amenable group $G_1$ of elementary class $\omega+2$. The same group was  defined independently about the same time by Navas \cite[Example 6.3]{N}. To realize $G_1$ as a subgroup of $F$, it suffices to let $x$ be an element of $F$ with a single orbital $(a,b)$ and let $y$ be a function in $F$ which maps $(a,b)$ into a fundamental domain of $x$. Then the group $\la x,y\ra$ is a copy of $G_1$ in $F$. 

We let $x=x_0x_1x_2^{-1}x_0^{-1}$ and $y=x_0x_1^{-2}$ and take $B$ to be the subgroup of $F$ generated by $x$ and $y$. 
The pairs of branches of the reduced diagrams of $x$ and $y$ are as follows. 
\[
  x=
  \begin{cases}
  00 &  \rightarrow		00 \\
  010 &  \rightarrow 01\\
	011 &  \rightarrow 100\\
	10 &  \rightarrow 101\\
	11 &  \rightarrow 11\
  \end{cases} 	\qquad	
   y =
  \begin{cases}
	00 & \rightarrow 0\\
	01  & \rightarrow 1000\\
	10 & \rightarrow 1001\\
	110 & \rightarrow 101\\
	111 & \rightarrow 11\
  \end{cases}
\]
Notice that $x$ has a single orbital $(.01,.11)$. The function $y$ maps $(.01,.11)$ onto $(.1,.101)$. Since $x(.1)=.101$, we have that $y(.01,.11)$ is contained in a single fundamental domain of $x$. In particular, $B$ is isomorphic to the group $G_1$ and as such it is elementary amenable.

We let $K=B[F,F]$. Then $K$ is a normal subgroup of $F$. We note that $\pi_{\ab}(K)=\pi_{\ab}(B)=\la (1,1)\ra\le\mathbb{Z}^2$. Since $\mathbb Z^2$ is a cyclic extension of $\pi_{\ab}(K)$, $F$ is a cyclic extension of $K$. Thus, condition (1) of the theorem is satisfied. 

To prove condition (3), we consider the core $\La(B)$. It can be described by the following binary tree. 

\Tree[.$e$ [.$\ell$ [.$\ell$ ] [.$a$ [.$a$ ] [.$c$ ]  ] ] [.$r$ [.$b$ [.$c$ [.$a$ ] [.$b$ ] ] [.$b$ ] ] [.$r$  ] ] ]


To prove that $B$ acts transitively on $\mathcal D$ we note that every edge in $\La(B)$ is the top edge of some positive cell. The edges of $\La(B)$ which are not incident to $\iota(\La(B))$ are $a,b,c$ and $r$. Since $\iota(r)=\iota(b)=\iota(c)=\iota(a)$, there is a unique inner vertex in $\La(B)$. Hence, by Corollary \ref{cor_tra}, $B$ acts transitively on the set $\mathcal D$. 


By Lemma \ref{diagram_group}, $\Cl(B)$ is naturally isomorphic to the diagram group $\DG(\La(B),e)$. 
Applying the algorithm from \cite[Lemma 9.11]{GuSa97} for finding a generating set of a diagram group (over a ``nice enough'' semigroup presentation), one can show that the generating set $\{x,y\}$ of $B$ is also a generating set of $\Cl(B)$. Hence, $B$ is a closed subgroup of $F$. Thus, condition (3) holds for $B$. 

To prove that condition (2) of Theorem \ref{B} is satisfied, we make use of the following lemma. 

\begin{Lemma}\label{eve_ide}
Let $f\in F \setminus B$ and let $H$ be the subgroup of $F$ generated by $B\cup\{f\}$. Then $\Cl(H)$ contains the derived subgroup of $F$ (in fact, $\Cl(H)=F$). 
\end{Lemma}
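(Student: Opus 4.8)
The plan is to prove the stronger assertion $\Cl(H)=F$ directly at the level of Stallings $2$-cores, by showing $\La(H)=\La(F)$. Recall from the verification of condition~(3) that $\La(B)$ has six edges $e,\ell,r,a,b,c$ and positive cells
$e\to(\ell,r)$, $\ell\to(\ell,a)$, $a\to(a,c)$, $r\to(b,r)$, $b\to(c,b)$, $c\to(a,b)$,
where $e$ is the distinguished edge, $\ell$ the unique left boundary edge, $r$ the unique right boundary edge, and $a,b,c$ are the three inner loops at the single inner vertex. Since $B\le H$, the construction of the core provides a morphism $\psi\colon\La(B)\to\La(H)$, which (Remark~\ref{mor_path}) sends each path on $\La(B)$ to the path on $\La(H)$ with the same label. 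The whole argument consists of analysing $\La(H)$ through $\psi$.

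First I would observe that $H$ acts transitively on $\mathcal D$, because $B$ does and $B\le H$. By Corollary~\ref{cor_tra}, every edge of $\La(H)$ is the top edge of a positive cell and $\La(H)$ has a unique inner vertex. As $\La(H)$ is folded, no two distinct positive cells share a top edge, so from each edge the left and right children are determined; hence every finite binary word labels a unique path on $\La(H)$, and for each word $u$ one has $(u)^+_{\La(H)}=\psi\bigl((u)^+_{\La(B)}\bigr)$. Since every edge of $\La(H)$ equals $(u)^+$ for some $u$, every edge of $\La(H)$ is the $\psi$-image of one of $e,\ell,r,a,b,c$; and because every edge is a top edge and cells are determined by their top edge, $\psi$ is surjective on cells as well. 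Thus $\La(H)$ is a folded quotient of $\La(B)$.

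Next I would determine which quotients are possible. By Corollary~\ref{inner}, the distinguished edge and the two boundary edges are separated from one another and from the inner edges by their incidence to $\iota(\La(H))$ and $\tau(\La(H))$, so $\psi(e),\psi(\ell),\psi(r)$ remain pairwise distinct and distinct from the inner edges; the only possible identifications are among $\psi(a),\psi(b),\psi(c)$. Here I would exploit that $\La(H)$ is folded: if, say, $\psi(a)=\psi(b)$, then the two image cells $\psi(a)\to(\psi(a),\psi(c))$ and $\psi(b)\to(\psi(c),\psi(b))$ share a top edge, hence coincide, forcing $\psi(a)=\psi(c)$; the symmetric computations show that any coincidence among $\psi(a),\psi(b),\psi(c)$ cascades to all three being equal. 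So there are exactly two cases: either $\psi$ is injective on edges, hence an isomorphism with $\La(H)=\La(B)$; or $\psi(a)=\psi(b)=\psi(c)$, in which case collapsing the three loops to one edge $h$ with cell $h\to(h,h)$ turns the cell list into $e\to(\ell,r)$, $\ell\to(\ell,h)$, $r\to(h,r)$, $h\to(h,h)$, which is exactly the core $\La(F)$ of Remark~\ref{coreF2}.

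Finally I would eliminate the first case using $f\notin B$. If $\La(H)=\La(B)$ then $\Cl(H)=\Cl(B)=B$; but $f\in H\subseteq\Cl(H)$ while $f\notin B$, a contradiction. Hence $\La(H)=\La(F)$, so $\Cl(H)=\Cl(F)=F$, and in particular $[F,F]\subseteq\Cl(H)$. The step I expect to demand the most care is the reduction showing that $\La(H)$ introduces no inner edges beyond the images of $a,b,c$: this relies on the transitivity of $H$ (to guarantee that every binary word labels a path, so that every edge is some $(u)^+$) together with the label-determinacy of paths in the folded core, and it is precisely what prevents the diagram of $f$ from contributing a new inner edge and thereby collapses the analysis to the two extreme quotients.
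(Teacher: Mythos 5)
Your proof is correct and follows essentially the same route as the paper's: both arguments use the surjective morphism $\psi\colon\La(B)\to\La(H)$, observe (via Corollary \ref{inner}) that the only possible identifications of edges are among the inner edges $\psi(a),\psi(b),\psi(c)$, and use the foldedness of $\La(H)$ to show that any single identification cascades to all three, yielding a unique inner edge. The only minor divergence is how $f\notin B$ enters: the paper extracts from the reduced diagram of $f$ an explicit pair of branches whose terminal edges differ in $\La(B)$ but must coincide in $\La(H)$, whereas you rule out the no-identification case by noting it would force $\Cl(H)=\Cl(B)=B$, contradicting $f\in\Cl(H)\setminus B$; both steps are valid and rely on the already-established fact that $B$ is closed.
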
  

\begin{proof}
We observe that every edge in $\La(B)$ is the top edge of some positive cell. Thus, every finite binary word $u$ labels a path on $\La(B)$. We also note that as $\La(B)$ has a unique left boundary edge $\ell$, every non-trivial path $u\equiv 0^n$ for $n\in\mathbb{N}$ must terminate on $\ell$. Similarly, every non-trivial path $1^m$ for $m\in\mathbb{N}$ terminates on the edge $r$. 

Now let $\Delta$ be the reduced diagram of $f$. Since $\Delta$ is not accepted by $\La(B)$ (indeed, $f\notin B=\Cl(B)$), $\Delta$ must have a pair of branches $u_1\rightarrow v_1$, such that on $\La(B)$, $u_1^+$ and $v_1^+$ are distinct edges. It is obvious that both $u_1$ and $v_1$ must contain both digits $0$ and $1$. Thus, $u_1^+,v_1^+$ are inner edges of $\La(B)$ (see Corollary \ref{inner}).

Next, we consider the core $\La(H)$. There is a natural morphism $\psi$ from the core $\La(B)$ to the core $\La(H)$. Indeed, to construct the core of $H$, one can start with the core $\La(B)$, and the diagram $\Delta$; attach the top and bottom edges of $\Delta$ to the distinguished edge of $\La(B)$ and apply foldings. Since every edge in $\La(B)$  is the top edge of some positive cell in $\La(B)$, a series of foldings of type $1$ would guarantee that each cell of the attached diagram $\Delta$ would be folded onto some cell of $\La(B)$. In particular, every edge of $\Delta$ gets identified with some edge of $\La(B)$. We note that in the process of applying foldings, an edge (resp. cell) of $\Delta$ can become identified with more than one edge (resp. cell) of $\La(B)$. So cells of $\La(B)$ might be folded. The morphism $\psi$ maps an edge $e'$ (resp. cell $\pi$) of $\La(B)$ to the edge (resp. cell) of $\La(H)$, identified with the edge $e'$ (resp. cell $\pi$) in this process of folding (or to itself if it did not get identified with anything). It is obvious that $\psi$ is surjective 
and that $\psi$ maps inner (resp. boundary) edges to inner (resp. boundary) edges. 
Thus, the inner edges of $\La(H)$ are $\psi(a),\psi(b)$ and $\psi(c)$. Every path on $\La(B)$ is mapped by $\psi$ to a path on $\La(H)$. In particular, any finite binary word $w$ labels a path on $\La(H)$. 

We claim that $\La(H)$ has a unique inner edge. To prove that, we consider the options for the pair of edges $u_1^+,v_1^+$ in $\La(B)$. If $\{u_1^+,v_1^+\}=\{a,b\}$, then $\Delta$ being accepted by $\La(H)$ implies that in $\La(H)$, the paths $u_1$ and $v_1$ terminate on the same edge. Hence, $\psi(a)=\psi(b)$. Notice that $\psi(a)$ is the top edge of a cell $\pi_1$ in $\La(H)$ with bottom $1$-path $\psi(a)\psi(c)$. Similarly, $\psi(b)$ is the top edge of a cell $\pi_2$ with bottom $1$-path $\psi(c)\psi(b)$. Since $\psi(a)=\psi(b)$ and no foldings are applicable to $\La(H)$, we must have $\pi_1=\pi_2$ and thus, the left bottom edges of the cells satisfy $\psi(a)=\psi(c)$. Thus, $\psi(a)=\psi(b)=\psi(c)$ is the only inner edge of $\La(H)$. A similar argument for the case where
$\{u_1^+,v_1^+\}=\{b,c\}$ or $\{u_1^+,v_1^+\}=\{a,c\}$ shows that $\La(H)$ has a unique inner edge. By Lemma \ref{fin_ind}, $\Cl(H)$ contains the derived subgroup of $F$. 
\end{proof}

Now we can finish the proof using a simple application of Theorem \ref{main1}. We note that for $x\in B$ and $\alpha=.01$ we have $x'(\alpha^+)=2$ and $x'(\alpha^-)=1$. By Lemma \ref{eve_ide}, for any $f\notin B$, the closure of $\la B,f\ra$ contains the derived subgroup of $F$. Hence, by Theorem \ref{main1}, for any $f\notin B$, $\la B,f\ra$ contains $[F,F]$. If follows that $K\le \la B,f\ra$. 
\end{proof}

\begin{Remark}\label{rem_K}
The group $K=B[F,F]$ is $2$-generated. Indeed, one can show that it is generated by $x_0x_1^{-2}$ and $x_1x_3^{-1}$ by another application of Theorem \ref{main1}. If $K'=\la x_0x_1^{-2},x_1x_3^{-1}\ra$ then by considering the image in the abelianization we get that $K'[F,F]=K$. One can check that $\Cl(K')=F$. Then since the element $x_0x_1^{-2}$ fixes the fraction $.1$, has slope $1$ at $.1^-$ and slope $2$ at $.1^+$,  Theorem \ref{main1} implies that $[F,F]\le K'$ and as such $K=K'$. 
\end{Remark}

\section{Computations related to $\La(H)$}\label{sec:tech}

\subsection{On the algorithm for finding a generating set of $\Cl(H)$}\label{ss:alg}

Let $\kk'$ be a directed $2$-complex. (We denote it by $\kk'$, as $\kk$ is still used to denote the Dunce hat.) As noted in Section \ref{ss:dc}, $\kk'$ can be described in a form similar to a semigroup presentation. Let $E$ be the set of edges of $\kk'$.  We let 
$$\P=\la E \mid \topp(f)\rightarrow\bott(f), f\in F^-\ra$$
be the \emph{semigroup presentation associated with $\kk'$}. We use negative $2$-cells instead of positive ones, as it will be more convenient below. The presentation $\P$ defines a semigroup $S$ associated with the directed $2$-complex $\kk'$. 
The semigroup $S$ is closely related to the diagram groupid $\mathcal D(\kk')$. Indeed, let $u$ and $v$ be two $1$-paths in $\kk'$. Then $u$ and $v$ can be viewed as words over the alphabet $E$. Then $u$ and $v$ represent the same element of $S$ if and only if there is a $(u,v)$-diagram over $\kk'$ \cite{GSdc}. In particular, if $u=v$ in $S$, then $\iota(u)=\iota(v)$ and $\tau(u)=\tau(v)$ in $\kk'$. In general, if $u$ and $v$ are two words in the alphabet $E$, then $u$ and $v$ are equal as elements of $S$ if and only if there is a $(u,v)$-diagram over $\kk''$, where $\kk''$ is the directed $2$-complex resulting from $\kk'$ by the identification of all vertices to a single vertex \cite{GSdc}. We also note that if words $u$ and $v$ are equal in $S$ then $u$ is a $1$-path in $\kk'$ if and only if $v$ is a $1$-path in $\kk'$. 


As mentioned in the proof of Theorem \ref{B}, there is an algorithm due to Guba and Sapir \cite{GuSa97} for finding a generating set of a diagram group. Let $p$ be a $1$-path in $\kk'$. To find a generating set for $\DG(\kk',p)$ we make some further assumptions about the directed $2$-complex $\kk'$. First, we assume that the set of edges $E$ is equipped with a total-order $\prec$. The total order $\prec$ induces a lexicographc order on the set $E^*$ of words in the alphabet $E$. If $w_1$ and $w_2$ are two words over $E$ then $w_1$ is smaller than $w_2$ in the \emph{ShortLex} order if $|w_1|<|w_2|$ or $|w_1|=|w_2|$ and $w_1$ precedes $w_2$ in the lexicographic order. We assume that for each positive $2$-cell $f$ in $\kk'$, $\topp(f)$ is smaller than $\bott(f)$ in the ShortLex order. In particular, in each rewriting rule $r_1\rightarrow r_2$ in the presentation $\P$, $r_2$ is smaller than $r_1$ in the ShortLex order. 

Let $\P'$ be a completion of $\P$ (for terminology see \cite{Sa}) such that every rewriting rule $r_1\rightarrow r_2$ in $\P'$ satisfies $r_1>r_2$ in the ShortLex order. We also assume that for any relation $r_1\rightarrow r_2$ in $\P'$ we can point on a derivation over $\P$ from $r_1$ to $r_2$. Notice that if one applies the Knuth-Bendix algorithm (see, for example, \cite{Sa}) for finding a completion $\P'$ of $\P$, then  the resulting completion (if attained) satisfies these $2$ properties. Whenever a presentation $\P$ associated with a directed $2$-complex and a completion $\P'$ of it are mentioned below, we assume that the set of edges $E$ is equipped with a total order $\prec$ and that $\P$ and $\P'$ satisfy the above properties. 

To implement the algorithm from \cite[Lemma 9.11]{GuSa97} for finding a generating set of $\DG(\kk',p)$ one has to find the set $B$ of all tuples $[u,r_1\rightarrow r_2,v]$ such that 
\begin{enumerate}
\item[(1)] $u$ and $v$ are words in the alphabet $E$  which are reduced over the complete presentation $\P'$;
\item[(2)] $r_1\rightarrow r_2$ is a rewriting rule of $\P'$;
\item[(3)] $ur_1v$ and $p$ are equal as elements of $S$; and 
\item[(4)] In the notation of \cite{GuSa97}, if one lets $u\equiv u$, $v\equiv v$, $r\equiv r_2$ and $\ell\equiv r_1$, then the tuple $(u,r\rightarrow \ell,v)$ does not satisfy at least one of the conditions in \cite[Definition 9.1]{GuSa97}.
\end{enumerate}

We did not give here the $4^{th}$ condition in detail as it will not be important to us. Suffice it to know that if one can find the set of all tuples $[u,r_1\rightarrow r_2,v]$ which satisfy conditions (1)-(3) then one can check for each one of them if it satisfies condition (4) and thus, if it belongs to $B$ or not. Given the set $B$, the algorithm in \cite{GuSa97}, shows how to associate an element of $\DG(\kk',p)$ with each tuple in $B$. The set of elements associated with tuples in $B$ is a generating set of $\DG(\kk',p)$. If $\P$ is complete then the generating set is minimal. 

The process of finding the element of $\DG(\kk',p)$ associated with a given tuple in $B$ is straightforward. The difficult parts in the algorithm are (1) finding a completion $\P'$ and (2) finding the set of tuples $B$. We note that if $\kk'$ is a finite directed $2$-complex, then $\P$ is finite. However, this does not imply the existence of a finite completion. Even if there is a finite completion $\P'$, the set $B$ might still be infinite and there may be no simple way to find it. Thus, implementing the algorithm from \cite[Lemma 9.11]{GuSa97} is often impractical. 

If the diagram group in question is the closure of a subgroup $H$ of $F$, the situation is ameliorated. While the first problem remains valid, the task of constructing the set $B$ becomes very simple. Let $H$ be a subgroup of $F$. Let $\La=\La(H)$ and let $p=p_{\La(H)}$ be the distinguished edge of $\La(H)$. Let $\P$ be the semigroup presentation associated with $\La$ and let $S$ be the semigroup it defines. We note that every relation $r_1\rightarrow r_2$ in $\P$ is such that $|r_1|>|r_2|$ (indeed, the top $1$-path of a negative cell is longer than the bottom $1$-path of the cell). Thus, regardless of the order one fixes on the set of edges $E$, the presentation $\P$ is as required.

 There are two main reasons why the algorithm for finding a generating set of $\Cl(H)\cong \DG(\La,p)$ is simplified in our context.


The first reason is that every diagram $\Delta$ over $\La$ (or over the directed $2$-complex $\La$ with all vertices identified) can be viewed as a diagram over the Dunce hat $\kk$. Moreover, as in the proof of Lemma \ref{diagram_group}, if $\Delta$ is reduced as a diagram over $\La$, then it is also reduced as a diagram over $\kk$. Thus, if $\Delta$ is a reduced diagram over $\La$, then there is a horizontal $1$-path in $\Delta$, which passes through all the vertices of $\Delta$ and separates it to a concatenation of a positive subdiagram $\Delta^+$ and a negative subdiagram $\Delta^-$. 
We note that the implication for the semigroup $S$ is that if $w_1$ and $w_2$ are words over $E$, equal as elements of $S$, then to get from $w_1$ to $w_2$ one can apply a positive derivation over $\P$ followed by a negative derivation (for terminology, see \cite{Sa}). 

The second reason is that as usual, if $\Delta$ is a diagram in the core, one can consider paths on $\Delta$ which can be mapped to paths on the core $\La(H)$ and thus use results from previous sections. 

In the proof of the following proposition we will often consider diagrams $\Delta$ alternately as diagrams over $\La$ and as diagrams over $\kk$. When we refer to the label $\lab(e)$ of an edge $e$ or $\lab(q)$ of a $1$-path $q$ in $\Delta$, the label refers to the label of the edge or $1$-path when $\Delta$ is viewed as a diagram over $\La$. In particular, $\lab(e)$ is an edge of $\La$. 

Let $w_1,w_2$ be words over $E$. We say that $w_1$ is a \emph{left divisor} of $w_2$ in $S$ if there is a word $a$ over $E$ such that $w_1a$ is equal to $w_2$ in $S$. Right divisors are defined in a similar way. 

\begin{Proposition}\label{prop:left}
Let $H$ be a subgroup of $F$. Let $\La=\La(H)$ and let $p=p_{\La(H)}$ be the distinguished edge of $\La$. Let $E$ be the set of edges of $\La$. Let $\P$ be the semigroup presentation associated with $\La$ and let $S$ be the semigroup given by $\P$. Let $w,w_1$ and $w_2$ be words over the alphabet $E$. Then the following assertions hold.
\begin{enumerate}
\item[(1)] $w$ is a left divisor of $p$ in $S$ if and only if $w$ is a $1$-path in $\La$ such that $\iota(w)=\iota(\La)$. 
\item[(2)] Assume that $w_1,w_2$ are non-empty words over $E$ such that $w_1$ and $w_2$ are left divisors of $p$ in $S$. Then $w_1$ is equivalent to $w_2$ in $S$ if and only if the $1$-paths $w_1$ and $w_2$ satisfy $\tau(w_1)=\tau(w_2)$.
\end{enumerate}
\end{Proposition}

\begin{proof}
(1) Assume that $w$ is a left divisor of $p$. By definition, there is a word $a$ such that $wa=p$ in $S$. Since $p$ is a $1$-path in $\La$, $wa$ is also a $1$-path in $\La$ and $\iota(wa)=\iota(p)=\iota(\La)$, $\tau(wa)=\tau(\La)$. In particular, $w$ is a $1$-path with initial vertex $\iota(\La)$. 

In the other direction, let $w$ be a $1$-path in $\La$ with initial vertex $\iota(\La)$. Assume by contradiction that $w$ is not a left divisor of $p$. We can assume that $w$ is a minimal $1$-path with these properties. Clearly, $w$ is not a trivial path (i.e., there are edges in the path $w$).  Similarly, $w$ is not composed of a single edge. Otherwise, let $u$ be a path on $\La$ such that $u^+=w$ (where $w$ is viewed as an edge). $u\equiv 0^k$ for some $k\ge 0$, since $w$ is a left boundary edge of $\La$. 
Let $\Psi$ be the minimal tree-diagram over $\kk$ with branch $u$. Since $u$ labels a path on $\La$, $\Psi$ can be viewed as a diagram over $\La$ with $\lab(\topp(\Psi))\equiv p$ and $\lab(\bott(\Psi))\equiv wd$ for some word $d$ over $E$. Then $wd=p$ in $S$ and  $w$ is a left divisor of $p$, which contradicts the assumption. 

Thus, we can write $w\equiv w'c$ where $c$ is the last letter of $w$ and $w'$ is not empty. The minimality of $w$ implies that $w'$ is a left divisor of $p$. Thus, there is a reduced $(p,w'q)$-diagram $\Delta$ over $\La$ for some word $q$ over $E$. We let $e$ be the $|w'|+1$ edge on $\bott(\Delta)$, so that $\lab(e)$ is the first letter of $q$. We let $e'$ be the edge on the horizontal $1$-path of $\Delta$ such that $\iota(e')=\iota(e)$ and let $c'\equiv \lab(e')$. Then $c'$ is an edge of $\La$ such that $\iota(c')=\tau(w')=\iota(c)$. If $\Delta$ is viewed as a diagram over $\kk$, then the positive subdiagram $\Delta^+$ is a tree-diagram. Let $u$ be the path on $\Delta^+$ with terminal edge $e'$. Let $\Psi$ be the minimal tree-diagram over $\kk$ with branch $u$. Then $\Psi$ can be viewed as a subdiagram of $\Delta^+$ with $\topp(\Psi)=\topp(\Delta^+)$  (see Figure \ref{fig1}).  The edge $e'$ lies on the bottom $1$-path of $\Psi$. Thus, $\bott(\Psi)=ae'b$, where $a$ and $b$ are the suitable $1$-paths in $\Delta^+$. Let $a'\equiv \lab(a)$ and $b'\equiv \lab(b)$. It follows that $\Delta$ has an $(a',w')$-subdiagram. Hence $a'$ and $w'$ represent the same element of $S$. We claim that $a'c$ is a left divisor of $p$ in $S$. That would yield the required contradiction, as $w\equiv w'c$ is equivalent to $a'c$ in $S$. We note that, as $1$-paths on $\La$, $\tau(a')=\tau(w')$. Thus, $a'c$ is a $1$-path in $\La$.

\begin{figure}[h]
\centering
\includegraphics[width=0.45\columnwidth]{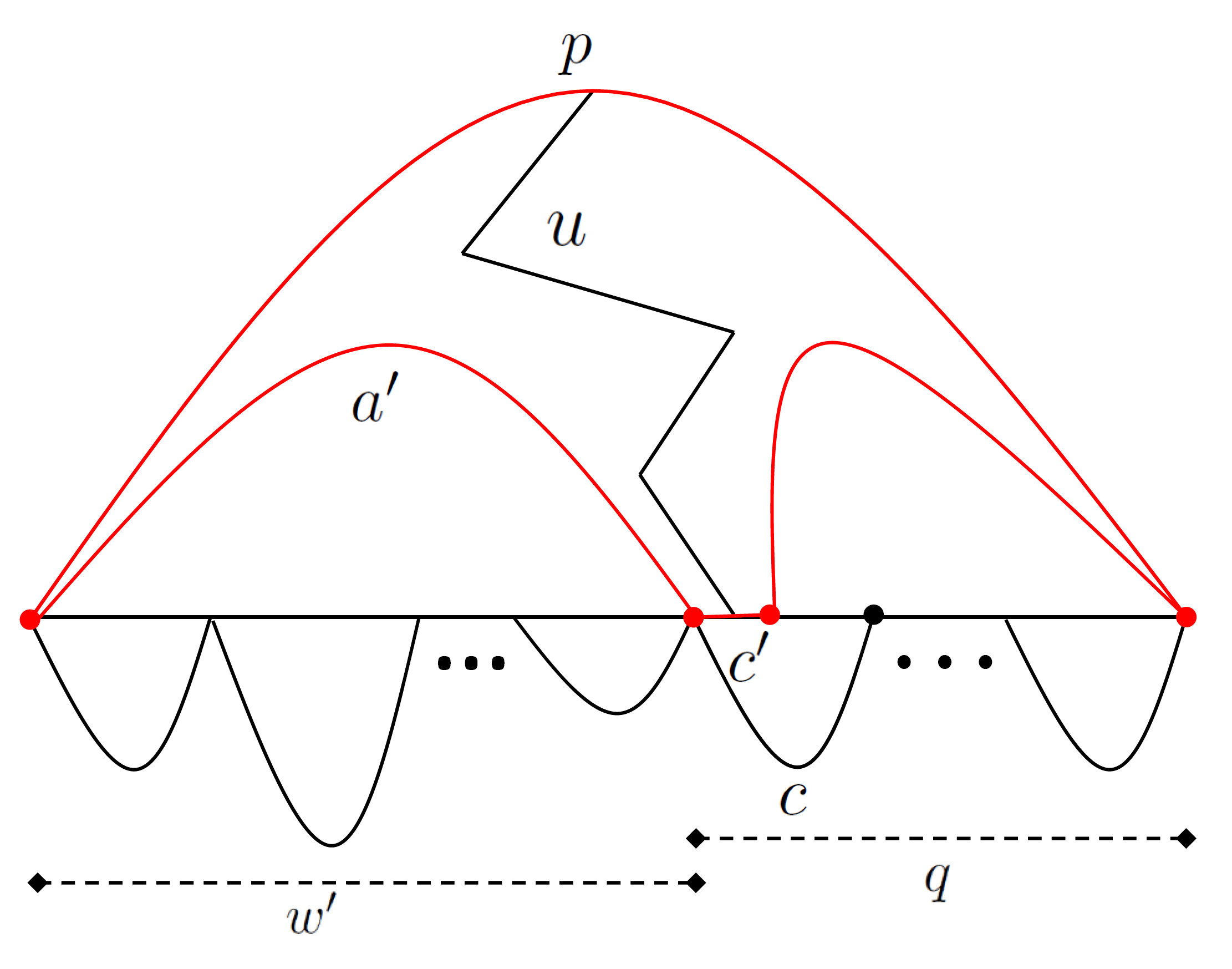}
\caption{The diagram $\Delta$. The top and bottom $1$-paths of the subdiagram $\Psi$ are colored red. The labels of edges or $1$-paths in the figure are their labels when $\Delta$ is viewed as a diagram over $\La$.}
    \label{fig1}
\end{figure}
 
Since $\Delta$ is a diagram over $\La$ with $\lab(\topp(\Delta))\equiv p$, the path $u$ in $\Delta$ implies that $u$ labels a path on $\La$ such that $u^+=c'$. Since $\iota(c)=\iota(c')\neq\iota(\La)$, by Proposition \ref{inner_vertex}, $c$ and $c'$ belong to the same connected component of $\Gamma(H)$. 
Let $v$ be a path on $\La$ with terminal edge $c$. By the comments following Remark \ref{derivation}, there are $m,n\ge 0$ such that $u0^m$ and $v0^n$ label paths on $\La$ and such that $(u0^m)^+=(v0^n)^+$ on $\La$. Thus, by Lemma \ref{cor_ide}, there is a diagram $\Delta'$ in $\Cl(H)$, accepted by $\La$, with a pair of branches $u0^m\rightarrow v0^n$ (see Figure \ref{fig2}). Let $d$ be the edge on the horizontal $1$-path of $\Delta'$ such that the positive branch $u0^m$ and the negative branch $v0^n$ terminate on $d$. We let $d_1$ be the terminal edge of the positive path $u$ on $\Delta'$ and $d_2$ be the terminal edge of the negative branch $v$ on $\Delta'$. Then $\iota(d)=\iota(d_1)=\iota(d_2)$.

Consider $\Delta'$ as a diagram over $\La$. Then $\lab(d_1)\equiv c'$ and $\lab(d_2)\equiv c$. Since $u$ labels a positive path on $\Delta'$, the tree-diagram $\Psi$ can be viewed as a subdiagram of $\Delta'^+$ with $\topp(\Psi)=\topp(\Delta')$. In particular, the bottom $1$-path of $\Psi$ is a $1$-path in $\Delta'$.  Recall that $\lab(\bott(\Psi))\equiv a'c'b'$ where $c'$ is the label of the edge $u^+$ of $\Psi$, i.e., the edge $d_1$ of $\Delta'$. Since $\iota(d_1)=\iota(d_2)$ and $\lab(d_2)\equiv c$, we get that $a'c$ labels a $1$-path in $\Delta'$, which starts from $\iota(\Delta')$. Extending the $1$-path, we get a $1$-path with label $a'cs$ (for some word $s$) with initial vertex $\iota(\Delta')$ and terminal vertex $\tau(\Delta')$. Then $\Delta'$ has a $(p,a'cs)$-subdiagram, which implies that $a'c$ is a left divisor of $p$ in $S$. 

\begin{figure}[h]
\centering
\includegraphics[width=0.45\columnwidth]{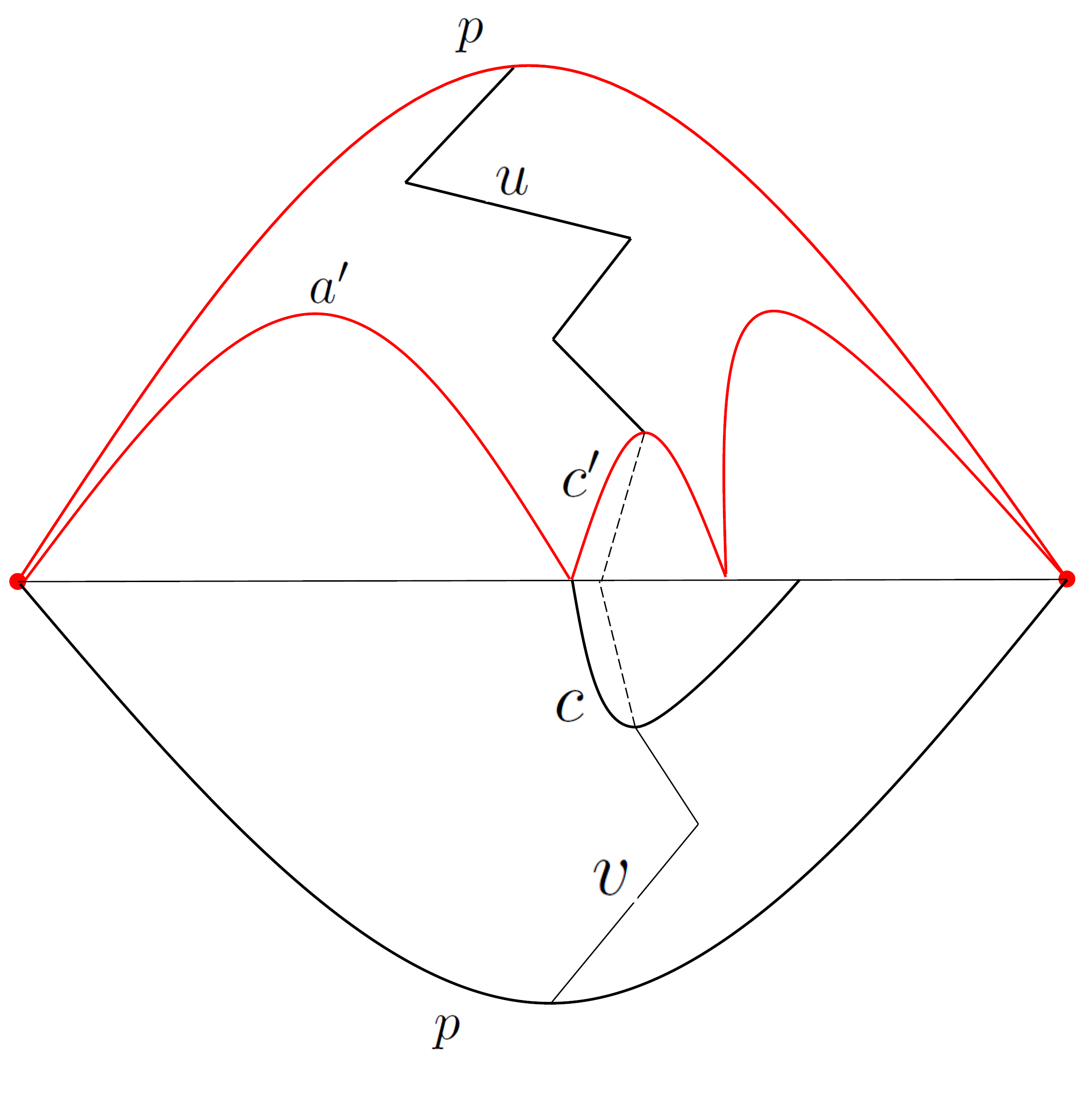}
\caption{The diagram $\Delta'$. The top and bottom $1$-paths of the subdiagram $\Psi$ are colored red. The labels of edges or $1$-paths in the figure are their labels when $\Delta'$ is viewed as a diagram over $\La$.}
    \label{fig2}
\end{figure}

(2) Let $w_1,w_2\neq\emptyset$ be left divisors of $p$ in $S$. By part (1), $w_1$ and $w_2$ are $1$-paths in $\La$ with initial vertex $\iota(\La)$. If $w_1$ and $w_2$ are equivalent in $S$, then there is a $(w_1,w_2)$-diagram over $\La$. Then $\tau(w_1)=\tau(w_2)$ as required. In the other direction, assume that $\tau(w_1)=\tau(w_2)$. 
Let $q_1$ and $q_2$ be words over $E$ such that $w_iq_i$ is equal to $p$ in $S$. If $q_1$ is empty, then $w_1=p$ in $S$ and so, $\tau(w_1)=\tau(p)=\tau(\La)$. It follows that $\tau(w_2)=\tau(\La)$. As $\tau(\La)$ has no outgoing edges, $q_2$ is also empty. It follows that $w_2=p$ in $S$. Thus, $w_1=w_2$ in $S$. 

Therefore, we can assume that $q_1$ and $q_2$ are not empty words. Let $\Delta_i$, $i=1,2$ be a $(p,w_iq_i)$-diagram over $\La$. Let $c_i$ be the first letter of the word $q_i$. As $\tau(w_1)=\tau(w_2)$, the edges $c_1$ and $c_2$ of $\La$ have the same initial vertex. 
We apply an argument similar to the one in part (1). 
Let $e_i$ be the $|w_i|+1$ edge on $\bott(\Delta_i)$, so that $\lab(e_i)\equiv c_i$. Let $e_i'$ be the edge on the horizontal $1$-path of $\Delta_i$ with the same initial vertex as $e_i$. Let $c_i'\equiv \lab(e_i')$ and let $u_i$ be the path on $\Delta_i^+$ with terminal edge $e_i'$. Let $\Psi_i$ be the minimal tree-diagram with path $u_i$. $\Psi_i$ can be viewed as a subdiagram of $\Delta_i^+$ with $\topp(\Psi_i)=\topp(\Delta_i)$ and 
$\bott(\Psi_i)=a_ie_i'b_i$. We let $a_i'\equiv \lab(a_i)$ and $b_i'\equiv \lab(b_i)$. Then $\Delta_i$ has an $(a_i',w_i)$-subdiagram and so, $a_i'$ is equivalent to $w_i$ in $S$. 

Since $\iota(c_i')=\tau(w_i)$, we have $\iota(c_1')=\iota(c_2')$ in $\La$. We note that $u_i$ labels a path on $\La$ with terminal edge $c_i'$. Thus, by Proposition \ref{inner_vertex} and Remark \ref{derivation} there are $m_i\ge 0$ such that $u_i0^{m_i}$ labels a path on $\La$ and such that $(u_10^{m_1})^+=(u_20^{m_2})^+$ in $\La$. 
By Lemma \ref{cor_ide}, there is a diagram $\Delta'$ accepted by $\La$ with a pair of branches $u_10^{m_1}\rightarrow u_20^{m_2}$. 
The minimality of $\Psi_i$ implies that if $\Delta'$ is viewed as a diagram over $\La$, then $\Psi_1$ is a subdiagram of ${\Delta'}^+$ with $\topp(\Psi_1)=\topp({\Delta'}^+)$ and $\Psi_2^{-1}$ is a subdiagram of ${\Delta'}^-$ with $\bott(\Psi_2^{-1})=\bott({\Delta'}^-)$. Then it is easy to see (Figure \ref{fig3}) that $\Delta'$ has an $(a_1',a_2')$-subdiagram. Hence, $a_1'$ and $a_2'$ are equivalent in $S$. Since $a_i'$ is equivalent to $w_i$ in $S$, $w_1=w_2$ in $S$, as required.

\begin{figure}[h]
\centering
\includegraphics[width=0.45\columnwidth]{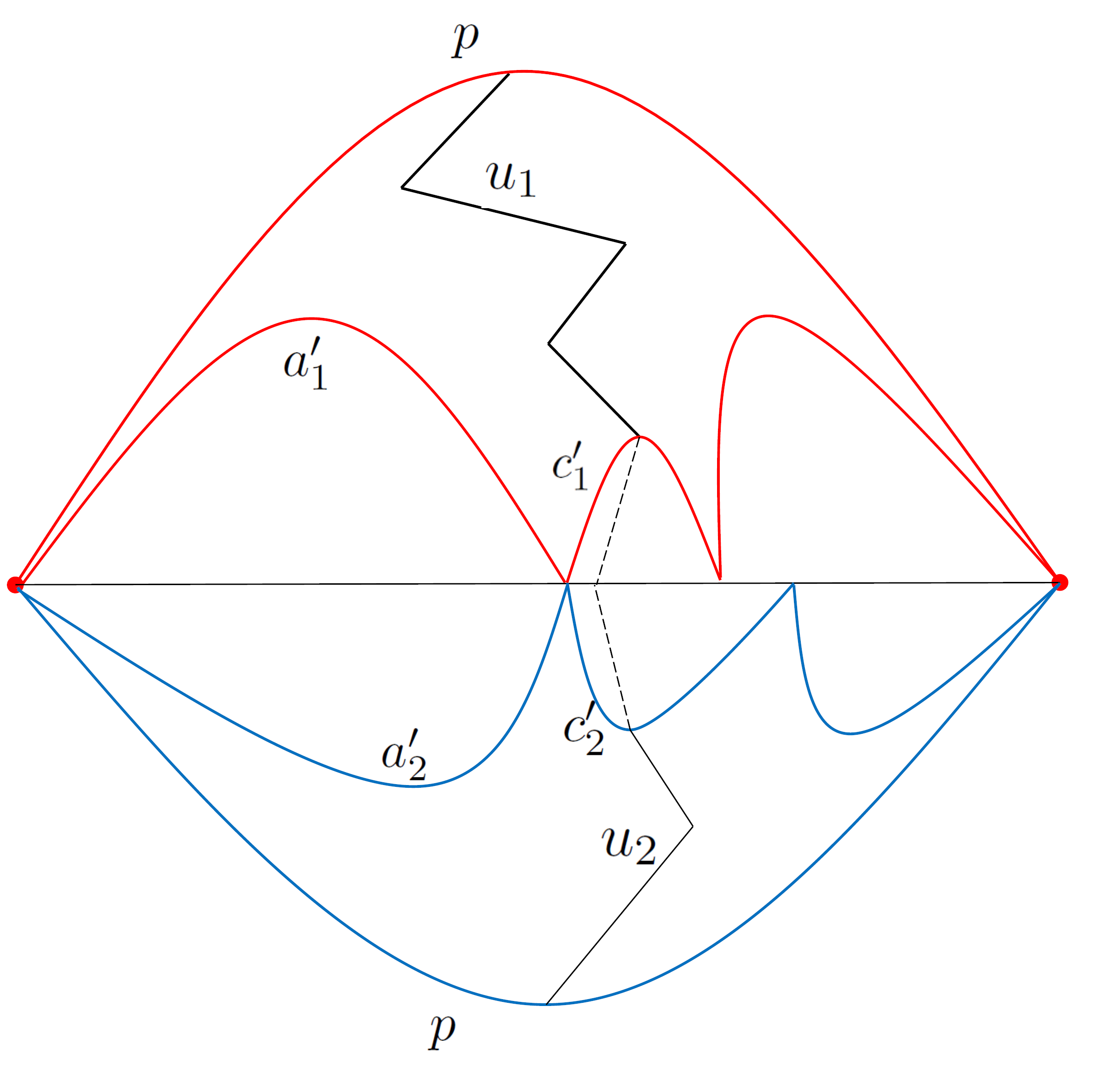}
\caption{The diagram $\Delta'$. The top and bottom $1$-paths of $\Psi_1$ and of $\Psi_2^{-1}$ are colored red and blue respectively. The labels of edges or $1$-paths in the figure are their labels when $\Delta'$ is viewed as a diagram over $\La$.}
    \label{fig3}
\end{figure}

\end{proof}

In a similar way, one can prove the following right-left analogue of Proposition \ref{prop:left}.

\begin{Proposition}\label{prop:right}
Let $H$ be a subgroup of $F$. Let $\La=\La(H)$ and let $p=p_{\La(H)}$ be the distinguished edge of $\La$. Let $E$ be the set of edges of $\La$. Let $\P$ be the semigroup presentation associated with $\La$ and let $S$ be the semigroup given by $\P$. Let $w,w_1$ and $w_2$ be words over the alphabet $E$. Then the following assertions hold.
\begin{enumerate}
\item[(1)] $w$ is a right divisor of $p$ in $S$ if and only if $w$ is a $1$-path in $\La$ such that $\tau(w)=\tau(\La)$. 
\item[(2)] Assume that $w_1,w_2$ are non-empty words over $E$ such that $w_1$ and $w_2$ are right divisors of $p$ in $S$. Then $w_1$ is equivalent to $w_2$ in $S$ if and only if the $1$-paths $w_1$ and $w_2$ satisfy $\iota(w_1)=\iota(w_2)$.
\end{enumerate}
\end{Proposition}

\begin{Corollary}\label{set_B}
Let $H$ be a subgroup of $F$ and let $p=p_{\La(H)}$ be the distinguished edge of $\La(H)$. Let $\P$ be the semigroup presentation associated with $\La=\La(H)$ and let $S$ be the semigroup given by $\P$. Let $\P'$ be a completion of $\P$ with the properties described at the top of the section. 
Then for each relation $r_1\rightarrow r_2$ in $\P'$ there is at most one tuple $[u,r_1\rightarrow r_2,v]$ which satisfies conditions (1)-(3) out of conditions (1)-(4) listed at the top of the section.
\end{Corollary}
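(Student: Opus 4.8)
The plan is to fix a rewriting rule $r_1\to r_2$ of $\P'$ and show that conditions (1)--(3) determine both $u$ and $v$ uniquely; since $r_2$ is also fixed, this gives at most one tuple. The starting observation is that condition (3), namely $ur_1v=p$ in $S$, is very restrictive. Because $p$ is a $1$-path in $\La$ and equality in $S$ preserves the property of being a $1$-path, the word $ur_1v$ is a $1$-path in $\La$; hence its consecutive edges compose, so in particular $r_1$ is itself a $1$-path and $\tau(u)=\iota(r_1)$, $\tau(r_1)=\iota(v)$. The crucial point is that $\iota(r_1)$ and $\tau(r_1)$ are vertices of $\La$ depending only on the fixed rule $r_1\to r_2$, not on the tuple; I will write $s=\iota(r_1)$ and $t=\tau(r_1)$. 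Moreover, $u$ is a left divisor of $p$ in $S$ (witnessed by $r_1v$) and $v$ is a right divisor of $p$ in $S$ (witnessed by $ur_1$).

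To determine $u$ I would invoke Proposition \ref{prop:left}. First, if $s=\iota(\La)$, then since $\iota(\La)$ has only outgoing edges, no non-empty directed $1$-path can terminate at $s$; as $\tau(u)=s$, the word $u$ must be empty and is therefore uniquely determined. If instead $s\neq\iota(\La)$, then $u$ is non-empty, and being a non-empty left divisor of $p$ with $\tau(u)=s$, Proposition \ref{prop:left}(2) shows that any two valid choices of $u$ are equal as elements of $S$. Since $u$ is reduced over the complete presentation $\P'$ by condition (1), and reduced words are unique normal forms in a complete presentation, two such choices of $u$ are literally the same word. The argument for $v$ is entirely symmetric, using Proposition \ref{prop:right}: if $t=\tau(\La)$ then $v$ is forced to be empty (as $\tau(\La)$ has only incoming edges, so $\iota(v)=t=\tau(\La)$ cannot be the initial vertex of a non-empty path), while if $t\neq\tau(\La)$ then $v$ is a non-empty right divisor of $p$ with $\iota(v)=t$, and Proposition \ref{prop:right}(2) together with the uniqueness of $\P'$-reduced words pins $v$ down. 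Combining the two, both $u$ and $v$ are uniquely determined by the rule $r_1\to r_2$, which proves the corollary.

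I do not expect a serious obstacle here, as the content is already packaged in Propositions \ref{prop:left} and \ref{prop:right}; the only care required is bookkeeping. The two points to handle cleanly are: (i) verifying from condition (3) that $r_1$ really is a $1$-path so that $s=\iota(r_1)$ and $t=\tau(r_1)$ are well-defined and tuple-independent, which follows from the fact that equality in $S$ preserves being a $1$-path; and (ii) treating the empty-word cases, where one of $u,v$ is forced to be empty precisely when $s=\iota(\La)$ or $t=\tau(\La)$, using that $\iota(\La)$ has no incoming edges and $\tau(\La)$ has no outgoing edges. The appeal to uniqueness of normal forms over the complete presentation $\P'$ then upgrades ``equal in $S$'' to ``identical as words'', which is what condition (1) is there to supply.
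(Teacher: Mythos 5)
Your proposal is correct and follows essentially the same route as the paper: reduce to the observation that condition (3) forces $u$ and $v$ to be $1$-paths with prescribed endpoints, then apply Propositions \ref{prop:left}(2) and \ref{prop:right}(2) together with uniqueness of $\P'$-reduced normal forms to pin down $u$ and $v$. Your explicit treatment of the empty-word cases (when $\iota(r_1)=\iota(\La)$ or $\tau(r_1)=\tau(\La)$) is a small point the paper leaves implicit, but it does not change the argument.
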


\begin{proof}
Let $r_1\rightarrow r_2$ be a relation in $\P'$. If $r_1$ is not a $1$-path in $\La$, then there is no tuple $[u,r_1\rightarrow r_2,v]$ as described. Indeed, condition (3) says that $p=ur_1v$ in $S$. Since $p$ is a $1$-path in $\La$, that would imply that $ur_1v$, and as such, that $r_1$, is a $1$-path in $\La$. 

Now, assume that $r_1$ is a $1$-path in $\La$ (if one follows the standard Knuth-Bendix algorithm \cite{Sa}, then every relation $r_1\rightarrow r_2$ in $\mathcal P'$ is of this kind). If $[u,r_1\rightarrow r_2,v]$ is as described, then since $p=ur_1v$ in $S$, $u$ and $v$ must be $1$-paths in $\La$ such that $\iota(u)=\iota(\La)$, $\tau(u)=\iota(r_1)$, $\iota(v)=\tau(r_1)$ and $\tau(v)=\tau(\La)$. The requirement that $u$ and $v$ are reduced implies, By Propositions \ref{prop:left} and \ref{prop:right}, that there is only one choice for $u$ and $v$; namely, $u$ is the unique $1$-path in $\La$ with $\iota(u)=\iota(\La)$ and $\tau(u)=\iota(r_1)$ such that the word $u$ is reduced over $\P'$ (if $\iota(r_1)=\iota(\La)$ then $u\equiv\emptyset$). Similarly, there is only one option for the choice of $v$. Finally, we note that if $u$ and $v$ are taken to be $1$-paths as described, then conditions (1)-(3) are satisfied for the tuple $[u,r_1\rightarrow r_2,v]$. Indeed, $u$ and $v$ are reduced words over $\mathcal P'$, $r_1\rightarrow r_2$ is a relation of $\mathcal P'$ and since $ur_1v$ is a $1$-path in $\La$ such that $\iota(ur_1v)=\iota(\La)$ and $\tau(ur_1v)=\tau(\La)$, by Proposition \ref{prop:left}(2), $ur_1v$ is equivalent to $p$ in $S$. 
\end{proof}

The proof of Corollary \ref{set_B} shows that given a subgroup $H\le F$ with finite $\La(H)$ and a completion $\P'$ of the semigroup presentation $\P$ associated with $\La(H)$, there is a simple algorithm for finding the set of tuples $B$ (and thus, for implementing the algorithm from \cite{GuSa97} for finding a generating set of $\Cl(H)$). Indeed, for each inner vertex $x$ of $\La(H)$ one can find $1$-paths $p$ and $q$ on $\La(H)$ such that $\iota(p)=\iota(\La(H))$, $\tau(p)=x$, $\iota(q)=x$ and $\tau(q)=\tau(\La(H))$. Then applying rewriting rules from $\P'$ one can find such $1$-paths $p$ and $q$, which are reduced as words in the alphabet $E$ over $\P'$. 

\subsection{Core $2$-automata}\label{ss:core}

In previous sections we sometimes described the core $\La(H)$ of a subgroup $H\le F$ using a labeled binary tree. In this section we  generalize this notion and make it precise. As in the rest of the paper, all $2$-automata $\La$ considered in this section are $2$-automata over the Dunce hat  $\kk$ with distinguished $1$-paths $p_{\La}=q_{\La}$ composed of one edge of $\La$. We always assume that the immersion $\psi_{\La}$ from $\La$ into $\kk$ maps every positive cell of $\La$ to the positive cell of $\kk$. Recall that $\La$ is a folded-automaton if no foldings are applicable to it. The proof of \cite[Lemma 3.21]{GS1} shows that if $\La$ is a folded-automaton and $\Delta$ is a diagram in $F$ accepted by $\La$ then the reduced diagram equivalent to $\Delta$ is also accepted by $\La$. Thus, one can talk about the subgroup of $F$ accepted by $\La$.

\begin{Lemma}
Let $H$ be a subgroup of $F$. Then $H$ is a closed subgroup of $F$ if and only if there is a folded-automaton $\La$ over the Dunce hat $\kk$ such that $H$ is the subgroup of $F$ accepted by $\La$. 
\end{Lemma}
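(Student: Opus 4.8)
The plan is to prove the two implications separately, with the forward direction being essentially a repackaging of definitions and the backward direction resting on a universal ``factoring through foldings'' property of the core $\La(H)$.

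For the forward implication, suppose $H$ is closed, so that $H=\Cl(H)$. I would simply take $\La=\La(H)$. The core $\La(H)$ is a folded-automaton by construction, since it is obtained from the bouquet of spheres by applying \emph{all} possible foldings, so that none remain applicable. By the very definition of the closure, the set of reduced diagrams in $F$ accepted by $\La(H)$ is exactly $\Cl(H)$; equivalently, the subgroup of $F$ accepted by $\La(H)$ is $\Cl(H)=H$. This disposes of one direction.

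For the backward implication, suppose $\La$ is a folded-automaton whose accepted subgroup is $H$. Since $H\le\Cl(H)$ always holds (Lemma \ref{l:au}), it suffices to show $\Cl(H)\le H$. The key step I would carry out is the construction of a morphism of $2$-automata $\psi\colon\La(H)\to\La$. Fix a generating set $\{\Delta_i\mid i\in\mathcal I\}$ of $H$ by reduced diagrams. Each $\Delta_i$ represents an element of $H$ and is therefore accepted by $\La$, giving a morphism $\Delta_i\to\La$ sending $\topp(\Delta_i)$ and $\bott(\Delta_i)$ to $p_{\La}=q_{\La}$. Because in the bouquet $\La'_0$ all these distinguished edges are identified to a single edge, the individual morphisms agree there and assemble into one morphism $\psi'_0\colon\La'_0\to\La$.

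The heart of the argument, and the step I expect to be the main obstacle, is showing that $\psi'_0$ descends along the foldings $\La'_0\to\La'_1\to\cdots$ used to build $\La(H)$. Suppose $\La'_{n+1}$ is obtained from $\La'_n$ by folding two positive cells $\pi_1,\pi_2$ that share (say) their top path, and that we already have $\psi'_n\colon\La'_n\to\La$. Since a morphism commutes with the immersions, $\phi_{\La}(\psi'_n(\pi_1))=\phi_{\La'_n}(\pi_1)=\phi_{\La'_n}(\pi_2)=\phi_{\La}(\psi'_n(\pi_2))$, and since it preserves top paths, $\topp(\psi'_n(\pi_1))=\topp(\psi'_n(\pi_2))$. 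Thus $\psi'_n(\pi_1)$ and $\psi'_n(\pi_2)$ are cells of $\La$ with equal image in $\kk$ sharing a top path; as $\La$ is folded no folding is applicable, forcing $\psi'_n(\pi_1)=\psi'_n(\pi_2)$, and likewise their non-shared bottom edges coincide. Hence $\psi'_n$ is constant on the fibres of the quotient map $\La'_n\to\La'_{n+1}$ and factors through it as some $\psi'_{n+1}$. Passing to the direct limit over all foldings yields the desired $\psi\colon\La(H)\to\La$, which sends the distinguished edge of $\La(H)$ to that of $\La$. I would then conclude as follows: for $g\in\Cl(H)$ with reduced diagram $\Delta_g$, the diagram $\Delta_g$ is accepted by $\La(H)$, i.e.\ there is a morphism $\Delta_g\to\La(H)$; composing with $\psi$ gives a morphism $\Delta_g\to\La$, so $\Delta_g$ is accepted by $\La$ and hence $g\in H$ by hypothesis. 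Therefore $\Cl(H)\subseteq H\subseteq\Cl(H)$, so $H=\Cl(H)$ is closed. The only genuinely delicate point is the factoring argument above, and one should also note that the whole construction goes through verbatim when $\mathcal I$ is infinite, the compatible system $\{\psi'_n\}$ surviving the passage to the limit automaton $\La(H)$.
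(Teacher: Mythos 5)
Your proof is correct, but the backward direction takes a genuinely different route from the paper. The paper disposes of it in two lines: if $H$ is the subgroup accepted by a folded automaton $\La$, then Remark \ref{r:GS} shows $H$ is closed for components, and Corollary \ref{cor:clo} (which rests on the dynamical characterization $\Cl(H)=\DPiec(H)$ of Theorem \ref{thm:GS}) then gives $H=\Cl(H)$. You instead establish a universal property of the Stallings $2$-core: the bouquet morphism $\La'_0\to\La$ descends through every folding precisely because $\La$ is folded (two cells of $\La'_n$ eligible for folding have images in $\La$ with a common top, resp.\ bottom, path, hence equal images), yielding $\psi\colon\La(H)\to\La$ and therefore $\Cl(H)\subseteq H$ by composing acceptance morphisms. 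Your argument is slightly longer but is purely combinatorial: it does not invoke Theorem \ref{thm:GS} or the notion of components at all, and it works verbatim for subgroups of diagram groups over arbitrary directed $2$-complexes, whereas the paper's two-line proof is only available for $F$ because Corollary \ref{cor:clo} is $F$-specific. The one point worth making explicit in your write-up is that when two cells sharing a top path map to two cells of $\La$ sharing a top path, foldedness forces the \emph{images} to be literally the same cell (not merely foldable), which is exactly what the definition of ``no folding applicable'' provides; you do say this, and the rest (compatibility with the direct limit, composition of acceptance morphisms) is routine.
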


\begin{proof}
If $H$ is a closed subgroup of $F$ then one can take $\La=\La(H)$. 
In the other direction, if $H$ is the subgroup of all diagrams in $F$ accepted by $\La$ then Remark \ref{r:GS} implies that $H$ is closed for components. Then by Corollary \ref{cor:clo}, $H$ is a closed subgroup of $F$. 
\end{proof}

Let $\La$ be a folded automaton over $\kk$. The subgroup of $F$ accepted by $\La$ is naturally isomorphic to the diagram group $G=\DG(\La,p_{\La})$ (indeed, the proof is identical to that of Lemma \ref{diagram_group}). 
In Section \ref{ss:alg} we have seen that if $\La$ is the core $\La(H)$ of some subgroup $H$ of $F$ then the algorithm from \cite{GuSa97} for finding a generating set of $G$ can be simplified. For this and for other reasons (see Section \ref{ss:max} below), it is useful to determine if $\La$ coincides with $\La(H)$ for some subgroup $H\le F$. We are only interested in $\La$ coinciding with $\La(H)$ when  all vertices of $\La$ and all vertices of $\La(H)$ are identified to a single vertex. Indeed, by Remark \ref{one_vertex}, the diagram groups $G$ and $\DG(\La(H),p_{\La(H)})\cong\Cl(H)$ are not affected by identification of vertices in $\La$ and in $\La(H)$. 

\begin{Definition}
Let $\La$ be a folded-automaton over $\kk$. We say that $\La$ is a \emph{core-automaton} if there is a subgroup $H\le F$ and a bijective morphism $\phi$ from $\La(H)$ with all vertices identified to $\La$ with all vertices identified. 
\end{Definition}

We note that the naive approach for deciding if a folded automaton $\La$ is a core automaton is to find a generating set $X$ of $\DG(\La,p_{\La})$, let $H$ be the subgroup of $F$ generated by $X$ (where elements in $X$ are viewed as reduced diagrams in $F$) and construct the core $\La(H)$. Then one has to check if $\La$ and $\La(H)$ coincide up to identification of vertices. This approach would work as long as one can find a generating set of $\DG(\La,p_{\La})$, but as we are trying to simplify the process of finding such a generating set we consider a different approach. Namely, we associate labeled binary trees $T_{\La}$ and $T_{\La}^{\min}$ with the folded-automaton $\La$. The folded automaton will be a core automaton if and only if the tree $T_{\La}^{\min}$ satisfies certain properties (see Lemma \ref{3con} below). 

Given a labeled binary tree $T$,  a \emph{path} $p$ in $T$ is always a simple path starting from the root. Every path is labeled by a finite binary word $u$. As for paths on diagrams, we will rarely distinguish between the path $p$ and its label $u$. Similarly, we will denote by $p^+$ or $u^+$ the terminal vertex of the path $p$ in $T$. $\lab(u^+)$ or $\lab(v^+)$ will denote the label of this terminal vertex. An \emph{inner} vertex of $T$ is a vertex which is not a leaf. \emph{Brother vertices} of $T$ are distinct vertices with a common father.

Let $\La$ be a folded-automaton. The labeled binary tree $T_{\La}$ associated with $\La$ is defined as follows. The labels of vertices in $T_{\La}$ are edges of $\La$.  Recall (Section \ref{sec:paths}) that each finite binary word $u$ labels at most one path on $\La$. We let $T_{\La}$ be the maximal binary tree such that for every path $u$ in $T_{\La}$, the finite binary word $u$ labels a path on $\La$. For example, if every edge in $\La$ is the top edge of some cell, then $T_{\La}$ is the complete infinite binary tree. The label of each vertex $u^+$ of $T_{\La}$ is the edge $u^+$ of $\La$.

Notice that every caret in $T_{\La}$ is labeled with accordance with the top and bottom edges of some positive cell in $\La$. In fact, $T_{\La}$ can be constructed inductively as follows. One starts with a root labeled by the distinguished edge $p_{\La}$ of $\La$. Whenever there is a leaf in the tree whose label is the top edge of some positive cell $\pi$ in $\La$,  one attaches a caret to the leaf and labels the left (resp. right) leaf of the caret by the left (resp. right) bottom edge of $\pi$. 

Now let $T$ be a rooted subtree of $T_{\La}$, maximal with respect to the property that there is no pair of distinct inner vertices in $T$ which have the same label. If $\ell$ is a leaf of $T$ and $\ell$ does not share a label with any inner vertex in $T$, then $\ell$ must be a leaf of $T_{\La}$. Indeed, otherwise one could attach the caret of $T_{\La}$ with  root $\ell$ to the the subtree $T$ and get a larger subtree where no pair of distinct inner vertices share a label. 

If the leaf $\ell$ shares a label with some inner vertex $x$ of $T$, then in $T_{\La}$, $\ell$ has two children. Each  child of $\ell$ is labeled as the respective child of $x$. Continuing in this manner, we see that it is possible to get $T_{\La}$ from $T$, by inductively attaching carets to leaves which share their label with inner vertices of $T$ and labeling the new leaves appropriately. It follows that $T$ and $T_{\La}$ have the same set of labeled carets. Since no labeled caret appears in $T$ more than once, $T$ is a minimal subtree of $T_{\La}$ with respect to the property that the sets of labeled carets of $T$ and $T_{\La}$ coincide. 

We let a \emph{minimal tree associated with $\La$}, denoted $T_{\La}^{\min}$ be a tree $T$ as described in the preceding paragraph. We note that a minimal tree associated with $\La$ is not unique. However, the label of the root of $T_{\La}^{\min}$ and the set of labeled carets of $T_{\La}^{\min}$ are determined uniquely by $\La$. Thus, we can consider different minimal trees associated with $\La$ to be equivalent. 
 All labeled binary trees which appeared in this paper so far were minimal trees associated with the cores $\La(H)$ they described.

 
 \begin{Lemma}\label{lem_ez}
 	Let $\La$ be a folded-automaton over $\kk$. Let $T_{\La}^{\min}$ be an associated minimal tree. Assume that for all $u$ and $v$ which label paths on $T_{\La}^{\min}$ such that $u^+$ and $v^+$ share a label, there is a diagram $\Delta_{u,v}$ accepted by $\La$ with a pair of branches $u\rightarrow v$.
 	Let $K$ be the subgroup of $F$ generated by the diagrams $\Delta_{u,v}$ for each pair of paths $u,v$ on $T_{\La}^{\min}$ with $\lab(u^+)\equiv \lab(v^+)$.
 Let $T_{\La}$ be the labeled binary  tree associated with $\La$. 
  	Let $u_1$ and $v_1$ be paths on the tree $T_{\La}$  such that $\lab(u_1^+)\equiv \lab(v_1^+)$. Then there is an element $k\in K$ with a pair of branches $u_1\rightarrow v_1$. 
 \end{Lemma}
 
 \begin{proof}
 	The proof is similar to the proof of Lemma \ref{path_lem} for foldings of type $1$. The diagrams $\Delta_{u,v}$ show that the lemma holds if the paths $u_1$ and $v_1$ belong to the subtree $T_{\La}^{\min}$ of $T_{\La}$. Recall that $T_{\La}$ can be constructed from $T_{\La}^{\min}$ by inductively attaching carets. We claim that whenever a caret is attached during the inductive construction, the lemma remains true for the constructed tree. 
 	
 	Let $T$ be a subtree of $T_{\La}$ resulting from $T_{\La}^{\min}$ by the attachment of finitely many labeled carets and assume that the lemma holds for $T$. Let $w_1^+$ be a leaf of $T$ which shares a label with some inner vertex $w_2^+$ of $T_{\La}^{\min}$ (for some finite binary words $w_1$ and $w_2$). We let $T'$ be the subtree of $T_{\La}$ which results from $T$ if one attaches a caret to $w_1^+$ and labels the left (resp. right) child of $w_1^+$ by the label of the left (resp. right) child of $w_2^+$.
 	
 	Let $u_1$ and $v_1$ be paths on $T'$ such that $\lab(u_1^+)\equiv  \lab(v_1^+)$. If $w_1$ is not a proper prefix of $u_1$ nor a proper prefix of $v_1$, then $u_1^+$ and $v_1^+$ are vertices of the subtree $T$ and the lemma holds by the induction hypothesis. Thus, we can assume that $w_1$ is a proper prefix of $u_1$. It follows that $u_1\equiv w_10$ or $u_1\equiv w_11$. We consider two possible cases. 
 	
 	(1) $w_1$ is not a proper prefix of $v_1$; i.e., $v_1^+$ is a vertex of $T$. 
 	
 	We assume without loss of generality that $u_1\equiv w_10$, the other case being similar. Since $\lab(v_1^+)\equiv  \lab(u_1^+)\equiv  \lab((w_10)^+)\equiv  \lab((w_20)^+)$ and $v_1^+$ and $(w_20)^+$ are vertices of $T$ (indeed, $w_2^+$ was an inner vertex of $T$), by the induction hypothesis, there is an element $k_1\in K$ with a pair of branches $w_20\rightarrow v_1$. Similarly, since $\lab(w_1^+)\equiv  \lab(w_2^+)$ and $w_1^+,w_2^+$ are vertices of $T$, by the induction hypothesis, there is an element $k_2\in K$ with a pair of branches $w_1\rightarrow w_2$. Thus, $u_1\equiv w_10\rightarrow w_20$ is a pair of branches of $k_2$. It follows that $u_1\rightarrow v_1$ is a pair of branches of $k_2k_1$, as required. 
 	
 	(2) $w_1$ is a proper prefix of $v_1$.
 	
 	We can assume that $u_1\not\equiv v_1$, otherwise the identity element has the pair of branches $u_1\rightarrow v_1$. Hence, either $u_1\equiv w_10$ or $v_1\equiv w_10$. 
 	Then $\lab(v_1^+)\equiv  \lab(u_1^+)\equiv  \lab((w_10)^+)\equiv  \lab((w_20)^+)$. Since $(w_20)^+$ is a vertex of $T$, we have by the previous case that there are elements $k_1,k_2\in K$ with pairs of branches $u_1\rightarrow w_20$ and  $v_1\rightarrow w_20$, respectively. Then $k_1k_2^{-1}$ has a pair of branches $u_1\rightarrow v_1$ as required.
 \end{proof}

\begin{Lemma}\label{cor_aut}
Let $\La$ be a folded-automaton over $\kk$. Let $T_{\La}^{\min}$ be an associated minimal tree. Assume that for all $u$ and $v$ which label paths on $T_{\La}^{\min}$ such that $u^+$ and $v^+$ share a label, there is a diagram $\Delta_{u,v}$ accepted by $\La$ with a pair of branches $u\rightarrow v$.
Let $K$ be the subgroup of $F$ generated by the diagrams $\Delta_{u,v}$ for each pair of paths $u,v$ on $T_{\La}^{\min}$ with $\lab(u^+)\equiv \lab(v^+)$. Then $\Cl(K)$ is the subgroup of $F$ accepted by $\La$. In addition, there is an injective morphism $\phi$ from the core $\La(K)$ with all vertices identified to $\La$ with all vertices identified. 
\end{Lemma}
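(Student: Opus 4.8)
The plan is to let $G$ denote the subgroup of $F$ accepted by the folded automaton $\La$, and to produce a morphism $\phi$ from $\La(K)$ to $\La$ (after identifying all vertices) which is injective and whose image carries the same accepted subgroup as $\La$; this simultaneously yields $\Cl(K)=G$ and the injective morphism. First I would record the easy inclusion: since $\La$ is folded it accepts a closed subgroup $G$ (by the lemma preceding the definition of core-automaton together with Corollary \ref{cor:clo}), and each generator $\Delta_{u,v}$ of $K$ is by hypothesis accepted by $\La$, so $K\le G$ and hence $\Cl(K)\le\Cl(G)=G$. To build $\phi$ I would run the usual Stallings factorisation: the acceptance morphisms $\Delta_{u,v}\to\La$ assemble into a morphism from the bouquet $\La'_0$ used to construct $\La(K)$ into $\La$, and because $\La$ is folded each folding performed in passing from $\La'_0$ to $\La(K)$ is compatible with this map, since two cells identified by a folding have the same image in $\La$ (as $\La$ has at most one positive cell with a given top edge and at most one with a given bottom path). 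Hence the map descends to a morphism $\phi\colon\La(K)\to\La$ of the vertex-collapsed complexes, and composing with $\phi$ re-proves $\Cl(K)\le G$.

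The heart of the argument is the injectivity of $\phi$, i.e.\ the statement that if $w_1,w_2$ label paths on $\La(K)$ with $w_1^+=w_2^+$ in $\La$, then $w_1^+=w_2^+$ in $\La(K)$. The basic mechanism is the hypothesis together with Lemma \ref{trivial}: whenever $u,v$ label paths on $T_{\La}^{\min}$ with $\lab(u^+)=\lab(v^+)$, the diagram $\Delta_{u,v}\in K$ has a pair of branches $u\rightarrow v$, so $u^+=v^+$ in $\La(K)$. I would then show that every word $w$ labelling a path on $\La$ can be rewritten to a path $t$ on $T_{\La}^{\min}$ with the same terminal edge in $\La$, by the following deterministic reduction: read $w$ letter by letter through $T_{\La}^{\min}$; whenever the current vertex is a leaf $\ell$ of $T_{\La}^{\min}$ whose label equals that of an inner vertex $x$ and there are letters left to read, replace the prefix read so far by the path to $x$ and continue from $x$. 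Each replacement uses the common-suffix property of branches together with $\Delta_{t_\ell,t_x}\in K$ and Lemma \ref{trivial} to witness the corresponding equality of terminal edges in $\La(K)$ (for words already labelling paths on $\La(K)$), while the fact that no folding of type $1$ applies to the core gives the propagation $e=e'\Rightarrow (ea)=(e'a)$. The reduction terminates because each read consumes one of the finitely many letters and each jump is immediately followed by a read. If $t_1,t_2$ are the reductions of $w_1,w_2$, then they are paths on $T_{\La}^{\min}$ with $\lab(t_1^+)=\lab(t_2^+)$, so $t_1^+=t_2^+$ in $\La(K)$ again by the hypothesis and Lemma \ref{trivial}; combining gives $w_1^+=t_1^+=t_2^+=w_2^+$ in $\La(K)$. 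Thus $\phi$ is injective and $\La(K)$ is isomorphic, after vertex identification, to the subautomaton $\La_0:=\phi(\La(K))\le\La$.

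It remains to prove the reverse inclusion $G\subseteq\Cl(K)$, and this is the step I expect to be the main obstacle. Let $\Delta$ be the reduced diagram of an element of $G$, with pairs of branches $u_i\rightarrow v_i$; acceptance by $\La$ gives that $u_i,v_i$ label paths on $\La$ with $u_i^+=v_i^+$ in $\La$. By the injectivity just established it would suffice to prove that each $u_i$ (and $v_i$) labels a path on $\La(K)$, for then $u_i^+=v_i^+$ transfers to $\La(K)$, $\Delta$ is accepted by $\La(K)$, and $\Delta\in\Cl(K)$. The real content is thus that $\phi$ is surjective onto the cells of $\La$ that are actually traversed by reduced diagrams of $G$. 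I would argue by contradiction: take a shortest prefix $w'a$ of some $u_i$ that does not label a path on $\La(K)$, reduce $w'$ (via the procedure above) to a path ending at the inner vertex $x_0$ of $T_{\La}^{\min}$ with $\lab(x_0)=w'^+$ in $\La$, and then use the dipole-freeness of the reduced diagram $\Delta$ to force a matching pair of branches passing through the caret at $x_0$, which by the hypothesis is realised in $K$ and hence places that caret into $\La(K)$ — contradicting that $w'^+$ is not the top of a cell in $\La(K)$. Making this dipole argument precise, in particular correctly handling the multiplicity of paths in $\La$ terminating at a given edge and the possibility of ``trivial'' carets of $\La$ that no reduced diagram of $G$ can traverse, is the delicate point; once it is settled, $\Cl(K)=G$ follows, and together with the injective $\phi$ of the previous paragraph this proves the lemma.
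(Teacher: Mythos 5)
Your outline of the easy inclusion and of the construction of $\phi$ from the acceptance morphisms of the generators matches the paper, and your injectivity argument is in the same spirit as the paper's (though note a small gap there: Lemma \ref{trivial} only applies to a pair of branches $u\rightarrow v$ when $u$ and $v$ are already known to label paths on $\La(K)$, and your reduction procedure jumps to paths $t_x$ on $T_{\La}^{\min}$ that are not a priori paths on $\La(K)$; the paper sidesteps this by first proving that for \emph{any} two paths $u,v$ on $T_{\La}$ with $\lab(u^+)\equiv\lab(v^+)$ there is an actual element of $K$ with the pair of branches $u\rightarrow v$ --- Lemma \ref{lem_ez}, proved by induction on attaching carets to pass from $T_{\La}^{\min}$ to $T_{\La}$ --- and by choosing the witness paths to live on the generating diagrams themselves).

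The genuine gap is in the reverse inclusion $G\subseteq\Cl(K)$, which you correctly identify as the main obstacle but do not close. Your plan --- show that every branch of a reduced diagram $\Delta\in G$ labels a path on $\La(K)$ and then transfer the equalities of terminal edges through the injective $\phi$ --- amounts to a surjectivity statement for $\phi$ onto the part of $\La$ traversed by reduced diagrams of $G$. That is exactly the content of the hard direction of Lemma \ref{3con}, where the paper needs the additional brother-leaves condition (2) and the minimality of $T_{\La}^{\min}$ to run the dipole argument; under the hypotheses of Lemma \ref{cor_aut} alone, $\phi$ need not be surjective, and while the restricted claim for branches of \emph{reduced} diagrams is true a posteriori, the dipole argument you sketch is precisely the delicate part and is not supplied. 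The paper avoids this route entirely: having established Lemma \ref{lem_ez}, it observes that for a reduced $\Delta$ accepted by $\La$ with pairs of branches $u_i\rightarrow v_i$, each pair $u_i\rightarrow v_i$ is a pair of branches of some $k_i\in K$, so $\Delta$ coincides with $k_i$ on $[u_i]$ and is therefore dyadic-piecewise-$K$; membership $\Delta\in\Cl(K)$ then follows from the dynamical characterization $\Cl(K)=\DPiec(K)$ of Theorem \ref{thm:GS} (via Lemma \ref{clo_dya}), with no need to verify acceptance of $\Delta$ by $\La(K)$ branch by branch. Routing the membership through the topological-full-group description of the closure, rather than through the core, is the idea your proposal is missing.
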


\begin{proof}
Since $\La$ is a folded-automaton and all diagrams $\Delta_{u,v}$ are accepted by $\La$, all diagrams in $K$ are accepted by $\La$. Thus, by Remark \ref{r:GS} and Theorem \ref{thm:GS}, $\Cl(K)$ is accepted by $\La$. To prove the opposite inclusion, let $\Delta$ be a reduced diagram accepted by $\La$ and let $u_i\rightarrow v_i$, $i=1,\dots,n$, be the pairs of branches of $\Delta$. Let $T_{\La}$ be the labeled binary tree associated with $\La$. Then, by the definition of $T_{\La}$, for every pair of branches $u_i\rightarrow v_i$ of $\Delta$, $u_i$ and $v_i$ label paths on $T_{\La}$ such that $\lab(u_i^{+})=\lab(v_i^{+})$. Then, by Lemma \ref{lem_ez}, for each $i=1,\dots,n$, there is an element $k_i\in K$ such that $\Delta$ and $k_i$ coincide on $[u_i]$. 
Therefore, $\Delta$ is dyadic-piecewise-$K$ and by Theorem \ref{thm:GS}, $\Delta$ belongs to $\Cl(K)$. Thus, $\Cl(K)$ is the subgroup of $F$ accepted by $\La$. 

For the rest of the proof we assume that all vertices of $\La(K)$ and all vertices of $\La$ were identified to a single vertex. To define the injective morphism $\phi$ from $\La(K)$ to $\La$ we only consider edges and cells of the $2$-automata (the unique vertex of $\La(K)$  is mapped to the unique vertex of $\La$). Hence, the non-standard notation should not cause confusion. To prove that there is an injective morphism $\phi$ from $\La(K)$  to $\La$, we let $X=\{\Delta_i:i\in\mathcal I\}$ be a set of reduced diagrams generating $K$. Since every diagram $\Delta_i$ is accepted by $\La(K)$, there is a natural morphism $\psi_{\Delta_i}$ from $\Delta_i$ to $\La(K)$. Moreover, the construction of $\La(K)$ (starting from the bouquet of spheres $\La'$) shows that every edge (resp. cell) of $\La(K)$ is the image under $\psi_{\Delta_i}$ of an edge (resp. cell) of $\Delta_i$, for some $i\in\mathcal I$. 
Since each diagram $\Delta_i$ is accepted by $\La$, there is a morphism $\psi'_{\Delta_i}$ from $\Delta_i$ to $\La$. 

Let $e$ be an edge of $\La(K)$. We choose $i\in\mathcal I$ and an edge $e'$ in $\Delta_i$ such that $\psi_{\Delta_i}(e')=e$ and let $\phi(e)=\psi'_{\Delta_i}(e')$. We define the action of $\phi$ on cells of $\La(K)$ in a similar way. It is easy to see that if $\phi$ is well defined (i.e., does not depend on the choice of preimages of edges and cells  of $\La(K)$ in the diagrams $\Delta_i$, $i\in\mathcal I$), then $\phi$ is a morphism from $\La(K)$ to $\La$. Indeed, the definition of $\phi$ respects adjacency of edges and cells in $\La(K)$. 


We begin by showing that the action of $\phi$ on edges of $\La(K)$ is well defined. 
 Let $e$ be an edge of $\La(K)$ and let $i,j\in \mathcal I$ and $e_1$, $e_2$ be edges of $\Delta_i$ and $\Delta_j$ respectively such that $\psi_{\Delta_i}(e_1)=e$ and $\psi_{\Delta_j}(e_2)=e$. We claim that $\psi'_{\Delta_i}(e_1)=\psi'_{\Delta_j}(e_2)$.

Let $u$ be a (positive or negative) path to $e_1$ on the diagram $\Delta_i$ and let $v$ be a (positive or negative) path to $e_2$ on the diagram $\Delta_j$. Since $\Delta_i$ and $\Delta_j$ are accepted by $\La$, $u$ and $v$ label paths on $\La$, such that $u^+=\psi'_{\Delta_i}(e_1)$ and $v^+=\psi'_{\Delta_j}(e_2)$ in $\La$. Similarly, $u$ and $v$ label paths on $\La(K)$ such that 
$u^+=\psi_{\Delta_i}(e_1)=e=\psi_{\Delta_j}(e_2)=v^+$. Thus, by Lemma \ref{cor_ide}, there is a diagram $\Delta$ in $\Cl(K)$ with a pair of branches $u\rightarrow v$. If $\Delta$ is reduced, then it is accepted by $\La$ and so $u^+=v^+$ on $\La$. Otherwise, there are subpaths $u_1$ and $v_1$ such that $u\equiv u_1s$, $v\equiv v_1s$ and $u_1\rightarrow v_1$ is a pair of branches of the reduced diagram equivalent to $\Delta$. It follows that $u_1^+=v_1^+$ on $\La$, which implies that $u^+=v^+$ on $\La$. Thus,  $\psi'_{\Delta_i}(e_1)=\psi'_{\Delta_j}(e_2)$ as required. Hence, $\phi$ is well defined on edges of $\La(K)$.

Now, let $\pi$ be a positive cell of $\La(K)$. Let $\pi_1$ and $\pi_2$ be cells of diagrams $\Delta_i$ and $\Delta_j$ for $i,j\in\mathcal I$ such that $\psi_{\Delta_i}(\pi_1)=\pi$ and $\psi_{\Delta_j}(\pi_2)=\pi$. Then since $\phi$ is well defined on edges,
$\psi'_{\Delta_i}(\topp(\pi_1))=\phi(\topp(\pi))=\psi'_{\Delta_j}(\topp(\pi_2))$ and 
$\psi'_{\Delta_i}(\bott(\pi_1))=\phi(\bott(\pi))=\psi'_{\Delta_j}(\bott(\pi_2))$. As the top and bottom $1$-paths of a cell in $\La$ determine the cell uniquely, $\psi'_{\Delta_i}(\pi_1)=\psi'_{\Delta_j}(\pi_2)$. Hence, $\phi$ is well defined on cells as well.  

It remains to prove that $\phi$ is injective on edges and cells. We prove injectivity on edges of $\La(K)$. As before, that would imply that $\phi$ is also injective on cells.  Let $e_1$ and $e_2$ be two edges of $\La(K)$ such that $\phi(e_1)=\phi(e_2)$. Let $e_1'$ and $e_2'$ be edges of diagrams $\Delta_i$ and $\Delta_j$, $i,j\in\mathcal I$ such that $\psi_{\Delta_i}(e_1')=e_1$ and $\psi_{\Delta_j}(e_2')=e_2$, Let $u$ and $v$ be paths on $\Delta_i$ and $\Delta_j$ respectively such that $u^+=e_1'$ and $v^+=e_2'$. Then $u$ and $v$ label paths on $\La$ such that $u^+=\phi(e_1)=\phi(e_2)=v^+$. Thus, by Lemma \ref{lem_ez} there is an element $k\in K$ with a pair of branches $u\rightarrow v$. By Lemma \ref{trivial}, that implies that on $\La(K)$ we also have $e_1=u^+=v^+=e_2$. 
\end{proof}

\begin{Corollary}\label{cor_finite}
Let $H$ be a subgroup of $F$ and let $\La(H)$ be the core of $H$. If $\La(H)$ is a finite directed $2$-complex then $\Cl(H)=\Cl(K)$ where $K$ is a finitely generated subgroup of $F$. 
\end{Corollary}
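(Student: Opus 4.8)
The plan is to deduce the corollary directly from Lemma \ref{cor_aut}, applied to the folded-automaton $\La=\La(H)$. Recall that $\La(H)$ is folded by construction (no foldings are applicable to it), and that by definition of the closure the subgroup of $F$ accepted by $\La(H)$ is precisely $\Cl(H)$. So it suffices to exhibit a \emph{finitely generated} subgroup $K\le F$ satisfying the hypotheses of Lemma \ref{cor_aut} with respect to $\La(H)$; that lemma will then give that $\Cl(K)$ is the subgroup accepted by $\La(H)$, i.e. $\Cl(K)=\Cl(H)$, as desired.

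First I would fix a minimal tree $T_{\La(H)}^{\min}$ associated with $\La(H)$ and check that it is finite. By the defining property of a minimal tree, distinct inner vertices of $T_{\La(H)}^{\min}$ carry distinct labels, and these labels are edges of $\La(H)$. Since $\La(H)$ is a finite directed $2$-complex it has finitely many edges, so $T_{\La(H)}^{\min}$ has boundedly many inner vertices, hence boundedly many leaves, and is therefore a finite binary tree. Consequently there are only finitely many pairs of paths $u,v$ on $T_{\La(H)}^{\min}$ with $\lab(u^+)\equiv\lab(v^+)$.

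Next I would verify the hypothesis of Lemma \ref{cor_aut}, namely the existence of the diagrams $\Delta_{u,v}$. For each pair of paths $u,v$ on $T_{\La(H)}^{\min}$ with $\lab(u^+)\equiv\lab(v^+)$ we have $u^+=v^+$ in $\La(H)$, so Lemma \ref{cor_ide} supplies a diagram $\Delta_{u,v}$ accepted by $\La(H)$ with pair of branches $u\rightarrow v$. This is exactly the input Lemma \ref{cor_aut} requires. Taking $K=\la \Delta_{u,v}\ra$ over this finite collection of pairs gives a finitely generated subgroup, and Lemma \ref{cor_aut} then identifies $\Cl(K)$ with the subgroup of $F$ accepted by $\La(H)$, which is $\Cl(H)$; thus $\Cl(H)=\Cl(K)$.

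There is no genuinely hard step here: the corollary is essentially a packaging of Lemma \ref{cor_aut} together with Lemma \ref{cor_ide}. The only point needing care — and the closest thing to an obstacle — is confirming that $T_{\La(H)}^{\min}$ is finite, which reduces to the observation that its inner vertices are labeled injectively by the (finitely many) edges of $\La(H)$; this guarantees that the generating set $\{\Delta_{u,v}\}$ is finite and hence that $K$ is finitely generated.
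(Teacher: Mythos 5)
Your proposal is correct and follows essentially the same route as the paper: the paper's proof also observes that $T_{\La(H)}^{\min}$ is finite when $\La(H)$ is, obtains the diagrams $\Delta_{u,v}$ from Lemma \ref{cor_ide}, and applies Lemma \ref{cor_aut} to the finitely generated group $K$ they generate. Your added justification that the minimal tree is finite (inner vertices labeled injectively by the finitely many edges of $\La(H)$) is a detail the paper leaves implicit, but it is the right one.
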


\begin{proof}
Since $\La(H)$ is finite, the tree $T_{\La(H)}^{\min}$ is finite. By Lemma \ref{cor_ide}, for each pair of paths $u$ and $v$ such that $u^+$ and $v^+$ share a label in $T_{\La(H)}^{\min}$ (and thus, $u^+=v^+$ on $\La(H)$), there is a diagram $\Delta_{u,v}$ accepted by $\La(H)$ with a pair of branches $u\rightarrow v$. Then by Lemma \ref{cor_aut}, the group $K$ generated by the finite collection of diagrams $\Delta_{u,v}$ satisfies $\Cl(K)=\Cl(H)$.
\end{proof}

\begin{Lemma}\label{3con}
Let $\La$ be a folded-automaton over $\kk$ with minimal associated tree $T_{\La}^{\min}$. Then $\La$ is a core-automaton if and only if the following conditions are satisfied. 
\begin{enumerate}
\item[(1)] $\La$ is \emph{given} by the tree $T_{\La}^{\min}$, i.e., for every edge (resp. cell) of $\La$ there is a vertex (resp. caret) in $T_{\La}^{\min}$ labeled accordingly. 
\item[(2)] If $x$ is an inner vertex of $T_{\La}^\min$ then $x$ has a descendant $y$ in $T_{\La}^{\min}$ such that $y$ shares a label with some vertex $z\neq y$ in $T_{\La}^{\min}$. 
\item[(3)] For each pair of paths $u,v$ in $T_{\La}^{\min}$ such that $u^+$ and $v^+$ share a label, there is a diagram $\Delta_{u,v}$ such that $\Delta_{u,v}$ is accepted by $\La$ and has a pair of branches $u\rightarrow v$. 
\end{enumerate}
\end{Lemma}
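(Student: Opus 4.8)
The plan is to prove the two implications separately, treating conditions (1) and (3) as the ``bookkeeping'' conditions and isolating condition (2) as the genuinely subtle one, since it encodes exactly the obstruction coming from the reducedness of diagrams in $F$.

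For the direction ($\Leftarrow$) I would start from conditions (1)--(3) and feed condition (3) directly into Lemma \ref{cor_aut}, whose hypothesis is precisely (3). It produces a subgroup $K\le F$ (generated by the diagrams $\Delta_{u,v}$), the equality of $\Cl(K)$ with the subgroup of $F$ accepted by $\La$, and an \emph{injective} morphism $\phi$ from $\La(K)$ with all vertices identified to $\La$ with all vertices identified. To conclude that $\La$ is a core-automaton it then remains to upgrade $\phi$ to a bijection, i.e.\ to prove surjectivity. Using condition (1), which guarantees that every edge and every cell of $\La$ is the label of a vertex, resp.\ caret, of $T_{\La}^{\min}$, I would reduce surjectivity to the single statement that every positive cell $\pi$ of $\La$ lies in the image of $\phi$. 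Writing $\pi$ as the caret with root $w^+$ and children $(w0)^+=a$ and $(w1)^+=b$, it suffices to show that $w$ labels a path on $\La(K)$ whose terminal edge is the top of a positive cell; that cell then necessarily maps onto $\pi$, since $\La$ is folded and so has no two positive cells with a common top.

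This last point is where condition (2) enters, and it is the step I expect to be the main obstacle. Condition (2) supplies, for every cell whose two children are both leaves, a second vertex of $T_{\La}^{\min}$ sharing a label with $a$ or with $b$; via condition (3), equivalently via Lemma \ref{lem_ez}, this yields a genuinely nontrivial element of $K$ that splits at $w$. The delicate part is that an element ``with a pair of branches $w0\rightarrow z$'' need not have $w0$ as a branch of its \emph{reduced} diagram, because a common suffix of $w0$ and $z$ may cancel under reduction; I would therefore have to argue, by descending through $T_{\La}^{\min}$ to a leaf that shares a label with an inner vertex and invoking Lemma \ref{lem_ez} on carefully chosen paths, that some element of $K$ really does branch at $w$ after reduction. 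Cells having an inner-vertex child carry no hypothesis from (2), and I would realize them by a top-down induction on the depth in $T_{\La}^{\min}$: once $w$ labels a path on $\La(K)$ and the cell at $w^+$ is realized, both $w0$ and $w1$ automatically label paths on $\La(K)$, and the induction propagates realizability downward from the root.

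For the direction ($\Rightarrow$) I would fix a bijective morphism $\phi$ from $\La(H)$ (all vertices identified) onto $\La$, so that $\La\cong\La(H)$. Condition (1) is then immediate: in any core every edge is $u^+$ for some path $u$, hence every edge and cell of $\La(H)$ occurs as a label of $T_{\La(H)}^{\min}$, and $\phi$ transports this to $T_{\La}^{\min}$. Condition (3) follows by pulling a pair of paths back to $\La(H)$, applying Lemma \ref{cor_ide} to obtain a diagram accepted by $\La(H)$ with the required pair of branches $u\rightarrow v$, and pushing it forward along $\phi$ to a diagram accepted by $\La$. The remaining condition (2) is the mirror image of the obstacle above. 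If some cell $\pi$ of $\La(H)$ had two \emph{distinct} bottom edges $a\neq b$, each reached by a unique path and neither being the top of any cell (so that (2) fails for its two leaf children), I would trace $\pi$ back to a cell $\pi'$ of a reduced generating diagram $\Delta$ that produces it; uniqueness of the paths to $a$ and to $b$ forces the two positive branches $w0,w1$ running into these edges to be paired with the negative branches $w0,w1$, so $\Delta$ would carry consecutive branches $w0\to w0$ and $w1\to w1$. This is exactly a dipole of type $2$, contradicting that $\Delta$ is reduced; hence every core-automaton satisfies (2).
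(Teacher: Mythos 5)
Your overall architecture matches the paper's: the backward direction feeds condition (3) into Lemma \ref{cor_aut} and then proves surjectivity of the induced morphism $\phi\colon\La(K)\to\La$ by showing that enough paths of $T_{\La}^{\min}$ label paths on $\La(K)$, and the forward direction derives condition (2) from reducedness of generating diagrams via a dipole contradiction, with (1) and (3) following from the existence of paths to every edge and from Lemma \ref{cor_ide}. However, there is one genuine gap: the step you yourself flag as ``the delicate part'' --- showing that some element of $K$ really does branch at $w$ after reduction --- is left open, and it is precisely the crux of the backward direction. The missing idea is the defining minimality property of $T_{\La}^{\min}$: extend $v$ to a path $w$ whose endpoint is a leaf with a leaf brother, use (2) to get a distinct vertex $(w')^+$ sharing its label, and take the reduced form $\Delta'$ of $\Delta_{w,w'}$. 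If the pair of branches $w\to w'$ did \emph{not} survive reduction, you would have $w\equiv w_1t$, $w'\equiv w_2t$ with $t$ non-empty and $w_1\to w_2$ a pair of branches of $\Delta'$; since $\Delta'$ is accepted by $\La$, the vertices $w_1^+$ and $w_2^+$ would be two \emph{distinct inner} vertices of $T_{\La}^{\min}$ sharing a label, contradicting the defining maximality/minimality of $T_{\La}^{\min}$. Without invoking this property your ``descend to a leaf and carefully choose paths'' plan has no mechanism to rule out cancellation, so the argument does not close.

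Two smaller inaccuracies in your forward direction: the uniqueness of the path reaching the leaf label in $T_{\La}^{\min}$ does \emph{not} force the positive and negative branches to coincide (a label occurring once in $T_{\La}^{\min}$ can still occur several times in $T_{\La}$, since carets get duplicated under inner vertices), so you cannot assert the branches are $w0\to w0$ and $w1\to w1$. What you do get, as in the paper, is consecutive pairs $u0\to v0$ and $u1\to v1$ for possibly different paths $u,v$ terminating at the top edge of the relevant cells; this is still enough, because the positive cell of $\Delta^+$ under $u0,u1$ and the negative cell of $\Delta^-$ over $v0,v1$ then share their middle $1$-path and form a dipole. Also, by the paper's conventions that dipole is of type $1$, not type $2$ (a positive $(x,x^2)$-cell followed by a negative $(x^2,x)$-cell); this is only a labelling slip, since a reduced diagram contains no dipoles of either type.
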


\begin{proof}
If $K$ is a subgroup of $F$ then the minimal tree associated with $\La(K)$ satisfies conditions (1)-(3) from the lemma. Indeed, for condition (1) we note that for any edge $e$ (resp. cell $\pi$) in $\La(K)$ there is a path $u$ on $\La(K)$ terminating on the edge $e$ (resp. ``passing through'' the cell $\pi$). Thus, there is a vertex (resp. caret) in $T_{\La(K)}$ and thus, in $T_{\La(K)}^{\min}$, labeled by the edge $e$ (resp. in accordance with the top and bottom $1$-paths of $\pi$). 

For condition (2), assume by contradiction that $x$ is an inner vertex of $T_{\La(K)}^{\min}$ such that every descendant $y$ of $x$ is identified only with itself. 
Let $e\equiv \lab(x)$ and note that if some vertex labeled $e$ in $T_{\La(K)}$ has a descendant labeled  $e'$ in $T_{\La(K)}$, then $e'$ must label some descendant of $x$ in $T_{\La(K)}^\min$. (Indeed, that can be proved by induction on the length of the simple path in $T_{\La(K)}$ from the vertex labeled $e$ to the vertex labeled $e'$.)
Let $y_1$ and $y_2$ be the left child and right child of $x$ in $T_{\La(K)}^{\min}$, respectively and let $e_1\equiv\lab(y_1)$ and  $e_2\equiv\lab(y_2)$.
Note that in $\La(K)$, $e_1$ and $e_2$ form the bottom $1$-path of a cell $\pi$, whose top edge is $e$. Note also  that $e_1$ and $e_2$ are not bottom edges of any cell in $\La(K)$, other than $\pi$, and that $e_1,e_2$ and $e$ are pairwise distinct. By the construction of $\La(K)$, there is a reduced diagram $\Delta$ in $K$ with a pair of branches $u_1\rightarrow v_1$ such that $u_1$ has an initial subpath $u_1'$ such that ${u_1'}$ labels a path on $\La(K)$ with ${u_1'}^+=e_1$ (indeed, that is true for any edge of $\La(K)$). Since $e_1$ is not on the bottom path of any cell other than $\pi$ and $e_1$ is the left bottom edge of $\pi$, $u_1'$ must be of the form $u_1'\equiv u0$, where $u^+=e$ in $\La(K)$. Let $w_1,\dots,w_r$ be the positive branches of $\Delta$ with prefix $u$. There is a pair of consecutive branches $w_i,w_{i+1}$ such that the edges $w_i^+$, $w_{i+1}^+$ on the bottom $1$-path of $\Delta^+$ form the bottom $1$-path of an $(x,x^2)$-cell $\pi_1$ in $\Delta^+$. Let $e_3= w_i^+$ and $e_4= w_{i+1}^+$ in $\La(K)$ and note that $e_3$ and $e_4$ form the bottom path of a cell $\pi'$ in $\La(K)$. Note also that in $T_{\La(K)}$ the edges $e_3$ and $e_4$ label the vertices $w_i^+$ and $w_{i+1}^+$ which are descendants of the vertex $u^+$ labeled $e$.  Hence,  $e_3$ and $e_4$ label descendants of the vertex $x$ in $T_{\La(K)}^\min$.  By the assumption on all descendants of the vertex $x$ that implies that the edges $e_3$ and $e_4$ are not bottom edges of any cell in $\La(K)$, other than $\pi'$. Let $w'_i$ and $w'_{i+1}$ be the branches of $\Delta^-$ such that $w_i\to w'_i$ and $w_{i+1}\to w_{i+1}'$ are pairs of branches of $\Delta$. Since $\Delta$ is accepted by $\La(K)$, we must have ${w_i'}^+={w_i^+}=e_3$ and ${w_{i+1}'}^+=w_{i+1}^+=e_4$ in $\La(K)$. Since $e_3$ is the left bottom edge of $\pi'$ and is not on the bottom path of any other cell in $\La(K)$, the last digit of $w_i'$ must be $0$. Similarly, the last digit of $w_{i+1}'$ must be $1$. Since the corresponding branches  $w_i$ and $w_{i+1}$ of $\Delta$ also end with  the digits $0$ and $1$, respectively, the diagram $\Delta$ is not reduced, in contradiction to the choice of $\Delta$. Hence, Condition (2) holds for $\La(K)$.
Condition (3) holds for $\La(K)$ by Lemma \ref{cor_ide}.

In the other direction, assume that conditions (1)-(3) are satisfied. Let $K$ be the subgroup of $F$ generated by the diagrams $\Delta_{u,v}$ from condition (3). By Lemma \ref{cor_aut}, the subgroup of $F$ accepted by $\La$ is $\Cl(K)$. Let $\phi$ be the injective morphism from $\La(K)$, with all vertices identified, to $\La$, with all vertices identified, constructed in the proof of Lemma \ref{cor_aut}. It suffices to prove that $\phi$ is surjective. Below, we do not distinguish between edges or cells of $\La(K)$ (resp. $\La$) and edges or cells of the $2$-automaton with identified vertices. In particular, a path on $\La(K)$ (resp. $\La$) can be viewed as a path on the $2$-automaton with identified vertices.  

We observe that $\phi$ induces a mapping from paths on $\La(K)$ to paths on $\La$ which preserves the labels of paths. 
To show that $\phi$ is surjective, it suffices to show that for any path $u$ on $\La$, the word $u$ labels a path on $\La(K)$. In fact, it suffices to consider paths $u$ which correspond to paths on $T_{\La}^{\min}$. Indeed, by condition (1), for every edge $e$ (resp. cell $\pi$) in $\La$, there is a path $u$ in $T_{\La}^{\min}$ such that $e$ is the last edge (resp. $\pi$ is the last cell) visited by the path $u$ on $\La$. Then, if $u$  also labels a path on $\La(K)$ with $u^+=e'$ (resp. $\pi'$ being the last cell through which the path $u$ passes) then $\phi(e')=e$ and $\phi(\pi')=\pi$. 

Thus, let $u$ be a path on $T_{\La}^{\min}$. We write $u\equiv va$ where $a\in\{0,1\}$ is the last digit of $u$. Then $v^+$ is an inner vertex of $T_{\La}^{\min}$. As such, by Condition (2), $v$ can be prolonged to a path $w\equiv vs$ on $T_\La^{\min}$, for a non empty suffix $s$ such that $w^+$ shares a label with some other vertex of $T_{\La}^\min$. It suffices to show that $w$ labels a path on $\La(K)$. Indeed, in that case, since $s\neq\emptyset$, either $v0$ or $v1$ is an initial subpath of $w$, and as such labels a path on $\La(K)$. That implies that both $v0$ and $v1$ label paths on $\La(K)$ and as such, that $u\equiv va$ labels a path on $\La(K)$. 
 
Let ${w'^+}$ be a  vertex of $T_{\La}^\min$, other than $w^+$, which shares a label with $w^+$.
By condition (3) there is a diagram $\Delta=\Delta_{w,w'}$ accepted by $\La$ with a pair of branches $w\rightarrow w'$. By the definition of $K$, we can assume that $\Delta\in K$. Let $\Delta'$ be the reduced diagram equivalent to it. We claim that $w\rightarrow w'$ is a pair of branches of $\Delta'$. Otherwise, there are prefixes $w_1,w_2$ and a non empty common suffix $t$ such that $w\equiv w_1t$, $w'\equiv w_2t$ and $w_1\rightarrow w_2$ is a pair of branches of $\Delta'$. We note that $w_1^+$ and $w_2^+$ are distinct inner vertices of $T_{\La}^{\min}$, as $t$ is not empty.
Since $\Delta'$ is accepted by $\La$, it follows that the vertices $(w_1)^+$ and $(w_2)^+$ of $\T_{\La}^{\min}$ share a label, in contradiction to the definition of $T_{\La}^{\min}$ as the maximal sutree of $T_{\La}$ where distinct inner vertices do not share their label.  Thus, $w\rightarrow w'$ is a pair of branches of $\Delta'$. Since $\Delta'$ is a reduced diagram in $K$ it is accepted by $\La(K)$. In particular, the positive branch $w$ labels a path on $\La(K)$ as required. 
\end{proof}

Let $\La$ be a core-automaton with distinguished edge $p_{\La}$ and let $u$ be a path on $T_{\La}^{\min}$. We denote by $T_u$ the minimal labeled rooted subtree of $T_{\La}^{\min}$ with branch $u$. Assume that $u$ is the $k^{th}$ branch of $T_{u}$ and that $\lab(u^+)\equiv  e$ in $T_u$. Then reading the labels of leaves of $T_u$ from left to right yields a word $p_ueq_u$ in the alphabet $E$ (where $E$ is the set of edges of $\La$) where $|p_u|=k-1$. The pair of words $(p_u,q_u)$ is the \emph{pair of words associated with the path $u$ on $T_{\La}^{\min}$}. We note that $p_ueq_u$ is a $1$-path in $\La$ with $\iota(p_ueq_u)=\iota(p_{\La})$ and $\tau(p_ueq_u)=\tau(p_{\La})$.

\begin{Lemma}\label{cor_sem}
Let $\La$ be a folded automaton and let $T_{\La}^{\min}$ be a minimal associated tree. Let $\P$ be the semigroup presentation associated with the directed $2$-complex $\La$ (as in Section \ref{ss:alg}) and let $S$ be the semigroup given by $\P$. 
Let $u$ and $v$ be a pair of paths in $T_{\La}^{\min}$ such that $\lab(u^+)\equiv\lab(v^+)$ and let $(p_u,q_u)$ and $(p_v,q_v)$ be the associated pairs of words. Then there is a diagram $\Delta_{u,v}$ accepted by $\La$ with a pair of branches $u\rightarrow v$ if and only if $p_u=p_v$ and $q_u=q_v$ in the semigroup $S$. 
\end{Lemma}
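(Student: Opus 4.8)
The plan is to read both the minimal tree datum $(p_u,q_u)$ and the existence of $\Delta_{u,v}$ off the \emph{horizontal $1$-path} of a diagram realizing the pair of branches $u\rightarrow v$, and then to reconcile two such horizontal paths inside the semigroup $S$. Throughout I view diagrams accepted by $\La$ as diagrams over $\La$ (as in Lemma \ref{diagram_group}), so that edges and cells carry labels in $E$, and I use freely that $\La$ is folded, so each finite binary word labels at most one path on $\La$, and the dictionary from Section \ref{ss:alg}: two words over $E$ are equal in $S$ if and only if there is a diagram over $\La$ (with all vertices identified) between them.

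First I would prove the forward direction. Take a diagram $\Delta$ accepted by $\La$ with pair of branches $u\rightarrow v$, assumed (as always) to have no dipoles of type $2$, so $\Delta\equiv\Delta^+\circ\Delta^-$. View $\Delta^+$ as a positive tree-diagram over $\La$ with positive branches $u_1,\dots,u_n$, where $u=u_k$, and let $W\equiv w_1\cdots w_n$ be the label of its bottom path (the horizontal $1$-path), so that $w_i\equiv\lab(u_i^+)$ and $w_k\equiv\lab(u^+)\equiv e$. Since $\La$ is folded, the labels along the branch $u$ in $\Delta^+$ agree with the labels along $u$ in $T_{\La}^{\min}$; hence the minimal subtree $T_u$ embeds label-preservingly into $\Delta^+$ as the subtree spanned by the prefixes of $u$ and their off-path siblings. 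Thus $\Delta^+$ is obtained from $T_u$ by attaching, below each leaf $c$ of $T_u$, a positive tree-diagram over $\La$; each such subtree is a $(c,W_c)$-diagram, whence $c=W_c$ in $S$. Reading the leaves of $T_u$ to the left of $u^+$ spells $p_u$, while their expansions spell $w_1\cdots w_{k-1}$, so concatenating the relations $c=W_c$ gives $p_u=w_1\cdots w_{k-1}$ in $S$, and symmetrically $q_u=w_{k+1}\cdots w_n$ in $S$. Running the identical argument on the positive tree-diagram $(\Delta^-)^{-1}$, whose $k$-th branch is $v$ and which shares the horizontal $1$-path $W$ with $\Delta^+$, yields $p_v=w_1\cdots w_{k-1}$ and $q_v=w_{k+1}\cdots w_n$ in $S$. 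Therefore $p_u=p_v$ and $q_u=q_v$ in $S$.

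For the converse, assume $p_u=p_v$ and $q_u=q_v$ in $S$. By the remark in Section \ref{ss:alg} that equal words of $S$ are joined by a positive derivation followed by a negative derivation, there are words $P^*,Q^*$ and positive diagrams over $\La$ expanding $p_u$ and $p_v$ to the common word $P^*$, and $q_u$ and $q_v$ to the common word $Q^*$; call them $\Phi_u,\Phi_v$ and $\Psi_u,\Psi_v$. Writing $e\equiv\lab(u^+)\equiv\lab(v^+)$, I would then set $\Delta^+\equiv T_u\circ(\Phi_u+\varepsilon(e)+\Psi_u)$ and $(\Delta^-)^{-1}\equiv T_v\circ(\Phi_v+\varepsilon(e)+\Psi_v)$. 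Both are positive tree-diagrams over $\La$ from $p=p_{\La}$ to the same word $P^*eQ^*$, in which $u$ (respectively $v$) is a branch terminating at the middle edge $e$, occupying position $|P^*|+1$ of the bottom path. Gluing $\Delta\equiv\Delta^+\circ\Delta^-$ along this common horizontal $1$-path produces a diagram over $\La$, hence one accepted by $\La$, having $u\rightarrow v$ as a pair of branches, as required.

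The main obstacle I expect is the forward direction: making precise that $T_u$ embeds in $\Delta^+$ with matching labels, and that the leaf-by-leaf expansions assemble into the \emph{single} semigroup identity $p_u=w_1\cdots w_{k-1}$. This rests on the uniqueness of labeled paths in the folded automaton and on the correspondence between positive tree-diagrams over $\La$ and positive derivations in $S$. In the converse, the corresponding delicate point is obtaining one horizontal word shared by the positive and negative halves, which is exactly what the positive-then-negative normal form for $S$ supplies.
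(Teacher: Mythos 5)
Your proof is correct and follows essentially the same route as the paper's: in both directions the diagram is decomposed as the minimal tree-diagram $\Psi_u$ (your $T_u$) on top, a sum of a $(p_u,\cdot)$-piece, the middle edge $e$, and a $(q_u,\cdot)$-piece in the middle, and $\Psi_v^{-1}$ on the bottom, with the forward direction reading the two semigroup identities off this decomposition and the converse assembling it. The only cosmetic difference is in the converse, where you invoke the positive-then-negative normal form to produce common expansions $P^*,Q^*$; the paper simply takes a $(p_u,p_v)$-diagram $\Delta_1$ and a $(q_u,q_v)$-diagram $\Delta_2$ and forms $\Psi_u\circ(\Delta_1+\varepsilon(e)+\Delta_2)\circ\Psi_v^{-1}$, which is your diagram after reassociation.
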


\begin{proof}
Let $e$ be the common label of $u^+$ and $v^+$ in $T_{\La}^{\min}$. We let $\Psi_u$ and $\Psi_v$ be the minimal tree-diagrams over $\kk$ with branches $u$ and $v$ respectively. Since $u$ and $v$ label paths on $\La$, $\Psi_u$ and $\Psi_v$ can be naturally viewed as diagrams over $\La$ with $\lab(\topp(\Psi_u))\equiv \lab(\topp(\Psi_v))\equiv p_{\La}$. Clearly, $\lab(\bott(\Psi_u))\equiv p_ueq_u$ and $\lab(\bott(\Psi_v))\equiv p_veq_v$. 

Let $\Delta_{u,v}$ be a diagram in $F$ accepted by $\La$ with a pair of branches $u\rightarrow v$. Then $\Psi_u$ can be viewed as a subdiagram of $\Delta^+$ with $\topp(\Psi_u)=\topp(\Delta^+)$ and $\Psi_v^{-1}$ can be viewed as a subdiagram of $\Delta^-$ with $\bott(\Psi_v^{-1})=\bott(\Delta^-)$. Since $\Delta_{u,v}$ is accepted by $\La$, we can consider it as a diagram over $\La$. If one removes from $\Delta$ the subdiagrams $\Psi_u$ (minus its bottom $1$-path) and $\Psi_v^{-1}$ (minus its top $1$-path) as well as the terminal edge of the positive branch $u$ and the negative branch $v$, one remains with $2$ subdiagrams; a $(p_u,p_v)$-diagram and a $(q_u,q_v)$-diagram. Thus $p_u=p_v$ and $q_u=q_v$ in $S$. 

In the opposite direction, assume that $p_u=p_v$ and $q_u=q_v$ in $S$. Then there is a $(p_u,p_v)$-diagram $\Delta_1$ over $\La$ and a $(q_u,q_v)$-diagram $\Delta_2$ over $\La$ (indeed, $p_u,p_v,q_u,q_v$ are $1$-paths in $\La$). Then $\psi_u\circ (\Delta_1+\varepsilon(e)+\Delta_2)\circ\Psi_v^{-1}$ is a diagram in $\DG(\La,p_{\La})$ with a pair of branches $u\rightarrow v$. 
\end{proof}

In general, Lemma \ref{3con} does not give an algorithm for deciding whether a folded-automaton is a core-automaton since we do not have an algorithm for deciding (in the notation of Lemma \ref{cor_sem}) whether two words $w_1$ and $w_2$ are equivalent in $S$.  However, the condition is often useful.

Let $T$ be a labeled binary tree such that (1) no two inner vertices of $T$ share a label and (2) there are no distinct carets $C_1$ and $C_2$ in $T$ such that the label of each leaf of $C_1$ coincides with the label of the respective leaf of $C_2$.
Then there is a folded automaton $\La$, given by a minimal tree $T_{\La}^{\min}$ such that $T_{\La}^{\min}=T$. Thus, in the following example and the ones in Section \ref{ss:max}, we can talk about a folded automaton given by a minimal tree $T$, as long as $T$ satisfies conditions (1) and (2). 

\begin{Example}
Let $\La$ be a folded-automaton given by the following minimal associated tree $T_{\La}^{\min}$.

\Tree[.$e$ [.$f$ [.$f$ ] [.$h$ [.$k$ ] [.$k$ ]  ] ] [.$g$ [.$h$ ] [.$g$  ] ] ]

\noindent Then $\La$ is not a core-automaton. 
\end{Example}

\begin{proof}
We consider the paths $u\equiv 010$ and $v\equiv 011$ on $T_{\La}^{\min}$. They satisfy $\lab(u^+)\equiv \lab(v^+)\equiv k$. In the notation of Lemma \ref{cor_sem}, we have $(p_u,q_u)\equiv (f,kg)$ and $(p_v,q_v)\equiv (fk,g)$. 
Let $\P=\la e,f,g,h,k\mid fg\rightarrow e,fh\rightarrow f,hg\rightarrow g,kk\rightarrow h \ra$ be the semigroup presentation given by $\La$ and let $S$ be the semigroup with presentation $\mathcal P$. When $\P$ is viewed as a rewriting system, it is confluent and terminating. Since no relation from $\P$ is applicable to $fk$, nor to $f$, they are both reduced words over $\P$. Hence, they are not equivalent in $S$. Thus, by Lemmas \ref{3con} and \ref{cor_sem}, $\La$ is not a core-automaton.
\end{proof}


\subsection{Maximal subgroups of $F$ of infinite index}\label{ss:max}

\subsubsection{Construction of maximal subgroups of $F$ of infinite index which do not fix any number in $(0,1)$}

Recall that by \cite[Proposition 1.4]{Sav}, for every $\alpha\in(0,1)$, the stabilizer of $\alpha$, $H_{\{\alpha\}}$ is a maximal subgroup of $F$. Savchuk asked \cite[Problem 1.5]{Sav} whether all maximal subgroups of $F$ of infinite index are of this form. The core $\La(H)$ of a subgroup $H\le F$ was applied in \cite{GS1} to provide implicit examples of maximal subgroups of $F$ of infinite index which do not fix any number in $(0,1)$. Applying results from this paper, one can use the Stallings $2$-core to provide explicit examples of such maximal subgroups. Indeed, in \cite{GS1}, we showed that $H=\la x_0,x_1x_2x_1^{-1}\ra$ is (1) a proper subgroup of $F$ (2) does not fix any number in $(0,1)$ and (3) is not contained in any finite index subgroup of $F$. The conclusion was that every maximal subgroup of $F$ containing $H$ has infinite index in $F$ and does not fix any number in $(0,1)$. In this section we construct two explicit maximal subgroups of $F$ containing $H$. 

The idea in both constructions is similar. We start with the minimal tree $T_{\La(H)}^{\min}$ associated with $\La(H)$ and ``extend'' it to the minimal tree of some core-automaton $\La$ which accepts a larger group. We apply Lemmas \ref{3con} and \ref{cor_sem} to prove that $\La$ is a core automaton. Then there is a closed subgroup $K\le F$ such that $\La$ coincides with $\La(K)$ up to identification of vertices. Using the algorithm from Section \ref{ss:alg}, one can find a generating set of the closed subgroup $K$. Since $\La(K)$ and $\La$ coincide up to identification of vertices, if $\La$ is chosen properly, then using Theorem \ref{main}, one can show that for any $f\notin K$, we have $\la K,f\ra=F$, which implies that $K$ is a maximal subgroup of $F$. 

The minimal tree $T_{\La(H)}^{\min}$  for $H=\la x_0,x_1x_2x_1^{-1}\ra$ is the following. It was given in Remark \ref{rem:tree} with different labels.

\Tree[.$e$ [.$f$ [.$f$ ] [.$h$ ] ] [.$g$ [.$h$ [.$k$ ] [.$\ell$ [.$\ell$ ] [.$h$ ] ] ] [.$g$ ] ] ]

\begin{Example}\label{ex:1}
The group $K=\la x_0,x_1x_2x_1^{-1},x_1^2x_2^{-1}\ra$ is a maximal subgroup of $F$ which contains $H$. 
\end{Example}

\begin{proof}
Given the group $K$, one could start the proof by finding the core $\La(K)$. We do not do so and instead explain how we constructed the group $K$. 
We let $\La$ be the folded-automaton given by the following minimal tree $T_{\La}^{\min}$. 

\Tree[.$e$ [.$f$ [.$f$ ] [.$h$ ] ] [.$g$ [.$h$ [.$k$ [.$h$ ] [.$k$ ] ] [.$\ell$ [.$\ell$ ] [.$h$ ] ] ] [.$g$ ] ] ]

The tree $T_{\La}^{\min}$ can be viewed as an ``extension'' of $T_{\La(H)}^{\min}$. Indeed, $T_{\La(H)}^{\min}$ is a rooted subtree of $T_{\La}^{\min}$. An immediate implication is that $H$ is accepted by the folded automaton $\La$. Indeed, every diagram accepted by $\La(H)$ is in particular accepted by $\La$.  

Next, we prove that $\La$ is a core-automaton. Indeed, $\La$ satisfies condition (1) from Lemma \ref{3con} by definition. It is easy to verify that it satisfies condition (2). To check condition (3) we use Lemma \ref{cor_sem}. Let $\mathcal P$ be the semigroup presentation given by $\La$ and let $S$ be the semigroup with presentation $\P$. Let $u$ and $v$ be paths on $\T_{\La}^{\min}$ such that $\lab(u^+)\equiv \lab(v^+)$. If $u^+$ and $v^+$ are vertices of the rooted subtree $T_{\La(H)}^{\min}$, then condition (3) of Lemma \ref{3con} holds as $\La(H)$ is a core automaton and any diagram accepted by $\La(H)$ is accepted by $\La$.  
 Thus, we can assume that $u\equiv 1000$ or $u\equiv 1001$. In the first case, $\lab(u^+)\equiv h$. By transitivity arguments, it suffices to check that the condition in Lemma \ref{3con}(3) holds for $u$ and $v\equiv 01$ (we could choose $v$ to be any path to a vertex with label $h$ in $T_{\La(H)}^{\min}$, since condition (3) in Lemma \ref{3con} holds for any pair of such paths in $T_{\La(H)}^{\min}$). In the notation of Lemma \ref{cor_sem}, we have $(p_u,q_u)\equiv (f,k\ell g)$ and $(p_v,q_v)\equiv (f,g)$. Thus, $p_u=p_v$ in the semigroup $S$. Similarly, $q_v\equiv g=hg=(k\ell) g\equiv q_u$ in $S$. The case where $u\equiv 1001$ can be treated in a similar way. Thus, $\La$ is a core automaton. 

Let $K$ be the closed subgroup of $F$ such that $\La$ coincides with $\La(K)$ up to identification of vertices. Then the core $\La(K)$ is given by the above minimal tree. One can apply the simplified algorithm from Section \ref{ss:alg} to find a generating set of the closed group $K$. One gets that $K=\la x_0,x_1x_2x_1^{-1},x_1^2x_2^{-1}\ra$. 

To prove that $K$ is a maximal subgroup of $F$ we use an argument similar to the one in Theorem \ref{B}.
Namely, we prove that for all $f\in F\setminus K$, $\Cl(\la K,f\ra)\supseteq [F,F]$. The proof is identical to the proof of Lemma \ref{eve_ide}, so we do not repeat it. We note that the group $H$, and thus $K$ and any subgroup of $F$ containing it, satisfies conditions (2) and (3) in Theorem \ref{main}. Indeed, $H$ is not contained in any finite index subgroup of $F$ and for $z=x_1x_2x_1^{-1}\in H$, $z$ fixes the dyadic fraction $\alpha=.101$, $z'(\alpha^-)=1$ and $z'(\alpha^+)=2$. Thus, for any $f\notin K$, the group $\la K,f\ra$ satisfies conditions (1)-(3) of Theorem \ref{main}. It follows that for any $f\notin K$, $\la K,f\ra=F$, which implies that $K$ is a maximal subgroup of $F$.
\end{proof}

\begin{Example}\label{ex:2}
The group $K=\la x_0,x_1x_2x_1^{-1},x_1^2x_2x_1^{-3},x_1^3x_2x_1^{-4}\ra$ is a maximal subgroup of $F$ containing $H$. 
\end{Example}

\begin{proof}
We let $\La$ be the folded-automaton given by the following minimal tree $T_{\La}^{\min}$. 

\Tree[.$e$ [.$f$ [.$f$ ] [.$h$ ] ] [.$g$ [.$h$ [.$k$ [.$a$ [.$a$ ] [.$c$ [.$c$ ] [.$c$ ] ] ] [.$b$ [.$c$ ] [.$b$ ] ] ] [.$\ell$ [.$\ell$ ] [.$h$ ] ] ] [.$g$ ] ] ]

 $T_{\La(H)}^{\min}$ is a rooted subtree of $T_{\La}^{\min}$. Thus, $H$ is accepted by $\La$. 

The proof that $\La$ is a core automaton can be done, as in Example \ref{ex:1}, by considering all pairs of paths $u$ and $v$ on $T_{\La}^{\min}$ with $\lab(u^+)\equiv \lab(v^+)$. Alternatively, we note that to construct the tree $T_{\La}^{\min}$, we started with minimal trees $T_{\La(H)}^{\min}$ and $T_{\La(F)}^{\min}$ associated with the cores $\La(H)$ and $\La(F)$ respectively (see Remark \ref{coreF2}), and identified the root of $T_{\La(F)}^{\min}$ with the leaf $(100)^+$ of $T_{\La(H)}^{\min}$ (which was not identified with any inner vertex of $T_{\La(H)}^{\min}$). Thus, if $u$ and $v$ are paths on $\La$ with $\lab(u^+)\equiv \lab(v^+)$, then $u^+$ and $v^+$ both belong to the rooted subtree $T_{\La(H)}^{\min}$ or both belong to the subtree of $T_{\La}^{\min}$ rooted at $(100)^+$. In the first (resp. second) case, condition (3) of Lemma \ref{3con} holds for $u$ and $v$ since the condition holds in $T_{\La(H)}^{\min}$ (resp. $T_{\La(F)}^{\min}).$

An application of the algorithm in Section \ref{ss:alg} shows that the closed group $K$ accepted by $\La$ is the one given in the example. 
As in Example \ref{ex:1}, any subgroup of $F$ containing $K$ satisfies conditions (2) and (3) in Theorem \ref{main}. Thus, the following lemma completes the proof of maximality of $K$ in $F$. 

\begin{Lemma}
Let $f\in F\setminus K$. Then $\Cl(\la K,f\ra)\supseteq [F,F]$. 
\end{Lemma}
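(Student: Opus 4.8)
The plan is to imitate the proof of Lemma~\ref{eve_ide}, with $K$ and $\La(K)$ in place of $B$ and $\La(B)$. First I would read the core $\La(K)$ off the minimal tree $T_{\La}^{\min}$ of Example~\ref{ex:2}: its edges are $e,f,g,h,k,\ell,a,b,c$; its positive cells are $e\to(f,g)$, $f\to(f,h)$, $g\to(h,g)$, $h\to(k,\ell)$, $k\to(a,b)$, $\ell\to(\ell,h)$, $a\to(a,c)$, $b\to(c,b)$, $c\to(c,c)$; its inner edges are $h,k,\ell,a,b,c$; and every edge is the top edge of some positive cell. In particular every finite binary word labels a path on $\La(K)$, and this property is inherited by any $2$-automaton onto which $\La(K)$ surjects, so condition~$(1')$ of Lemma~\ref{fin_ind} will hold automatically. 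Since $K=\Cl(K)$, for $f\in F\setminus K$ the reduced diagram $\Delta$ of $f$ is not accepted by $\La(K)$, hence $\Delta$ has a pair of branches $u_1\to v_1$ with $u_1^+\neq v_1^+$ in $\La(K)$; as in Lemma~\ref{eve_ide}, $u_1$ and $v_1$ must each contain both digits $0$ and $1$ (the leftmost and rightmost branches of a homeomorphism terminate on the boundary edges $f$ and $g$), so $u_1^+$ and $v_1^+$ are inner edges by Corollary~\ref{inner}.

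Setting $G=\la K,f\ra$, there is a natural surjective morphism $\psi\colon\La(K)\to\La(G)$ taking inner edges to inner edges; thus every edge of $\La(G)$ is again the top of a positive cell, and by Lemma~\ref{fin_ind} it remains only to prove that $\La(G)$ has a unique inner edge, i.e.\ that $\psi$ identifies all of $h,k,\ell,a,b,c$. The identifications $\psi$ makes on $\La(K)$ form the congruence on edges generated by the seeds $u_i^+\sim v_i^+$ arising from the pairs of branches of $\Delta$, closed under the two forced foldings of a folded automaton: equal top edges force equal bottom edges (type~$1$), and an equal pair of bottom edges forces equal top edges (type~$2$). The whole content of the lemma is that this congruence collapses the six inner edges to one.

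This collapse is the main obstacle, and it is strictly harder than in Lemma~\ref{eve_ide}. There the three inner edges of $\La(B)$ formed a single block closed under the child maps whose sink-edge pointed back up to its siblings, so a \emph{single} seed already forced a total collapse; here the four edges $k,a,b,c$ form a copy of $\La(F)$ that is invariant under the child maps (note $c\to(c,c)$ is a sink), and e.g.\ the lone seed $a\sim b$ propagates only to $\{a,b,c,k\}$, leaving $h$ and $\ell$ separate. Hence one bad pair is not enough. I would get around this with Remark~\ref{cons}: a bad pair $u_1\to v_1$ cannot occur in isolation in a reduced diagram, because the next pair of branches $u_2\to v_2$ is forced by the shapes of $u_1$ and $v_1$ to terminate on edges that again differ in $\La(K)$, typically bringing in $\ell$ or $h$; this yields a second seed which, joined to the first, escapes the $\La(F)$-block. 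I would then run a short case analysis according to whether $\{u_1^+,v_1^+\}$ is contained in $\{k,a,b,c\}$ or already meets $\{h,\ell\}$, and check in each case that the two linked seeds generate, under the type~$1$ and type~$2$ foldings, the full identification $h\sim k\sim\ell\sim a\sim b\sim c$. Lemma~\ref{fin_ind} then gives $[F,F]\subseteq\Cl(G)$, as required (and, as at the close of Example~\ref{ex:1}, Theorem~\ref{main1} upgrades this to $\la K,f\ra=F$, proving maximality of $K$).
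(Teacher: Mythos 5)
Your setup coincides with the paper's: you read $\La(K)$ off the tree correctly, reduce via Lemma \ref{fin_ind} and the surjection $\psi\colon\La(K)\to\La(\la K,f\ra)$ to showing that all six inner edges are identified, and you correctly isolate the hard case --- a pair of branches $u_1\rightarrow v_1$ of the reduced diagram of $f$ with $u_1^+\neq v_1^+$ both lying in the $\La(F)$-block $\{k,a,b,c\}$, where (as you rightly observe) a single seed closed under foldings does not leave the block. The gap is in your mechanism for escaping it. The claim that the adjacent pair of branches ``is forced by the shapes of $u_1$ and $v_1$ to terminate on edges that again differ in $\La(K)$'' is false. Take $u_1\equiv 01000$ and $v_1\equiv 01001$, so $u_1^+=a$ and $v_1^+=c$ is a bad pair inside the block. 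By Remark \ref{cons} the next positive branch has the form $u_2\equiv 010010^{n}$ and the next negative branch the form $v_2\equiv 01010^{n'}$; one computes $u_2^+=c$ for every $n\ge 0$ and $v_2^+=c$ for every $n'\ge 1$, so the next pair may terminate on the \emph{same} edge and contribute no seed at all. (In this configuration the \emph{previous} pair happens to escape the block, but that is an accident of the example; you have not shown, and it is not clear how to show by purely local branch-chasing, that some adjacent pair always does.)

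The paper closes this case by a genuinely different, global argument. If the congruence generated by the block-internal seeds were all of the kernel of $\psi$, then $\La(\la K,f\ra)$ would be the explicit quotient automaton whose minimal tree is displayed in the proof, and that automaton is shown \emph{not} to be a core-automaton: condition (3) of Lemma \ref{3con} fails via Lemma \ref{cor_sem}, because the words $\ell g$ and $b\ell g$ are not equal in the associated semigroup (equivalently, $\ell$ and $b$ lie in different connected components of the graph of Definition \ref{Gamma}, by Proposition \ref{inner_vertex}). Since $\La(\la K,f\ra)$ \emph{is} the core of a subgroup, there must be a further identification; iterating this once more forces $h$ or $\ell$ into the congruence, after which the folding propagation you describe does finish the job. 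To repair your proof you would need either to import this core-automaton argument, or to carry out a substantially more delicate (and currently missing) analysis showing that the seeds arising from \emph{all} pairs of branches of a reduced diagram not accepted by $\La(K)$ cannot all lie in $\{k,a,b,c\}\times\{k,a,b,c\}$ together with trivial pairs.
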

 
\begin{proof}
We let $M=\la K,f\ra$. The proof is similar, but more complicated than the proof of Lemma \ref{eve_ide}. The core $\La(K)$ is described by the minimal tree $T_{\La}^{\min}$ given above. We note that every edge in $\La(K)$ is the top edge of some positive cell and that $\La(K)$ has a unique left boundary edge and a unique right boundary edge. Thus, as in the proof of Lemma \ref{eve_ide}, there is a surjective morphism $\phi$ from $\La(K)$ to $\La(M)$. In addition, for some pair of distinct inner edges $e_1\neq e_2$ in $\La(K)$, we must have $\phi(e_1)=\phi(e_2)$ in $\La(M)$. The inner edges of $\La(K)$ are $h,\ell,k,a,b,c$. To get the result, we have to go over all choices of inner edges $e_1\neq e_2$ in $\La(K)$ and show that if $\phi(e_1)=\phi(e_2)$ in $\La(M)$, then $\phi(h)=\phi(\ell)=\phi(k)=\phi(a)=\phi(b)=\phi(c)$, so that there is only one inner edge in $\La(M)$. As there are $15$ possible choices of pairs $e_1\neq e_2$, we describe only some of them. The other cases can be verified in a similar way. 

First, assume that $e_1=h$ and $e_2=k$, so that $\phi(h)=\phi(k)$. As in the proof of Lemma \ref{eve_ide}, we consider the implications to the image of other inner edges in $\La(K)$. In $T_{\La(K)}^{\min}$, a vertex labeled $h$ has a left child labeled $k$ which has a left child labeled $a$. Thus, the identification of $h$ and $k$ under $\phi$ implies that $\phi(k)=\phi(a)$. Considering carets in $T_{\La(K)}^{\min}$ with top vertices labeled $h$, $k$ and $a$, we see that the right children must all be identified under $\phi$. Thus, $\phi(\ell)=\phi(b)=\phi(c)$. Again, considering right children, we get that $\phi(h)=\phi(b)=\phi(c)$. All together, we get that $\phi(h)=\phi(\ell)=\phi(k)=\phi(a)=\phi(b)=\phi(c)$ as required. In a similar manner, one can show that if $e_1=h$ and $e_2$ is any other inner edge of $\La(K)$, then $\La(M)$ has a unique inner edge. Indeed, similar arguments also work when $e_1=\ell$ and $e_2\neq \ell$. Thus, we only have to consider the case where $e_1,e_2\in\{k,a,b,c\}$. We consider the case $e_1=k$ and $e_2=a$, the other cases being similar. 

By assumption $\phi(k)=\phi(a)$. That implies that the right children of $\phi(a)$ and $\phi(k)$ satisfy $\phi(b)=\phi(c)$. We claim that there must be at least one more pair of edges  of $\La(K)$ (other than $\{k,a\}$ and $\{b,c\}$) with the same image under $\phi$. Otherwise, the core $\La(M)$ is given by the following minimal tree. 

\Tree[.$e$ [.$f$ [.$f$ ] [.$h$ ] ] [.$g$ [.$h$ [.$k$ [.$k$ ] [.$b$ [.$b$ ] [.$b$ ] ] ] [.$\ell$ [.$\ell$ ] [.$h$ ] ] ] [.$g$ ] ] ]

\noindent To get a contradiction it suffices to note that the above tree is not a minimal tree associated with a core-automaton. Indeed, consider the paths $u\equiv 1001$ and $v\equiv 10010$ on $T_{\La(M)}^{\min}$. Then, $\lab(u^+)\equiv \lab(v^+)\equiv b$. The pairs of words associated with the paths $u$ and $v$ in $T_{\La(M)}^{\min}$ are $(p_u,q_u)\equiv(fk,\ell g)$ and $(p_v,q_v)\equiv(fk,b\ell g)$. We claim that $q_u\neq q_v$ in the semigroup $S$ with presentation $\P$ associated with $\La(M)$. Indeed, if $\ell g=b\ell g$ in $S$, then, since $\ell g$ is a $1$-path in $\La(M)$, there is an $(\ell g,b\ell g)$-diagram $\Delta$ over $\La(M)$. The diagram $\Delta$ implies that $\iota(\ell g)=\iota(b\ell g)$ in $\La(M)$. In particular, the vertices $\iota(\ell)$ and $\iota(b)$ coincide in $\La(M)$, in contradiction to $\ell$ and $b$ not being in the same connected component of $\Gamma(\La(M))$ (see Proposition \ref{inner_vertex}). 

Hence, at least one more pair of inner edges is identified in the transition from $\La(K)$ to $\La(M)$. If $h$ or $\ell$ is one of the edges in the pair, we are done. Thus, we can assume that $\phi(k)=\phi(b)$. We claim that in this case, again, there must be another pair of identified edges. Otherwise, $\La(M)$ is described by the following associated minimal tree. 

\Tree[.$e$ [.$f$ [.$f$ ] [.$h$ ] ] [.$g$ [.$h$ [.$k$ [.$k$ ] [.$k$ ] ] [.$\ell$ [.$\ell$ ] [.$h$ ] ] ] [.$g$ ] ] ]

As above, one can show that $T_{\La(M)}^{\min}$ is not associated with a core-automaton to get the required contradiction by considering the pair of paths $u\equiv 100$ and $v\equiv 1000$ on $T_{\La(M)}^{\min}$. Therefore, at least one more pair of inner edges of ${\La(K)}$ is identified in 
${\La(M)}$. In particular, either $h$ or $\ell$ is identified with some other edge of $\La(K)$. As noted above, that implies that $\La(M)$ has a unique inner edge and completes the proof of the lemma. 
\end{proof}
\end{proof}

\subsubsection{A maximal subgroup of $F$ which acts transitively on the set $\mathcal D$}


The following can be viewed as a strong counter example to Savchuk's problem \cite[Problem 1.5]{Sav}.

\begin{Proposition}\label{lem:max}
The group $M=\la x_0,x_1x_2x_1^{-3},x_1x_2x_3x_2^{-3}x_1^{-1}\ra$ is a maximal subgroup of infinite index in $F$ which acts transitively on the set of finite dyadic fractions $\mathcal D$. 
\end{Proposition}

\begin{proof}
The core $\La(M)$ is given by the following associated minimal tree.

\Tree[.$e$ [.$f$ [.$f$ ] [.$h$ ] ] [.$g$ [.$h$ [.$k$ [.$m$ [.$k$ ] [.$k$ ] ] [.$h$ ] ] [.$h$ ] ] [.$g$ ] ] ]

The group $M$ was chosen originally to be the subgroup of $F$ accepted by this core (the algorithm from Section \ref{ss:alg} was used to find the given generating set). Hence $M=\Cl(M)$. 

The proof that $M$ is a maximal subgroup of $F$ of infinite index is almost identical to the proof of maximality of $K$ from Example \ref{ex:1}. Indeed, the proof that $M[F,F]=F$ and that for each $f\notin M$, we have $[F,F]\le \Cl(\la M,f\ra)$ is identical to the proof in Example \ref{ex:1}. To prove that for any $f\notin M$ there is an element $z\in \la M,f\ra$ which fixes a finite dyadic fraction $\alpha\in (0,1)$ and such that $z'(\alpha^+)=2$ and $z'(\alpha^-)=1$, we observe that the element 
$y=x_1x_2x_1^{-3} (x_1x_2x_3x_2^{-3}x_1^{-1})^{-1}\in M$ fixes the finite dyadic fraction $\beta=.111$ and has slope $y'(\beta^-)=2$ and $y'(\beta^+)=1$. Since for each $f\notin F$, $[F,F]\le \Cl(\la M,f\ra)$, one can apply Lemma \ref{1221} to get the existence of an element $z\in\la M,f\ra$ as described. 

It remains to note that every edge in $\La(M)$ is the top edge of some positive cell and that the graph $\Gamma(M)$ given in Figure \ref{pic1} is connected and apply Theorem \ref{thm:tra}.

\begin{figure}[h]
	\centering
	\includegraphics[width=0.7\columnwidth]{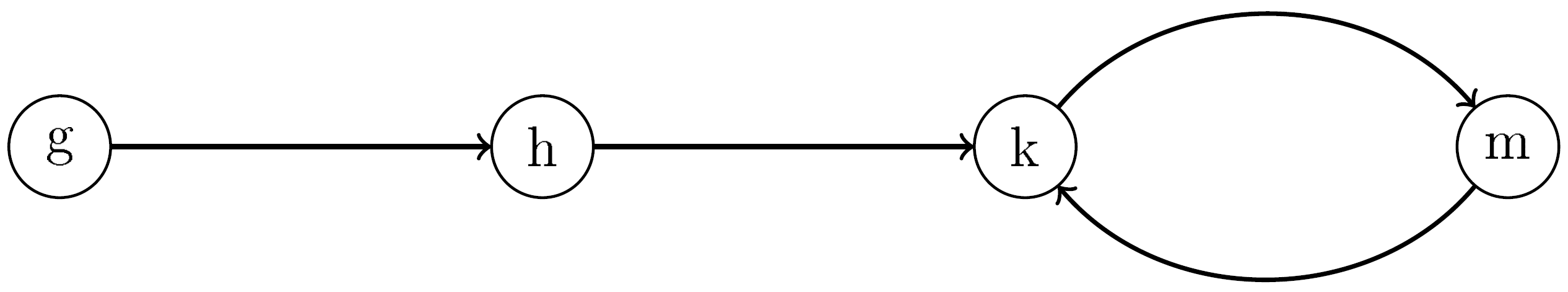}
	\caption{The graph $\Gamma(M)$}
	\label{pic1}
\end{figure}


\end{proof}


\section{Solvable subgroups of Thompson's group $F$}

\subsection{On the closure of solvable subgroups}\label{sec:sg}

In this section we consider the closure of solvable subgroups of $F$ and prove the following theorem. The theorem follows from results of \cite{Bl1,Bl2,BBH}. We prove each part separately below. 

\begin{Theorem}\label{sol}
Let $H$ be a solvable subgroup of $F$ of derived length $n$. Then the following assertions hold. 
\begin{enumerate}
\item[(1)] The action of $H$ on the set of finite dyadic fractions $\mathcal D$ has infinitely many orbits.
\item[(2)] $\Cl(H)$ is solvable of derived length $n$. 
\item[(3)] If $H$ is finitely generated then $\Cl(H)$ is finitely generated. 
\end{enumerate}
\end{Theorem}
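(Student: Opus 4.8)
The three parts share a common toolkit: the orbit equality of Corollary \ref{orb_CH}, the description $\Cl(H)=\DPiec(H)$ of Theorem \ref{thm:GS}, and Bleak's dictionary between solvability and the geometry of towers of orbitals (\cite{Bl1,Bl2}), which in the language of this paper is echoed by Theorem \ref{Alg_sol_int}: for a solvable subgroup the derived length equals the maximal height of a tower (Definition \ref{def:tower}). The plan is to lean on the fact that, since $H$ and $\Cl(H)$ have the same orbits on $[0,1]$ (Corollary \ref{orb_CH}), they have the same fixed-point set, the same support, and hence literally the same orbitals.

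For part (1) I would argue by contraposition: a subgroup of $F$ acting on $\mathcal D$ with only finitely many orbits cannot be solvable. By Bleak's criterion it suffices to exhibit, for every $k$, a tower of height $k$. I would build such a tower by induction on its height, maintaining a push-up orbital $(a,b)$ with dyadic endpoints of some element $h\in H$; inside a fundamental domain $[c,h(c))\subset(a,b)$ the pigeonhole principle (finitely many orbits, infinitely many dyadic points) produces two distinct dyadic points lying in a single $H$-orbit, and a suitable commutator of the connecting element with a power of $h$ then yields a nontrivial element of $H$ whose support sits in a compact subinterval of $(a,b)$. This furnishes a strictly nested, properly linked orbital, extending the tower. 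The main obstacle here is precisely this nesting step: verifying that the finite-orbit hypothesis really forces elements of $H$ supported in proper subintervals (rather than merely moving points around), which is where the piecewise-linear commutator dynamics of \cite{Bl1,BBH} enter.

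For part (2), the inequality $d(\Cl(H))\ge d(H)=n$ is immediate from $H\le \Cl(H)$. For the reverse inequality I would show that the maximal tower height of $\Cl(H)$ does not exceed that of $H$ and then read off solvability and the derived length from Bleak's theorem. The key observation is that a tower is detected by the germs of its elements at the endpoints where the orbitals nest: if $f\in\Cl(H)$ and $c$ is such an endpoint, then on the side of $c$ pointing into the orbital $f$ is linear, and because $\Cl(H)=\DPiec(H)$ while each element of $F$ has only finitely many breakpoints, $f$ coincides near $c$ with a single element of $H$. Replacing each $f_i$ in a height-$m$ tower of $\Cl(H)$ by the element of $H$ realizing its germ at the corresponding nesting point yields a height-$m$ tower in $H$, so $m\le n$. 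Hence $\Cl(H)$ has tower height at most $n$; Bleak's converse direction then gives that $\Cl(H)$ is solvable of derived length $\le n$, and combined with the lower bound this is an equality. The delicate point is the bookkeeping of the linking condition of towers under this germ-localization, and distinguishing dyadic from non-dyadic nesting endpoints.

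For part (3), I would combine part (2) with the diagram-group picture. By Lemma \ref{diagram_group}, $\Cl(H)\cong\DG(\La(H),p)$, and since $H$ is finitely generated the core $\La(H)$ is finite. By part (2) this group is solvable, and by Corollary \ref{orb_CH} it has the same orbitals as the finitely generated group $H$, so its orbital and tower data are finite. I would then invoke Bleak's structural description of finitely generated solvable subgroups of $\PL_o(I)$ (\cite{Bl2}, see also \cite{BBH}), by which such a group is assembled from finitely many finitely generated pieces supported on its finitely many orbitals, to conclude that $\Cl(H)$ is finitely generated; alternatively one can feed the finite core $\La(H)$ into the simplified generating-set algorithm of Section \ref{ss:alg} (Corollary \ref{set_B}) once a finite completion is in hand. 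I expect the main obstacle to be showing that finite generation of $H$ forces finiteness of the full orbital structure of $\Cl(H)$ (in particular, finitely many orbitals), after which finite generation follows from the structure theory.
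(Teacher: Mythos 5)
Your outline correctly identifies the relevant ingredients (Corollary \ref{orb_CH}, $\Cl(H)=\DPiec(H)$, and Bleak's tower dictionary), but in each of the three parts the route you take diverges from the paper's and breaks down at exactly the step you flag as the ``main obstacle,'' so this is not yet a proof. For (1), the paper argues directly rather than by contraposition (Lemma \ref{sol_inf}): it takes a maximal tower of height $n$, lets $(a,b)$ be its smallest orbital, shows via Lemma \ref{Bl1} and maximality that every element of $H$ moving a point of $(a,b)$ has $(a,b)$ as an orbital, and deduces that the induced group of homeomorphisms of $[a,b]$ is infinite cyclic; a cyclic group trivially has infinitely many orbits on $(a,b)\cap\mathcal D$. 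Your contrapositive route needs to manufacture, from the finite-orbit hypothesis, a nontrivial element of $H$ supported in a compact subinterval of $(a,b)$, and the commutator you propose does not obviously produce one: an element $g$ carrying one dyadic point of a fundamental domain to another may have support far outside $(a,b)$, and the commutator of $g$ with a power of $h$ need not be supported inside $(a,b)$. For (2), your germ-localization is too weak: if $f\in\Cl(H)$ has orbital $(a_i,b_i)$ and $h_i\in H$ agrees with $f$ on a right neighborhood of $a_i$, then $h_i$ has some orbital $(a_i,b_i')$, but $b_i'$ can be smaller than $a_{i+1}$, so the replaced intervals need not be nested and need not form a tower. The paper instead quotes Bleak's split-group theorem (Theorem \ref{Bl2}, i.e.\ \cite[Lemma 4.5, Corollary 4.6]{Bl2}): for solvable $G$ the \emph{set of orbitals of elements} of $S(G)$ equals that of $G$, so since $H\le\Cl(H)\le S(H)$ a tower in $\Cl(H)$ is literally a tower in $H$. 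That identity of orbitals, not of one-sided germs at endpoints, is the essential input, and your argument does not recover it.

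For (3), one of your two proposed routes is unavailable: the existence of a finite completion of the semigroup presentation attached to $\La(H)$ is precisely Problem \ref{pro:pre}, left open in the paper, so Corollary \ref{set_B} cannot be invoked. Your other route points at the right sources but misses the finiteness mechanism. The paper proves (Proposition \ref{fin_gen_SG}) that $S(G)$ is finitely generated by taking Bleak's generating set of one-orbital functions from \cite[Lemma 4.7]{Bl2} and showing it is finite: if it were infinite, one would obtain infinitely many pairwise disjoint minimal orbitals, each containing a breakpoint of an element of $G$, lying in pairwise distinct $G$-orbits, contradicting Lemma \ref{BBH}. The finiteness needed is of the minimal orbitals of \emph{elements} up to the $G$-action, not of the finitely many orbitals of the group, so ``$\Cl(H)$ has finitely many orbitals'' is not the right statement to aim for. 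Finally, the paper's actual proof of (3) is a dyadic-orbital variant of this breakpoint-counting argument, not an application of part (2) together with finiteness of the core.
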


Part (1) of Theorem \ref{sol} should be viewed in contrast with Theorem \ref{B}, where an elementary amenable subgroup $B\le F$ which acts  transitively on the set of finite dyadic fractions $\mathcal D$ was constructed. Part (2) of Theorem \ref{sol} should be viewed in contrast with the following example. 

\begin{Example}\label{B_1}
Let $B_1$ be the subgroup of $F$ generated by 
$x=x_0x_1x_2x_3x_5^2(x_0x_1x_2x_4^3)^{-1}$ and $y=x_0^3x_2x_6(x_0x_1^2x_3x_5^2x_7)^{-1}$. Then $B_1$ is a copy of the Brin-Navas group. In particular, it is elementary amenable. The closure $\Cl(B_1)$ contains a copy of Thompson's group $F$. In particular, it is not elementary amenable \cite{CFP}.
\end{Example}

\begin{proof}
The group $B_1$ is a copy of the Brin-Navas group, as realized in \cite[Section 1.2.3]{Bl3}.
To prove that the closure of $B_1$ contains a copy of $F$ we consider the core $\La(B_1)$. 
The following is a minimal tree associated with $\La(B_1)$. 

\Tree[.$e$ [.$f$ [.$m$ [.$f$ ] [.$h$ ] ] [.$\ell$ [.$\ell$ ] [.$a$ ] ] ] [.$g$ [.$h$ [.$a$ [.$h$ ] [.$\ell$ ] ] [.$h$ ] ] [.$k$ [.$\ell$ ] [.$g$ ] ] ] ]

Let $\P$ be the semigroup presentation associated with $\La(B_1)$ and let $p=p_{\La(B_1)}=e$ be the distinguished edge of $\La(B_1)$. Let $S$ be the semigroup with presentation $\P$.  We claim that there is a word $w$ over the alphabet $E$ of $\mathcal P$, such that (1) $w$ divides $p$ in $S$ (i.e., there exist words $a$ and $b$ over $E$ such that $awb$ is equal to $p$ in $S$) and (2) $w=ww$ in $S$. 
By \cite[Theorem 25]{GuSa99}, that would imply that the diagram group $\DG(\La(B_1),p)\cong \Cl(B_1)$, contains a copy of Thompson's group $F$. Let $w$ be the edge $a$ of $\La(B_1)$. Since $w$ is a $1$-path in $\La(B_1)$, it follows from Proposition \ref{prop:right}(1) that $w$ divides $p$ in $S$. (Indeed, one can consider a word $wq$ where $q$ is a $1$-path in $\La(B_1)$ with $\iota(q)=\tau(w)$ and $\tau(q)=\tau(\La(B_1))$.) It remains to note that $a=h\ell=h(\ell a)=(h\ell)a=aa$ in $S$. 
\end{proof}

\begin{Remark}
Using similar arguments to those in the proof of Theorem \ref{B}, one can show that the closure of $B_1$ is a maximal subgroup of $B_1[F,F]$.
\end{Remark}

In \cite{Bl1}, solvable subgroups of $\PL_o(I)$ are characterized by the towers associated with them. The following definition uses different notation than the one in \cite{Bl1}.

\begin{Definition}\label{def:tower}
Let $G$ be a subgroup of $\PL_o(I)$. A \emph{tower} in $G$ is a set of distinct intervals $T=\{(a_i,b_i)\mid i\in \mathcal I\}$ such that for each $i$, $(a_i,b_i)$ is an orbital of some function in $H$ and such that $T$ is totally ordered with respect to inclusion. 

The cardinality of a tower is said to be the \emph{height of the tower}.
The supremum of heights of all towers in $G$ is called the \emph{depth} of $G$. If $G$ is the trivial subgroup, we say that $G$ has depth $0$. 
\end{Definition}

\begin{Theorem}\cite[Theorem 1.1]{Bl1}\label{thm:sol}
Let $G$ be a subgroup of $\PL_o(I)$. Then $G$ is solvable of derived length $n$ if and only if the depth of $G$ is $n$.
\end{Theorem}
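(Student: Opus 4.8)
The plan is to prove the two inequalities $\operatorname{depth}(G)\le \ell(G)$ and $\ell(G)\le\operatorname{depth}(G)$, where $\ell(G)$ denotes the derived length, with the case $\operatorname{depth}(G)=\infty$ (non-solvability) emerging from the contrapositive of both bounds. The central tool throughout is the germ--slope homomorphism: for a point $p$ fixed by a subgroup $K\le \PL_o(I)$, the map $\sigma_p\colon K\to(\mathbb{R},+)$, $g\mapsto \log g'(p^+)$, is a well-defined homomorphism into an abelian group, so $[K,K]\subseteq\ker\sigma_p$; in particular every element of $[K,K]$ has slope $1$ at $p^+$, and symmetrically at $p^-$.

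First I would prove $\operatorname{depth}(G)\le\ell(G)$, i.e.\ that a tower of height $n$ forces $G^{(n-1)}\neq 1$, by induction on $n$. The base case $n=1$ is immediate. For the inductive step, let $(a_1,b_1)\subsetneq\cdots\subsetneq(a_n,b_n)$ be a tower, with $(a_n,b_n)$ an orbital of some $g\in G$; after replacing $g$ by $g^{-1}$ if necessary, assume it is push-up. The intervals $(a_1,b_1)\subsetneq\cdots\subsetneq(a_{n-1},b_{n-1})$ form a tower of height $n-1$ which, after conjugating by a power of $g$ (an operation preserving height), may be placed inside a single fundamental domain of $g$. Letting $K$ be the subgroup of elements of $G$ supported in $[a_{n-1},b_{n-1}]$, induction yields a nontrivial $h\in K^{(n-2)}$. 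Since the conjugates $K^{g^i}$ have pairwise disjoint supports, $\langle K,g\rangle$ maps onto a wreath-type structure in which a suitable iterated commutator of $h$ with its $g$-conjugates survives; tracking supports shows this iterated commutator lands nontrivially in $G^{(n-1)}$. The disjoint-support bookkeeping is the only delicate point here.

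Next I would prove $\ell(G)\le\operatorname{depth}(G)$ by showing, by induction on $\operatorname{depth}(G)$, that $\operatorname{depth}([G,G])\le\operatorname{depth}(G)-1$; iterating then gives $G^{(n)}=1$ whenever $\operatorname{depth}(G)=n$. The key lemma is that \emph{every orbital $(a,b)$ of an element $h\in[G,G]$ is strictly contained in an orbital of some single element of $G$}. To see this, suppose first that no element of $G$ has $a$ in the interior of one of its orbitals; then every $g\in G$ fixes $a$ (an orientation-preserving map moving $a$ would have $a$ interior to one of its orbitals), so $\sigma_a$ is defined on all of $G$ and $h\in[G,G]$ would have slope $1$ at $a^+$, contradicting that $a$ is the left endpoint of an orbital of $h$. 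Hence $a$ lies in the interior of an orbital $(c,d)$ of some $g_1\in G$ with $c<a$; if $(c,d)\supsetneq(a,b)$ we are done, and otherwise $(c,d)$ and $(a,b)$ overlap, so the standard chaining lemma for overlapping orbitals of the same push direction produces an element of $\langle g_1,h\rangle$ whose orbital strictly contains $(a,b)$. Applying this at the top interval of any tower in $[G,G]$ extends it to a strictly taller tower in $G$, which is exactly the required depth drop.

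The main obstacle is the endpoint analysis feeding this key lemma: the germ homomorphism only applies on a common stabilizer, so the genuine content is the dichotomy ``either $a$ is interior to an ambient orbital, or all of $G$ fixes $a$'', together with the chaining lemma that fuses two overlapping orbitals of the same direction into a single orbital of one group element --- both PL-dynamical facts that must be verified carefully for homeomorphisms with finitely many breakpoints. Finally, the case $\operatorname{depth}(G)=\infty$ follows by contraposition: if $G$ were solvable of length $n$, the depth-drop induction would force $\operatorname{depth}(G)\le n$, so unbounded towers certify non-solvability, while the first inequality shows any solvable $G$ has depth at most its derived length; together the two inequalities pin the derived length exactly to the depth.
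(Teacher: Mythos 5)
The paper does not actually prove this statement: it is imported verbatim from Bleak's paper \cite{Bl1} (his Theorem 1.1), so there is no in-paper proof to compare against, and your proposal has to be measured against Bleak's argument. Your overall architecture --- the two inequalities, the germ--slope homomorphism $\sigma_p$ on a common stabilizer, and the depth drop $\operatorname{depth}([G,G])\le\operatorname{depth}(G)-1$ --- does match the known strategy. However, both of the points you flag as ``delicate'' are genuine gaps rather than routine verifications.

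First, in the inductive step for $\operatorname{depth}(G)\le\ell(G)$ you set $K$ to be the subgroup of elements of $G$ supported in $[a_{n-1},b_{n-1}]$ and apply the inductive hypothesis to $K$. This does not follow: a tower only requires each $(a_i,b_i)$ to be an orbital of \emph{some} element of $G$, and those elements typically have further orbitals outside $[a_{n-1},b_{n-1}]$, so none of them need lie in $K$; the sub-tower need not be a tower in $K$, and $K$ can even be trivial. Repairing this is most of the content of Bleak's paper: he refines towers to ``exemplary'' towers and builds the wreath-product witness directly out of conjugates and commutators of the original elements rather than out of a localized subgroup. Second, in your key lemma for the reverse inequality, the leftover case $c<a<d<b$ is exactly a transition chain, and the asserted ``chaining lemma'' --- that $\langle g_1,h\rangle$ then contains a single element with an orbital strictly containing $(a,b)$ --- is not true in general: the naive product argument works only when both elements are one-signed everywhere, and interleaved orbitals of opposite sign can create fixed points of the product inside $(c,b)$. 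The correct resolution, which the present paper itself leans on via Lemma \ref{Bl1} and the quoted Lemma 2.11 of \cite{Bl1}, is that a transition chain forces arbitrarily tall towers, hence infinite depth, so the inequality $\ell(G)\le\operatorname{depth}(G)$ is vacuous in that case; this is a substantive lemma in its own right, and your sketch replaces it with a shortcut that does not work. With the transition-chain case quarantined in this way, your endpoint dichotomy via $\sigma_a$ and the nestedness of overlapping orbitals in the remaining case do combine correctly to give the depth drop.
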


\begin{Definition}[\cite{Bl1}]
Let $G\le \PL_o(I)$. The group $G$ \emph{admits a transition chain} if there are elements $g_1,g_2$ in $G$ with orbitals $(a,b)$ and $(c,d)$ respectively, such that $a<c<b<d$.
\end{Definition}

The following lemma follows immediately from Theorem 1.1, Lemma 1.4 and Remark 4.1 of \cite{Bl1}. 

\begin{Lemma}[\cite{Bl1}]\label{Bl1}
Let $G$ be a solvable subgroup of $\PL_o(I)$. Then $G$ does not admit transition chains. Moreover, if $(a,b)$ and $(c,d)$ are distinct orbitals of elements $g,h\in G$ such that $(a,b)\cap(c,d)\neq\emptyset$, then either $a<c<d<b$ or $c<a<b<d$. 
\end{Lemma}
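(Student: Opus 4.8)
The statement to prove is Lemma~\ref{Bl1}: if $G$ is a solvable subgroup of $\PL_o(I)$, then $G$ admits no transition chains, and moreover any two intersecting orbitals of elements of $G$ are nested (one strictly inside the other). The excerpt explicitly says this ``follows immediately from Theorem 1.1, Lemma 1.4 and Remark 4.1 of \cite{Bl1}'', so my plan is to assemble these cited ingredients rather than reprove Bleak's structure theory from scratch. The logical heart is the contrapositive: a transition chain forces unbounded tower height, hence (by Theorem~\ref{thm:sol}, which is Bleak's Theorem 1.1) non-solvability.

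The plan is as follows. First I would prove the first assertion by contradiction. Suppose $G$ admits a transition chain: there are $g_1,g_2\in G$ with orbitals $(a,b)$ and $(c,d)$ satisfying $a<c<b<d$. The key step is to produce, from this single ``staircase'' overlap, a tower of arbitrary height, which contradicts finite depth. Concretely, consider the commutator-type element $h=[g_2,g_1]=g_2^{-1}g_1^{-1}g_2g_1$ or an appropriate conjugate $g_1^{g_2^k}$; because $g_2$ pushes points of $(c,b)$ strictly in one direction while $g_1$ has support reaching only to $b<d$, iterating $g_2$ translates a copy of the orbital $(a,b)$ of $g_1$ into a chain of pairwise-overlapping orbitals whose left endpoints march across $(c,d)$. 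Nesting these produces arbitrarily tall towers. By Theorem~\ref{thm:sol}, depth equals derived length $n<\infty$, so towers have height at most $n$; this is the contradiction. Hence no transition chain exists.

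For the ``moreover'' clause I would argue directly from the no-transition-chain conclusion. Let $(a,b)$ and $(c,d)$ be orbitals of $g,h\in G$ with $(a,b)\cap(c,d)\neq\emptyset$. I would rule out the two ``staircase'' configurations $a<c<b<d$ and $c<a<d<b$: each is literally a transition chain (after possibly relabeling which element plays which role), so neither can occur. I would also rule out the boundary-coincidence cases (e.g.\ $a=c$ with $b\neq d$, or $b=d$ with $a\neq c$): since orbitals are maximal open intervals on which the relevant element has no interior fixed point, a shared endpoint together with overlapping interiors can again be leveraged—via a conjugate—into a genuine transition chain, or handled by Bleak's Lemma~1.4 on the structure of orbital overlaps. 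What remains, given a nonempty intersection, is exactly nesting: $a<c<d<b$ or $c<a<b<d$, which is the claim.

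The main obstacle I anticipate is the quantitative ``tower-building'' step: rigorously verifying that a transition chain yields towers of \emph{every} finite height. This requires care with the dynamics—tracking how conjugation by $g_2$ moves the orbital $(a,b)$ of $g_1$ and confirming that the translated intervals genuinely overlap in the staircase pattern rather than drifting apart or being absorbed. Since the excerpt permits citing Bleak's Theorem 1.1, Lemma 1.4 and Remark 4.1, I would lean on Remark~4.1 of \cite{Bl1} (which presumably packages precisely this transition-chain-implies-unbounded-depth implication) to supply the technical estimate, and on Lemma~1.4 of \cite{Bl1} for the classification of how two orbitals can overlap, thereby keeping my argument at the level of assembling cited results. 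The solvability-to-finite-depth direction is then immediate from Theorem~\ref{thm:sol}.
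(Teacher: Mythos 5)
The paper offers no proof of this lemma at all---it is quoted from Bleak's work with the single remark that it ``follows immediately from Theorem 1.1, Lemma 1.4 and Remark 4.1 of \cite{Bl1}''---and your proposal assembles exactly those cited ingredients in the intended way (transition chain $\Rightarrow$ arbitrarily tall towers $\Rightarrow$ non-solvability via Theorem \ref{thm:sol}), so it is consistent with the source. The only imprecisions are cosmetic: in the conjugation argument the left endpoint $a<c$ is fixed by $g_2$ while the right endpoints $g_2^k(b)$ march toward $d$, so the conjugate orbitals $(a,g_2^k(b))$ are already nested and form the tower directly, and likewise the shared-endpoint configurations in the ``moreover'' clause are excluded by the same infinite-tower construction rather than by converting them into a literal transition chain.
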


We apply Theorem \ref{thm:sol} and Lemma \ref{Bl1} to prove the following lemma. This is part (1) of Theorem \ref{sol}.

\begin{Lemma}\label{sol_inf}
Let $H$ be a solvable subgroup of $F$. Then the action of $H$ on the set of finite dyadic fractions 
$\mathcal D$ has infinitely many orbits.
\end{Lemma}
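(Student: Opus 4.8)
The plan is to argue by contradiction: suppose $H$ is solvable and acts with finitely many orbits on $\mathcal D$, and derive a contradiction with Theorem \ref{thm:sol} by building towers of unbounded height. Since $H$ is solvable, by Theorem \ref{thm:sol} its depth is some finite number $n$, so every tower in $H$ has height at most $n$. The strategy is to show that finitely many orbits would force the existence of arbitrarily tall towers, contradicting finiteness of the depth.

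First I would extract a single nontrivial element and use it to seed a tower. Since $H$ is solvable and nontrivial (if $H$ is trivial the orbit count on $\mathcal D$ is clearly infinite, as every point is its own orbit), pick $g\in H$ with a nonempty orbital $(a,b)$; we may take its endpoints to be finite dyadic by replacing $(a,b)$ with a suitable subinterval on which $g$ acts nontrivially and whose endpoints are dyadic fixed points, or more simply by noting $g\in F$ has dyadic breakpoints so its orbitals have dyadic endpoints. The key mechanism is transitivity-on-orbitals: inside $(a,b)$ there are infinitely many finite dyadic fractions, and if $H$ had only finitely many orbits then by pigeonhole infinitely many of these dyadic points of $(a,b)$ lie in a common $H$-orbit. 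The plan is to translate this abundance of orbit-equivalent points, via elements of $H$, into a nested chain of orbitals that has no bound on its length.

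The main technical step I would carry out is the following inductive construction. Given a tower $(a_1,b_1)\subsetneq\cdots\subsetneq (a_k,b_k)$ realized by orbitals of elements $h_1,\dots,h_k\in H$, with $(a_1,b_1)$ containing infinitely many dyadic points from a single $H$-orbit, I want to produce a strictly smaller orbital $(a_0,b_0)\subsetneq (a_1,b_1)$ of some element of $H$, again containing infinitely many points of a single orbit. To do this I would locate two dyadic points $\alpha<\beta$ inside $(a_1,b_1)$ in the same $H$-orbit, take $h\in H$ with $h(\alpha)=\beta$, and use conjugates/commutators or the orbital of a well-chosen element of $H$ supported strictly inside $(a_1,b_1)$ to manufacture a genuinely nested orbital. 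Here Lemma \ref{Bl1} is the crucial constraint: because $H$ admits no transition chains, any two intersecting orbitals must be nested, so the orbitals I produce cannot ``stagger'' — they are forced either to nest or be disjoint, which is exactly what lets me keep the chain growing downward in a controlled way rather than spreading out sideways. Iterating, I obtain towers of every finite height, so the depth of $H$ is infinite, contradicting Theorem \ref{thm:sol}.

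The hard part will be the inductive step: guaranteeing that the new orbital is \emph{strictly} contained in the previous one and is genuinely the orbital of an element of $H$ (not just of some element of $\PL_o(I)$), while simultaneously preserving the ``infinitely many points of a single orbit'' property needed to continue. The finitely-many-orbits hypothesis is what feeds the induction — it guarantees at each stage that the shrinking interval still contains infinitely many orbit-equivalent dyadic points — so I must be careful to invoke it at every level and check that the conjugating elements I use to build the descending chain indeed lie in $H$ and have their supports arranged (by Lemma \ref{Bl1}) so that the resulting orbitals are comparable under inclusion. I expect that orchestrating the interplay between the global finite-orbit assumption and the local no-transition-chain rigidity, so that the tower strictly increases in height at each step, is the real obstacle, whereas the conclusion via Theorem \ref{thm:sol} is then immediate.
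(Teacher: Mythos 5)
There is a genuine gap, and it sits exactly where you flag the ``hard part'': the inductive step you propose --- given an orbital $(a_1,b_1)$ of an element of $H$ containing infinitely many dyadic points of one $H$-orbit, produce a \emph{strictly smaller} orbital of an element of $H$ inside it --- is not just difficult but impossible in general. By Theorem \ref{thm:sol} a solvable $H$ has a maximal tower, and its smallest orbital $(a,b)$ contains no orbital of any element of $H$ strictly inside it; in fact one can show (using Lemma \ref{Bl1} and conjugation, as the paper does) that \emph{every} element of $H$ moving some point of $(a,b)$ has $(a,b)$ itself as an orbital. So taking $\alpha<\beta$ in $(a,b)$ in the same orbit and $h\in H$ with $h(\alpha)=\beta$ only ever hands you orbitals equal to $(a,b)$ or containing it, and no conjugates or commutators will push you strictly downward. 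Your contradiction therefore cannot be ``arbitrarily tall towers''; the finitely-many-orbits hypothesis has to be refuted by a different mechanism at the bottom level, namely: the elements of $H$ fixing $a$ and $b$, restricted to $[a,b]$, form an infinite cyclic group (minimality of $(a,b)$ forces every nontrivial such restriction to have orbital exactly $(a,b)$, and comparing slopes at $a^{+}$ shows the group of restrictions is generated by one element), the $H$-orbits of dyadic points in $(a,b)$ coincide with the orbits of this cyclic group, and a single PL homeomorphism with orbital $(a,b)$ has orbits accumulating only at the endpoints, hence infinitely many orbits on the dense set $\mathcal D\cap(a,b)$. Your proposal never identifies this cyclicity argument, and without it the induction has no base-case contradiction to bottom out on.

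Two smaller points. First, the assumption of finitely many orbits does not actually feed the construction of smaller orbitals in any way --- it is the thing being contradicted, not a tool for producing nested supports --- so the logical role you assign it (``guarantees at each stage that the shrinking interval still contains infinitely many orbit-equivalent dyadic points'') does not advance the argument. Second, your claim that orbitals of elements of $F$ have finite dyadic endpoints is false: the endpoints are only rational in general (e.g.\ an element can fix $1/3$ and move everything nearby), which is why the paper is careful to speak of rational fixed points and to introduce dyadic-orbitals separately elsewhere. This does not affect the main gap but would need repair in any write-up.
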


\begin{proof}
Let $n$ be the derived length of $H$. By Theorem \ref{thm:sol}, $H$ has a maximal tower $T=\{(a_i,b_i):i=1,\dots,n\}$ of height $n$. We denote by $(a,b)$ the smallest orbital in the tower. Let $h\in H$ be a function with orbital $(a,b)$. Let $f$ in $H$ be a function which maps a number $x\in(a,b)$ to a number $y\neq x$ in this interval. We claim that $(a,b)$ is an orbital of $f$ and in particular that $f$ fixes $a$ and $b$.
Indeed, let $(c,d)$ be the orbital of $f$ containing $x$ and assume by contradiction that $(c,d)$ does not coincide with $(a,b)$. Replacing $f$ by $f^{-1}$ if necessary, we can assume that $(c,d)$ is a push-up orbital of $f$. By Lemma \ref{Bl1}, either $(c,d)$ is strictly contained in $(a,b)$, or we have $c<a<b<d$. Since $T$ is a maximal tower of $H$, we must have $c<a<b<d$. In particular, $a\in (c,d)$. Since $a<x$ and $(c,d)$ is a push-up orbital of $f$ we have $a<f(a)<f(x)=y<b$. 
We consider the element $h^f$. It has an orbital $(f(a),f(b))$. Since $f(a)\in (a,b)$, by Lemma \ref{Bl1}, the orbital $(f(a),f(b))$ must be strictly contained in $(a,b)$. That gives a contradiction to the tower $T$ being a tower of $H$ of maximal height. Thus, $(a,b)$ is an orbital of $f$.

Let $K$ be the subgroup of $H$ of all elements which fix the numbers $a$ and $b$. 
One can naturally map $K$ onto a subgroup $K'$ of $\PL_o(I)$ where all functions have support in $[a,b]$, by sending a function $k\in K$ to the function $k'\in K'$ which coincides with $k$ on $[a,b]$ and is the identity elsewhere. 
Let $\alpha_1,\alpha_2\in (a,b)$ be two finite dyadic fractions. The above argument shows that $\alpha_1$ and $\alpha_2$ belong to the same orbit of the action of $H$ on $\mathcal D$ if and only if they belong to the same orbit of the action of $K'$ on $\mathcal D$. The maximality of the tower $T$ implies that any non-trivial element in $K'$ has orbital $(a,b)$.  

We claim that $K'$ is cyclic. Indeed, let $k_1\in K'$ be an element with slope $2^m$ at $a^+$ for minimal $m>0$. If $k_2\in K'$, has slope $2^{m_2}$ at $a^+$ then $m$ divides $m_2$. Hence, $k'=k_1^{\frac{m_2}{m}}k_2^{-1}$ has slope $1$ at $a^+$. It follows that $k'$ fixes a right neighborhood of $a$ and as such $(a,b)$ is not an orbital of $k'$. Thus, $k'$ is the trivial element of $K'$ 
 which implies that $K'=\la k_1\ra$. It is obvious that the action of $\la k_1\ra$ on $(a,b)\cap\mathcal D$ has infinitely many orbits. Thus, the action of $H$ on $\mathcal D$ has infinitely many orbits. 
\end{proof}

Part (2) of Theorem \ref{sol} follows immediately from results of Bleak \cite{Bl2}. 
For a subgroup $G$ of $\PL_o(I)$, Bleak defined the split group $S(G)$ of $G$. We define $S(G)$ using different terminology to emphasize the relation between $S(G)$ and the closure $\Cl(G)$ in case $G$ is a subgroup of $F$. 

\begin{Definition}\label{def:com3}
Let $f$ be a function in $\PL_o(I)$. If $f$ fixes a number $\alpha\in (0,1)$, then the following functions $f_1,f_2\in F$ are called \emph{general components} of the function $f$, or \emph{general components of $f$ at $\alpha$}.  
\[
   f_1(t) =
  \begin{cases}
   f(t) &  \hbox{ if }  t\in[0,\alpha] \\
    t       & \hbox{ if } t\in[\alpha,1]\
  \end{cases}  	\qquad	
   f_2(t) =
  \begin{cases}
   t &  \hbox{ if }  t\in[0,\alpha] \\
    f(t)       & \hbox{ if } t\in[\alpha,1]\
  \end{cases} 
\]
\end{Definition}

Note that the only difference between Definition \ref{com2} and Definition \ref{def:com3} for functions $f\in F$, is that general components are taken with respect to fixed points which are not necessarily finite dyadic. 

\begin{Definition}[Bleak \cite{Bl2}]
Let $G\le \PL_o(I)$. The \emph{split group} of $G$ is the subgroup of $\PL_o(I)$ generated by all elements of $G$ and all general components of elements in $G$. 
\end{Definition}

\begin{Remark}
A simple adaptation of the proof of Lemma \ref{clo_dya} shows that $S(G)$ can be defined as the subgroup of $\PL_o(I)$ of all piecewise-$G$ functions. It follows that $S(S(G))=S(G)$. We also note that the orbits of the action of $G$ on the interval $[0,1]$ coincide with the orbits of the action of $S(G)$. 
\end{Remark}

\begin{Theorem}\label{Bl2}\cite[Lemma 4.5, Corollary 4.6]{Bl2}
Let $G$ be a solvable subgroup of $\PL_o(I)$. Then $(a,b)$ is an orbital of an element in $G$ if and only if it is an orbital of an element in $S(G)$. In particular, by Theorem \ref{thm:sol}, the derived length of $G$ is equal to the derived length of the split group $S(G)$.
\end{Theorem}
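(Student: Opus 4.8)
The forward direction is immediate: since $G\subseteq S(G)$, every orbital of an element of $G$ is an orbital of an element of $S(G)$, and the derived length of $G$ is at most that of $S(G)$. The whole content is the reverse inclusion of orbitals, and my plan is to argue it directly using the characterization of $S(G)$ as the group of all piecewise-$G$ functions (the adaptation of Lemma \ref{clo_dya} recorded just after Definition \ref{def:com3}) together with the no-transition-chain property of solvable groups, Lemma \ref{Bl1}. Crucially, Lemma \ref{Bl1} is available only for $G$; I must not apply it to $S(G)$, whose solvability is what I am ultimately deducing.

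Let $s\in S(G)$ have an orbital $(a,b)$; replacing $s$ by $s^{-1}$ if necessary I may assume $(a,b)$ is a push-up orbital, so $s(x)>x$ on $(a,b)$, $s(a)=a=s(b)-b+b$ (i.e. $s$ fixes $a$ and $b$), $s'(a^+)>1$ and $s'(b^-)<1$. Pick a partition $a=\alpha_0<\alpha_1<\cdots<\alpha_m=b$ refining the piecewise-$G$ structure of $s$ on $[a,b]$, so that $s$ agrees with a single element $g_i\in G$ on each $[\alpha_i,\alpha_{i+1}]$. By continuity $g_0(a)=\lim_{x\to a^+}s(x)=a$ and $g_0'(a^+)=s'(a^+)>1$, so $g_0$ has a push-up orbital whose left endpoint is \emph{exactly} $a$; call it $(a,q_0)$. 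Symmetrically $g_{m-1}$ has an orbital $(p_{m-1},b)$ whose right endpoint is exactly $b$. If $q_0=b$ then $(a,b)=(a,q_0)$ is an orbital of $g_0\in G$ and we are done; likewise if $p_{m-1}=a$. The goal is therefore to force $q_0=b$.

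Suppose $q_0<b$ and $p_{m-1}>a$. The intervals $(a,q_0)$ and $(p_{m-1},b)$ are distinct orbitals of elements of $G$; since $G$ is solvable, Lemma \ref{Bl1} forces any two overlapping orbitals to be strictly nested with all four endpoints distinct, and forbids distinct orbitals sharing an endpoint. As $(a,q_0)$ has the strictly smaller left endpoint while $(p_{m-1},b)$ has the strictly larger right endpoint, neither can contain the other, so they cannot overlap; hence $q_0\le p_{m-1}$ and there is a gap $[q_0,p_{m-1}]$ inside $(a,b)$. To reach a contradiction I would exploit that each interior piece $g_i$ still pushes points of $(\alpha_i,\alpha_{i+1})\subseteq(a,b)$ upward, so it has an orbital $(p_i,q_i)\supseteq(\alpha_i,\alpha_{i+1})$, and that one-sided limits at the breakpoints put every interior $\alpha_{i+1}$ in the interiors of both $(p_i,q_i)$ and $(p_{i+1},q_{i+1})$, whence consecutive orbitals overlap. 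Walking this chain from the piece anchored at $a$, I would locate the first orbital $(p_k,q_k)$ reaching past $q_0$ and compare it with $(a,q_0)$: since $q_0\in(p_k,q_k)$ and $q_k>q_0$, these overlap, and Lemma \ref{Bl1} then gives either a forbidden shared endpoint (when $p_k=a$) or a forbidden transition chain (when $a<p_k<q_0<q_k$), unless $p_k<a$, i.e. unless $(a,q_0)$ is properly engulfed on the left by an ambient $G$-orbital $(p_k,q_k)\supsetneq(a,b)$.

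This last possibility — an element of $G$ whose orbital strictly contains $(a,b)$ — is the genuine obstacle, since it is not by itself contradictory (the Theorem only asserts that \emph{some} $G$-orbital equals $(a,b)$, which may sit nested inside a larger one). I plan to dispose of it by descent on the number of pieces: using the general component $(g_0)_1\in S(G)$ of $g_0$ at its fixed point $q_0$ (equal to $g_0$, hence to $s$, near $a^+$, and equal to the identity on $[q_0,1]\ni b$), the element $t=((g_0)_1)^{-1}s\in S(G)$ is the identity on a right neighborhood of $a$ and agrees with $s$ near $b^-$, so $t$ has a push-up orbital $(a',b)$ with $a<a'$ and strictly fewer original breakpoints inside it; feeding the $G$-orbital obtained for $t$ back into the comparison above closes the gap and forces $q_0=b$. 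Verifying that this descent strictly decreases a well-founded measure (the count of pieces inside the orbital) is the technical heart of the argument. Once the orbital sets of $G$ and $S(G)$ are shown to coincide, the conclusion is formal: a tower (Definition \ref{def:tower}) is precisely a chain of such orbitals totally ordered by inclusion, so $G$ and $S(G)$ have the same depth, and Theorem \ref{thm:sol} upgrades equal depth to equal derived length.
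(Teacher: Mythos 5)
First, note that the paper does not prove this statement: it is quoted verbatim from Bleak \cite[Lemma 4.5, Corollary 4.6]{Bl2}, where the proof goes through his algebraic classification machinery (the split-group generators of \cite[Lemma 4.7]{Bl2} and the structure theory behind Theorem \ref{thm:sol}), not through a direct piecewise induction. So you are supplying a proof from scratch, and your skeleton is reasonable: reduce to showing that the first $G$-piece $g_0$ of $s$ at $a^{+}$ has orbital exactly $(a,b)$, and use Lemma \ref{Bl1} only for $G$. Your chain-walking step is correct (Lemma \ref{Bl1} does force $p_k<a$), and the intended contradiction at the end is in fact available: if the inductive step produced a $G$-orbital $(a',b)$ with $a<a'<q_0<b$, then $(a,q_0)$ and $(a',b)$ would be a transition chain in $G$. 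But there are two genuine gaps.

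The smaller one: your dichotomy ``$q_0=b$ or $q_0<b$'' omits $q_0>b$. Nothing you write rules out the first piece overshooting, i.e.\ $g_0$ having an orbital $(a,q_0)$ with $q_0>b$ (equivalently, the symmetric case $p_{m-1}<a$); in that situation the component of $g_0$ at $q_0$ no longer isolates $(a,b)$ and your construction of $t$ does not apply. The serious one is the descent itself. Take $t=((g_0)_1)^{-1}s$ with $(g_0)_1$ the component of $g_0$ at $q_0$. Since $g_0(q_0)=q_0$ while $s(q_0)>q_0$, one gets $t(q_0)=s(q_0)>q_0$, so $t$ does \emph{not} fix $q_0$ and the orbital of $t$ ending at $b$ has left endpoint $a'<q_0$ (not $a'\ge q_0$). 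On $(a',q_0)$ the map $t$ equals $g_0^{-1}g_i$ on the $g_0$-images of the old pieces, and a fresh $G$-breakpoint appears at $q_0$ (the pieces $g_0^{-1}g_j$ and $g_j$ do not merge there because $g_0\ne\mathrm{id}$ just left of $q_0$). Counting: if $s$ has $m$ pieces on $(a,b)$ with $q_0\in(\alpha_j,\alpha_{j+1}]$, then $t$ has up to $j$ pieces on $(a',q_0)$ and $m-j$ on $(q_0,b)$, i.e.\ up to $m$ pieces on its orbital. So ``the count of pieces inside the orbital'' need not decrease, and the induction you flag as the technical heart does not close as stated. You would need either a different well-founded measure, a different normalization of $t$ (e.g.\ one that also removes the contribution of $g_0$ on all of $(a,q_0)$ rather than only matching $s$ near $a^{+}$), or to follow Bleak's route through the algebraic structure of $S(G)$.
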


For a subgroup $H\le F$, by Theorem \ref{thm:GS}, we have $H\le \Cl(H)\le S(H)$. Thus, Theorem \ref{Bl2} implies the following. 

\begin{Corollary}[Theorem \ref{sol}(2)]\label{cor:sol_cor}
Let $H\le F$ be a solvable subgroup of $F$. Then the closure $\Cl(H)$ is solvable of the same derived length. 
\end{Corollary}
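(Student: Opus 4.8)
The plan is to sandwich $\Cl(H)$ between $H$ and the split group $S(H)$ and then to invoke the fact that, within the solvable world, derived length is monotone under passage to subgroups. First I would record the inclusions $H \le \Cl(H) \le S(H)$ furnished by Theorem \ref{thm:GS}: by that theorem $\Cl(H) = \DPiec(H)$ consists precisely of the dyadic-piecewise-$H$ functions, and every such function is in particular piecewise-$H$, hence lies in the split group $S(H)$. At the level of the two notions of components this is the observation that a dyadic component (Definition \ref{com2}) is just a general component (Definition \ref{def:com3}) taken at a finite dyadic fixed point, so $\DPiec(H)$ embeds into the group of piecewise-$H$ functions, which is $S(H)$.

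Next I would pin down the derived length of the outer group. Let $n$ be the derived length of $H$. Theorem \ref{Bl2} asserts that an interval $(a,b)$ is an orbital of an element of $H$ if and only if it is an orbital of an element of $S(H)$. Consequently $H$ and $S(H)$ have exactly the same collection of orbitals, hence the same towers and the same depth; by Theorem \ref{thm:sol} this common depth equals the derived length, so $S(H)$ is solvable of derived length $n$.

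Finally I would close the argument by monotonicity of derived length. Since $\Cl(H)$ is a subgroup of the solvable group $S(H)$, it is itself solvable with derived length at most $n$. On the other hand $H \le \Cl(H)$ forces the derived length of $\Cl(H)$ to be at least that of $H$, namely $n$. Combining the two inequalities yields that $\Cl(H)$ is solvable of derived length exactly $n$.

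I expect essentially no serious obstacle here: the statement is a genuine corollary, and the substance is carried entirely by the cited Theorems \ref{thm:sol} and \ref{Bl2} together with the standard fact that a subgroup of a solvable group of derived length $n$ is solvable of derived length at most $n$. The only point requiring a moment's care is verifying the second inclusion $\Cl(H) \le S(H)$ from the two component definitions, but this is immediate once one notes that dyadic fixed points are a special case of arbitrary fixed points.
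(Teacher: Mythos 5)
Your proposal is correct and follows exactly the paper's argument: the paper also sandwiches $H \le \Cl(H) \le S(H)$ via Theorem \ref{thm:GS} and then invokes Theorem \ref{Bl2} (together with Theorem \ref{thm:sol} and monotonicity of derived length under subgroups), merely stating these steps more tersely. No differences of substance.
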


To prove Theorem \ref{sol}(3) we need the following lemma. 

\begin{Lemma}\cite[Lemma 3.1]{BBH}\label{BBH}
Let $G\le \PL_o(I)$ be a finitely generated subgroup. Then the set of breakpoints of elements in $G$ intersects finitely many orbits of the action of $G$ on $[0,1]$. 
\end{Lemma}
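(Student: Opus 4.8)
The plan is to reduce the statement to a finite set of breakpoints carried by a generating set, and then to show that every breakpoint of every element of $G$ lies in one of finitely many $G$-orbits. First I would fix a finite generating set $\{g_1,\dots,g_k\}$ of $G$ and let $B$ denote the set of all breakpoints of the homeomorphisms $g_1^{\pm 1},\dots,g_k^{\pm 1}$. Since each generator is piecewise linear with finitely many pieces, $B$ is finite. The goal is then to prove that the set of breakpoints of an arbitrary element of $G$ is contained in $\bigcup_{b\in B} G\cdot b$; as $B$ is finite, this immediately yields the lemma.

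The key technical step is a containment for breakpoints under composition. Recalling that composition in the paper is from left to right, so that $(fg)(t)=g(f(t))$, I would prove that for all $f,g\in\PL_o(I)$,
\[
\mathrm{Br}(fg)\subseteq \mathrm{Br}(f)\cup f^{-1}\!\left(\mathrm{Br}(g)\right),
\]
where $\mathrm{Br}(\cdot)$ denotes the set of breakpoints. This is a purely local statement: if $x\notin\mathrm{Br}(f)$ and $f(x)\notin\mathrm{Br}(g)$, then $f$ is linear on a neighborhood of $x$ and $g$ is linear on a neighborhood of $f(x)$, so by continuity of $f$ the composite $g\circ f$ is linear near $x$, whence $x\notin\mathrm{Br}(fg)$. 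I emphasize that only the containment is needed, not equality, so any cancellation of breakpoints is harmless.

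Next, I would write an arbitrary $g\in G$ as a word $g=h_1h_2\cdots h_m$ with each $h_j\in\{g_1^{\pm 1},\dots,g_k^{\pm 1}\}$ and set $p_j=h_1\cdots h_j$, with $p_0=\mathrm{id}$, so that $p_j=p_{j-1}h_j$. A straightforward induction on $m$ using the composition containment yields
\[
\mathrm{Br}(g)\subseteq\bigcup_{j=1}^{m} p_{j-1}^{-1}\!\left(\mathrm{Br}(h_j)\right).
\]
Since $\mathrm{Br}(h_j)\subseteq B$ and $p_{j-1}^{-1}\in G$, every breakpoint of $g$ has the form $p_{j-1}^{-1}(b)$ for some $b\in B$, and hence lies in the orbit $G\cdot b$. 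This establishes the claim and the lemma. The main obstacle is merely the careful bookkeeping of the left-to-right composition convention and of the partial products $p_{j-1}$; the analytic input (local linearity of a composite of linear maps) is elementary. I would also remark that the endpoints $0$ and $1$ are fixed by every element of $G$ and are not breakpoints, so they cause no difficulty.
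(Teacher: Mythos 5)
Your argument is correct. Note that the paper itself gives no proof of this lemma: it is imported verbatim as \cite[Lemma 3.1]{BBH}, so there is no internal proof to compare against. Your proof is the standard, self-contained one (and essentially the argument in the cited source): the containment $\mathrm{Br}(fg)\subseteq \mathrm{Br}(f)\cup f^{-1}(\mathrm{Br}(g))$ for the left-to-right composition $(fg)(t)=g(f(t))$ is exactly right, the induction over the partial products $p_j$ is handled with the correct bookkeeping, and the conclusion that every breakpoint of every element of $G$ lies in $\bigcup_{b\in B}G\cdot b$ with $B$ finite immediately gives the statement. One small remark: including the inverses $g_i^{-1}$ in the definition of $B$ is harmless but not necessary, since $\mathrm{Br}(g_i^{-1})=g_i(\mathrm{Br}(g_i))$ already lies in the orbits of $\mathrm{Br}(g_i)$; either convention works.
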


\begin{Proposition}\label{fin_gen_SG}
Let $G\le \PL_o(I)$ be a finitely generated solvable subgroup. Then the split group $S(G)$ is finitely generated. 
\end{Proposition}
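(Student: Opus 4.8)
The plan is to show that $S(G)$ is generated by $G$ together with finitely many general components of elements of $G$, and that this suffices because every element of $S(G)$ is, by the remark following the definition of the split group, piecewise-$G$ (a finite concatenation of pieces each agreeing with some $g\in G$), so that a decomposition argument analogous to the proof of Lemma \ref{clo_dya} reduces every such element to a product of $G$-elements and general components. The key point to control is the set of fixed points at which components must be split: a priori there are infinitely many fixed points of elements of $G$, hence infinitely many possible general components, and I must extract a finite generating subset.

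The main steps, in order, are as follows. First, invoke Lemma \ref{BBH}: since $G$ is finitely generated, the breakpoints of all elements of $G$ meet only finitely many $G$-orbits on $[0,1]$. Second, observe that when one forms a general component $f_1$ (or $f_2$) of $f\in G$ at a fixed point $\alpha$, the breakpoints of $f_1$ are among the breakpoints of $f$ together with (possibly) $\alpha$ itself; moreover the relevant splitting points $\alpha$ can be taken to be fixed points that are endpoints of orbitals of elements of $G$, and by solvability (Lemma \ref{Bl1} and Theorem \ref{thm:sol}) these orbitals are nested rather than interleaved, so the structure of fixed points is tightly constrained. Third, use this orbit-finiteness to argue that up to conjugation by elements of $G$ there are only finitely many ``types'' of general components: if $\alpha$ and $\alpha'=g(\alpha)$ lie in the same $G$-orbit, then the general component of $f$ at $\alpha$ is conjugate, via $g\in G\le S(G)$, to the general component of $f^{g}$ at $\alpha'$, so only one representative per orbit is needed. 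Fourth, assemble a finite generating set: take the finite generating set of $G$, together with the finitely many general components at a chosen representative from each of the finitely many orbits of fixed points/breakpoints; then show by induction on the number of pieces (exactly as in Lemma \ref{clo_dya}, peeling off one breakpoint at a time and writing the ``correction'' as a general component that lies in the chosen finite family up to a $G$-conjugacy) that this finite set generates all of $S(G)$.

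The hard part will be step three: verifying that finiteness of orbits of breakpoints genuinely cuts down the general components to finitely many generators up to conjugacy. The subtlety is that two general components split at $G$-equivalent points need not be literally $G$-conjugate unless one also controls the ``shape'' of the function near the splitting point (its germ), and one must check that conjugating $f$ by the group element $g$ carrying $\alpha$ to $\alpha'$ really does carry the component of $f$ at $\alpha$ to the component of $f^{g}$ at $\alpha'$ — this uses that conjugation sends fixed points to fixed points and respects the left/right splitting, which follows from the Remark on orbitals and supports under conjugation in Section \ref{der_sub}. I would also need to handle the bookkeeping that the induction on pieces stays within the finitely generated subgroup, i.e., that each peeling step produces a correction term expressible through the already-chosen finite family rather than introducing a genuinely new splitting point; here solvability is essential, since Lemma \ref{Bl1} forbids transition chains and thereby keeps the collection of orbital endpoints (hence splitting points) organized into finitely many orbits with nested structure.
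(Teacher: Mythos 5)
There is a genuine gap in steps three and four. Your conjugacy reduction only handles one of the two sources of infiniteness. It correctly identifies the component of $f$ at $\alpha$ with the component of $f^{g}$ at $g(\alpha)$, and so reduces the set of splitting points to finitely many orbit representatives (granting for the moment that the relevant splitting points meet only finitely many $G$-orbits). But at a fixed representative $\alpha_0$ you still need \emph{all} general components at $\alpha_0$ of \emph{all} elements of $G$ fixing $\alpha_0$: the map $f\mapsto f_1$ is a homomorphism from $\Stab_G(\alpha_0)$ into $\PL_o(I)$, and what you must show is that its image lies in a subgroup generated by $G$ together with finitely many elements. Nothing in your proposal bounds this; $\Stab_G(\alpha_0)$ is in general infinite and need not be finitely generated, and your induction on pieces (the Lemma \ref{clo_dya} peeling) produces at each step a correction that is a component of an \emph{arbitrary} element of $G$ at $\alpha_0$ — conjugation relocates the splitting point but leaves the element being split unconstrained. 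A secondary issue: Lemma \ref{BBH} controls $G$-orbits of \emph{breakpoints}, while your splitting points are fixed points (orbital endpoints), which are not breakpoints; passing from one to the other requires the observation that every minimal orbital contains a breakpoint of a function having that orbital, together with the disjointness of distinct minimal orbitals, and this link is not made in your outline.

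The paper takes a different route that sidesteps both problems. It does not try to generate $S(G)$ by $G$ plus components of elements of $G$; instead it invokes the proof of Bleak's Lemma 4.7 in \cite{Bl2}, which produces a generating set $B$ of $S(G)$ consisting of functions each having a \emph{single} orbital, with the strong property that no two of these orbitals are $G$-translates of one another, and such that every non-minimal one contains a minimal one. Finiteness of $B$ then follows by contradiction: if $B$ were infinite, the nesting bound from Theorem \ref{thm:sol} and Lemma \ref{Bl1} (a minimal orbital lies in at most $n$ orbitals) would force infinitely many pairwise $G$-inequivalent minimal orbitals, each containing a breakpoint of some element of $G$, and these breakpoints would lie in infinitely many $G$-orbits, contradicting Lemma \ref{BBH}. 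If you want to repair your argument you would essentially need to import this one-orbital generating set (or prove an analogue of it), since it is exactly what replaces the uncontrolled family of components at a fixed splitting point.
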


\begin{proof}

Assume that $G$ is solvable of derived length $n$. By Theorem \ref{Bl2}, orbitals of elements in $G$ coincide with orbitals of elements in $S(G)$. We say that an orbital $(a,b)$ of some function in $G$ (equiv. $S(G)$) is \emph{minimal} in $G$ if there is no function in $G$ with an orbital strictly contained in $(a,b)$. Note that if $(a,b)$ is a minimal orbital in $G$ and $g\in G$ then 
$(g(a),g(b))$ is also a minimal orbital in $G$. Note also that by Theorem \ref{thm:sol} every orbital of an element in $G$ contains some minimal orbital. In addition, by Lemma \ref{Bl1}, the set of orbitals of elements in $G$ which contain a given minimal orbital forms a tower in $G$. Thus, by Theorem \ref{thm:sol},  a minimal orbital in $G$ is contained in at most $n$ orbitals of elements of $G$.
 
The proof of \cite[Lemma 4.7]{Bl2} shows that the split group $S(G)$ is generated by a set of functions $B=\{g_i\mid i\in\mathcal I\}$ such that for each $i$, $g_i$ has a single orbital $(a_i,b_i)$ and such that the following conditions hold.
\begin{enumerate}
\item[(1)] For every $i\neq j$ and every $g\in G$ we have $(g(a_i),g(b_i))\neq (a_j,b_j)$. In particular, for every $i\neq j$, $(a_i,b_i)\neq(a_j,b_j)$. 
\item[(2)] If $(a_i,b_i)$ is not a minimal orbital in $G$ then it contains an orbital $(a_j,b_j)$ for some $j$, such that $(a_j,b_j)$ is minimal in $G$. 
\end{enumerate}

We claim that the generating set $B$ must be finite. Otherwise, we let $\mathcal J\subseteq I$ be the subset of all indexes $j$ such that the orbitals $(a_j,b_j)$ are minimal orbitals in $G$. Let $C$ be the set of orbitals $(a_j,b_j)$ for $j\in \mathcal J$.
We claim that $C$ must be infinite. Indeed, by condition (2) for the set $B$ one can map each generator $g_i$ in $B$ to some orbital $(a_j,b_j)$ in $C$ such that the orbital $(a_j,b_j)$ is contained in $(a_i,b_i)$. By condition (1), the preimage of each orbital in $C$ is a subset of size at most $n$ of $B$. Thus, $C$ is infinite.

By Theorem \ref{Bl2}, each orbital $(a_j,b_j)$ in $C$ is an orbital of some element $h_j$ in $G$. 
We note that $h_j$ must have a breakpoint $x_j\in(a_j,b_j)$. Indeed, a function in $\PL_o(I)$ cannot be linear on any of its orbitals. 
We let $E$ be the set of breakpoints $x_j$, $j\in\mathcal J$. We note that if $j\neq k$ in $\mathcal J$, then $x_j\neq x_k$. Indeed, $(a_j,b_j)$ and $(a_k,b_k)$ are distinct minimal orbitals in $G$. Hence by Lemma \ref{Bl1}, they are disjoint. Moreover, $x_j$ and $x_k$ do not belong to the same orbit of the action of $G$ on $[0,1]$. Indeed, if $g\in G$, then $g(x_j)\in (g(a_j),g(b_j))\neq (a_k,b_k)$ by condition (1). Since $(g(a_j),g(b_j))$ and $(a_k,b_k)$ are minimal orbitals in $G$, they are disjoint, and so $g(x_j)\neq x_k$. Thus, the set $E$ is a set of breakpoints of elements of $G$ which intersects infinitely many orbits of the action of $G$ on $[0,1]$. We get a contradiction to the finite generation of $G$ by Lemma \ref{BBH}.
\end{proof}

The proof of Theorem \ref{sol}(3) requires a slight adaptation of the proof of Proposition \ref{fin_gen_SG}. 

\begin{Definition}
Let $f\in F$ and assume that $f\in F$ fixes finite dyadic fractions $a,b\in[0,1]$ such that $a<b$. We say that $(a,b)$ is a \emph{dyadic-orbital} of $f$ if $f$ does not fix any finite dyadic fraction in $(a,b)$. 
\end{Definition}

By \cite[Corollary 2.5]{GS2} (See also \cite{Sav}), if $f$ fixes an irrational number $x$ in $(0,1)$ then $f$ fixes an open neighborhood of $x$. Hence, if $(a,b)$ is a dyadic-orbital of $f$, which is not an orbital of $f$, then there are finitely many rational non-dyadic numbers $x_1<\dots<x_n$ in $(a,b)$ such that $(a,x_1),(x_1,x_2),\dots,(x_{n-1},x_n),(x_n,b)$ are orbitals of $f$. Thus, a dyadic-orbital of $f\in F$ can be defined as a minimal sequence of consecutive ``adjacent'' orbitals $(a_1,a_2),(a_2,a_3)\dots,(a_k,a_{k+1})$ such that $a_1,a_{k+1}$ are finite dyadic.

\begin{proof}[Proof of Theorem \ref{sol}(3)]
The proof of the theorem is almost identical to the proof of Proposition \ref{fin_gen_SG}. Indeed, one only has to replace every occurrence of the word ``orbital'' in the proof of Proposition \ref{fin_gen_SG} with the term ``dyadic-orbital'' and replace the group $S(G)$ with the group $\Cl(G)$.
A detailed proof requires an adaptation of Lemma \ref{Bl1} and the first statement of Theorem \ref{Bl2} to the case where $G$ is a subgroup of $F$ and dyadic-orbitals are considered instead of orbitals. It also requires a proof that the conclusion of \cite[Lemma 3.12]{Bl2} (which is used in the proof of \cite[Lemma 4.7]{Bl2}) holds for a solvable subgroup $G\le F$ with ``orbital'' replaced by ``dyadic-orbital''. The adaptations are not hard but as we do not wish to repeat here paper \cite{Bl2}, we omit them.
\end{proof}

\subsection{Characterization of solvable subgroups $H\le F$ in terms of the core $\La(H)$}\label{ss:sol_alg}

Recall that if $(a,b)$ is a push-down orbital of a function $g\in\PL_o(I)$ then the slope $g'(a^+)<1$. By \cite[Corollary 2.5]{GS2}, if $(a,b)$ is an orbital of a function $f\in F$, then $a$ and $b$ must be rational numbers. The proof of the following lemma is similar to the proof of Lemma \ref{4parts} and is left as an exercise to the reader. 

\begin{Lemma}\label{branch}
Let $f\in F$ be a function with a push-down orbital $(a,b)$. Then the reduced diagram $\Delta$ of $f$ has a pair of branches $u\rightarrow us$ for some finite binary words $u$ and $s$ such that $s$ contains the digit $0$ and $a=.us^{\mathbb{N}}$. 
\end{Lemma}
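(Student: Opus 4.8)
The plan is to work directly with the correspondence between pairs of branches and the linear pieces of $f$ set up in Section \ref{sec:bra}: a pair of branches $u\to v$ of (a diagram representing) $f$ means exactly that $f$ maps the dyadic interval $[u]$ linearly and increasingly onto $[v]$. Since $(a,b)$ is a push-down orbital, $f(a)=a$ and $f(x)<x$ for $x$ slightly larger than $a$; as $f$ is piecewise-linear with all slopes powers of $2$, the slope immediately to the right of $a$ equals $2^{-j}$ for some integer $j\ge 1$ (it is $<1$, and it cannot equal $1$, for otherwise $f$ would fix a right-neighbourhood of $a$, contradicting that $(a,b)$ is an orbital).

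First I would locate the relevant branch of the reduced diagram $\Delta$. The breakpoints of $f$ are finite dyadic and isolated, so for small $\varepsilon>0$ the interval $(a,a+\varepsilon)$ lies inside a single linear piece of $f$; this piece is $[u]$ for a unique pair of branches $u\to v$ of $\Delta$, and it satisfies $a\in[u]$ together with $a<.u1^{\mathbb N}$, i.e. $a$ is not the right endpoint of $[u]$. On $[u]$ the map $f$ is linear of slope $2^{-j}$ and fixes the point $a\in[u]$, so it is a contraction of $[u]$ towards the interior point $a$; an elementary computation with the formula $f(x)=a+2^{-j}(x-a)$ shows that $f([u])$ is strictly contained in $[u]$. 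Hence $[v]=f([u])\subseteq[u]$, which for dyadic intervals of this form forces $u$ to be a prefix of $v$, say $v\equiv us$ with $s$ non-empty and $|s|=j$.

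Next I would identify the fixed point. The increasing linear bijection $[u]\to[us]$ sends $.u\omega\mapsto .us\omega$, so its unique fixed point solves $\omega=s\omega$, giving $\omega=s^{\mathbb N}$ and fixed point $.us^{\mathbb N}$; since $f(a)=a$ with $a\in[u]$ and the slope is $\ne1$, we conclude $a=.us^{\mathbb N}$. This already exhibits $u$ and $s$ with the first required property (and incidentally reproves that $a$ is rational, cf. \cite[Corollary 2.5]{GS2}).

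The one point needing genuine care — and which I expect to be the main, if modest, obstacle — is verifying that $s$ contains the digit $0$; this is where the push-down hypothesis is used in full. If $s$ were $1^{\,j}$, then $a=.us^{\mathbb N}=.u1^{\mathbb N}$ would be the right endpoint of $[u]$, contradicting the choice of $[u]$ as the piece containing the right-neighbourhood $(a,a+\varepsilon)$; equivalently, for $s\equiv 1^{\,j}$ one computes $f(x)>x$ for $x\in[u]$ lying to the left of $a$, so $a$ would be a right endpoint rather than the left endpoint of an orbital. Since $|s|=j\ge1$ and $s\not\equiv 1^{\,j}$, the word $s$ must contain a $0$, completing the argument with $u\to us$ a genuine pair of branches of $\Delta$.
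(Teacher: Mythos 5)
Your proof is correct and is essentially the argument the paper intends: the lemma is left as an exercise with a pointer to the proof of Lemma \ref{4parts}, and your reasoning (locate the positive branch $u$ with $a\in[u]$ but $a<.u1^{\mathbb N}$, use the slope $2^{-j}<1$ and containment $f([u])\subseteq[u]$ to force $v\equiv us$ with $|s|=j\ge 1$, read off the fixed point $a=.us^{\mathbb N}$, and rule out $s\equiv 1^j$) is exactly the natural adaptation of that proof to a not-necessarily-dyadic fixed point. No gaps.
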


Recall (Section \ref{sec:paths}) that a path on the core $\La(H)$ always starts from the distinguished edge $p_{\La(H)}=q_{\La(H)}$ whereas a trail on $\La(H)$ can start from any edge. We rarely distinguish between a trail and its label, but when describing a trail we are careful to mention its initial edge.

\begin{Definition}\label{semi}
Let $H\le F$. An edge $e$ of the core $\La(H)$ is a \emph{semi-periodic edge} if there is a trail $s$ with initial edge $e$ and terminal edge $e$ such that the label of $s$ contains the digit $0$. 
\end{Definition}

\begin{Definition}\label{opt_tra}
Let $H\le F$ and let $e$ be a semi-periodic edge of $\La(H)$. We let $\mathcal P_e$ be the set of all non-trivial trails from $e$ to itself which do not visit any edge of the core more than twice. By assumption, $\mathcal P_e$ is not empty. We consider the set $\mathcal P_e$ ordered by the lexicographic order on the set of non-empty finite binary words $\{0,1\}^+$ 
(where $0$ is taken to be smaller than $1$). If there is a minimal trail in $\mathcal P_e$, we say that $e$ is a \emph{periodic} edge and denote the minimal trail in $\mathcal P_e$ by $s_e$. We call $s_e$ the \emph{optimal trail from $e$ to itself} and note that $s_e$ must contain the digit $0$ by the definition of periodic edges. 
\end{Definition}


%

\begin{Proposition}\label{prop}
Let $e_1$ and $e_2$ be edges of $\La(H)$. Assume that $e_1$ is periodic and let $s_{e_1}$ be the optimal trail from $e_1$ to itself. Then the following conditions are equivalent.
\begin{enumerate}
\item[(1)] There is a trail $q$ from $e_1$ to $e_2$ on $\La(H)$ and a trail $s$ from $e_1$ to itself such that $.q>.s^{\mathbb{N}}$.
\item[(2)] There is a trail $q$ from $e_1$ to $e_2$ such that for some finite binary words $u,w_1,w_2$, $$(*)\ \ q\equiv u1w_1\ \mbox{ and }\ s_{e_1}\equiv u0w_2,$$
and such that the sub-trail of $q$ given by the suffix $w_1$ is a simple trail; i.e., it does not visit any edge in $\La(H)$ more than once.
\item[(3)] There is a trail $q$ from $e_1$ to $e_2$ such that $.q>.s_{e_1}^{\mathbb{N}}$.
\end{enumerate}
\end{Proposition}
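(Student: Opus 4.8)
The plan is to work with the \emph{transition graph} $G$ whose vertices are the edges of $\La(H)$, with a directed arrow labeled $0$ (resp. $1$) from $e$ to $e'$ whenever $e$ is the top edge of a positive cell whose left (resp. right) bottom edge is $e'$. A trail is exactly a directed walk in $G$, a simple trail a simple directed walk, and a cycle at $e_1$ a closed walk. Since $\La(H)$ is folded, no foldings of type $1$ apply, so each edge is the top of at most one positive cell; hence $G$ is \emph{deterministic}: from each vertex there is at most one $0$-arrow and one $1$-arrow, and a trail is determined by its label. I will use freely that prepending a cycle at $e_1$ to a trail $e_1\to e_2$ again yields a trail $e_1\to e_2$, that a trail may be cut at any return to $e_1$, and that any trail between two edges may be replaced by a simple one with the same endpoints (loop removal in the finite graph $G$).

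I would first dispatch the easy directions and establish $(2)\Leftrightarrow(3)$. The implication $(3)\Rightarrow(1)$ is immediate on taking $s\equiv s_{e_1}$, which is a cycle at $e_1$. For $(2)\Rightarrow(3)$: if $q\equiv u1w_1$ and $s_{e_1}\equiv u0w_2$, then $u$ is a proper prefix of $s_{e_1}$, hence of $s_{e_1}^{\mathbb{N}}$, so the infinite words $q0^{\mathbb{N}}$ and $s_{e_1}^{\mathbb{N}}$ agree on the first $|u|$ digits and differ at position $|u|+1$, where $q$ has $1$ and $s_{e_1}^{\mathbb{N}}$ has $0$; thus $.q>.s_{e_1}^{\mathbb{N}}$. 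For $(3)\Rightarrow(2)$: let $p$ be the first position at which $q0^{\mathbb{N}}$ and $s_{e_1}^{\mathbb{N}}$ differ, so $q$ has $1$ and $s_{e_1}^{\mathbb{N}}$ has $0$ there. Writing $p-1=a|s_{e_1}|+r$ with $0\le r<|s_{e_1}|$, periodicity of $s_{e_1}^{\mathbb{N}}$ lets me peel the $a$ leading full periods $s_{e_1}^a$ off the front of $q$; each is a cycle at $e_1$, so cutting there leaves a trail $e_1\to e_2$ of the form $u'1w'$ with $u'$ the length-$r$ prefix of $s_{e_1}$ and $s_{e_1}\equiv u'0w_2$. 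Finally I replace $w'$ by a simple trail $w_1$ with the same endpoints, obtaining $q\equiv u'1w_1$ as in $(2)$.

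The remaining and \textbf{main obstacle} is $(1)\Rightarrow(3)$, which I would route through $(2)$. Given a trail $q\colon e_1\to e_2$ and a cycle $s$ at $e_1$ with $.q>.s^{\mathbb{N}}$, let $x$ be the longest common prefix of $q$ and $s^{\mathbb{N}}$; then $x$ is a prefix of $s^{\mathbb{N}}$, the $x$-walk from $e_1$ ends at an edge $f$ lying on the $s$-walk at which $s$ takes a $0$-step, and $q$ takes the $1$-step from $f$ toward $e_2$, so $e_2$ is reachable from the $1$-successor of $f$. I must \emph{transport} this upward branching from the arbitrary cycle $s$ to the optimal cycle $s_{e_1}$: I would locate a $0$-step edge $g$ on $s_{e_1}$ whose $1$-successor $g_1$ still lies in the strongly connected component $C$ of $e_1$, so that $e_2$ is reachable from $g_1$ by routing through $e_1$ (using $g_1\to e_1\to e_2$), and then set $u$ to be the prefix of $s_{e_1}$ up to $g$ and $w_1$ a simple trail $g_1\to e_2$, giving $q'\equiv u1w_1$ as in $(2)$, whence $(3)$. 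The delicate point—and the reason a naive value comparison fails—is that $s_{e_1}$ is lexicographically minimal only among \emph{bounded-visit} cycles and need not minimize $.s^{\mathbb{N}}$ over all cycles (for instance the cycles $0^k1$ have strictly decreasing infinite value, so $.s_{e_1}^{\mathbb{N}}$ can exceed some $.s^{\mathbb{N}}$). The transport argument must therefore be reachability-theoretic, handled through the component structure of $G$ and determinism, with a case analysis according to whether the available $0$-step branches of $s_{e_1}$ keep the $1$-successor inside $C$ or exit toward $e_2$; verifying that such a $g$ always exists, using that $s_{e_1}$ contains a $0$ and meets $C$, is where I expect the real work to lie.
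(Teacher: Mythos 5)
Your handling of $(2)\Rightarrow(3)$, $(3)\Rightarrow(1)$ and $(3)\Rightarrow(2)$ is correct, and your $(3)\Rightarrow(2)$ argument (peel leading full periods $s_{e_1}^a$ off $q$, branch at the first disagreement, simplify the tail) is essentially what the paper does. The genuine gap is the implication out of $(1)$, which is the real content of the proposition and which you explicitly leave open (``where I expect the real work to lie''). Worse, the intermediate claim your sketch aims at --- a $0$-step edge $g$ on $s_{e_1}$ whose $1$-successor lies in the strongly connected component $C$ of $e_1$ --- is not forced by the hypotheses at the level of the transition graph. Consider $C=\{e_1,f\}$ with $0$-successors $e_1\mapsto f\mapsto e_1$ and both $1$-successors outside $C$, one of them being (or leading to) $e_2$. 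Then $s_{e_1}\equiv 00$, no $0$-step edge of $s_{e_1}$ has its $1$-successor in $C$, and yet all three conditions hold (via $q\equiv 1$). So the dichotomy you mention in passing (``keep the $1$-successor inside $C$ or exit toward $e_2$'') cannot be deferred: it is the whole problem, and nothing in your sketch explains how condition $(1)$ --- which speaks only of some cycle $s$ whose branch point toward $e_2$ may lie nowhere near $s_{e_1}$ --- produces a usable branch point on $s_{e_1}$ itself.

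The missing idea is to normalize the pair $(q,s)$ rather than to argue by reachability. Write $q\equiv u1w_1$ and $s\equiv u0w_2$ with $u$ the longest common prefix (after first stripping leading copies of $s$ from $q$), then cut repeated-edge cycles out of $u$ and out of $w_2$; each cut shortens $s$, preserves condition $(1)$ for the modified pair, and the process terminates with $s$ visiting no edge more than twice, i.e.\ $s\in\mathcal P_{e_1}$. Now the \emph{lexicographic} minimality of $s_{e_1}$ in $\mathcal P_{e_1}$ (not minimality of the value $.s^{\mathbb{N}}$ over all cycles, which indeed fails, as you correctly observe) leaves exactly two cases: either $s_{e_1}$ is a prefix of $s$, which forces $s\equiv s_{e_1}$ since otherwise $s$ would visit $e_1$ three times, and then $(2)$ is already in hand from the decomposition $q\equiv u1w_1$, $s\equiv u0w_2$; or $s_{e_1}$ first differs from $s$ by a $0$ against a $1$, whence $.s_{e_1}^{\mathbb{N}}<.s^{\mathbb{N}}<.q$ and $(3)$ holds, after which your own $(3)\Rightarrow(2)$ argument finishes. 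Your observation that $s_{e_1}$ need not minimize $.s^{\mathbb{N}}$ over all cycles is a good one, but the resolution is that after cycle-cutting one only ever compares $s_{e_1}$ against members of $\mathcal P_{e_1}$, where its lexicographic minimality does control the infinite binary values.
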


\begin{proof}
It is clear that (2) implies (3). Indeed, the trail $q$ from (2) satisfies $.q>.s_{e_1}^{\mathbb{N}}$. It is also obvious that (3) implies (1). To prove that (1) implies (2), we note that it suffices to prove that there is a trail $q$ from $e_1$ to $e_2$ which satisfies $(*)$. Indeed, if $q$ satisfies $(*)$, but 
 the sub-trail of $q$ given by the suffix $w_1$ is not a  simple trail, we can replace $w_1$ by a simple trail with the same initial and terminal vertices and $(*)$ would still hold. 
 
 Hence, let $q$ and $s$ be trails as described in condition (1). We prove that $q$ and $s$ can be modified so that $q$ would satisfy $(*)$. First, we remove the longest prefix of $q$ which is a power of $s$. Note that if $q\equiv s^nq'$, then since $s^n$ labels a trail from $e_1$ to itself, $q'$ labels a trail from $e_1$ to $e_2$. In addition, since $.q=.s^nq'>.s^{\mathbb{N}}$, we have $.q'>.s^{\mathbb{N}}$. Hence, after removing from $q$  its longest prefix  which is a power of $s$, we still have  $.q>.s^{\mathbb{N}}$.
 
 

 Now, we note that $(*)$ is satisfied for $q$ and $s_{e_1}$ replaced by $s$ (below we shall say shortly that $(*)$ holds for $q$ and $s$). Indeed, consider the binary words $q$ and $s^{\mathbb{N}}$ as paths on the complete infinite binary tree. We let $u$ be the longest common prefix of $q$ and $s^{\mathbb{N}}$. Since $.q>.s^{\mathbb{N}}$, we have that $q\equiv u1w_1$ and $s^{\mathbb N}\equiv u0\omega_2$ where $w_1$ is a possibly empty suffix of $q$ and $\omega_2$ is an infinite suffix of $s^{\mathbb{N}}$. Since $s$ is not a prefix of $q$, and as such, not a prefix of $u$; $u$ must be a strict prefix of $s$. Thus, $s\equiv u0w_2$ for some possibly empty suffix $w_2$. Hence $(*)$ holds for $q$ and $s$. 
 
 Next, we modify $q$ and $s$ to ensure that $s$ does not visit any edge of $\mathcal L(H)$ more than twice, while preserving condition $(*)$ for $q$ and  $s$. 
 If $s$ visits an edge $e$ more than twice, then since $s\equiv u0w_2$, either the sub-trail $u$ or the sub-trail $w_2$ visits the edge $e$ twice. In the first case, we cut out a cycle from $e$ to itself from the trail $u$ and note that the result is a shorter trail $u'$ which visits every edge visited by $u$ at most as many times and visits the edge $e$ less times than the trail $u$. We replace $u$ by $u'$ in both trails $q\equiv u1w_1$ and  $s\equiv u0w_2$ and note that $(*)$ still holds for $q$ and $s$.
In the second case,  we replace $w_2$ by a shorter trail by cutting out a cycle from $w_2$. Again, condition $(*)$ for $q$ and $s$ still holds. Repeating the process as long as $s$ visits some edge of $\mathcal L(H)$ more than twice, we get that $q$ and $s$  satisfy $(*)$ and  that $s$ does not visit any edge more than twice.  
As such, by the definition of optimal trail, $s_{e_1}$ is smaller or equal to $s$ in the lexicographic order on $\{0,1\}^+$. Note that if $s_{e_1} \equiv s$ we are done, as $(*)$ holds for $q$ and $s$. Note also that $s_{e_1}$ cannot be a proper prefix of $s$. Indeed, as $s$ terminates on the edge $e_1$ and $s_{e_1}$ visits $e_1$ twice, if $s_{e_1}$ is a proper prefix of $s$, then $s$ visits $e_1$ at least $3$ times, in contradiction to the above. Hence, we can assume that $s_{e_1}$ is not a prefix of $s\equiv u0w_2$. Since $s_{e_1}$ is smaller than $s$, we have $s_{e_1}\equiv a0b_1$ and $s\equiv a1b_2$ where $a$ is the longest common prefix of $s_{e_1}$ and $s$, and $b_1$ and $b_2$ are finite binary words. We note that $a\not\equiv u$, since $u1$ is not a prefix of $s$. Hence, either $a1$ is a prefix of $u$ or $u0$ is a prefix of $a$. In the first case, $a1$ is a prefix of $q$. Hence $q\equiv a1 w_3$ for some finite binary word $w_3$ and $(*)$ holds. In the second case, $a\equiv u0c$ for some finite binary word $c$. Hence, $s_{e_1}\equiv u0c0b_1$ and $q\equiv u1w_1$ and $(*)$ holds. 
\end{proof}

\begin{Definition}\label{def:PH}
Let $H$ be a subgroup of $F$. We define a directed graph $\mathcal P(H)$ as follows. The vertex set of $\mathcal P(H)$ is the set of periodic edges of $\La(H)$. If $e_1$ and $e_2$ are periodic edges, then there is a directed edge from $e_1$ to $e_2$ in $\mathcal P(H)$ if and only if the equivalent conditions in Proposition \ref{prop} are satisfied for $e_1$ and $e_2$. The \emph{length} of a directed path in $\mathcal P(H)$ is the number of vertices in the path.  
\end{Definition}


\begin{Lemma}\label{11}
Let $H$ be a subgroup of $F$ such that $\Cl(H)$ does not admit transition chains. Then the following assertions hold. 
\begin{enumerate}
	\item[(1)] If there is a directed path of length $n$ in $\mathcal P(H)$ then $\Cl(H)$ admits a tower $T=\{(a_i,b_i),i=1,\dots,n\}$ of height $n$. 
	\item[(2)] If there is a semi-periodic edge $e$ in $\mathcal L(H)$ which is not periodic, then $\Cl(H)$ admits a tower of infinite height. 
\end{enumerate}
\end{Lemma}

\begin{proof}
(1) Let $e_1\rightarrow e_2\rightarrow\dots\rightarrow e_n$ be a directed path in $\mathcal P(H)$. Then by Condition (3) in Proposition \ref{prop}, for each $i=1,\dots,n-1$ there is a trail $q_i$ on $\La(H)$ from $e_i$ to $e_{i+1}$ such that $.q_i>.s_{e_i}^{\mathbb{N}}$. 
Let $u$ be a path on $\La(H)$ with terminal edge $e_1$. For each $i\in\{1,\dots,n\}$, we let $v_i\equiv uq_1\cdots q_{i-1}$ and note that $v_i$ is a path on $\La(H)$ with terminal edge $e_i$. 

Since the edges $e_i$ are all periodic edges, for each $i$, $v_i$ and $v_is_{e_i}$ are paths on $\La(H)$ with terminal edge $e_i$. By Lemma \ref{cor_ide}, there is a function $f_i\in\Cl(H)$ with a pair of branches $v_i\rightarrow v_is_{e_i}$. In particular, $f_i$ fixes the rational  fraction $.v_is_{e_i}^{\mathbb{N}}$. Since $f_i$ is linear on $[v_i]$, it does not fix any number in $[v_i]$ other than $.v_is_{e_i}^{\mathbb{N}}$, and in particular, it does not fix any number in $(.v_is_{e_i}^{\mathbb{N}},.v_i1^{\mathbb{N}}]$ (we note that this interval is not empty as $s_{e_i}$ contains the digit $0$). Thus, $f_i$ has an orbital $(a_i,b_i)$ where $a_i=.v_is_{e_i}^{\mathbb{N}}$ and $b_i>.v_i1^{\mathbb{N}}$.

Since for each $i=1,\dots,n-1$, $v_{i+1}\equiv v_iq_i$ and $.q_i>.s_{e_i}^{\mathbb{N}}$, we have 
$$a_{i+1}=.v_{i+1}s_{e_{i+1}}^{\mathbb{N}}\ge.v_{i+1}=.v_iq_i>.v_is_{e_i}^{\mathbb{N}}=a_i.$$
Thus, $a_1<a_2<\dots<a_n$.

Since for each $i=1,\dots,n$, $b_i>.v_i1^{\mathbb{N}}$, for each $i=1,\dots,n-1$, we have $a_{i+1}=.v_{i+1}s_{e_{i+1}}^{\mathbb{N}}=.v_iq_is_{e_{i+1}}^{\mathbb{N}}<.v_i1^{\mathbb{N}}<b_i$. 
It follows that for each $i=1,\dots,n-1$, $a_{i+1}\in(a_i,b_i)$. Thus, since $\Cl(H)$ does not admit transition chains, we must have $b_1\ge b_2\ge \dots\ge b_n$. Hence, $(a_1,b_1)\supset(a_2,b_2)\supset\cdots\supset(a_n,b_n)$ and so $\{(a_i,b_i)\mid i=1,\dots,n\}$ is a tower  of height $n$ in $\Cl(H)$. 

(2) The proof is similar to that of (1). Since $e$ is not periodic, there is an infinite sequence of trails $s_i$, $i\in\mathbb{N}$ from $e$ to itself, such that for each $i$, $s_{i+1}$ is smaller than $s_i$ in the lexicographic order. Let $u$ be a path on $\mathcal L(H)$ such that $u^+=e$. Then for each $i$, $us_i$ is a path on $\mathcal L(H)$ with terminal edge $e$. As in the proof of part (1), we get for each $i$ an element $h_i\in\Cl(H)$ which has an orbital with right endpoint $\alpha_i=.us_i^{\mathbb{N}}$ and which contains $[.u_i,\alpha_i)$. since for each $i$, $\alpha_{i+1}\in [.u_i,\alpha_i)$, the assumption that $\Cl(H)$ does not admit a transition chain implies that the orbitals of $h_i$, $i\in\mathbb{N}$ with right endpoints $\alpha_i$ form an infinite tower. 
\end{proof}

Let $G\le\PL_o(I)$. A tower $T=\{(a_i,b_i)\mid i\in\mathcal I\}$ in $G$ is \emph{good} if for distinct $i$ and $j$, 
either $a_i<a_j<b_j<b_i$ or $a_j<a_i<b_i<b_j$. A good tower is a weaker version of an \emph{exemplary tower} defined in \cite{Bl1}. 

\begin{Lemma}\label{22}
Let $H$ be a subgroup of $F$ which admits a good tower of height $n$. Assume that every semi-periodic edge in $\mathcal L(H)$ is periodic. Then there is a directed path of length $n$ in the graph $\mathcal P(H)$.
\end{Lemma}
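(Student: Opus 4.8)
The plan is to reverse the construction in Lemma \ref{11}: from the strictly nested orbitals of the good tower I will read off periodic edges $e_1,\dots,e_n$ together with trails $q_i$ realizing directed edges $e_i\to e_{i+1}$ in $\mathcal P(H)$. First some reductions. Order the good tower so that $(a_1,b_1)\supset\cdots\supset(a_n,b_n)$, the goodness hypothesis giving the strict chain $a_1<\cdots<a_n<b_n<\cdots<b_1$. For each $i$ choose $g_i\in H$ with orbital $(a_i,b_i)$; replacing $g_i$ by $g_i^{-1}$ if necessary, I may assume $(a_i,b_i)$ is a push-down orbital of $g_i$. By Lemma \ref{branch} the reduced diagram of $g_i$ has a pair of branches $u_i\to u_is_i$ with $s_i$ containing the digit $0$ and $a_i=.u_is_i^{\mathbb N}$. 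Since $g_i\in H$, Lemma \ref{trivial} gives $u_i^+=(u_is_i)^+$ in $\La(H)$, so $s_i$ labels a trail from $e_i:=u_i^+$ to itself containing the digit $0$; hence each $e_i$ is a periodic edge, i.e.\ a vertex of $\mathcal P(H)$. I do not need the $e_i$ to be distinct: in the sense of Definition \ref{def:PH} a directed path of length $n$ is a directed walk through $n$ vertices (otherwise the length would be bounded by the number of vertices and could never detect non-solvability in Theorem \ref{Alg_sol_int}).

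The heart of the argument is an inductive construction of nested branch-words $v_1,\dots,v_n$ with $v_{i+1}=v_iq_i$, where each $v_i$ is a positive branch, hence a path on $\La(H)$, of a push-down element $\hat g_i\in H$ with branch $v_i\to v_is^{(i)}$, with $A_i:=.v_i(s^{(i)})^{\mathbb N}$ the left endpoint of its orbital and $e_i=v_i^+$. Start with $\hat g_1=g_1$, $v_1=u_1$, $s^{(1)}=s_1$. Suppose $v_i,\hat g_i$ are built and that the tail $(A_i,B_i)\supset(A_{i+1},B_{i+1})\supset\cdots$ is a good tower in $H$ (a conjugate of the original tail). Since $(A_i,B_i)$ is a push-down orbital of $\hat g_i$ and $A_{i+1}\in(A_i,B_i)$, the iterates $\hat g_i^{\,m}(A_{i+1})$ decrease to $A_i$; choosing $m$ large I conjugate the entire tail by $\hat g_i^{\,m}\in H$. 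Conjugation by an element of $H$ sends orbitals to orbitals of the conjugate element and preserves inclusions, strictness, and push-down type, so the conjugated tail is again a good tower in $H$, and its new outermost left endpoint $A_{i+1}':=\hat g_i^{\,m}(A_{i+1})$ lies in the interior of $[v_i]$ strictly to the right of $A_i$. Applying Lemma \ref{branch} to a push-down element realizing this new orbital, and prolonging the resulting branch by powers of its loop, I obtain $v_{i+1}$ with $|v_{i+1}|>|v_i|$, $A_{i+1}'=.v_{i+1}(s^{(i+1)})^{\mathbb N}$, and $s^{(i+1)}$ containing $0$. As $A_{i+1}'\in[v_i]$ has binary expansion beginning with $v_i$, I may write $v_{i+1}=v_iq_i$ and set $e_{i+1}=v_{i+1}^+$, periodic with loop $s^{(i+1)}$ by Lemma \ref{trivial}.

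It remains to verify that $q_i$ realizes a directed edge $e_i\to e_{i+1}$ through condition (1) of Proposition \ref{prop}, using the loop $s=s^{(i)}$ at $e_i$. By construction $v_iq_i=v_{i+1}$ is a path on $\La(H)$ from $e_i$ to $e_{i+1}$, so $q_i$ is a trail from $e_i$ to $e_{i+1}$. For the inequality $.q_i>.(s^{(i)})^{\mathbb N}$ I use that the tail of $A_i$ after $v_i$ is exactly $(s^{(i)})^{\mathbb N}$ while $A_{i+1}'>A_i$: prolonging $v_{i+1}$ so that it is strictly longer than the longest common prefix of the expansions of $A_i$ and $A_{i+1}'$ forces $q_i$ and $(s^{(i)})^{\mathbb N}$ to agree on a prefix and then diverge, with $q_i$ carrying a $1$ where $(s^{(i)})^{\mathbb N}$ carries a $0$; hence $.q_i>.(s^{(i)})^{\mathbb N}$. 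Thus each $e_i\to e_{i+1}$ is a directed edge and $e_1\to e_2\to\cdots\to e_n$ is a directed path of length $n$ in $\mathcal P(H)$.

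I expect the main obstacle to be the inductive bookkeeping of the second paragraph: maintaining a genuine good tower in $H$ after each conjugation while simultaneously guaranteeing that the next left endpoint lands in the interior of $[v_i]$ and that $v_{i+1}$ can be prolonged to diverge strictly to the right of $A_i$. The degenerate case $s^{(i)}=0^\ell$, in which $A_i=.v_i$ is finite dyadic, must be treated separately but causes no difficulty, since then $.(s^{(i)})^{\mathbb N}=0$ and any $q_i$ containing a $1$ satisfies the inequality. Establishing condition (1) of Proposition \ref{prop} rather than the optimal-trail condition (3) keeps this final verification elementary.
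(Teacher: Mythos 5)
Your proposal is correct and follows essentially the same route as the paper's proof: order the tower, take push-down representatives, extract the pairs of branches $u_i\rightarrow u_is_i$ via Lemma \ref{branch} to obtain periodic edges, conjugate the remaining tail by a large power of the current element so that the next orbital (and hence the next dyadic interval) nests strictly inside $[v_i]$, and verify condition (1) of Proposition \ref{prop} by comparing $.v_iq_i$ with $a_i=.v_is_{(i)}^{\mathbb N}$. Your explicit inductive bookkeeping and the separate treatment of the dyadic left-endpoint case are only cosmetic differences from the argument in the text.
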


\begin{proof}
Let $T=\{(a_i,b_i)\mid i=1,\dots,n\}$ be a good tower in $H$ and assume that the chain of intervals $(a_i,b_i),i=1,\dots,n$ is decreasing. In particular, $a_1<a_2<\dots<a_n$ and $b_n<b_{n-1}<\dots<b_1$. For each $i$, let $h_i\in H$ be an element with orbital $(a_i,b_i)$. We assume that for all $i$, $(a_i,b_i)$ is a push-down orbital of $h_i$, otherwise, one can replace $h_i$ by $h_i^{-1}$. We use the elements $h_i$ to construct a directed path of length $n$ in $\mathcal P(H)$.

For $i=1$, by Lemma \ref{branch}, the reduced diagram $\Delta_1$ of $h_1$ has a pair of branches of the form $u_1\rightarrow u_1s_1$, where $s_1$ contains the digit $0$ and $a_1=.u_1s_1^{\mathbb{N}}$. 
Since $\Delta_1$ is reduced, the words $u_1$ and $u_1s_1$ label paths on $\La(H)$ which terminate on the same edge. Let $e_1=u_1^+=(u_1s_1)^+$ 
 and note that $e_1$ is a semi-periodic edge of $\La(H)$ and as such, a periodic edge.

Since $h_1$ is linear on $[u_1]$ it does not fix any number in $[u_1]$ apart from $a_1$. Therefore, the interval $(a_1,.u_11^{\mathbb{N}}]$
 (which is not empty as $s_1$ contains the digit $0$), is contained in the orbital $(a_1,b_1)$. 
Let $y_1\in (a_1,.u_11^{\mathbb{N}})$. Since $b_2,y_1\in(a_1,b_1)$ and $(a_1,b_1)$ is an orbital of $h_1$, for some $k_1\in\mathbb{Z}$ we have 
$h_1^{k_1}(b_2)<y_1$. In particular $h_2^{h_1^{k_1}}$ has orbital $(h_1^{k_1}(a_2),h_1^{k_1}(b_2))$ contained in $(a_1,y_1)$. 

We replace all functions $h_i$, $i>1$ in the sequence $(h_i)$, $i=1,\dots,n$, by the conjugates $h_i^{h_1^{k_1}}$ and each orbital $(a_i,b_i)$, $i>1$ with the orbital $(h_1^{k_1}(a_i),h_1^{k_1}(b_i))$. We denote the resulting sequence of functions again by $(h_i)$, $i=1,\dots,n$. Similarly, we will refer to the orbitals in the new sequence of orbitals again by $(a_i,b_i)$, $i=1,\dots,n$. Note that this sequence is decreasing and forms a good tower in $H$.

Now, for $i=2$, by Lemma \ref{branch}, the reduced diagram $\Delta_2$ of $h_2$ has a pair of branches of the form $u_2\rightarrow u_2s_2$, where $s_2$ contains the digit $0$ and $a_2=.u_2s_2^{\mathbb{N}}$. Since $\Delta_2$ is reduced, the words $u_2$ and $u_2s_2$ label paths on $\La(H)$ which terminate on the same edge. Let $e_2=u_2^+=(u_2s_2)^+$  and note that $e_2$ is a periodic edge of $\La(H)$. 
Since $f_2$ is linear on $[u_2]$ and fixes $a_2$, the (non-empty) interval $(a_2,.u_21^{\mathbb{N}}]$ is contained in $(a_2,b_2)\subseteq (a_1,y_1)$ and thus, it is contained in the interior of $[u_1]$. That implies that $[u_2]\subset [u_1]$. (Indeed, the right endpoint of the dyadic interval $[u_2]$ is contained in the interior of $[u_1]$.) 
Since $a_1<a_2$ and $a_2=.u_2s_2^{\mathbb{N}}$, for a large enough $m\in\mathbb{N}$, every number in the interval $[u_2s_2^m]$  is greater than $a_1$. Since $[u_2s_2^m]\subseteq[u_2]\subseteq[u_1]$ we have $u_2s_2^{m}\equiv u_1q_1$ for some suffix $q_1$. Since $u_1$ labels a path on $\La(H)$ with terminal edge $e_1$, and the path $u_2s_2^m$ on $\La(H)$ terminates on $e_2$, the word $q_1$ labels a trail from $e_1$ to $e_2$. In addition, $.u_2s_2^{m}>a_1$. Hence, $.u_1q_1>.u_1s_1^{\mathbb{N}}$ which implies that $.q_1>.s_1^{\mathbb{N}}$. Thus, the trail $q_1$ from $e_1$ to $e_2$ and the trail $s_1$ from $e_1$ to itself both satisfy condition (1) from Proposition \ref{prop}. 
It follows that there is a directed edge in $\mathcal P(H)$ from $e_1$ to $e_2$. 

Continuing in this manner, we get periodic edges $e_1,e_2,\dots,e_n$ in $\La(H)$ such that for each $i=1,\dots,n-1$, there is a directed edge in $\mathcal P(H)$ from $e_i$ to $e_{i+1}$. That completes the proof of the lemma. 
\end{proof}

\begin{Theorem}\label{thm}
Let $H\le F$. Then $H$ is solvable if and only if every semi-periodic edge of $\mathcal L(H)$ is periodic and the graph $\mathcal P(H)$ does not contain arbitrarily long positive paths.
If $H$ is solvable then the derived length of $H$ is the maximal length of a positive path in $\mathcal P(H)$. 
\end{Theorem}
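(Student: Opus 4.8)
The plan is to show that the depth of $H$ (the supremum of the heights of towers in $H$, in the sense of Definition \ref{def:tower}) coincides with the supremum $L$ of the lengths of directed paths in $\mathcal P(H)$, both read as elements of $\mathbb{N}\cup\{\infty\}$. Granting this, the theorem follows at once from Bleak's Theorem \ref{thm:sol}: $H$ is solvable exactly when its depth is finite, and in that case the derived length equals the depth, hence equals $L$. I would first record the bookkeeping fact that $H$ and $\Cl(H)$ have the same depth. Indeed $H\le\Cl(H)$ gives that the depth of $H$ is at most that of $\Cl(H)$; when $H$ is solvable, Corollary \ref{cor:sol_cor} and Theorem \ref{thm:sol} force the two depths to agree (both equal the common derived length), and when $H$ is not solvable both are infinite by Theorem \ref{thm:sol} and the inclusion $H\le\Cl(H)$. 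Write $D$ for this common depth.

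For the upper bound $L\le D$ I would split on whether $\Cl(H)$ admits a transition chain. If it does, then by Lemma \ref{Bl1} the group $\Cl(H)$ is not solvable, so $D=\infty$ and there is nothing to prove. If it does not, then Lemma \ref{11} applies directly: a directed path of length $m$ in $\mathcal P(H)$ produces a tower of height $m$ in $\Cl(H)$, whence $m\le D$; taking the supremum over all directed paths gives $L\le D$.

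For the lower bound $L\ge D$ I would take any tower of height $n$ in $H$ and invoke the refinement from \cite{Bl1} that replaces it by a good tower of the same height $n$ (a good tower being the weakening of Bleak's exemplary tower recorded just before Lemma \ref{22}). Lemma \ref{22} then yields a directed path of length $n$ in $\mathcal P(H)$, so $L\ge n$ for every $n\le D$, and therefore $L\ge D$. Combining the two bounds gives $L=D$, which is exactly what is needed to conclude both the equivalence and the equality of derived length with the maximal path length.

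The main obstacle will be the lower bound, and specifically the passage from an arbitrary tower to a good tower of the same height, which must be valid for every subgroup of $F$ — including the non-solvable case, where $D=\infty$ and one needs good towers of unbounded height. The solvable case is comfortable (finite depth is realized by an exemplary tower, and Lemma \ref{Bl1} rules out transition chains so the clean hypothesis of Lemma \ref{11} is available), so the care is concentrated in quoting the tower-refinement result of \cite{Bl1} in the generality of \emph{all} towers rather than merely for solvable groups or in the absence of transition chains. I expect that verifying this general form is the step requiring the most attention; once it is in hand, the two inequalities above assemble the proof mechanically.
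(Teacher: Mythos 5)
Your proposal is correct in outline and follows essentially the same route as the paper: the upper bound is Lemma \ref{11} applied after ruling out transition chains for solvable groups via Lemma \ref{Bl1} and Corollary \ref{cor:sol_cor}, the lower bound is Lemma \ref{22} applied to good towers, and Theorem \ref{thm:sol} translates between depth and derived length exactly as you describe. The one step you rightly flag as delicate --- passing from an arbitrary tower of height $n$ to a good tower of height $n$ --- is not handled in the paper by a single refinement lemma, and I do not believe \cite{Bl1} contains such a refinement in that generality. The paper instead uses a dichotomy: if $H$ admits a transition chain, then \cite[Lemma 2.11]{Bl1} gives an \emph{infinite} good tower, hence good towers of every finite height (not obtained by refining the given tower, but that does not matter); if $H$ admits no transition chain, then \cite[Lemma 2.12]{Bl1} says every tower in $H$ is already good. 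Either way one obtains good towers of every height up to the depth of $H$, which is all your argument requires, so substituting this case split for the claimed refinement closes the proposal with no other changes.
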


\begin{proof}
Assume that $H$ is solvable. By Corollary \ref{cor:sol_cor}, $\Cl(H)$ is also solvable. Hence, by Lemmas \ref{Bl1} and \ref{thm:sol}, $\Cl(H)$ does not admit transition chains and there is a uniform bound on the height of towers in $\Cl(H)$. Therefore, Lemma \ref{11} implies that $\mathcal P(H)$ does not contain arbitrarily long directed paths and that there is no semi-periodic edge in $\mathcal L(H)$ which is not periodic.
In the other direction, assume that $H$ is not solvable. 
It follows from Lemmas 2.11, 2.12 and Theorem 1.1 of \cite{Bl1} that $H$ must admit good towers of arbitrarily large height. Hence, if  every semi-periodic edge of $\mathcal L(H)$ is periodic, Lemma \ref{22} implies that  there are arbitrarily long directed paths in $\mathcal P(H)$.
%
%
%
\end{proof}



\begin{Corollary}\label{cor_PH}
Let $H$ be a subgroup of $F$ with finite core $\La(H)$. Then $H$ is solvable if and only if there is no directed loop in $\mathcal P(H)$.
\end{Corollary}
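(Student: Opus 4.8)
The plan is to obtain Corollary~\ref{cor_PH} as a direct consequence of Theorem~\ref{thm} together with the finiteness of the core. First I would note that since $\La(H)$ is a finite directed $2$-complex it has only finitely many edges; as the vertex set of $\mathcal{P}(H)$ consists of periodic edges of $\La(H)$ (Definition~\ref{def:PH}), the graph $\mathcal{P}(H)$ is finite.

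Next I would record the elementary dichotomy for finite directed graphs quoted just before the corollary: a finite directed graph admits directed paths of unbounded length if and only if it contains a directed cycle. Here it is essential to use the convention, implicit in Lemmas~\ref{11} and~\ref{22}, that a directed path in $\mathcal{P}(H)$ is a directed \emph{walk}, i.e.\ its vertices need not be distinct; otherwise ``arbitrarily long'' would be vacuous in a finite graph. With this convention both implications are immediate. If $\mathcal{P}(H)$ contains a directed cycle $C$, then traversing $C$ $k$ times produces a directed path whose length grows without bound in $k$, so $\mathcal{P}(H)$ has arbitrarily long directed paths. Conversely, if $\mathcal{P}(H)$ is acyclic then no directed walk can repeat a vertex (a repetition would yield a cycle), so every directed path has length at most the finite number of vertices of $\mathcal{P}(H)$, and the lengths are bounded.

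Finally I would combine these observations with Theorem~\ref{thm}. By that theorem $H$ is solvable precisely when $\mathcal{P}(H)$ does not contain arbitrarily long directed paths; by the dichotomy above, for the finite graph $\mathcal{P}(H)$ this happens precisely when $\mathcal{P}(H)$ contains no directed cycle. This yields the stated equivalence. There is no genuine obstacle in this step, since the substantive content lies entirely in Theorem~\ref{thm}; the only point requiring care is making the walk-versus-path convention explicit, so that the condition ``arbitrarily long directed paths'' is correctly matched with the presence of a directed cycle rather than being misread as a (vacuous) bound on the number of distinct vertices in a finite graph.
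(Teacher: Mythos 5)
Your proposal is correct and matches the paper's argument, which derives the corollary from Theorem \ref{thm} via exactly the observation that a finite directed graph has arbitrarily long directed paths if and only if it contains a directed cycle. Your explicit remark that ``directed path'' must be read as a directed walk (vertices may repeat, as is implicit in Lemmas \ref{11} and \ref{22}) is a worthwhile clarification, but it does not change the substance of the argument.
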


\begin{proof}
	Since the core $\mathcal L(H)$ is finite, any semi-periodic edge of $\mathcal L(H)$ must be periodic. In addition, $\mathcal P(H)$ is a finite directed graph. Hence, it contains arbitrarily long directed paths if and only if the graph contains a  directed cycle.
	We claim that  there is a directed cycle in $\mathcal P(H)$ if and only if there is a directed loop from some vertex of $\mathcal P(H)$ to itself. 
	Indeed, one direction is obvious. For the other direction it suffices to show that that if $e_1,e_2,e_3$ are vertices of $\mathcal P(H)$ such that there is a directed edge in $\mathcal P(H)$ from $e_1$ to $e_2$ and a directed edge in $\mathcal P(H)$ from $e_2$ to $e_3$, then there is a directed edge in $\mathcal P(H)$ from $e_1$ to $e_3$.
	Assume that there are directed edges in $\mathcal P(H)$ from $e_1$ to $e_2$ and from $e_2$ to $e_3$. Then, by definition, there is a trail $q$ from $e_1$ to $e_2$ on $\mathcal L(H)$, such that $.q>.s_{e_1}^{\mathbb{N}}$ and there is a trail $q’$ on $\mathcal L(H)$ from $e_2$ to $e_3$. Consider the concatenation of trails $qq’$ in $\mathcal L(H)$. The trail $qq’$ is a trail from $e_1$ to $e_3$. It satisfies that $.qq’\geq.q>s_{e_1}^{\mathbb{N}}$. Hence, by the definition of $\mathcal P(H)$, there is a directed edge in $\mathcal P(H)$ from $e_1$ to $e_3$. It follows that if there is a directed cycle in $\mathcal P(H)$, then there is a loop in $\mathcal P(H)$ from each vertex in the cycle to itself.
\end{proof}

We remark that if $H\le F$ is finitely generated, then Corollary \ref{cor_PH} and Theorem \ref{thm} give a simple algorithm for deciding if $H$ is solvable and for finding its derived length if it is. Indeed, one has to construct the directed graph $\mathcal P(H)$, and check whether or not it contains a cycle, and if not, find the length of the longest directed path.

Note that if $H$ is finitely generated, there are finitely many edges in the core $\La(H)$. For each edge $e$ in $\La(H)$, there are finitely many trails with initial edge $e$ which do not visit any edge more than twice. Thus, one can check if $e$ is a periodic edge and if it is, find the optimal trail $s_e$. 
If $e$ is periodic then to find all outgoing edges of $e$ in $\mathcal P(H)$, it suffices to consider all trails of the form $q\equiv u1w$ with initial edge $e$ where $u0$ is a prefix of $s_{e}$ and the sub-trail of $q$ given by the suffix $w$ is simple. Clearly, there are finitely many trails to consider. 


\begin{Remark}\label{n3}
Let $H$ be a subgroup of $F$ such that $\La(H)$ has $n$ edges. It is possible to construct the  directed graph $\mathcal P(H)$ in $O(n^3)$ time and thus, to implement the algorithm for deciding solvability (and derived length) of $H$ in $O(n^3)$ time.  
\end{Remark}

\begin{proof}
We only give an outline of the algorithm and leave the details to the reader. We explain below how to decide for each edge $e$ of $\La(H)$ if it is periodic or not and if it is, find the optimal trail $s_e$ in $O(n^2)$ time. Thus, we get the vertex set of $\mathcal P(H)$ in $O(n^3)$ time. For each vertex $e$, using condition (2) in Proposition \ref{prop}, one can find all outgoing edges of $e$ in $\mathcal P(H)$ in $O(n^2)$ time. Indeed, the length of the optimal trail $s_e$ is at most $2n$, thus, in the notation of Proposition \ref{prop}(2), there are at most $2n$ prefixes $u$ to consider. For each one, finding all  end vertices  of simple trails $w_1$ on $\La(H)$ with initial edge $(u1)^+$ can be done in $O(n)$ time. 
Thus, the construction of the directed graph $\mathcal P(H)$ takes $O(n^3)$ time. Since $\mathcal P(H)$ has at most $n$ vertices and $O(n^2)$ edges, one can decide if it contains a loop and if not find the length of the longest directed path in it in $O(n^2)$ time. 

For each edge $e$ in $\La(H)$ it is possible to determine in $O(n)$ time whether it is \emph{quasi-periodic}; i.e., whether there is a trail on $\La(H)$ from $e$ to itself. Indeed, one can use a Depth First Search to go over all simple trails on $\La(H)$ with initial edge $e$ and check if they terminate on $e$. For a quasi periodic edge $e$ we define the \emph{optimal trail} $s_e$ from $e$ to itself as in Definition \ref{opt_tra} (here, the optimal trail $s_e$ might not contain the digit $0$). 

Let $e$ be a quasi-periodic edge. We find the optimal trail $s_e$ in $O(n^2)$ steps using a greedy algorithm. Namely, one can check in $O(n)$ time (using a DFS) if the trail labeled $0$ starting from $e$ can be extended to a trail $q\equiv 0w$ such that (1) $q$ terminates on the edge $e$ and (2) the suffix $w$ labels a simple trail on $\La(H)$ . If so, then the optimal trail from $e$ to itself starts with $0$. Otherwise, it starts with $1$. 

Assume that we found that the optimal trail from $e$ to itself has prefix $u$. If the trail $u$ starting from $e$ terminates on $e$, then $s_e\equiv u$. Otherwise, we check in $O(n)$ time (using a DFS) whether the trail labeled $u0$ starting from $e$ can be extended to a trail $q\equiv u0w$ such that (1) $q$ terminates on $e$, (2) the suffix $w$ labels a simple trail on $\La(H)$ and (3) the sub-trail of $q$ given by the suffix $w$ does not visit any edge visited twice by the sub-trail $u$. (We assume here that it takes $O(1)$ time to check if a given edge of $\La(H)$ was visited twice by the sub-trail $u$.) If a trail $q$ as described exists, then $u0$ is a prefix of the optimal trail $s_e$. Otherwise, $u1$ is a prefix of the optimal trail $s_e$. 

Since the optimal trail $s_e$ is of length $2n$ at most, we find the optimal trail $s_e$ in $O(n^2)$ time. The edge $e$ is periodic if and only if $s_e$ contains the digit $0$. Thus, determining if $e$ is periodic and finding the optimal trail $s_e$ if it is, can be done in $O(n^2)$ time. The rest of the algorithm is as described above. 
\end{proof}

We remark that Bleak, Brough and Hermiller \cite{BBH} have an algorithm for deciding the solvability of  finitely generated computable subgroups of $\PL_o(I)$ and in particular, of any finitely generated subgroup of Thompson's group $F$. 
 The algorithm given by Corollary \ref{cor_PH} applies the special properties of Thompson's group $F$ and cannot be generalized to subgroups of $\PL_o(I)$. However, for finitely generated subgroups of $F$, it is much easier to implement than the algorithm in \cite{BBH}. Indeed, the algorithm in \cite{BBH} involves consideration of orbitals of elements in $H$ and is composed of $4$ steps, separated altogether into $29$ sub-steps, most of which have to be iterated. While it is proved that the algorithm in  \cite{BBH} will always terminate, no explicit upper bound on the complexity can be derived from the proof. By Remark \ref{n3}, the complexity of our algorithm is polynomial. 
 

We finish this section with a demonstration of our algorithm for determining if a finitely generated subgroup of $F$ is solvable.

\begin{Example}\label{ex:last}
The subgroup $H=\la x_0,x_1^2x_2^{-1}x_1^{-1}\ra$ of Thompson's group $F$ is solvable of derived length $2$.
\end{Example}

\begin{proof}
The core $\La(H)$ is given by the following minimal associated tree. 

\Tree[.$e$ [.$f$ [.$f$ ] [.$h$ ] ] [.$g$ [.$h$ [.$a$ [.$a$ ] [.$c$ ] ] [.$b$ [.$c$ ] [.$b$ ] ] ] [.$g$ ] ] ]

By considering trails with initial edges $e,f,g,h,a,b$ and $c$ which do not visit any edge of $\La(H)$ more than twice, we get that the periodic edges of $\La(H)$ are $f$ with optimal trail $s_f\equiv 0$ and $a$ with optimal trail $s_a\equiv 0$. Thus, the vertices of $\mathcal P(H)$ are $f$ and $a$. 

To find the outgoing edges of $f$ in $\mathcal P(H)$, we consider all trails of the form $q\equiv u1w$ with initial edge $f$, such that $u0$ is a prefix of $s_f$ and such that the sub-trail of $q$ given by the suffix $w$ is a simple trail. Since $s_f\equiv 0$, we must have $u\equiv \emptyset$. Thus, one has to consider trails $q\equiv 1w$ where the suffix $w$ gives a simple sub-trail. One can check that starting from $f$, one can get via a trail $q\equiv 1w$ of this form to the vertices $h,a,b$ and $c$. Hence, there is a directed edge in $\mathcal P(H)$ from $f$ to $a$. 

Since $s_a\equiv 0$, to find the outgoing edges of $a$ in $\mathcal P(H)$, we  again have to consider trails of the form $q'\equiv 1w'$ with initial edge $a$, where the suffix $w'$ gives a simple sub-trail of $q'$. The only such trail (where $w'\equiv\emptyset$) has terminal edge $c$. Hence $a$ has no outgoing edges in $\mathcal P(H)$. Thus, the graph $\mathcal P(H)$ is the following. 

\begin{figure}[h]
	\centering
	\includegraphics[width=0.3\columnwidth]{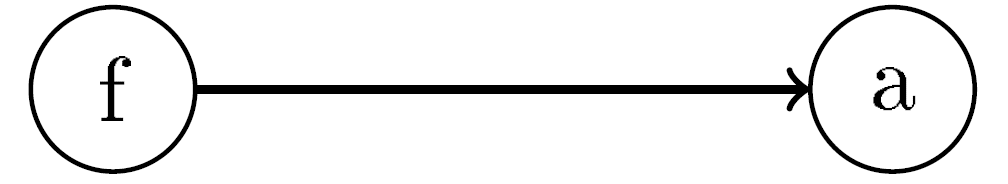}
	\caption{The graph $\mathcal P(H)$}
	\label{pic1}
\end{figure}

It follows from Theorem \ref{thm} that $H$ is solvable of derived length $2$. 
\end{proof}

\begin{Remark}
The group $H$ from Example \ref{ex:last} is a copy of the restricted wreath product $\mathbb{Z} \wr\mathbb{Z}$ considered in \cite{C}. The group $H$ is isomorphic to $\mathbb{Z}\wr\mathbb{Z}$ because the support of $x_1^2x_2^{-1}x_1^{-1}$ is contained in a fundamental domain of $x_0$.
\end{Remark}

\section{Open problems}\label{open}

\subsection{Subgroups of $F$ which contain the derived subgroup of $F$}

Let $H\le F$. By Theorem \ref{main1}, if (1) $[F,F]\le \Cl(H)$ and (2) there exists an element $h\in H$ which fixes a finite dyadic fraction $\alpha\in (0,1)$ such that $h'(\alpha^+)=2$ and $h'(\alpha^-)=1$, then $[F,F]\le H$. However, we do not have examples of subgroups $H$ of $F$ which satisfy the first condition but not the second. We note that the maximal subgroups $H$ constructed in Theorem \ref{B}, Examples \ref{ex:1} and \ref{ex:2} and Proposition \ref{lem:max} were all constructed so that whenever an element $f\notin H$ was added to them they satisfied condition (1) of Theorem \ref{main1}. For any $f\notin H$, the groups $\la H,f\ra$ simply ``happened'' to satisfy condition (2) as well. 

\begin{Problem}\label{pro_der}
Let $H\le F$. Is it true that $H$ contains the derived subgroup $[F,F]$ if and only if $\Cl(H)$ contains $[F,F]$?
\end{Problem}

We note that if the answer to problem \ref{pro_der} is positive, then in particular, we have a positive answer to the following problem. 

\begin{Problem}\cite[Problem 5.12]{GS1}\label{pro_HF}
Let $H$ be a subgroup of $F$ which is not contained in any proper finite index subgroup of $F$. Is it true that $H=F$ if and only if $\Cl(H)=F$?
\end{Problem}

We note that if the answer to Problem \ref{pro_HF} is positive, then it gives a very simple algorithm for solving the generation problem in $F$. 

\subsection{Maximal subgroups of $F$ of infinite index}

\subsubsection{Closed maximal subgroups}

All known maximal subgroups of $F$ of infinite index are closed. Indeed, the Stabilizers $H_{\{\alpha\}}$ in $F$ for $\alpha\in (0,1)$ are closed for components, and as such, by Corollary \ref{cor:clo}, are closed subgroups. 
The maximal subgroup of $F$ of infinite index constructed in \cite{GS} is also closed. In addition, the method for constructing maximal subgroups of $F$ of infinite index demonstrated in Section \ref{ss:max} can only yield closed maximal subgroups. 

\begin{Problem}\cite[Problem 5.11]{GS1}\label{pro_clo}
Is it true that every maximal subgroup of $F$ of infinite index is closed?
\end{Problem}

Note that Problem \ref{pro_clo} is equivalent to Problem \ref{pro_HF}. Indeed, it was observed in \cite{GS1} that a positive answer to Problem \ref{pro_HF} implies a positive answer to Problem \ref{pro_clo}. In the other direction, if $F$ has a proper subgroup $H$ which is not contained in any finite index subgroup of $F$ and such that $\Cl(H)=F$, then a maximal subgroup $M$ of $F$ containing $H$ would be a counter example to Problem \ref{pro_clo}.

\subsubsection{The action of maximal subgroups of infinite index in $F$ on $[0,1]$}

In Proposition \ref{lem:max} we constructed a maximal subgroup of infinite index in $F$ which acts transitively on the set of finite dyadic fractions $\mathcal D$. The following problem remains open.

\begin{Problem}\label{pro_tra}
Does $F$ have a maximal subgroup $H$ of infinite index such that the orbits of the action of $H$ on the interval $[0,1]$ coincide with the orbits of the action of $F$?
\end{Problem}

We note that if the answer to Problem \ref{pro_clo} is negative, then the answer to Problem \ref{pro_tra} is positive. Indeed, if $H$ is a non-closed maximal subgroup of infinite index in $F$, then $\Cl(H)=F$. Then by Corollary \ref{orb_CH}, the actions of $H$ and of $F$ on the interval $[0,1]$ have the same orbits.

If $\alpha\in (0,1)$ is a rational non-dyadic number, then $\alpha=.ps^{\mathbb{N}}$ where $s$ is a minimal period. Clearly, $s$ contains both digits $0$ and $1$. The orbit of $\alpha$ under the action of $F$ is the set of all rational numbers in $(0,1)$ with minimal period $s$. 
To decide if the action of $H$ on this orbit is transitive, one can adapt the algorithm from Theorem \ref{thm:tra}. 
 Indeed, we define a graph $\Gamma_s(H)$ whose vertex set is the vertex set of $\Gamma(H)$ (see Definition \ref{Gamma}). There is a directed edge from $e_1$ to $e_2$ in $\Gamma_s(H)$ if and only if there is a trail labeled $s$ with initial edge $e_1$ and terminal edge $e_2$ in $\La(H)$. Then Theorem \ref{thm:tra}, with the graph $\Gamma(H)$ replaced by $\Gamma_s(H)$ and the set $\mathcal D$ replaced by the orbit of $\alpha$ under the action of $F$ on $[0,1]$, holds. 


A consideration of the graph $\Gamma_s(H)$, where $s\equiv 01$ and $H$ is the maximal subgroup of $F$ from Proposition \ref{lem:max}, 
shows that $H$ does not act transitively on the orbit of $\frac{1}{3}=.(01)^{\mathbb{N}}$ under the action of $F$. Thus, $H$ acts transitively on the set $\mathcal D$ but is not a solution for Problem \ref{pro_tra}. 

\subsubsection{$2$-generated maximal subgroups of $F$}


In \cite{GS2}, the author and Sapir prove that the isomorphism class of the stabilizer $H_{\{\alpha\}}$ of $\alpha\in (0,1)$ depends only on the \emph{type} of $\alpha$; i.e., on whether $\alpha$ is dyadic, rational non-dyadic or irrational. 
It is easy to see that if $\alpha$ is dyadic, then $H_{\{\alpha\}}$ is isomorphic to the direct product of two copies of $F$. Therefore, in that case, the minimal size of a generating set of $H_{\{\alpha\}}$ is $4$. Savchuk observed that if $\alpha$ is irrational then $H_{\{\alpha\}}$ is not finitely generated. In \cite{GS2} we prove that if $\alpha$ is finite dyadic then the stabilizer $H_{\{\alpha\}}$ has a generating set with $3$ elements and does not have any smaller generating set. 
The explicit maximal subgroup of $F$ constructed in \cite{GS1} is isomorphic to $F_3$; the ``brother group'' of $F$ which consists of all piecewise linear homeomorphisms of the unit interval $[0,1]$ where all breakpoints are triadic fractions and all slopes are integer powers of $3$. Thus, it has a generating set of size $3$ and no less \cite{Br}.

We note that all maximal subgroups of $F$ constructed in this paper are $3$- or $4$- generated. Since the generating sets provided are not necessarily optimal, it is possible that one of them is $2$-generated, but we do not know if that is the case. Note that the algorithm from \cite[Lemma 9.11]{GuSa97} enables computing a presentation for the maximal subgroups constructed (as they are all closed). By moving to the abelianization, one can find a lower bound for the minimal size of a generating set. An application of this algorithm for the maximal subgroup from Example \ref{ex:1}, shows that it cannot be generated by less than $3$ elements. We did not apply the algorithm for the other maximal subgroups constructed in the paper. In general, the following problem is open.

\begin{Problem}\label{pro_2gen}
Does Thompson's group $F$ have a $2$-generated maximal subgroup of infinite index?
\end{Problem}

We note that by \cite{BW}, for each prime $p$, Thompson's group $F$ has subgroups of index $p$ which are isomorphic to $F$. Therefore, $F$ has $2$-generated maximal subgroups of finite index. In fact, we go as far as to make the following conjecture.

\begin{Conjecture}\label{con_2gen}
All finite index subgroups of $F$ are $2$-generated.
\end{Conjecture}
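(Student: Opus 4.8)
The plan is to reduce the conjecture to the generation criterion of Theorem \ref{main1} together with a lifting-and-folding problem for the Stallings $2$-core. First I would record the structural reduction. Since $[F,F]$ is infinite and simple, every finite index subgroup $H\le F$ contains $[F,F]$, so $H\mapsto \pi_{\ab}(H)$ is a bijection between finite index subgroups of $F$ and finite index sublattices $L=\pi_{\ab}(H)\le\mathbb{Z}^2$; moreover $[H,H]=[F,F]$ because $[F,F]$ is perfect and contained in $H$. Thus it suffices to prove the following: for every finite index sublattice $L\le\mathbb{Z}^2$ with a $\mathbb{Z}$-basis $v_1,v_2$, there exist $a,b\in H=\pi_{\ab}^{-1}(L)$ with $\pi_{\ab}(a)=v_1$ and $\pi_{\ab}(b)=v_2$ such that $\la a,b\ra\supseteq[F,F]$. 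Given such $a,b$, the containment $[F,F]\subseteq\la a,b\ra$ together with $\pi_{\ab}(\la a,b\ra)\supseteq\la v_1,v_2\ra=L=\pi_{\ab}(H)$ and $\la a,b\ra\subseteq H$ forces $\la a,b\ra=H$, exactly as in the deduction of Theorem \ref{main} from Theorem \ref{main1}.

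Next I would apply Theorem \ref{main1} to the subgroup $\la a,b\ra$: to obtain $[F,F]\subseteq\la a,b\ra$ it is enough to arrange that (1) $\Cl(\la a,b\ra)$ contains $[F,F]$ and (2) some word in $a,b$ fixes a finite dyadic fraction $\alpha$ with $h'(\alpha^-)=1$ and $h'(\alpha^+)=2$. Condition (2) is the easy part. Because we may replace any lift $a$ by $a c$ with $c\in[F,F]$ without changing $\pi_{\ab}(a)$, we have complete freedom to prescribe the local behaviour of a generator near a chosen dyadic fixed point, and in particular to realize the required slope pair, just as the slope element was exhibited at the end of Examples \ref{ex:1} and \ref{ex:2}. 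By Lemma \ref{fin_ind}, condition (1) is equivalent to the core $\La(\la a,b\ra)$ having every edge as the top edge of some positive cell and possessing a unique inner edge.

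The main obstacle is therefore condition (1): constructing lifts $a,b$ whose core has a \emph{unique} inner edge, uniformly over all $L$. I would take $a,b$ to be ``generic'' lifts of $v_1,v_2$ — each of full support in $(0,1)$ and with branch structure rich enough that every finite binary word labels a path on $\La(\la a,b\ra)$ and that the core has a unique left boundary edge and a unique right boundary edge; this is the easy half of Lemma \ref{fin_ind} and is arranged essentially as in Lemma \ref{eve_ide}. The delicate half is collapsing all inner edges to one. In the maximal-subgroup constructions (Lemma \ref{eve_ide} and Examples \ref{ex:1}--\ref{ex:2}) this collapse is produced by adjoining a \emph{single freely chosen} extra element; here we are constrained to exactly two generators whose abelianizations are prescribed. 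The plan is to use the freedom of multiplying the lifts by elements of $[F,F]$: choose $c_1,c_2\in[F,F]$ and set $a=a_0c_1$, $b=b_0c_2$ so that the foldings of type $1$ and type $2$ induced on the semi-core identify all inner edges, with Lemma \ref{path_lem} supplying the branch-pairs $u\rightarrow v$ (both $u,v$ containing the digits $0$ and $1$) once the relevant edges are folded together.

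The hard point is making this adjustment succeed \emph{simultaneously for every} finite index $L$, rather than case-by-case as in the worked examples, and this is exactly why the statement is only conjectured. I would note that the alternative route — proving every finite index subgroup is isomorphic to $F$ and hence $2$-generated — does not seem easier: $\mathrm{Out}(F)$ induces only a finite subgroup of $GL_2(\mathbb{Z})$ on $\pi_{\ab}(F)=\mathbb{Z}^2$, so one cannot reduce a general sublattice $L$ to a normal form by automorphisms of $F$, and a uniform isomorphism with $F$ would have to be built by hand much as above.
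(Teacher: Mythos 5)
The statement you are addressing is labelled a \emph{Conjecture} in the paper, and the paper gives no proof of it: the only supporting evidence offered is Remark \ref{rem_K}, where the specific group $K=B[F,F]$ is shown to be $2$-generated by exhibiting two explicit lifts of a basis of $\pi_{\ab}(K)$, verifying $\Cl(K')=F$, and invoking Theorem \ref{main1} via the slope condition, together with the sentence that the author ``believes the proof can be adapted to all finite index subgroups.'' Your proposal reconstructs exactly this intended strategy: the reduction of finite index subgroups to finite index sublattices of $\mathbb{Z}^2$ via $\pi_{\ab}$ (using that $[F,F]$ is infinite and simple, hence contained in every finite index subgroup), the observation that two lifts $a,b$ of a basis of $L$ generate $H=\pi_{\ab}^{-1}(L)$ as soon as $[F,F]\subseteq\la a,b\ra$, and the appeal to Theorem \ref{main1} to certify the latter. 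Your reduction is correct, and your identification of which of the two hypotheses of Theorem \ref{main1} is easy (the slope pair, adjustable by multiplying a lift by an element of $[F,F]$) and which is hard (forcing $\La(\la a,b\ra)$ to have a unique inner edge) matches the structure of Remark \ref{rem_K} and of Examples \ref{ex:1}--\ref{ex:2}.

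However, you have not produced a proof, and you say so yourself: the step of choosing $c_1,c_2\in[F,F]$ so that the core of $\la a_0c_1, b_0c_2\ra$ collapses to a single inner edge \emph{uniformly over every finite index sublattice} $L$ is precisely the missing content, and it is the reason the paper states this as a conjecture rather than a theorem. There is nothing in the paper for your argument to be measured against beyond the case-by-case template, so the honest assessment is that your proposal is a faithful and correct account of the intended attack, with the same open gap the paper leaves open; it should not be presented as a proof of the conjecture. One small caution on your closing remark: not every finite index subgroup of $F$ is isomorphic to $F$ (Bleak and Wassink classify the isomorphism types, and several are not $F$), so the ``alternative route'' you dismiss is not merely harder but unavailable as stated; any uniform argument must go through the generation criterion rather than through an isomorphism with $F$.
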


Note that all finite index subgroups of $\mathbb{Z}^2$ are $2$-generated. Hence, the image of any finite index subgroup of $F$ in the abelianization of $F$ is $2$-generated. 
The proof in Remark \ref{rem_K} of the group $K$ being $2$-generated, can be adapted to certain finite index subgroups of $F$ considered by the author. We believe that the proof can be adapted to all finite index subgroups of $F$ (note that Bleak and Wassink \cite{BW} found all isomorphism classes of finite index subgroups of $F$). It is interesting to note that if Conjecture \ref{con_2gen} holds and the answer to Problem \ref{pro_2gen} is ``no'' then the minimal number of generators of a maximal subgroup of $F$ is an invariant determining whether the subgroup has finite or infinite index in $F$. 

\subsection{Subgroups $H\le F$ with finite core $\La(H)$}

\begin{Problem}\label{pro_fin}
Let $H$ be a subgroup of $F$ such that the core $\La(H)$ is finite. Is it true that $\Cl(H)$ is finitely generated?
\end{Problem}

We note that by Corollary \ref{cor_finite}, if $\La(H)$ is finite, then $\Cl(H)=\Cl(K)$, where $K$ is finitely generated. Thus, Problem \ref{pro_fin} is equivalent to \cite[Problem 5.7]{GS1}, asking whether the closure of a finitely generated subgroup of $F$ is finitely generated. 

We note that by Theorem \ref{sol}, the answer to Problem \ref{pro_fin} is ``yes'' if $H$ is solvable. If $H$ is elementary amenable, we already do not have an  answer to Problem \ref{pro_fin}. 

If the core $\La(H)$ is finite then the semigroup presentation $\mathcal P$ associated with $\La(H)$ is finite. It follows from Corollary \ref{set_B}, that if $\mathcal P$ has a finite completion $\mathcal P'$ (satisfying the conditions in Section \ref{ss:alg}), then $\Cl(H)$ is finitely generated. 

\begin{Problem}\label{pro:pre}
Let $H$ be a subgroup of $F$ with finite core $\La(H)$. Let $\mathcal P$ be the semigroup presentation associated with $\La(H)$. Is it true that $\mathcal P$ has a finite completion $\mathcal P'$ (satisfying the conditions from Section 
\ref{ss:alg})?
\end{Problem}

We find it unlikely that the answer to Problem \ref{pro:pre} would be ``yes''. But as Propositions \ref{prop:left} and \ref{prop:right} show, semigroup presentations $\mathcal P$ associated with cores $\La(H)$ of subgroups $H\le F$ satisfy some special properties. 

Finally, we note that there are finitely generated closed subgroups of $F$ which are not finitely presented. For example, the subgroup from Example \ref{ex:last}, isomorphic to $\mathbb{Z} \wr \mathbb{Z}$, is a closed subgroup of $F$. Indeed, it is easy to prove that it is closed for components and therefore closed by Corollary \ref{cor:clo}. The wreath product $\mathbb{Z}\wr\mathbb{Z}$ is $2$-generated but is not finitely presented.

\begin{minipage}{3 in}
Gili Golan Polak\\
Department of Mathematics,\\
Ben Gurion University of the Negev,\\
Be'er Sheva, Israel\\
golangi@bgu.ac.il
\end{minipage}

\end{document}